\begin{document}

\sloppy
\renewcommand{\theequation}{\arabic{section}.\arabic{equation}}
\thinmuskip = 0.5\thinmuskip
\medmuskip = 0.5\medmuskip
\thickmuskip = 0.5\thickmuskip
\arraycolsep = 0.3\arraycolsep

\newtheorem{theorem}{Theorem}[section]
\newtheorem{lemma}[theorem]{Lemma}
\newtheorem{corollary}[theorem]{Corollary}
\newtheorem{prop}[theorem]{Proposition}
\newtheorem{definition}[theorem]{Definition}
\newtheorem{remark}[theorem]{Remark}
\renewcommand{\thetheorem}{\arabic{section}.\arabic{theorem}}
\newcommand{\prf}{\noindent{\bf Proof.}\ }
\def\prfe{\hspace*{\fill} $\Box$

\smallskip \noindent}

\def\be{\begin{equation}}
\def\ee{\end{equation}}
\def\bea{\begin{eqnarray}}
\def\eea{\end{eqnarray}}
\def\beas{\begin{eqnarray*}}
\def\eeas{\end{eqnarray*}}

\newcommand{\loc}{\operatorname{loc}}
\newcommand{\R}{\mathbb R}
\newcommand{\N}{\mathbb N}
\newcommand{\T}{\mathbb T}
\newcommand{\K}{\mathbb S}
\newcommand{\leftexp}[2]{{\vphantom{#2}}^{#1}{#2}}
\newcommand{\eqdef}{\overset{\mbox{\tiny{def}}}{=}}
\newcommand{\snorm}[1]{\|#1\|}

\newcommand{\todo}[1]{\vspace{5 mm}\par \noindent
\marginpar{\textsc{ \hspace{.2 in}   \textcolor{red}{ To Fix}}} \framebox{\begin{minipage}[c]{0.95
\textwidth} \tt #1 \end{minipage}}\vspace{5 mm}\par}

\def\k{\kappa}
\def\l{\lambda}
\def\g{\partial}
\def\E{\mathcal{ E}}
\def\D{\mathcal{D}}
\def\t{\bar{\partial}} 
\def\div{\mbox{div}}
\def\curl{\mbox{curl}}
\def\open#1{\setbox0=\hbox{$#1$}
\baselineskip = 0pt
\vbox{\hbox{\hspace*{0.4 wd0}\tiny $\circ$}\hbox{$#1$}}
\baselineskip = 11pt\!}
\def\pA{\leftexp{+}{\!\!\!\!\!\!A}}
\def\mA{\leftexp{-}{\!\!\!\!\!\!A}}
\def\A{\leftexp{\kappa}{\!\!\!\!\!\!A}}
\def\a{\leftexp{\kappa}{\!\!\!a}}
\def\kA{\leftexp{\kappa}{\!\!\!\tilde{A}}}
\def\w{\leftexp{\kappa}{\!\!\!w}}
\def\kw{\leftexp{\kappa}{\!\!\!\tilde{w}}}
\def\qn{\open{q}}
\def\vp{(v^+)}
\def\vm{(v^-)}
\def\newe{{\scriptstyle \mathcal{E} }}
\def\newd{{\scriptstyle \mathcal{D} }}
\def\eee{{{\scriptscriptstyle  \mathcal{E} }}}
\def\ppp{{{\scriptscriptstyle  \partial }}}
\def\ppp{{{\scriptscriptstyle  \Gamma}}}
\def\aabb{{{\scriptscriptstyle  a,b }}}
\def\ddd{{\scriptscriptstyle \mathcal{D} }}
\def\newb{{\scriptstyle \mathcal{B} }}
\def\qok{{{}^\k\hspace{-0.02in}q_0}}
\def\rok{{{}^\k\hspace{-0.02in}r_0}}
\def\muok{{{}^\k\hspace{-0.02in}\mu_0}}
\def\abark{{{}^\k\hspace{-0.05in}\bar{A}}}
\def\ak{{{}^\k\hspace{-0.05in}A}}
\def\ako{{{}^\k\hspace{-0.05in}A_0}}
\def\ao{{A_0}}
\def\psibark{{{}^\k\hspace{-0.02in}\bar{\Psi}}}
\def\psik{{{}^\k\hspace{-0.02in}\Psi}}
\def\psiko{{{}^\k\hspace{-0.02in}\Psi_0}}
\def\Qko{{{}^\k\hspace{-0.02in}Q_0}}
\def\Uko{{{}^\k\hspace{-0.02in}U_0}}
\def\Jbark{{\bar{J}_\k}}
\def\labark{{{}^\k\hspace{-0.02in}\bar{a}}}
\def\qs{{{}^s\hspace{-0.02in}q}}

\title{Local well-posedness and Global stability of the Two-Phase Stefan problem}

\author{M. Had{\v z}i\'c, G. Navarro, S. Shkoller}

\begin{abstract}
The two-phase Stefan problem describes the temperature distribution in a homogeneous medium undergoing a phase transition such as ice melting 
to water. This is accomplished by solving the heat equation on a time-dependent domain, composed of two regions separated by an a priori unknown moving boundary which is transported by the difference (or jump) of the normal derivatives of the temperature in each phase. 
We establish local-in-time well-posedness and a global-in-time stability result for arbitrary sufficiently smooth domains and small initial temperatures. 
To this end, we develop a higher-order energy with natural weights adapted to the problem and combine it with Hopf-type inequalities. This extends the previous work by Had\v{z}i\'{c} \& Shkoller \cite{mHsS2013,mHsS2015} on the one-phase Stefan problem to the setting of two-phase problems, and 
simplifies the proof significantly.  
\end{abstract}

\date{July 1, 2016}
\keywords{Stefan problem, two-phase problem, interface motion, free-boundary problem}
\maketitle

\tableofcontents

\section{Introduction to the problem}

\subsection{Problem formulation and the reference domain}\label{se:form}
We consider the local and global well-posedness and  interface regularity of solutions to the classical {\em two-phase Stefan problem}, describing the evolving interface, separating  a freezing liquid and a melting solid.
The temperature of the liquid-solid phase $p^{\pm}(t,x)$ 
and the {\it a priori unknown}  {\it moving interface} $\Gamma(t)$ must satisfy the following system of equations:
\begin{subequations}
\label{E:stefan}
\begin{alignat}{2}
p^{\pm}_t-\Delta p^{\pm}&=0&&\ \text{ in } \ \Omega^\pm(t)\,\label{eq:heat}\\
[\g_np]^{\pm}&=-V_{\Gamma(t)}&& \ \text{ on } \ \Gamma(t)\,\label{eq:neumann}\\
p^+=p^-&=0&& \ \text{ on } \ \Gamma(t)\,\label{eq:dirichlet}\\
p^{\pm}(0,\cdot)=p^{\pm}_0\,, & \ \Gamma(0)=\Gamma_0&&\,,\label{eq:initial}
\end{alignat}
\end{subequations}
\begin{figure}[htbp]\label{FIGURE}
\begin{center}
\includegraphics[scale = 0.6]{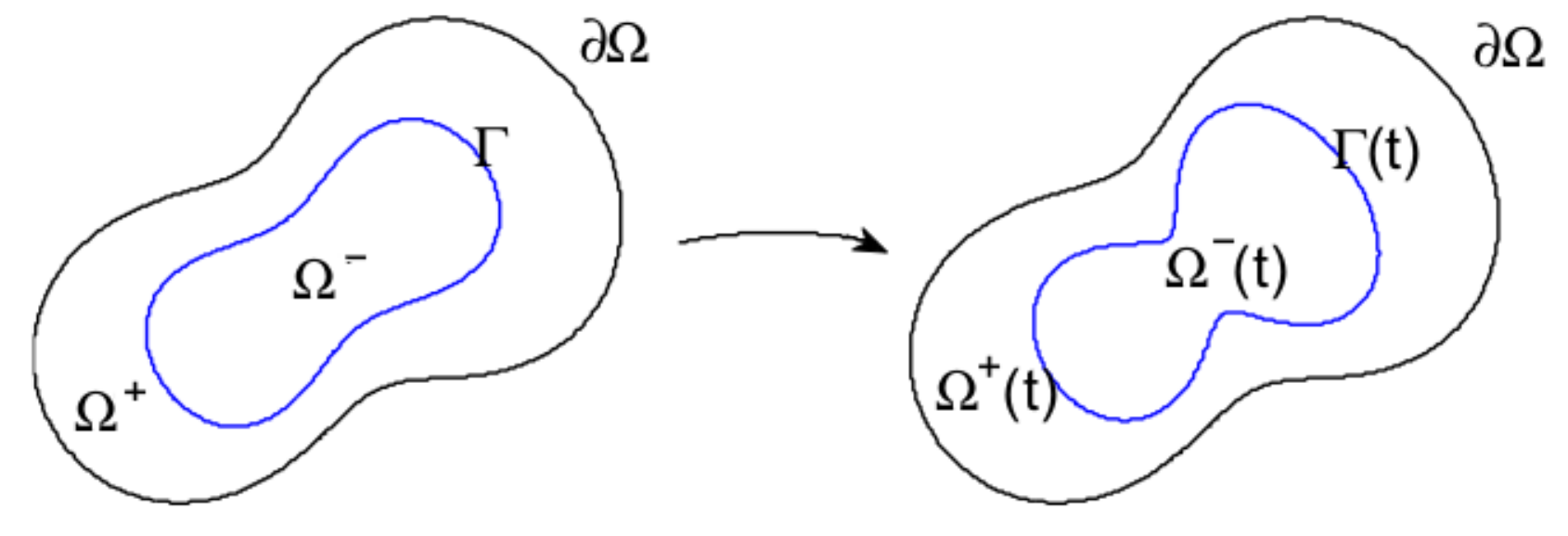}
\end{center} \caption{{\footnotesize The two-phase Stefan problem.  Displayed on the left side of the figure are the reference domains $\Omega^\pm$ and reference
interface $\Gamma$.  The time-dependent domains $\Omega^\pm(t)$ and the moving interface $\Gamma(t)$ are shown on the right side
of the figure.  The domain $\Omega^-(t)$ denotes the solid phase, while the domain $\Omega^+(t)$ denotes the liquid phase.}}\label{fig1}
\end{figure}
where for each time $t \in [0,T]$,  $\Omega^+(t)$ and $\Omega^-(t)$ denote  two evolving open  and bounded domains as shown
in Figure \ref{fig1}, and $\Gamma(t)$ denotes the moving  interface
 separating $\Omega^+(t)$ and $\Omega^-(t)$, so that  $\Gamma(t)=\overline{\Omega^-(t)} \cap \overline{\Omega^+(t)}$.  
 
\begin{definition}[The domains $\Omega$ and $\Omega^\pm(t)$]\label{def-domains}
For $d \ge 2$, we denote by $\Omega \subset \mathbb{R}  ^d$, a
 fixed, open, and
 bounded set such that
$$
 \Omega = \overline{\Omega^-(t) } \cup\Omega^+(t) \,,
$$
as shown Figure \ref{fig1}.  We assume that the fixed boundary $\partial \Omega$ is $C^ \infty $.
\end{definition}

The equations \eqref{eq:heat} model temperature diffusion in the bulk $\Omega^\pm(t)$,  
while the interface jump condition \eqref{eq:neumann} states that the jump in temperature gradients evolves the interface; 
that is, $[\g_np]^{\pm}:= \g_np^+-\g_np^-=\nabla (p^+-p^-)\cdot n$ on $\Gamma(t)$, where
 $n( \cdot ,t)$ denotes the outward  unit normal on $\Gamma(t)$ (pointing into  $\Omega^+(t)$), and 
$V_{\Gamma(t)}$ denotes the speed or  normal velocity of the interface $\Gamma(t)$.
Note that the freezing of the liquid and the melting of the solid occur at a constant temperature $p=0$ as seen from the 
Dirichlet boundary condition~\eqref{eq:dirichlet}.   
Initial conditions are prescribed in (\ref{eq:initial}):  the initial interface $\Gamma_0$
and the initial temperature functions  $p_0^-$ and $p_0^+$ are specified.    

Herein, we shall, for simplicity,  consider the two-dimensional Stefan problem $d=2$, although all of our methods easily extend in a straightforward manner to the case that $d \ge 3$.
No convexity assumptions  are made on the initial interface
 $\Gamma_0$, but we shall assume that $\Gamma_0$ is diffeomorphic to the unit circle $ \mathbb{S}  ^1$.

\begin{remark} 
Surface tension effects can be included as well  by replacing \eqref{eq:dirichlet} with 
\be\label{eq:dirichletgamma}
p^\pm=\gamma H_{\Gamma(t)} \ \text{ on } \ \Gamma(t),
\ee
where $\gamma\ge0$ is the surface tension parameter and $H_{\Gamma(t)}$ 
is the mean curvature of $\Gamma(t)$.  Herein, we shall study the case that $\gamma=0$.
We shall also consider the two-dimensional problem $d=2$, although all of our results extend in a straightforward manner to the case that $d \ge 3$.\end{remark}

\subsection{Specifying a smooth reference  interface $\Gamma$ and reference domains $\Omega^\pm$}\label{SS:smoothreference}   
In order to describe our initial interface $\Gamma_0$, we employ an
$H^6$-class
 parameterization  $z_0 : \mathbb{S}  ^1 \to \Gamma_0$,  where $ \mathbb{S}  ^1$ is identified with the period $[0, 2 \pi]$.  
To construct a smooth reference interface, we consider a $C^ \infty $ nearby interface $\Gamma_ \sigma $ which is constructed 
by smoothing $z_0( \theta)$, $\theta \in \mathbb{S}  ^1$ using a standard mollification approach.
 For $ \sigma >0$ taken sufficiently small, we define the  convolution operator  $ \Lambda _ \sigma $ as follows: 
\begin{equation}\label{E:Lambdadefinition}
\Lambda _ \sigma  z_0 (\theta)= \int_{ \mathbb{R}} \rho_\sigma  (\theta -\vartheta) z_0(\vartheta) d\vartheta \,,
\end{equation}

where $\rho_\sigma(\theta) = \sigma ^{-1} \rho( \theta/ \sigma  )$, and $\rho$ is the standard mollifier on $ \R$, given by
\begin{equation*}
\rho(x) = \left\lbrace \begin{array}{cc} Ce^{\frac{-1}{1-|x|^2}} & |x|\leq 1\,,\\
0 & |x|> 1\,. \end{array}\right.
\end{equation*}
\begin{definition}[$C^ \infty$ reference interface and domains] For $ \sigma >0$ taken sufficiently small and fixed, we
set
\begin{align*}
z^\sigma_0(\theta) &= \Lambda_\sigma z_0(\theta),
\end{align*}
and we define the nearby $C^ \infty$  curve 
$$\Gamma := z_\sigma ( \mathbb{S}  ^1) \,. $$
The curve $\Gamma$ is the reference interface, and define the reference domain $\Omega^- $ to be the open set enclosed 
by $\Gamma $, and we set 
$\Omega^-  = \Omega - \overline{\Omega^+}$.
\end{definition} 

The initial interface $\Gamma_0$ is in the normal bundle of $\Gamma$;  hence there exists a signed height
function  $h_0\in H^6(\Gamma_ \sigma )$ such that
$$
z_0(\theta) = z^\sigma_0 (\theta) + h_0(z_\sigma(\theta))N_\sigma(z^\sigma_0(\theta)),\ z^\sigma_0(\theta) \in \Gamma_\sigma,
$$ 
where $N_\sigma(z^\sigma_0(\theta))$ is the  unit normal vector to $\Gamma $ (pointing into  $\Omega^+$) at the point 
$z^ \sigma _0(\theta)$.
It follows that the initial height function $h_0$ has amplitude of order $\sigma$, and that $ h_0 \to 0$  as $\sigma\rightarrow 0$.

As time evolves, if the interface
$\Gamma(t)$ stays in the normal bundle of $ \Gamma_ \sigma $, then for each time $t$, we can define the corresponding
signed height function $h(t,z_\sigma)$ as follows:
\begin{equation}
\Gamma(t) := \{ y\ \vert\ y= z^\sigma_0 + h(t,z^\sigma_0)N_\sigma(z^\sigma _0),\ z^\sigma_0 \in\Gamma_\sigma\},
\end{equation} 
with the initial condition 
$$
h(0,z_\sigma) = h_0(z_\sigma).
$$

We note, that while it is not essential, it is convenient to use the $ C^ \infty $ curve $\Gamma_ \sigma $ as the reference interface.  This allows
us to use the normal bundle of $ \Gamma_ \sigma $ with a $C^ \infty $ unit normal vector field.  If we had instead worked with the initial
interface $\Gamma_0$ as
the reference interface, we would have been forced to use a different (from the normal) transverse vector field to define the height function due to 
the limited regularity of $\Gamma_0$ and the fact that the regularity of the normal  would have a  one derivative loss.

For notational clarity, we shall henceforth drop the explicit  dependence on $\sigma$ in our parametrization, and write $z_0(\theta)$ for
 $ z^\sigma_0 (\theta)$.


\subsection{Notation}\label{ss:Notation}
We denote the identity map $x\mapsto x$ by $e$ and the identity $(2\times2)$-matrix $(\delta_{ij})_{i,j=1,2}$ by $\text{Id}.$
 A constant C is a generic constant and may change from line to line, and we write $X\lesssim Y$ to denote $X\leq CY$. 
Similarly, we use the notation $P(\cdot)$ to denote a generic polynomial of the form $P(x)=Cx^p$, with $p\geq 1$, and the constants C and $p$ may also change from line to line.

We use $\nabla = (\partial_{x_1}, \partial_{x_2})$ to denote the gradient operator.
For $i=1,2$,  we shall abbreviate partial differentiation
of a function $f$ as   $f,_k = \frac{\partial f}{\partial x^k}$, and for time-differentiation we let $F_t:=\g_t F$. 
We shall use the Einstein summation convention, where repeated indices are summed from $1$ to $2$.
Furthermore, given a function $F(t,x)$, we shall often write $F(t)$ instead of $F(t,\cdot)$ and $F(0)$ instead of $F(0,x)$.

To deal with lower-order terms in energy estimates, we shall use the abbreviation $\text{l.o.t.}$ for spacetime integrals
in which the  integrand has sufficiently few derivatives so as to be bounded by a simple application of H\"{o}lder's  inequality; this is made precise in\eqref{E:lotdefinition}.\\

It will be useful for some estimates to set the following notation for functions evaluated at time $t=0$:
\begin{equation*}
\Psi_0^\pm:= \Psi^\pm(0),\ \psiko^\pm := \psik^\pm(0),\ \ao^\pm:=A^\pm(0) = [\nabla\Psi_0^\pm]^{-1},\ \ako^\pm := \ak^\pm(0) = [\nabla \psik_0^\pm]^{-1},
\end{equation*}	
and we can define as well the respective differential operators 
\begin{align*}
\Delta_{\Psi_0^\pm} f &:= {\ao^\pm}^i_j (\ao^k_j f,_k),_i,\\
\Delta_{\psiko^\pm} f &:= {\ako^\pm}^i_j ({\ako^\pm}^k_j f,_k),_i,\\
(\nabla_{\Psi_0^\pm} f)^j &:= {\ao^\pm}^i_j f,_i,\\
(\nabla_{\psiko^\pm} f)^j &:= {\ako^\pm}^i_j f,_i,\\
\end{align*}
which are the generalizations of the Laplacian and the gradient respectively for a scalar function $f$ over the regions $\Omega^\pm$. For a vector field $F$, we defined the matrices,
\begin{align*}
[\nabla_{\Psi_0^\pm} F]^i_j &:= {\ao^\pm}^k_j F^i,_k,\\
[\nabla_{\psiko^\pm} F]^i_j &:= {\ako^\pm}^k_j F^i,_k.
\end{align*}

\subsection{Sobolev norms} For any $s\geq 0$ and given functions $F^\pm:\Omega^\pm\to\R,\ \varphi:\Gamma\to \R$, we denote the norms in the standard Sobolev spaces $H^s(\Omega^\pm),\ H^s(\Gamma)$ as,
\begin{equation*}
\|F^\pm\|_{s} := \|F^\pm\|_{H^s(\Omega^\pm)},\ \ |\varphi|_s := \|\varphi\|_{H^s(\Gamma)},
\end{equation*}
where $\|F^\pm\|_s$ is either $\|F^+\|_{H^s(\Omega^+)}$ or $\|F^-\|_{H^s(\Omega^-)}$ depending on which domain we are considering.

If $F:[0,T]\times\Omega\to\R,\ \varphi:[0,T]\times\Gamma\to\R$ are given time-dependent functions, then
\begin{align*}
\|F\|_{L^2_tH^s} &:= \left(\int_0^t \|f(s)\|^2_{H^s(\Omega)}ds\right)^{1/2},\\
\|F\|_{L^\infty_tH^s}&:= \sup\limits_{0\leq s\leq t}\ \|F(s)\|_{H^s(\Omega)},\\
|\varphi|_{L^2_tH^s} &:= \left(\int_0^t |\varphi(s)|^2_{H^s(\Gamma)}ds\right)^{1/2},\\
|\varphi|_{L^\infty_tH^s} &:= \sup\limits_{0\leq s\leq t}\ |\varphi(s)|_{H^s(\Gamma)}.
\end{align*}

For given weight functions $W^\pm:\Omega^\pm\to\R$ such that $W^\pm >0$, we define the {\bf weighted $L^2$ norm} as
\begin{align}
\|F^\pm\|_{L^{2,W^\pm}}^2& := \int_{\Omega^\pm}|F^\pm(s,x)|^2W^\pm\,dx.
\end{align}

\subsection{Tangential derivatives} \label{SS:TANGENTIAL}
For a given $0<\nu \ll1$, we define a smooth cut-off function $\mu: \bar{\Omega} \rightarrow \mathbb{R}_+$ satisfying
\begin{equation}\label{eq:mudefinition}
\mu(x) \equiv 1\ \text{ if } \text{dist}(x,\Gamma\cup\g\Omega) \leq \nu \text{ and } \mu(x)\equiv 0\ \text{ if }\ \text{dist}(x,\Gamma\cup\g\Omega) \geq 2\nu.
\end{equation}
This will allow us to localize the analysis to a neighborhood of the interface, wherein we define \emph{tangential derivative} as $\t f =  \nabla f \cdot \tau$, where $\tau$ is the smooth extension of the tangent vector to $\Gamma$ into that neighborhood. In the case of functions defined solely on $\Gamma$, we define the tangential derivative naturally as $\t g = \frac{1}{\|z_0'\|}\frac{d}{d\theta} g(z_0(\theta))$, where $z_0(\theta)$ is the parametrization of $\Gamma$ described in Section \ref{SS:smoothreference}. 

\subsection{Steady states}\label{S:SS}
Let $\bar\Gamma$ be any given closed $C^1$-curve separating $\Omega$   into two connected components $\Omega^+$ and $\Omega^-.$ Then the triple
$(u^+,u^-,\Gamma)\equiv(0,0,\bar\Gamma)$ constitutes a steady state solution to~\eqref{E:stefan}. The space of steady states is therefore infinite-dimensional 
and NOT parametrized by finitely many parameters. 
The main goal of this article is to understand the nonlinear stability of these steady states.


\subsection{Pulling-back to the reference domains $\Omega^\pm$} \label{SS:FIXEDDOMAIN}
To develop a well-posedness theory for (\ref{E:stefan}), we pull-back the equations to the reference domains $\Omega^\pm$.  It is convenient
to construct harmonic diffeomorphisms. Hence, for each $t \in [0,T]$, 
 we define the 
diffeomorphisms $\Psi^\pm( \cdot , t): \Omega^\pm \to \Omega^\pm(t)$ as the solution of
\begin{subequations}\label{E:definition_Psi}
\begin{align}
\Delta \Psi^\pm &= 0\ \text{ in } \Omega^\pm,\\
\Psi^\pm(t,x) &= x + h(t,x)N(x)\ \text{ on } \Gamma,\label{E:definition_Psi_boundary}\\
\Psi^+ &= \text{e}\ \text{ on } \g\Omega,  \label{E:FIXEDBOUNDARY}
\end{align}
\end{subequations}
where $e$ is the identity map on $\g\Omega$. 
Elliptic estimates show that for $k \ge 1$,
\begin{equation}\label{E:Psiboundarycontrol}
\|\Psi^\pm-e\|_{6.5} \leq C(|h|_{6}) \,.
\end{equation}  
When $|h|_6 \le  \epsilon \ll 1$, the inverse function theorem, together with the Sobolev embedding
theorem, show that $\Psi^\pm( \cdot , t): \Omega^\pm \to \Omega^\pm(t)$ are $H^{6.5}$-class diffeomorphisms. Of course, we could have used any Sobolev space $H^s$, in place of $H^{6.5}$, but  our analysis will make use of the latter.


We next introduce our physical variables set on the fixed reference domains $\Omega^\pm$.  We set
\begin{subequations}
\begin{alignat}{2}
q^\pm &:= p^\pm(t,\cdot) \circ \Psi^\pm,\\
v^\pm &:= \nabla p^\pm(t,\cdot)\circ \Psi^\pm,\\
A^\pm &:= (\nabla\Psi^\pm)^{-1},\label{E:A_definition}\\
w^\pm &:= \Psi_t^\pm.
\end{alignat}
\end{subequations}
In the parlance of fluid dynamics, the mappings $\Psi^\pm$ are often called Arbitrary Lagrangian Eulerian (ALE) coordinates.
The Laplace operator in ALE coordinates is given by
$$\Delta_{\Psi^\pm}:= A^\pm{}^i_j\g_i(A^\pm{}^k_j \g_k)\,.$$

Therefore, on the fixed reference domains $\Omega^\pm$,  the Stefan problem~\eqref{E:stefan} has the following form:
\begin{subequations}
\label{E:ALE} 
\begin{alignat}{2}
q^\pm_t - \Delta_{\Psi^\pm} q^\pm &= - v^\pm\cdot w^\pm\ & \text{ in }& \Omega^\pm,\label{E:ALEheat}\\
v^\pm + A^\pm{}^T \nabla q^\pm &= 0\  &\text{ in } &\Omega^\pm,\label{E:ALEv}\\
q^\pm(t,x) &= 0\ &\text{ on }& \Gamma,\label{E:ALEdirichlet}\\
h_t  &= \left[v\cdot \tilde{n}\right]^+_- &\text{ on }& \Gamma, \label{E:ALEneumann}\\
v^+\cdot {\bf N}^+ &= 0\ &\text{ on } & \g\Omega,\label{E:topneumann}\\ 
q^\pm(0,x) &= q_0^\pm(x) &\text{ on }& \{t=0\}\times \Omega^\pm,\label{E:ALEinitial}\\
h &= h_0\ &\text{ on }& \{t=0\}\times \Gamma.
\end{alignat}
\end{subequations} 
The motion of the interace $\Gamma(t)$ is given by
equation \eqref{E:ALEneumann}, which is an equivalent form of  \eqref{eq:neumann}, since the speed  $V_\Gamma(t) $ of $\Gamma(t)$
is equal to $ \Psi_t\cdot n = h_t N\cdot n$, where $n$ is the outward normal vector to $\Omega^-(t)$ to be defined below, and $\tilde{n}:=\frac{n}{n\cdot N}$. 
Observe that the matrices $A^\pm$ depend on $\Psi^\pm$, and the  $\Psi^\pm$ are extensions of $e + hN$  obtained from (\ref{E:definition_Psi}).

Notice that using the description of the reference interface $\Gamma$ as a curve $z(\theta)$, the moving interface $\Gamma(t)$ is 
described as $y(\theta) = z(\theta) + h(t,z(\theta))N(z(\theta))$, and so the normal vector $n$ is given by
\begin{equation}\label{E:moving_normal}
n(t,y(\theta)) = \frac{-\t h \tau + (1+H(\theta)h(t,z(\theta)))N}{\sqrt{(\t h)^2 + (1+H(\theta)h)^2}}
\end{equation}
where $H(\theta) := \frac{z_2'z_1''-z_1'z_2''}{\|z'\|^3}$ is the signed curvature of $\Gamma$ at the point $z(\theta)$ and $\tau$ is defined in Section~\ref{ss:Notation}.

\subsection{Higher-order Norm used for our Analysis}\label{S:norms}
\subsubsection{Local well-posedness theory}
We will develop the local-in-time well-posedness  theory with respect to the following norm:
\begin{equation}
\mathcal{S} (t) := \underbrace{\newe^+(t) + \newe^-(t) +\newe^\ppp_{\loc}(t) }_{L^ \infty\text{-in-time-control}} 
\ + \  \underbrace{\int_0^t \left(\newd^+(s) + \newd^-(s) + \newd^\ppp_{\loc}(s)\right)\,ds}_{L^ 2\text{-in-time-control}}\,, \label{S-local}
\end{equation}
where 
\begin{align*}
\newe^\pm(t) &:= \sum\limits_{l=0}^3 \|\g_t^l q^\pm\|_{L^\infty_t H^{6-2l}(\Omega^\pm)}^2 + \|\t^{5-2l}\g_t^lv^\pm\|^2_{L^\infty_t L^2(\Omega^\pm)}\,, \\
\newd^\pm(t) &:= \sum\limits_{l=0}^3 \|\g_t^l q^\pm(t)\|_{H^{6.5-2l}(\Omega^\pm)}^2 + \|\t^{6-2l}\g_t^l v^\pm(t)\|_{L^2(\Omega^\pm)}^2 \,, \\
\newe^\ppp_{\loc}(t) &:= \sum\limits_{l=0}^3 \sup\limits_{0\leq s\leq t} |\g_t ^ l h(s)|^2_{H^{6-2l}(\Gamma)}\,, \\
\newd^\ppp_{\loc}(t) &:=  \sum\limits_{l=0}^2 |\g_t^{l+1} h|_{H^{5-2l}(\Gamma)}^2 \,.
\end{align*}

\subsubsection{Global well-posedness and nonlinear stability theory}
We will develop the global-in-time well-posedness and our nonlinear stability theory with respect to the following norm:
\begin{equation}
S(t) := \underbrace{\newe^+(t) + \newe^-(t) + \newe^\ppp(t)}_{L^ \infty\text{-in-time-control}} 
\ + \  \underbrace{\int_0^t \left(\newd^+(s) + \newd^-(s) + \newd^\ppp(s)\right)\,ds}_{L^ 2\text{-in-time-control}} \  
+ \ \underbrace{E_\beta^+(t) +E_\beta^-(t)}_{\text{exponential-decay-in-time}}\,, \label{S-global}
\end{equation}
where 
\begin{align*}
\newe^\pm(t) &:= \sum\limits_{l=0}^3 \|\g_t^l q^\pm\|_{L^\infty_t H^{6-2l}(\Omega^\pm)}^2 + \|\t^{5-2l}\g_t^lv^\pm\|^2_{L^\infty_t L^2(\Omega^\pm)}\,, \\
\newd^\pm(t) &:= \sum\limits_{l=0}^3 \|\g_t^l q^\pm(t)\|_{H^{6.5-2l}(\Omega^\pm)}^2 + \|\t^{6-2l}\g_t^l v^\pm(t)\|_{L^2(\Omega^\pm)}^2 \,, \\
\newe^\ppp(t) &:= \sum\limits_{l=0}^3 \sup\limits_{0\leq s\leq t} e^{(-\lambda_1 + \eta)s}|\g_t ^ l h(s)|^2_{H^{6-2l}(\Gamma)}\,, \\
\newd^\ppp(t) &:=  e^{(-\lambda_1 + \eta)t}\sum\limits_{l=0}^2 |\g_t^{l+1} h|_{H^{5-2l}(\Gamma)}^2 \,,\\
E_{\beta}^\pm(t) &:= e^{\beta^\pm t} \sum\limits_{l=0}^2\|\g_t^lq^\pm(t)\|^2_{H^{4-2l}(\Omega^\pm)}\,,
\end{align*}
where 
$\lambda_1 = \min\{\lambda_1^+,\lambda_1^-\}$ is the smaller of the two first eigenvalues $\lambda_1^\pm$ of the \emph{Dirichlet-Laplacian} on the reference domains $\Omega^\pm$, 
 $\eta>0$ is a small constant relative to $\lambda_1$ to be fixed later, and
 \be\label{E:BETAPMDEFINITION}
\beta^\pm = 2\lambda_1^\pm -\eta \,.
\ee 
%

\begin{remark}
The definition of our higher-order energy function requires the definition of the terms $\g_t^l q$ and $\g_t^l h$ at time $t=0$. These are computed using the time-differentiated version of the equation \eqref{E:ALEheat} at time $t=0$. For example,
\begin{align}
q_1^\pm := q_{t}^\pm(0) &= \Delta_{\Psi^\pm(0)}q_0^\pm + A^\pm(0)^\top \nabla q^\pm_0\cdot \Psi^\pm_t(0).\label{E:qt_atzero} 
\end{align}
The other time derivatives follow the same procedure. The terms $\g_t^l h\big|_{t=0}$  follow similarly
 by means of taking time derivatives of the evolution equation~\eqref{E:ALEneumann} and restricting it at time $t=0$. We define the functions,
\begin{subequations}\label{E:h_and_derivativesatzero}
\begin{align}
g_1 &:= h_t(0) = [v(0)\cdot\tilde{n}(0)]^+_-\label{E:definition_h_t0}\\
g_2 &:= h_{tt}(0) = [v_t(0)\cdot\tilde{n}(0)]^+_- + [v(0)\cdot \tilde{n}_t(0)]^+_-\\
g_3 &:= h_{ttt}(0) = [v_{tt}(0)\cdot \tilde{n}(0)]^+_- + 2[v_t(0)\cdot \tilde{n}_t(0)]^+_- + [v(0)\cdot \tilde{n}_{tt}(0)]^+_-,
\end{align}
\end{subequations}
where $v(0),v_t(0),v_{tt}(0)$ are computed from equation \eqref{E:ALEv} by taking time derivatives and restricting to $t=0$. Notice that they depend only on $h_0$ and $q_0^\pm$. The derivatives of $\tilde{n}(0)$ are given by,  
\begin{align*}
\tilde{n}(0) &:= (N-\tau \frac{\t h_0}{(1+H h_0)}),\\
\tilde{n}_t(0) &:= \tau\left(\frac{\t g_1}{(1+Hh_0)} - \frac{\t h_0 H g_1}{(1+Hh_0)^2}\right),\\
\tilde{n}_{tt}(0) &:= \tau\left(\frac{\t g_2}{(1+Hh_0)} + \frac{\t g_1 H g_1}{(1+Hh_0)^2} - \frac{(\t g_1 H g_1  + \t h_0 H g_2)}{(1+Hh_0)^2} + \frac{2\t h_0 H^2 (g_1)^2}{(1+Hh_0)^3}\right).
\end{align*}
\end{remark}

\subsection{Compatibility conditions}\label{SS:compatibility}

Since we study the twice time-differentiated problem, in order to ensure the continuity of the solution in time, we have to impose certain compatibility conditions. Since $q^\pm_t\vert_\Gamma = q^\pm_{tt}\vert_\Gamma \equiv 0$, by restricting time derivatives of equation \eqref{E:qt_atzero} to $\Gamma$ at time $t=0$, we obtain that $q_0^\pm$ must satisfy, 

\begin{subequations}\label{E:compatibility}
\begin{align}
\Delta_{\Psi_0^\pm}q_0^\pm &= \nabla_{\Psi_0} q_0^\pm\cdot N g_1 \text{ on } \Gamma,\label{compatibility1}\\
-\Delta^2_{\Psi_0^\pm}q_0^\pm &= -\ao^i_l\Psi_t(0)^l,_s\ao^s_j(\ao^k_j q_0,_k),_i - \ao^i_j(\ao^k_l\Psi_t(0)^l,_s\ao^s_j q_0,_k),_i\nonumber \\
&\qquad -\ao^i_k\Psi_t(0)^k,_l\ao^l_j q_0,_i N^j g_1 + \nabla_{\Psi_0}(\Delta_{\Psi_0}q_0+\nabla_{\Psi_0}q_0\cdot\Psi_t(0))\cdot N g_1\nonumber\\
&\qquad + \nabla_{\Psi_0}q_0\cdot N g_2 + \Delta_{\Psi_0}(\nabla_{\Psi_0}q_0\cdot\Psi_t(0)), \label{compatibility2}
\end{align}
and over $\g\Omega$, 
\begin{align}\label{E:compatibility_gOmega}
\nabla_{\Psi_0}q_0^+\cdot {\bf N}^+ &= 0,\\
\nabla_{\Psi_0} (\Delta_{\Psi_0}q_0^+)\cdot {\bf N}^+ &= -\nabla_{\Psi_0}(\nabla_{\Psi_0}q_0^+\cdot\Psi_t(0))\cdot {\bf N}^+ + \ao^i_l\Psi_t^l(0),_s \ao^s_j q_0^+,_i {{\bf N}^+}^j. \label{E:COMP_GOMEGA}
\end{align}
\end{subequations}

\begin{remark}
Notice that, when the initial data $h_0\equiv 0$, equation \eqref{compatibility1} takes the more familiar form 
\begin{equation}
\Delta q_0^\pm = \g_N q^\pm_0 [\g_N q_0]^+_- \text{ on } \Gamma, \label{E:compatibility1_identity}
\end{equation}
and the functions $q_0^\pm = \pm\ dist(x,\Gamma)$, satisfy \eqref{E:compatibility1_identity}.  
\end{remark}

\begin{remark} Unlike the analysis of  \cite{mHsS2013}, herein,  the matrix $A^\pm(0)\neq \operatorname{Id}$, but it is nevertheless a very small perturbation of the identity matrix. We have the following estimate:
\begin{equation*}
\|A^\pm(0)- \operatorname{Id} \|_{s} \lesssim |h_0|_{s-0.5} \lesssim \sigma,
\end{equation*}
which follows from the boundary condition \eqref{E:definition_Psi_boundary} restricted at time $t=0$. Recall that $h_0$ is defined on the
 smooth reference curve $\Gamma$, and the graph of $h_0$ defines the initial interface $\Gamma_0$ in the normal bundle over $\Gamma$. 
\end{remark}

\subsection{Non-degeneracy or  Rayleigh-Taylor stability condition}
To ensure the local-in-time well-posedness of the Stefan problem, we impose the well-known nondegeneracy condition that has been used by
Meirmanov \cite{amM1992extra},    Pr{\"u}ss, Saal, \& Simonett \cite{jPjSgS2007}, Had\v{z}i\'{c} \& Shkoller \cite{mHsS2013,mHsS2015} and other
authors; specifically, we make the following requirement on the initial temperature function: 
\begin{equation}\label{E:localTaylorsign}
\g_Nq_0^\pm \geq \delta >0 \text{, uniformly on } \Gamma,
\end{equation}
for some constant $\delta>0.$ Condition~\eqref{E:localTaylorsign} is the classical Rayleigh-Taylor sign condition that naturally appears in the context of many free boundary  and moving interface problems in fluid dynamics as a stability condition for well-posedness~\cite{CoShfreesurfEuler}, wherein the function $q_0$ is the initial pressure function rather than temperature function.
   For the one-phase water waves equations with an interface that does not self-intersect, 
(\ref{E:localTaylorsign}) was shown to always hold in \cite{Wu1999} using the Hopf lemma.  For the incompressible Euler equations with
vorticity, it is essential to chose an initial velocity profile that provides a pressure function satisfying (\ref{E:localTaylorsign}), for if 
(\ref{E:localTaylorsign}) holds, then the free boundary incompressible Euler equations are well-posed (see, for example, 
\cite{CoShfreesurfEuler} and the references therein), while if (\ref{E:localTaylorsign}) does not hold, then the problem is ill-posed, as shown in
 \cite{Eb1987}.   In the setting of compressible flows with the so-called physical vacuum boundary, condition (\ref{E:localTaylorsign}) is
equivalent to the sound speed of the gas vanishing as the square-root of the distance function to the vacuum boundary, and is also required for
well-posedness as shown in \cite{CoSh2011, CoSh2012a}.   This condition also appears in both the Hele-Shaw and Muskat
problems (see, for example, \cite{ChCoSh2012a} and 
\cite{ChGrSh2015}).   In all of these problems, the natural control of the second-fundamental form
of the moving interface $\Gamma(t)$ can be obtained in a somewhat similar fashion (at least for short-time), and we will discuss this further in Section~\ref{S:METHODOLOGY}.

To ensure the global-in-time nonlinear stability of the steady state solutions of the Stefan problem~\eqref{E:stefan} described in Section~\ref{S:SS}, we shall demand natural sign assumptions on the initial temperatures in the liquid
and the solid phases,  respectively:
\be\label{E:SIGNASSUMPTION}
q_0^+>0 \  \text{ in } \ \Omega^+ \ \ \text{ and } \ q_0^-<0 \  \text{ in } \ \Omega^-.
\ee
In addition to this, given some universal constant $C^*>0,$ we consider initial temperature distributions $q_0^\pm$ satisfying
\begin{equation}\label{E:globalTaylorsign}
k(q_0^\pm):=\frac{\inf_{x\in\Gamma}\g_N q^\pm_0(x)}{\int_{\Omega^\pm} q_0^\pm \varphi_1^\pm dx} \geq C^*,
\end{equation}
where $\varphi_1^\pm\ge 0$ denotes the first eigenfunction of the {\em Dirichlet-Laplacian} on $\Omega^\pm.$ 
The quantity $k(q_0)$ is dimensionless and it is invariant under scaling, i.e. $k(\varepsilon q_0)=k(q_0),$ $\varepsilon\neq 0.$
We also denote
\begin{equation}
c_1^\pm : = \int_{\Omega^\pm} q_0^\pm \varphi_1^\pm dx,\label{E:c_1}
\end{equation}
i.e.  $c_1^\pm$
is the projection of $q_0^\pm$ onto the first eigenfunction of the Dirichlet-Laplacian.
Observe that~\eqref{E:globalTaylorsign} implies~\eqref{E:localTaylorsign}.
Notice that under the sign assumptions~\eqref{E:SIGNASSUMPTION} the parabolic Hopf Lemma implies that $\g_Nq^\pm(t) > 0$ for some period of time $0<t<T$;
however, as $t\rightarrow 0^+$, there is no uniformity on this lower bound.
Condition~\eqref{E:globalTaylorsign} is designed to ensure a uniform lower bound on $\g_N q^\pm(t)$ as $t\rightarrow 0$ in a certain quantified manner, involving the quantity $c_1^\pm.$
This will be crucial in obtaining a sharp lower bound for $\g_N q^\pm(t),$ $t>0,$ 
which is used in the proof of the global-in-time stability. 

\section{Main results}
Our first result is a local well-posedness theorem in Sobolev spaces.
\begin{theorem}[Local well-posedness]\label{T:main} With $\Omega$ and $\Omega^\pm$ as given in Definition \ref{def-domains},  and with
$(q_0^\pm,h_0)$ satisfying the initial data compatibility conditions \eqref{compatibility1}--\eqref{E:COMP_GOMEGA}, the Rayleigh-Taylor sign condition~\eqref{E:localTaylorsign} and 
\[
\mathcal{S} (0) < \infty\,,
\]
where $ \mathcal{S} (t)$ is defined in (\ref{S-local})\,,
there exists a time $T>0$ and a universal constant $C>0$, such that there exists a unique solution to the two-phase Stefan problem~\eqref{E:ALE}, the map $t\to S(t)$ is continuous on $[0,T]$, and the solution verifies the following estimate:
$$
\mathcal{S} (t) \leq C\mathcal{S}(0) \ \ \text{ for all } \ \ t\in[0,T] \,.
$$
\end{theorem}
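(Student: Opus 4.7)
The plan is to prove Theorem~\ref{T:main} in two stages: first to derive nonlinear a priori estimates of the form $\mathcal{S}(t)\le C\mathcal{S}(0) + T\,P(\mathcal{S}(t))$ on a short interval $[0,T]$, and then to construct an actual solution by a parabolic regularization scheme whose approximate solutions are uniformly controlled by the same estimates. The proof rests on three pillars: (i) a hierarchy of energy identities obtained by applying $\g_t^l$ for $l=0,1,2,3$ (and tangential derivatives $\t^k$ in place of $\g_t^l$ at the $l=0$ slot) to the ALE system~\eqref{E:ALE}; (ii) an elliptic-regularity bootstrap that lifts $L^2$-control of $\g_t^l q^\pm$ to the $H^{6.5-2l}(\Omega^\pm)$-norms appearing in $\mathcal{D}^\pm$; and (iii) a Hopf-type inversion of the kinematic condition~\eqref{E:ALEneumann} driven by the Rayleigh--Taylor sign~\eqref{E:localTaylorsign}, which converts control of $q^\pm$ into control of $h$.

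\textbf{Energy estimates.} For $l\ge 1$, the Dirichlet condition~\eqref{E:ALEdirichlet} implies $\g_t^l q^\pm|_\Gamma=0$; testing $\g_t^l$ of~\eqref{E:ALEheat} against $\g_t^l q^\pm$ and integrating by parts produces the coercive dissipation $\|\nabla_{\Psi^\pm}\g_t^l q^\pm\|_{L^2}^2$ on the left, with no boundary contribution from $\Gamma$ at the leading order. The remainder consists of the forcing $\g_t^l(v^\pm\cdot w^\pm)$, commutators $[\g_t^l,\Delta_{\Psi^\pm}]q^\pm$, and boundary integrals on $\g\Omega$ controlled via~\eqref{E:topneumann}, all of which I would bound by $P(\mathcal{S}(t))$ after suitable use of H\"older's inequality, Sobolev embedding, and trace estimates. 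The $l=0$ slot I would handle by localizing with the cutoff $\mu$ from~\eqref{eq:mudefinition} and applying $\t^k$ for $k\le 5$; since $\t$ is tangential, $\t^k q^\pm|_\Gamma=0$ and the same coercive structure persists. The essential coupling to the moving interface enters at the \emph{commutator} level: when integration by parts is applied to commutator terms whose coefficients depend on $\g_t^l h$ through $\Psi^\pm$, a boundary integral of the schematic form
\begin{equation*}
\int_\Gamma \g_N q^\pm\,(\g_t^l h)^2\,dS
\end{equation*}
is produced, and~\eqref{E:localTaylorsign} renders this coercive in $|\g_t^l h|_{L^2(\Gamma)}$, yielding the interface-level control in $\mathcal{E}^\Gamma_{\loc}$.

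\textbf{Interface regularity and elliptic upgrade.} The Rayleigh--Taylor sign~\eqref{E:localTaylorsign} will persist on a short time interval by continuity, since $\g_N q^\pm$ is controlled in a sufficiently strong trace norm by $\mathcal{S}(t)$. Combined with tangential and time differentiation of the kinematic identity $h_t = -[(A^\pm)^T \nabla q^\pm\cdot \tilde n]^+_-$ and the explicit form~\eqref{E:moving_normal} of $n$, this closes the bound on $|\g_t^l h|_{H^{6-2l}(\Gamma)}$ in terms of quantities already controlled by $\mathcal{E}^\pm$. I would then recover the bulk regularity of $q^\pm$ from the elliptic equation
\begin{equation*}
\Delta_{\Psi^\pm}(\g_t^l q^\pm) = \g_t^{l+1} q^\pm + \g_t^l(v^\pm\cdot w^\pm) + [\g_t^l,\Delta_{\Psi^\pm}]q^\pm,
\end{equation*}
with zero Dirichlet data on $\Gamma$ and Neumann data on $\g\Omega$, via descending induction on $l$ from $l=3$ to $l=0$, producing the $\int_0^t\mathcal{D}^\pm\,ds$ bounds.

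\textbf{Construction, uniqueness, and the main obstacle.} To promote these a priori estimates to existence, I would regularize~\eqref{E:ALEneumann} by adding a small parabolic term (for example, replacing it by $h_t^\kappa - \kappa\t^2 h^\kappa = [v^\kappa\cdot \tilde n^\kappa]^+_-$, along the lines of~\cite{mHsS2013,mHsS2015}), solve the smoothed nonlinear system by a fixed-point argument on a small time interval, and pass $\kappa\to 0$ using the uniform-in-$\kappa$ estimates. Uniqueness follows by applying the same energy identity to the difference of two solutions and again invoking~\eqref{E:localTaylorsign} for coercivity; continuity of $t\mapsto\mathcal{S}(t)$ then follows from the standard combination of weak continuity with norm equality. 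The hardest step will be the interface analysis: one must quantitatively preserve the lower bound~\eqref{E:localTaylorsign} on $[0,T]$ with $T$ depending only on $\mathcal{S}(0)$, and extract the coercive $(\g_t^l h)^2$-boundary integrals from the nonlinear geometry encoded in~\eqref{E:moving_normal}, which is precisely why Hopf-type inequalities, rather than only the pointwise sign condition, are required.
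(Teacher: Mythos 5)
Your overall architecture (a priori estimates, regularization, uniform bounds, passage to the limit, contraction for uniqueness) matches the paper's, but the central mechanism by which the coercive boundary energy for $h$ is produced — the one point where the two-phase problem genuinely differs from the one-phase problem — is where your argument breaks down. You assert that integration by parts in the commutator terms yields a coercive integral of the schematic form $\int_\Gamma \g_N q^\pm\,(\g_t^l h)^2\,dS$. In the one-phase setting this is correct, because $h_t=v\cdot\tilde n$ and the single boundary integral carries the factor $-\g_Nq$. In the two-phase setting $h_t=[v\cdot\tilde n]^+_-$ is a \emph{jump}: the integration by parts in $\Omega^+$ produces a boundary integral whose natural weight is $\g_Nq^+$ (it arises from decomposing $\nabla q^+=(\nabla q^+\cdot\tau)\tau+(\g_Nq^+)N$ on $\Gamma$), while the one in $\Omega^-$ carries $\g_Nq^-$. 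Since $\g_Nq^+\neq\g_Nq^-$ on $\Gamma$, the two contributions cannot be combined into a single factor times $(\t^a\g_t^b\Psi\cdot n)\,[\t^a\g_t^b v\cdot n]^+_-$, so one cannot invoke the kinematic condition to extract $\tfrac12\tfrac{d}{dt}\int_\Gamma\mathcal F\,|\t^a\g_t^b h|^2$; your ambiguous ``$\g_Nq^\pm$'' hides exactly this obstruction. Note also that the top-order control of $h$ cannot instead be recovered from the kinematic identity plus trace estimates, since $|h_t|_{H^6(\Gamma)}$ would require $q^\pm\in H^{7.5}(\Omega^\pm)$, half a derivative more than the dissipation supplies. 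The paper's resolution is to test the $\t^a\g_t^b$-differentiated equation for $v^\pm$ against $\t^a\g_t^b v^\pm\,W^\pm\mu$, where $W^\pm$ are harmonic in $\Omega^\pm$ with boundary value $e^{(-\lambda_1+\eta)t}/\g_Nq^\pm$ on $\Gamma$ (see \eqref{E:W}); the factor $\g_Nq^\pm$ then cancels on $\Gamma$, both phases acquire the common weight $e^{(-\lambda_1+\eta)t}$, and the positive-definite boundary energy $e^{(-\lambda_1+\eta)t}\int_\Gamma a_\k^2|\t^a\g_t^b\Lambda_\k h|^2$ emerges (Lemma \ref{ab_energy_lemma}). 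The Rayleigh--Taylor condition then enters through the positivity and boundedness of $W^\pm$ (Lemma \ref{l:wbounds_local}) rather than directly as a coercivity weight. Without some device of this kind your energy identity does not close at top order.

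Two secondary discrepancies. Your regularization (artificial viscosity $-\k\t^2h^\k$ in the kinematic equation) differs from the paper's, which smooths $h$ by a double tangential convolution $h^\k=\Lambda_\k\Lambda_\k h$ precisely so that one copy of the self-adjoint mollifier can be transferred onto $h_t$ to form the exact quadratic boundary energy, and which additionally installs the $\k^2$ Robin condition \eqref{E:ALEregularized_dirichlet} to make sense of the trace of $v^\pm_{ttt}\cdot n$ in the third time-differentiated estimates. Moreover, any regularization perturbs the compatibility conditions \eqref{E:compatibility}; the paper constructs corrected initial data $\Qko^\pm$ and correction terms $\alpha^\pm,\beta^\pm,\gamma$ so that the $\k$-problem inherits compatible data, an issue your proposal does not address. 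These points are likely repairable, but the absent weight functions constitute a genuine gap.
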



Having established short-time existence for arbitrarily  large data, we next consider the nonlinear stability of equilibria.
To do so, we introduce the following dimensionless quantity:
\begin{equation}\label{ssK}
K(q_0^\pm): = \frac{\|q_0^\pm\|_4}{\|q_0^\pm\|_0},
\end{equation} 
which is invariant under the rescaling $q_0^\pm \mapsto \varepsilon q_0^\pm.$ Note that $K>1$, since, in the standard definition of the norm in $H^4(\Omega^\pm)$, the $L^2(\Omega^\pm)$ norm is contained in the sum.
\begin{theorem}[Global existence, nonlinear stability, and decay]
\label{T:main_global}
For $K>1$, suppose that the initial data $(q^\pm_0,h_0)$ satisfies the conditions \eqref{compatibility1}--\eqref{E:COMP_GOMEGA}, 
\eqref{E:SIGNASSUMPTION},  and \eqref{E:globalTaylorsign}, 
as well as the condition
\[
\max\{K(q_0^+),\,K(q_0^-)\} \le K \,,
\]
where $K (q_0^\pm)$ is defined in (\ref{ssK}).
Then,
there exists an $\epsilon_0 >0$ and a monotonically increasing function $F:(1,\infty)\rightarrow\R_+$ which is independent of $\epsilon_0$ and $K$,
such that if
\begin{equation}\label{initialdatawithF}
S(0) < \frac{\epsilon_0^2}{F(K)} \,,
\end{equation} 
then there exist a unique global-in-time solution $(q^\pm,h)$ to problem \eqref{E:ALE} satisfying 
\begin{equation}
S(t) < C\epsilon_0^2,\ \ t\in [0,\infty),
\end{equation}
for some universal constant $C>0$. Moreover, the temperature $q^\pm(t)\rightarrow 0$ as $t\rightarrow \infty$ with 
a decay rate
\begin{equation}
\|q^\pm(t)\|^2_{H^4(\Omega^\pm)}\leq Ce^{-\beta^\pm t},
\end{equation}
where $\beta^\pm = 2\lambda^\pm_1-\eta$ is defined in~\eqref{E:BETAPMDEFINITION}.
The moving boundary $\Gamma(t)$ converges asymptotically to some nearby time-independent hypersurface $\bar{\Gamma}$,
and 
\begin{equation}
\sup\limits_{0\leq t<\infty}|h(t,\cdot)-h_0(\cdot)|_{4.5} \leq C\sqrt{\epsilon_0}.
\end{equation}
\end{theorem}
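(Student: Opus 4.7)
The plan is a continuation/bootstrap argument built on top of Theorem~\ref{T:main}. Local well-posedness gives a unique solution on some interval $[0,T_0]$; let $T^\ast$ be the maximal time of existence in the class $S(t)<\infty$. The strategy is to make the bootstrap ansatz $S(t)\le 2C_0\epsilon_0^2$ on $[0,T^\ast)$ for a fixed constant $C_0\ge 1$ and, under the smallness \eqref{initialdatawithF}, to improve it to $S(t)\le C_0\epsilon_0^2$. Continuity of $t\mapsto S(t)$ then forces $T^\ast=+\infty$ and yields the global bound $S(t)\lesssim\epsilon_0^2$.

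The first step is a high-order energy estimate for $\newe^{\pm}+\newe^{\ppp}+\int_0^t(\newd^{\pm}+\newd^{\ppp})$, essentially the same hierarchy that drives Theorem~\ref{T:main}: differentiate \eqref{E:ALEheat} in time up to three times, test against $\g_t^lq^\pm$, and pair the tangentially-differentiated version of \eqref{E:ALEv} against $\t^{6-2l}\g_t^lv^\pm$ localized by the cutoff $\mu$ of \eqref{eq:mudefinition}. The Rayleigh--Taylor condition \eqref{E:localTaylorsign}, kept alive for all time by the Hopf step below, supplies the coercive boundary term $\int_\Gamma \g_Nq\,|\t^{6-2l}\g_t^{l+1}h|^2$ controlling the interface dissipation $\newd^{\ppp}$. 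All nonlinear errors are of the form $P(S(t))\cdot\newd$ and are absorbed by the bootstrap smallness, yielding $\newe^\pm+\newe^{\ppp}+\int_0^t(\newd^\pm+\newd^{\ppp})\lesssim S(0)+P(S(t))\,S(t)$. Crucially, the weight $e^{(-\lambda_1+\eta)s}<1$ in $\newe^{\ppp}$ and $\newd^{\ppp}$ is harmless in the estimate, since $h_t=[v\cdot\tilde n]^+_-$ via \eqref{E:ALEneumann} inherits the exponential decay of $v^\pm$ produced in the next step.

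The heart of the proof is the exponential-decay bound for $E_\beta^\pm$ together with the attendant Hopf lower bound on $\g_Nq^\pm$. For the decay, I would test \eqref{E:ALEheat} and its $\g_t,\g_t^2$-derivatives against $e^{\beta^\pm t}\g_t^lq^\pm$, integrate by parts into the $A^\pm$-weighted Dirichlet form on $\Omega^\pm$, and use that $\Psi^\pm-e$ is $O(|h|)$-small so this form differs from the flat Dirichlet form by $O(|h|)$. With $\beta^\pm=2\lambda_1^\pm-\eta$ this produces the spectral-gap inequality $\tfrac{d}{dt}\!\left(e^{\beta^\pm t}\|\g_t^lq^\pm\|_0^2\right)\le -\tfrac{\eta}{2}e^{\beta^\pm t}\|\g_t^lq^\pm\|_0^2 +\text{nonlinear error}$, whose integration gives $L^2$-decay. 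Elliptic regularity applied to $\Delta_{\Psi^\pm}q^\pm=q_t^\pm+v^\pm\cdot w^\pm$ with zero Dirichlet data upgrades this to the $H^{4-2l}$-statement in $E_\beta^\pm$. To preserve the Rayleigh--Taylor coercivity for all $t$, I invoke \eqref{E:globalTaylorsign} together with \eqref{E:SIGNASSUMPTION}: under the sign assumption, $q^\pm(t)$ stays strictly sign-definite, and the first spectral coefficient $c_1^\pm(t)=\int_{\Omega^\pm}q^\pm\varphi_1^\pm\,dx$ decays like $e^{-\lambda_1^\pm t}c_1^\pm(0)$ up to $O(|h|)$-errors, with higher modes decaying strictly faster. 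A parabolic Hopf comparison applied to $q^\pm(t)/c_1^\pm(t)$ against $\varphi_1^\pm$ then gives $\g_Nq^\pm(t)\ge c\,k(q_0^\pm)\,|c_1^\pm(t)|\ge cC^\ast|c_1^\pm(t)|$, and rescaling the boundary term of the Step~1 energy inequality by this lower bound keeps the surface coercivity uniformly positive in $t$.

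For the asymptotic interface, $h(t)-h_0=\int_0^t h_t(s)\,ds$ with $|h_t|_{H^s(\Gamma)}\lesssim \|v\|_{H^{s+1/2}(\Omega)}$; interpolating between the high-norm control of $v^\pm$ from Step~1 and its exponentially-decaying $H^{4-2l}$-bound from Step~2 makes $|h_t|_{4.5}$ integrable in time, so $h(t)$ converges in $H^{4.5}(\Gamma)$ to a limit $\bar h$ defining the asymptotic interface $\bar\Gamma$, with $\sup_t|h(t)-h_0|_{4.5}\le C\sqrt{\epsilon_0}$. The main obstacle throughout is the three-way coupling between the exponential decay of $q^\pm$, the non-decay of $h$ itself (which deforms $\Omega^\pm(t)$ and hence perturbs the effective $\lambda_1^\pm$ used in Step~3), and the time-uniform Rayleigh--Taylor lower bound on $\g_Nq^\pm$. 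The function $F(K)$ in \eqref{initialdatawithF} is chosen precisely so that the perturbation of the spectral gap is small compared to $\eta$ and the loss in the lower bound on $\g_Nq^\pm$ is small compared to $cC^\ast$; the dependence on $K=\|q_0\|_4/\|q_0\|_0$ is essential since high-frequency components of $q_0$ could otherwise compete with the slow $e^{-\lambda_1^\pm t}$-decay of $c_1^\pm(t)$ and destroy the Hopf-type estimate.
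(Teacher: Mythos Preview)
Your bootstrap architecture is right, but there is a genuine structural gap in Step~1 that propagates through the rest of the argument. You write that the Rayleigh--Taylor condition ``supplies the coercive boundary term $\int_\Gamma \g_Nq\,|\t^{6-2l}\g_t^{l+1}h|^2$.'' In the one-phase problem this is exactly what happens, but in the two-phase problem it does not: integration by parts in $\Omega^+$ and $\Omega^-$ separately produces boundary terms with coefficients $\g_Nq^+$ and $\g_Nq^-$ respectively, and since $\g_Nq^+\neq\g_Nq^-$ on $\Gamma$ (cf.\ \eqref{ssW}), these two terms do \emph{not} combine into a single term involving the jump $[v\cdot\tilde n]^+_-=h_t$. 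Without a common factor you cannot form the positive-definite quantity $\int_\Gamma(\cdots)|\t^a\g_t^b h|^2$, and the boundary energy $\newe^\ppp$ is never controlled. This is precisely why the paper introduces the weight functions $W^\pm$ of \eqref{E:W}: by testing against $\t^a\g_t^bv^\pm W^\pm\mu$ with $W^\pm|_\Gamma=e^{(-\lambda_1+\eta)t}/\g_Nq^\pm$, the two boundary integrals acquire the \emph{same} coefficient $e^{(-\lambda_1+\eta)t}$ and the jump forms. The exponential weight in $\newe^\ppp$ that you call ``harmless'' is not a by-product to be tolerated; it is the device that makes the two-phase energy identity close at all.

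Once the weights are in place, two further issues you glossed over become the actual work. First, the ``critical'' boundary error (the analogue of $\int_\Gamma\g_Nq_t|\t^6h|^2$ from the one-phase problem) is now $\int_\Gamma\g_t(e^{(-\lambda_1+\eta)t})|\t^6h|^2<0$, which is a major simplification, but the price is new interior error terms containing $W_t^\pm$ and $\nabla W^\pm$; these require sharp upper bounds (Lemma~\ref{L:WEIGHTDERIVATIVES}) that in turn depend on sharp \emph{lower} bounds on $\g_Nq^\pm$. Second, your Hopf-type lower bound ``comparison of $q^\pm(t)/c_1^\pm(t)$ against $\varphi_1^\pm$'' is too naive: the operator $\Delta_{\Psi^\pm}$ has time-dependent variable coefficients, and $\varphi_1^\pm$ is not a subsolution for it. The paper instead constructs comparison functions from the demi-eigenfunctions of the maximal Pucci operator $\mathcal M_{\nu_1,\nu_2}$ (Lemma~\ref{lowerboundforX}), which subsumes all coefficient matrices in a uniform ellipticity class and yields $\mathcal X^\pm(t)\ge C|c_1^\pm|e^{-(\lambda_1^\pm+O(\sqrt\epsilon))t}$. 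Your spectral-gap computation for $E_\beta^\pm$ and the $H^{4.5}$ convergence of $h$ are essentially correct in spirit and match Lemmas~\ref{L:EBETAIMPROVED} and~\ref{L:smallness_h}.
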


\begin{remark}
Other existing global stability results for the Stefan problem contain an effective heat source most commonly introduced through the presence of nontrivial Dirichlet boundary conditions~\cite{amM1992extra, fQjlV2000}. Such stability questions are simpler than our problem, as the presence of a heat source makes the family of possible steady states finite-dimensional. This allows to a priori guess a possible asymptotic attractor for the nonlinear dynamics. In our case, due to an abundance of possible steady states, small perturbations converge to some nearby element of the set of steady states. 
Characterization of such a nearby asymptotic state in terms of initial data is a difficult problem. 
\end{remark}

\subsection{A brief history of prior results}
The Stefan problem was introduced by Jo\v zef Stefan in 1889 as a model for the melting of  ice caps \cite{jS1889c, jS1891},
and is now considered  a prototype  free boundary problem in the area of nonlinear partial differential equations;
a historical account of the analysis of related free boundary is given  in \cite{aFfR1988, dKgS1980} for results prior to the 1980s. An account of
more recent results 
is provided in~\cite{amM1992extra,liR1971,aFfR1988,aV1996}; see also the introduction to~\cite{mHsS2013}.

 Weak solutions to the classical Stefan problem were shown to exist in~\cite{slK1961,aF1968, qaLvaSnnU1968}, for both the one-phase and the two-phase problem.
In the one-phase case, the problem lends itself to a variational approach that was successfully used in~\cite{aFdK1975} to study the existence and regularity of solutions.
Important regularity results were established in~\cite{lC1977,dKlN1977,dKlN1978,lC1978,lCaF1979}. The continuity of the the temperature function for the weak solutions of the two-phase classical Stefan problem
in any dimension was proved in~\cite{lCcE1983}. Another notion of a generalized solution for the classical Stefan problem, called the {\em viscosity} solution, was introduced and studied in the 
seminal works~\cite{iAlCsS1996.0, iAlCsS1996, iAlCsS1998, iAlCsS2003}, while the existence proof and further regularity results can be found in~\cite{iK2003,iKnP2011,sCiK2010, sCiK2012}. 
An overview of various regularity results for  viscosity solutions prior to 2005 can be found in the monograph~\cite{lCsS2005}.

Short-time existence of classical solutions of the one-phase problem was established in~\cite{eiH1981} under sufficient regularity assumptions and higher-order compatibility conditions. 
In~\cite{FrSo} the authors prove local existence for the one-phase classical Stefan problem in higher dimension.
In the two-phase case, local existence and uniqueness of classical solutions was proven in~\cite{amM1992extra}. Neither of these  papers, however, established the full well-posedness in the sense of Hadamard, as 
the constructed solutions experience a potential derivative loss.  
Under mild regularity assumptions on the initial data and a more general domain,
the local-in-time solutions were shown to exist in~\cite{jPjSgS2007}, proving additionally the space-time analyticity of the solutions. Smoothness of the free boundary and the temperature was also shown 
in~\cite{hK1998}.

 Using initial domains $\mathbb T^{d-1}\times (0,1)$ and $\mathbb T^{d-1 } \times (-1,0)$ and temperature profiles that allows for only one steady state solution to the two-phase Stefan problem, global-in-time stability  was established in \cite{amM1992extra}, by 
imposing Dirichlet  boundary conditions on the two fixed  boundaries: $\mathbb T^{d-1}\times\{x_d=1\}$ and $\mathbb T^{d-1}\times\{x_d=-1\}.$ 
In such a setting, the
solution to the nonlinear problem, can be treated as a small perturbation of the known linear solution by contrast to our problem.


A similar strategy is taken in~\cite{fQjlV2000}, where a global-in-time description of the dynamics for the one-phase classical Stefan problem is given. 
Therein, the authors study the {\em exterior} problem in the presence of a nontrivial heat source, modeled again through the imposition of an appropriate Dirichlet boundary condition.
The corresponding free boundary expands to infinity and the asymptotic rate is given. That method relies on the availability of a non-trivial background Hele-Shaw solution.

Global existence of  classical solutions of the one-phase problem was  proved in \cite{pDkL2004}
for log-concave initial temperatures, and hence for convex initial domains.

In the presence of surface tension, the families of steady states to the Stefan problem are parametrized by finitely many parameters and therefore the problem does not exhibit the same type 
of difficulty as the Stefan problem in the absence of surface tension. 
The global-in-time nonlinear stability of flat steady states was established in~\cite{mHyG2010}. In the more complicated case of steady spheres, the nonlinear stability was first proved in~\cite{mH2012}, and by a different method in
\cite{jPgSrZ2013}.


In the absence of surface tension, 
the nonlinear stability of nearly spherical steady state solutions to the one-phase Stefan problem was proved in~\cite{mHsS2013} and the authors generalized that result to allow for  arbitrary (bounded)  initial domains in~\cite{mHsS2015}.
Due to the  infinite-dimensional space of steady states,  the nonlinear stability theory cannot be viewed as a perturbation of a given linear profile;
thus, a  novel hybrid methodology was developed in \cite{mHsS2013,mHsS2015},  which combined  energy methods with pointwise maximum principle techniques to establish exponential-in-time lower-bounds on the Rayleigh-Taylor stability condition.   
Maximum principles together with energy estimates were also used in~\cite{aCdCfG2011} for the analysis of the related Muskat problem.


\subsection{Methodology and outline of the paper}\label{S:METHODOLOGY}

Our first main result is Theorem~\ref{T:main} proving the local well-posedness for the two-phase classical Stefan problem. Our methodology extends the hybrid method developed in~\cite{mHsS2011,mHsS2013} for the one-phase problem in a fundamental way.

Following the energy method of \cite{CoShfreesurfEuler} for the incompressible Euler equations,
tangential and temporal energy estimates on the problem
~\eqref{E:ALE} 
lead to control of the interface regularity via an  integral of the type
$
\int_\Gamma   \mathcal{F} (x,t) |\t^k h|^2\,dx$ for some function $ \mathcal{F} (x,t)>0$.   
Unlike the one-phase problem wherein $\Gamma(t)$ moves with speed $v \cdot n$ and so $\mathcal{F} (x,t) = -\g_Nq$ , in the 
 two-phase setting,  $\Gamma(t)$ moves with the jump  of $v \cdot n$, and hence weight functions must be introduced into the energy method
 to obtain the function $ \mathcal{F} (x,t)$.  Specifically, since on $\Gamma$, 
\begin{equation}\label{ssW}
\g_Nq^+ \neq \g_Nq^- \,,
\end{equation} 
we introduce the weight functions $W^\pm$~\eqref{E:W} in the interior of the two phases $\Omega^\pm$ designed to resolve the mismatch in
(\ref{ssW}), and to allow us to form a common factor in the difference of the two boundary integrals arising from integration-by-parts in both phases
$\Omega^+$ and $\Omega^-$.
With our weighted energy method, we obtain control on the boundary integral
\be\label{E:BOUNDARYENERGY}
\newe^\ppp = e^{-(\l_1+\eta)t}\int_\Gamma  |\t^6 h|^2\,dx,
\ee
where $\l_1$ is the smaller of the two first eigenvalues of the Dirichlet-Laplacian on the domains $\Omega^\pm.$
In order to use this energy control to prove the existence of solutions, we will regularize in the tangential directions with the convolution-by-layers smoothing operator introduced in~\cite{CoShfreesurfEuler}, 
and study the regularized problem. We obtain that for a short time $T$, depending on the smoothing parameter $\k$, there exists a solution. The aforementioned energy control will give us a uniform bound, which will guarantee that the time of existence does not vanish as the smoothing parameter goes to zero. Taking a limit as $\k\to 0$ leads to a local-in-time solution to~\eqref{E:ALE}.

In order to prove global-in-time stability of a given steady state, we need to contend with the exponentially decaying weight present in~\eqref{E:BOUNDARYENERGY}.
Its presence suggests that a bound on \eqref{E:BOUNDARYENERGY} implies that $|\t^6 h|_0$ can grow exponentially fast.
A related  issue is also present in the one-phase case~\cite{mHsS2013} and our general strategy is similar;
whenever we have to bound the top order norms of $h$ we do that at the cost of an exponentially growing factor, since 
\be\label{E:BADESTIMATE}
|\t^6h|_0 \le e^{(\l_1+\eta)t/2} (\newe^\ppp)^{1/2}.
\ee
On the other hand, we {\em do expect} that the temperature $q$ will decay exponentially fast to the equilibrium, as it solves a heat equation. Therefore,
each time we use~\eqref{E:BADESTIMATE} we have to make sure that it comes coupled with a lower order derivative of $q$ which decays sufficiently fast to counter balance
a possible growth coming from~\eqref{E:BADESTIMATE}.
While this strategy works for most of the error terms, there are certain {\em energy critical} error terms with no room left to obtain 
the desired exponentially decaying factor in the error terms.

In~\cite{mHsS2013} this critical term took the form
\begin{equation}\label{E:difficult_term}
\int_\Gamma \partial_N q_t \, |\t^6 h|^2,
\end{equation}
which could not be treated as an error term since the expected decay rate of $\g_Nq_t$ is exactly the same as the decay rate of $\g_Nq.$ 
To resolve this issue, the authors proved that after a sufficiently long time interval,  the term~\eqref{E:difficult_term} is sign-definite, with a favorable sign.
This required a complicated usage of comparison principles and a decomposition of the temperature into the eigenfunctions of the {\em Dirichlet-Laplacian}. 

In our current treatment,  we  circumvent this difficulty 
through the introduction of the weights $W^\pm$ in the definition of the natural energy $\mathcal E.$ As a consequence, the corresponding ``critical" term takes the form
\begin{equation}\label{E:not_so_difficult_term}
\int_\Gamma \partial_t \left(e^{-(\l_1+\eta)t}\right)|\t^6 h|^2 = -(\l_1+\eta)\int_\Gamma  e^{-(\l_1+\eta)t}|\t^6 h|^2 < 0.
\end{equation}
The simplification in our analysis caused by the estimate~\eqref{E:not_so_difficult_term} is very substantial, but it does come with a small price. 
Terms $\partial_Nq^\pm_t$ are implicitly hidden in the terms $\partial_tW^\pm$ 
which appear inside some of the interior error  terms involving integration over $\Omega^\pm.$ 
However, the dissipative effects are stronger inside $\Omega^\pm$ and we combine 
norm interpolation and energy estimates to overcome a potentially exponential growth in our estimates caused by $\g_tW^\pm.$ 
This simplifies the proof significantly, as we no longer need to \emph{wait} until the dynamics settle into a regime dominated by the first eigenfunction of the \emph{Dirichlet-Laplacian} as in Section 4.3 of \cite{mHsS2013}.

To get quantitative lower bounds on the weights $W^\pm,$ we must
obtain sharp quantitative lower bounds for the quantities $\g_Nq^\pm.$
We implement a bootstrap scheme, where we first assume such bounds and use them to prove important energy-norm equivalence Lemmas in Section~\ref{S:bootstrap}.
Just like in~\cite{mHsS2015}, to show that the lower bound is dynamically preserved, we make a very sharp use of the Pucci operators and comparison principles as explained in Section~\ref{S:lowerbound}.
Finally, using  standard continuity arguments and the improvement of the bootstrap bounds, we present the proof of Theorem~\ref{T:main_global} in Section~\ref{S:finalproof}.

To summarize, a novel aspect 
of our methodology is the  introduction  of the weight functions  $W^\pm(t,x)$ with very specific decay properties. One of its key features is that it measures the boundary energy contribution 
in terms of a higher-order Sobolev norm weighted by an explicit exponential
$e^{(-\lambda_1 + \eta)t}$. 
This  simplifies the global-in-time analysis with respect to~\cite{mHsS2013}, and provides a tool for studying similar multi-phase problems in the absence of surface tension.
Equally importantly, using the weighted higher-order energy, we are able to show that  the top-order norms $\newe^\pm$ also decay-in-time. 
The top-order terms decay at a slower rate than predicted by the 
 linear theory, a consequence of the degeneracy caused by the nonlinear and mixed parabolic-hyperbolic character of the equations.

Finally, the perturbation, given by  $h$,  from our initial geometry does not decay, but rather it converges, as $t \to \infty $,  to some
 nontrivial $h_\infty$ which is very small in a suitable Sobolev norm.

\subsection{Future work}

The well-posedness framework introduced in this work is well-suited for the investigation of various singular limits that commonly arise in the study of free boundary problems.  We intend to establish that  solutions to the
 one-phase Stefan problem are, in fact,  limits of solutions of the  two-phase Stefan problem in the limit as the ratio of the diffusion coefficients
converges to zero.   A second important singular limit amenable to our approach is the problem of the vanishing surface tension limit.  Our
energy method naturally extends to the  surface tension problem, by simply adding new top-order energy terms, weighted by the surface tension coefficient.
 We intend to examine the possibility of a splash singularity for the one-phase Stefan problem as in \cite{CoSh2014} and to 
investigate if a splash singularity can occur  for the two-phase Stefan problem following the methodology of \cite{CoSh2016}.


\subsection{Outline of the paper}

Section \ref{C:local} is devoted to the proof of the local well-posedness Theorem~\ref{T:main}. In Section~\ref{S:regularization} we regularize the Stefan problem. 
In Section \ref{s:energydef} we define the energy functionals with the new weights $W^\pm$. In Sections~\ref{S:ENERGYEQUIV} and~\ref{S:derivation} we establish the short-time relationship between the natural energy and the norms
and derive the energy identities respectively. In Section \ref{S:energyestimates} we prove the energy estimates and in Section~\ref{S:PROOF1} we finally finish the proof of the local existence Theorem. 
In Section~\ref{S:bootstrap} we re-introduce the hypotheses for the global stability Theorem, and the bootstrap assumptions. 
 In Sections~\ref{S:WGLOBAL} and~\ref{S:APRIORIH} we obtain global estimates for the weights $W^\pm,$ energy-norm equivalence, and some a priori estimates for the height function $h$. Sections \ref{S:lowerbound}, \ref{S:lower_order} and \ref{S:improvedenergy} are dedicated to the proof of the dynamic improvement of our the bootstrap assumptions, and in Section \ref{S:finalproof} we present the proof of the global stability Theorem. Appendix \ref{S:apendix_local} briefly presents some useful bounds for the change of variables $\Psi^\pm$.     

\section{Local well-posedness: proof of Theorem \ref{T:main}}\label{C:local}

We begin by constructing a sequence of approximate, so called $ \kappa $-problems which
retain the  nonlinear structure of the original two-phase Stefan problem. The small number $ \kappa >0$ is the radius of
convolution, and our $ \kappa$-problems
 \eqref{E:ALEregularized} are founded
upon the smoothing of the evolving interface $\Gamma(t)$; in particular, in
Section \ref{S:regularization}, we regularize the height function $h$ using a symmetric horizontal convolution operator $ \Lambda _\kappa$,  and otherwise keep the structure of the equations
the same.
In Section \ref{S:linearproblem}
we establish an existence theorem for our sequence of  $ \kappa $-approximations  \eqref{E:ALEregularized} by the contraction mapping principle.
The  time of existence $T_\kappa $, a priori, may shrink to zero as $\kappa \to 0$, but in Section \ref{S:energyestimates}, we establish
$ \kappa$-independent estimates, which allow us to prove that $T_ \kappa$ is, in fact,  independent of $\kappa$. 
Passing to the limit as $\k\rightarrow 0$, we shall obtain solutions to the Stefan problem \eqref{E:ALE}.

\subsection{Sequence of approximate $ \kappa$-problems}\label{S:regularization}

For a given parameter $\k>0$ and a height function $h$,  we define its regularization by

\begin{equation}
h^\k := \Lambda_\k\Lambda_\k h, \label{E:h_kappa}
\end{equation}
where $\Lambda_\k$ is the smoothing operator defined in \eqref{E:Lambdadefinition}.
We introduce the \emph{regularized coordinate transformations} $\psik^\pm$ as the solutions to
\begin{subequations}\label{E:Psi_regularized}
\begin{alignat}{2}
\Delta\psik^{\pm} &= 0\ &&\text{ in } \Omega^{\pm},\\
\psik^{\pm}(t,x) &= x + h^{\k}(t,x) N\  &&\text{ on } \Gamma,\\
\psik^+ &= e\  &&\text{ on } \g\Omega,
\end{alignat} 
\end{subequations}
where we recall that $\text{e}$ is the identity map on $\partial\Omega$.
Similarly as for~\eqref{E:definition_Psi}, notice that equation \eqref{E:Psiboundarycontrol} for this regularized problem is,
\begin{equation}\label{E:Psiboundarycontrolkappa}
\|\psik^\pm - e\|_{6.5} \leq C|h^\k|_{6}.
\end{equation}
Therefore, the smallness of $|h^\k-h_0|_6$ for short time together with the choice of $|h_0|_6\leq C\sigma\ll 1$ gives us that $\psik^\pm$ are in fact $H^{6.5}$-class diffeomorphisms.   
As in \eqref{E:A_definition}, we define now, ${}^\k \hspace{-.06in} A^\pm :=  ( 
\nabla \psik^\pm)^{-1}$, and let $w^\pm_\k := \g_t \psik^\pm$. We introduce our sequence of approximations to the Stefan problem as
the following {\bf $\k$-problem}:
\begin{subequations}\label{E:ALEregularized}
\begin{align}
q_t^\pm - \Delta_{\psik^\pm} q^\pm &= -v^\pm\cdot w^\pm_\k + \alpha^\pm \text{ in } \Omega^\pm,\label{E:ALEregularized_heat}\\
v^\pm + {}^\k \hspace{-.06in} A^\pm{}^\top  \nabla q^\pm &= 0 \text{ in } \Omega^\pm,\label{E:ALEregularized_v}\\
q^\pm &=  \mp\k^2 ((v^\pm)^i\ {}^\k \hspace{-.06in} A^j_i N^j) \pm \k^2\beta^\pm(t,x) \text{ on } \Gamma,\label{E:ALEregularized_dirichlet}\\
 h_t &= [v\cdot\tilde{n}^\k]^+_-\ \text{ on } \Gamma, \label{E:HKAPPAEVOLUTION}\\
v^+\cdot {\bf N}^+ &= \gamma \text{ on } \g\Omega,\label{E:ALEregularized_neumann_top}\\
q^\pm\vert_{t=0} &= \Qko^\pm \text{ on } \{t=0\}\times\Omega^\pm,\label{E:ALEregularized_initial} \\
h\vert_{t=0} &=h_0^\k \ \text{ on } \{t=0\}\times \Gamma.
\end{align}
\end{subequations}
where $\bf N^+$ is the exterior normal vector to the fixed boundary $\g\Omega$, $\tilde{n}^\k:= \frac{n^\k}{n^\k\cdot N}$, $n^\k$ is the normal vector to the regularized interface $\Gamma_\k(t)$, given by
$$
n^\k(t,y(\theta)) = \frac{-\t h^\k \tau + (1+H(\theta)h^\k(t,z(\theta)))N}{\sqrt{(\t h^\k)^2 + (1+H(\theta)h^\k)^2}},
$$
and $\Qko^\pm$ is the initial data defined carefully in Section \ref{qzerokappa}.
The introduction of this special initial data, and the functions $\alpha^\pm,\ \beta^\pm$ and $\gamma$ into equations \eqref{E:ALEregularized_heat},\eqref{E:ALEregularized_dirichlet}, \eqref{E:ALEregularized_neumann_top} respectively, has the purpose of \emph{cancelling} the new compatibility conditions that arise in the $\k$-problem due to the smoothing. The central idea is that equations \eqref{E:ALEregularized} and their time derivatives, when restricted to time $t=0$, will produce new terms from the $\k$-dependent coefficients $\ako^\pm$, that the functions $\alpha^\pm(0),\ \beta^\pm(0)$, and $\gamma(0)$ will cancel and replace with the analogous terms of the non-regularized problem \eqref{E:ALE}, which in turn corresponds to the original compatibility conditions satisfied by $q_0^\pm$ \eqref{E:compatibility}.\\ 

For $s\geq 0$, let $E^\pm:H^s(\Omega^\pm)\rightarrow H^s(\mathbb{R}^2)$ be the Sobolev extension operator of $\Omega^\pm$. Then we define the function $ \alpha^\pm (t,x)$ over $\Omega^\pm$ as,
\begin{equation}\label{E:alpha}
\alpha^\pm(t,x) = \alpha_0^\pm(x) + \int_0^t r^\pm(s,x) ds,
\end{equation}
where $\alpha_0^\pm$ is given by
\begin{equation*}
\alpha_0^\pm := - \nabla_{\psik_0}\Qko^\pm\cdot \psibark_t^\pm(0) + \nabla_{\Psi_0}q_0^\pm\cdot \Psi_t^\pm(0),
\end{equation*}
and $r^\pm := \bar{r}^\pm\vert_{\Omega^\pm}$ is the restriction to $\Omega^\pm$ of the solution to the parabolic problem, 
\begin{subequations}
\begin{align*}
\bar{r}^\pm_t + \Delta^2 \bar{r}^\pm &=0 \text{ in } \mathbb{R}^2,\\
\bar{r}^\pm(t=0) &= E^\pm(\alpha_1^\pm) \text{ on } \mathbb{R}^2\times\{ t=0\},\label{E:r_0}
\end{align*}
where $\alpha_1^\pm$ is defined in $\Omega^\pm$ as, 
\begin{align*}
\alpha_1^\pm &:= -B_1^\pm(\Qko,h^\k_0) + B_2^\pm(q_0, h_0),
\end{align*}
with,
\begin{align}
B_1(\Qko,h^\k_0) &:= \Delta_{\psik_0}\left(\nabla_{\psik_0}\Qko\cdot\psik_t(0) +\alpha_0\right) - \ak_0{}^i_l\psik_t{}^l,_s(0) \ak_0{}^s_j(\ak_0{}^k_j\Qko,k),_i\nonumber\\
&\quad - \ak_0{}^i_j(\ak_0{}^k_l\psik_t{}^l,_s\ak_0{}^s_j \Qko,_k),_i - \ak_0{}^i_l\psik_t^l,_s\ak_0{}^s_j\Qko,_i\psik_t(0)^j\nonumber\\
&\quad + \nabla_{\psik_0}(\Delta_{\psik_0}\Qko + \nabla_{\psik_0}\Qko\cdot\psik_t(0) + \alpha_0)\cdot\psik_t(0) + \nabla_{\psik_0}\Qko\cdot\psik_{tt}(0),\\
B_2(q_0,h_0) &:= -\ao^i_l\Psi_t(0)^l,_s\ao^s_j(\ao^k_j q_0,_k),_i - \ao^i_j(\ao^k_l\Psi_t(0)^l,_s\ao^s_j q_0,_k),_i\nonumber \\
&\qquad -\ao^i_k\Psi_t(0)^k,_l\ao^l_j q_0,_i \Psi_t(0)^j + \nabla_{\Psi_0}(\Delta_{\Psi_0}q_0+\nabla_{\Psi_0}q_0\cdot\Psi_t(0))\cdot \Psi_t(0)\nonumber\\
&\qquad + \nabla_{\Psi_0}q_0\cdot \Psi_{tt}(0) + \Delta_{\Psi_0}(\nabla_{\Psi_0}q_0\cdot\Psi_t(0)).
\end{align}
Notice that, since $q_0$ satisfies the compatibility conditions \eqref{compatibility2}, when restricted to $\Gamma$, $B_1$ and $B_2$ can be writen simply as, 
\begin{align*}
B_1(\Qko,h_0^\k) &= \g_t(\Delta_{\psik}q + \nabla_{\psik}q\cdot\psik_t)\vert_{t=0} - \Delta_{\psik_0}^2\Qko,\\
B_2(q_0,h_0) &= \Delta_{\Psi_0}^2 q_0,
\end{align*}
where the value for $q_t(0)$ can be obtained from restricting equation \eqref{E:ALEregularized_heat} to $t=0$ and using that $q(0)=\Qko$. 
\end{subequations}

\begin{remark}
Since, in the right-hand side of equation \eqref{E:ALEregularized_heat} we have the term $\alpha^\pm(t,x)$, the regularity of $\g_t^l q^\pm$ for $l=0,..,3$, depends, among other things, on the regularity of this term and its time derivative. Specifically, we will need to bound $\alpha^\pm$ in $H^5(\Omega^\pm)$ (see Section \ref{SS:alpha_gamma_errors}), in order to obtain the desired regularity of $q^\pm$, but at the same time we require that $\g_t\alpha^\pm (0) = \alpha_1^\pm$ on $\Gamma$, which does not have enough derivatives. The introduction of $r^\pm$ solves this problem since, from the standard parabolic regularity theory, 
the solution $r^\pm(t,x)$ is in $L^2((0,T);H^5(\Omega^\pm))$ since the initial datum $\bar{r}^\pm(t=0)$ belongs to $H^3(\Omega)$ by our regularity assumptions on $(q_0,h_0)$.
\end{remark}

Now let us define the function $\gamma$ on $\g\Omega$ as: 
\begin{equation}\label{E:gamma}
\gamma := \mathcal{G}^0\cdot{\bf N}^+ + \int_0^t \mathcal{G}^1(s)\cdot{\bf N}^+ds + \int_0^t\int_0^s \mathcal{G}^2(\tau)\cdot{\bf N}^+d\tau ds \text{ in } \Omega^+,
\end{equation}
where $\mathcal{G}^i := \tilde{\mathcal{G}}^i\vert_{\g\Omega}$ for $i=0,1,2$, is the restriction to $\g\Omega$ of the solution to the following parabolic problem:
\begin{align}
\tilde{\mathcal{G}}^i_t - \Delta^{2i+1} \tilde{\mathcal{G}}^i &= 0\ \text{ in } \mathbb{R}^2,\label{E:GEQUATION1}\\
\tilde{\mathcal{G}}^i(t=0) &= E^+(\gamma_i)\ \text{ on } \mathbb{R}^2\times\{t=0\}, \label{E:GEQUATION2}
\end{align} 
with $\gamma_i$ defined in $\Omega^+$ as,
\begin{align*}
\gamma_0 &:= -\nabla_{\psik_0^+}\Qko^+ + \nabla_{\Psi_0^+}q_0^+, \\
\gamma_1 &= -[\ako^+\nabla\psik^+_t(0)\ako^+]^\top\nabla\Qko^+ + \nabla_{\psik_0^+}(\nabla_{\Psi_0^+}q^+_0\cdot \Psi_t^+(0)) + \nabla_{\Psi_0^+}(\Delta_{\Psi_0^+}q_0^+),\\
\gamma_2 &= \g_t^2(\nabla_{\psik^+}q^+)\vert_{t=0} - \nabla_{\psik_0^+}(\Delta_{\psik_0^+}^2\Qko^+) + \nabla_{\Psi_0^+}(\Delta^2_{\Psi_0^+}q_0^+).
\end{align*} 
As a consequence of~\eqref{E:GEQUATION1}--\eqref{E:GEQUATION2} and the Sobolev regularity of $\tilde{\mathcal{G}}^i(t=0)$, $i=0,1,2$, standard parabolic regularity theory gives the bound
\begin{align}\label{E:GBOUND}
\|\tilde{\mathcal G}^0\|_{L^\infty_tH^{5.5}}+\sum_{i=0}^2 \|\tilde{\mathcal G}^i\|_{L^2_tH^6} \lesssim 1.
\end{align}
Estimate~\eqref{E:GBOUND} plays a crucial role in the nonlinear estimates in Section~\ref{SS:alpha_gamma_errors}.

The function $\beta^\pm(t,x)$ is defined on $\Gamma$ as
\begin{equation}\label{E:beta}
\beta^\pm(t,x) := \sum\limits_{k=0}^3 \frac{t^k}{k!}\g_t^k((v^\pm)^i\ {}^\k \hspace{-.06in} A^j_i N^j)\vert_{t=0}. 
\end{equation} 
Note that $\beta$ is a cubic polynomial in $t$ with space dependent coefficients.

\begin{remark}
The functions $\beta^\pm$ serves a similar purpose on the boundary $\Gamma$ as $\alpha$ does in the interior, and they are used to avoid new compatibility conditions that may appear from the boundary regularization \eqref{E:ALEregularized_dirichlet}. This regularization is needed to overcome a technical difficulty in the higest-in-time energy estimates when we have a term of the form $$\int_\Gamma (\psik^\pm{}_{ttt}\cdot n)( v_{ttt}^\pm\cdot n) d\sigma,$$ because the trace of $v^\pm_{ttt}\cdot n$ is not necessarily well defined. 
\end{remark}

\subsubsection{Definition of the smooth initial data $\Qko^\pm$}\label{qzerokappa}
We now construct a smooth version of the initial data $q_0^\pm$ that will satisfy the compatibility conditions for the $\k$-problem \eqref{E:ALEregularized}. We solve the tri-Laplacian, 
$$\Delta^3_{\psik_0^\pm}\Qko^\pm = \Delta_{\Psi_0^\pm}^3 (\eta_\k\ast E(q_0^\pm)) \text{ in } \Omega^\pm,$$
with specific boundary data designed to satisfy the compatibility conditions. We proceed by solving the equivalent system of elliptic equations: 
\begin{subequations}\label{E:kappa_initialdata_system}
\begin{align}
\Delta_{\psik_0^\pm}\Qko^\pm &= {}^\k R_0^\pm \text{ in } \Omega^\pm,\\
\Qko^\pm &= 0 \text{ on } \Gamma,\\
\nabla_{\psik_0^+}\Qko^+\cdot {\bf N}^+ &= 0 \text{ on } \g\Omega,
\end{align}

\begin{align}
\Delta_{\psik_0^\pm}{}^\k R_0^\pm &= {}^\k U_0^\pm \text{ in } \Omega^\pm,\\
{}^\k R_0^\pm &= -(\nabla_{\Psi_0^\pm}q_0^\pm\cdot N) g_1 \text{ on } \Gamma,\label{E:nokappaboundary1}\\
\nabla_{\psik_0^+}{}^\k R_0^+\cdot {\bf N}^+ &= \nabla_{\Psi_0^+}(\Delta_{\Psi_0^+}q_0^+)\cdot {\bf N}^+ \text{ on } \g\Omega,\label{E:nonhighorder1}
\end{align}
\begin{align}
\Delta_{\psik_0^\pm} {}^\k U_0^\pm &= \Delta_{\psik_0^\pm}^3(\eta_\k\ast E(q_0^\pm)) \text{ in } \Omega^\pm,\label{E:uokLaplace}\\
{}^\k U_0^\pm &= B_2(q_0^\pm,h_0) \text{ on } \Gamma,\label{E:nokappaboundary2}\\
\nabla_{\psik_0^+}{}^\k U_0^+\cdot {\bf N}^+ &= \nabla_{\Psi_0^+}(\Delta_{\Psi_0^+}^2 q_0^+)\cdot {\bf N}^+ \text{ on } \g\Omega.\label{E:nonhighorder2}
\end{align}
\end{subequations}
Recall that $g_1$ is defined in~\eqref{E:definition_h_t0}.
Notice that the system is decoupled, and therefore existence of solutions follows directly. 
This choice of initial data, and the fact that $q_0^\pm$ satisfy \eqref{E:compatibility}, shows that the compatibility conditions for equation \eqref{E:ALEregularized} are automatically satisfied. Moreover, as $\k\rightarrow 0$, 
\begin{equation*}
\Qko^\pm \rightharpoonup q_0^\pm\text{ in } H^6(\Omega^\pm).
\end{equation*}  

\begin{remark}
We actually have strong convergence of $\Qko^\pm$ to $q_0^\pm$ in $H^6(\Omega^\pm)$. The argument is simple, but cumbersome, as it involves elliptic estimates from all three equations \eqref{E:kappa_initialdata_system}. Consider for example equations \eqref{E:uokLaplace} with boundary condition \eqref{E:nokappaboundary2}, on the region $\Omega^-$ (we will omit the index ``$-$''). In order to estimate the difference between $\Uko - \Delta^2_{\psik_0}q_0$ we analyze the elliptic problem: 
\begin{align*}
\Delta_{\psik_0}(\Uko - \Delta_{\Psi_0}^2 q_0) &= \Delta_{\psik_0}\left(\Delta_{\psik_0}^2(\eta_\k\ast E(q_0)) - \Delta_{\Psi_0}^2 q_0\right)\ \text{ in } \Omega,\\
\Uko - \Delta_{\Psi_0}^2 q_0 &= 0\ \text{ on } \Gamma.
\end{align*}
Let us define $G := \Uko - \Delta_{\Psi_0}^2 q_0$; Then, the system can be rewritten as 
\begin{align*}
\Delta_{\Psi_0}G &=  \Delta_{\psik_0}\left(\Delta_{\psik_0}^2(\eta_\k\ast E(q_0)) - \Delta_{\Psi_0}^2 q_0\right) + (\Delta_{\Psi_0} - \Delta_{\psik_0})G \text{ in } \Omega,\\
G &= 0 \text{ on } \Gamma,
\end{align*} 
and so, by elliptic estimates we have the bound: 
\begin{align}
\|G\|_2 &\leq \|\Delta_{\psik_0}(\Delta_{\psik_0}^2(\eta_\k\ast E(q_0))- \Delta_{\Psi_0}^2 q_0)\|_0 + \|A_0A_0 - \ako\ako\|_{L^\infty}\|G\|_2 + \mathcal{O}_\k,\label{E:G_bound_kappa}
\end{align}
where we have gathered all the lower-order terms coming from the product rule in $\mathcal{O}_\k$. Since we have strong convergence of $h^\k_0\rightarrow h_0$ in $H^6(\Gamma)$, we conclude that $\psik_0\rightarrow \Psi_0$ in $H^{6.5}(\Omega)$, and therefore $\ako\rightarrow A_0$ in $H^{5.5}(\Omega)$. Combining this fact together with the strong convergence of $\eta_\k\ast E(q_0)$ to $q_0$ in $H^6(\Omega)$, the right-hand side of equation \eqref{E:G_bound_kappa} goes to zero as $\k\rightarrow 0$, and therefore, $G\rightarrow 0$ in $H^2(\Omega)$, or equivalently: $\Uko \rightarrow \Delta_{\Psi_0}^2 q_0$ in $H^2(\Omega)$. This same procedure applied to the other equations of the system \eqref{E:kappa_initialdata_system} gives us the necessary estimates to prove the strong convergence $\Qko^\pm \rightarrow q_0^\pm$ in $H^6(\Omega^\pm)$.

\end{remark}		

\subsection{Existence theorem for the $\k$-problem}\label{S:linearproblem}
In this section, we use the contraction mapping theorem to find solutions to the $\k$-problem \eqref{E:ALEregularized}.
We introduce the following normed space of functions:

\begin{align}\label{E:contraction_space}
X_M^\k &= \left\{ \g_t^l h^\k\in C([0,T_\k];H^{6-2l}(\Gamma)), \g_t^{s+1} h^\k \in L^2([0,T_\kappa];H^{5-2s}(\Gamma)):\ 0\leq l\leq 3, 0\leq s\leq 2,\right.\nonumber \\
&\qquad \sum\limits_{l=0}^3 |\g_t ^ l h^\k(s)|^2_{L^\infty_t H^{6-2l}(\Gamma)} + \sum\limits_{l=0}^2|\g_t^{l+1} h^\k|_{L^2_t H^{5-2l}(\Gamma)}^2 \leq M,\nonumber\\
&\qquad  \left.h^\k(0,x) = h_0^\k(x),\ h_t^\k(0,x) = g_1^\k(x),\ h_{tt}^\k(0,x) = g_2^\k(x),\ h_{ttt}^\k(0) = g_3^\k(x)\right\},
\end{align}
with $M>0$ a function of the initial data to be determined, and $g_i^\k := \Lambda_\k\Lambda_\k g_i,\ i=1,2,3,$ the smooth versions of \eqref{E:h_and_derivativesatzero}.

\begin{theorem}[Solutions to the $\kappa$-problem]\label{T:smooth_solutions}
For any fixed $\kappa > 0$ there exist a time $T_\kappa>0$, such that there exists a unique olution $(q, h^\k)$ to the {\bf non-linear} $\kappa$-problem~\eqref{E:ALEregularized} on the time-interval $[0,T_\k]$, and $\mathcal{S}(t) = \mathcal{S}(q^\pm, h^\k) \leq M$.
\end{theorem}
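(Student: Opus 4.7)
The plan is to obtain $(q^\pm, h^\kappa)$ as a fixed point of a Picard-type map $\Theta: X_M^\kappa \to X_M^\kappa$, built by decoupling the nonlinear system \eqref{E:ALEregularized} into (i) an elliptic problem for the diffeomorphism, (ii) a linear parabolic problem for the temperatures with frozen coefficients, and (iii) an ODE-like update of the height function. Given $\bar h \in X_M^\kappa$, I first form the doubly mollified $\bar h^\kappa = \Lambda_\kappa \Lambda_\kappa \bar h$, which is $C^\infty$ in space uniformly on $[0,T_\kappa]$ with bounds depending on $\kappa^{-1}$. Solving \eqref{E:Psi_regularized} with Dirichlet data $x + \bar h^\kappa N$ then yields a smooth family $\bar\psi^\kappa(t,\cdot)$ of diffeomorphisms, hence smooth matrices $\bar{}^\kappa\!A^\pm$ and velocity $\bar w^\pm_\kappa$. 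I next solve the linear parabolic problem in each phase,
\begin{equation*}
q^\pm_t - \Delta_{\bar\psi^\kappa} q^\pm + \bigl(\bar{}^\kappa\!A^{\pm\top}\nabla q^\pm\bigr)\cdot \bar w^\pm_\kappa = \alpha^\pm,
\end{equation*}
together with \eqref{E:ALEregularized_dirichlet} (which, after substituting $v^\pm = -\bar{}^\kappa\!A^{\pm\top}\nabla q^\pm$, becomes a Robin/oblique‐derivative boundary condition with coefficient $\mp\kappa^2$ on $\Gamma$), \eqref{E:ALEregularized_neumann_top} on $\partial\Omega$, and the initial datum $\Qko^\pm$. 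Standard linear parabolic theory with smooth coefficients (e.g.\ \cite{qaLvaSnnU1968}-type $L^2$--$H^s$ theory) delivers a unique $q^\pm$ with $\partial_t^l q^\pm \in L^\infty_t H^{6-2l} \cap L^2_t H^{6.5-2l}$ for $l=0,\dots,3$. Finally, set
\begin{equation*}
\Theta(\bar h)(t) := h_0^\kappa + \int_0^t \bigl[v\cdot \tilde n^\kappa\bigr]^+_- \,ds,
\qquad v^\pm = -\bar{}^\kappa\!A^{\pm\top}\nabla q^\pm.
\end{equation*}

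To verify $\Theta(X_M^\kappa) \subseteq X_M^\kappa$, I would use the parabolic energy estimates for the linear problem together with the fundamental theorem of calculus. By construction of $\Qko^\pm$ in Section~\ref{qzerokappa} and of $\alpha^\pm, \beta^\pm, \gamma$ in \eqref{E:alpha}, \eqref{E:gamma}, \eqref{E:beta}, the compatibility conditions of the $\kappa$-system are satisfied, so that the solution $q^\pm$ is $C^3$ in time down to $t=0$ with values matching $\Qko^\pm, q_t^\pm(0), q_{tt}^\pm(0), q_{ttt}^\pm(0)$ computed from the equation. This ensures that $\Theta(\bar h)(0) = h_0^\kappa$ and $\partial_t^l \Theta(\bar h)(0) = g_l^\kappa$ for $l=1,2,3$. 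Writing
\begin{equation*}
\bigl|\partial_t^l \Theta(\bar h)(t)\bigr|_{H^{6-2l}(\Gamma)}^2 \le 2|g_l^\kappa|_{H^{6-2l}}^2 + 2T_\kappa \int_0^t \bigl|\partial_t^{l+1}\Theta(\bar h)(s)\bigr|_{H^{6-2l}}^2\,ds,
\end{equation*}
and using the trace of the parabolic estimates on $v^\pm$, together with $\kappa$-dependent (but finite) norms of $\bar\psi^\kappa, \tilde n^\kappa$, one obtains a bound of the form $\mathcal{S}(\Theta(\bar h)) \le C_\kappa(\mathcal{S}(0)) + T_\kappa P(M)$. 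Choosing $M$ large in terms of the data and then $T_\kappa$ small in terms of $M$ and $\kappa$ yields invariance.

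For contraction, I would take $\bar h_1, \bar h_2 \in X_M^\kappa$ and consider the differences $\delta\psi := \bar\psi^\kappa_1 - \bar\psi^\kappa_2$, $\delta q := q_1 - q_2$, $\delta h := \Theta(\bar h_1) - \Theta(\bar h_2)$. Elliptic estimates for \eqref{E:Psi_regularized} give $\|\delta\psi\|_{6.5}\lesssim |\bar h_1-\bar h_2|_6$, and writing the equation for $\delta q$ in divergence form with source terms linear in $\delta\psi$ (which are controlled in the ``lower'' norms appearing in a weaker metric on $X_M^\kappa$), a standard parabolic energy estimate gives $\|\delta q\|\lesssim T_\kappa^{1/2}\|\bar h_1-\bar h_2\|_{X_M^\kappa}$; integrating the jump condition then shows $\|\delta h\|_{X_M^\kappa} \le \tfrac12 \|\bar h_1-\bar h_2\|_{X_M^\kappa}$ provided $T_\kappa$ is small (depending on $\kappa$ and $M$). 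Banach's fixed point theorem then produces the unique solution, and the estimate $\mathcal S(t)\le M$ is inherited from membership in $X_M^\kappa$. The main obstacle is bookkeeping: verifying that the correction terms $\alpha^\pm, \beta^\pm, \gamma$ actually cancel the spurious $\kappa$-dependent compatibility obstructions so that $\partial_t^3 q^\pm$ is genuinely controlled up to $t=0$, since without this the $L^\infty_t H^0$ control of $\partial_t^3 h$ required by the definition of $X_M^\kappa$ would fail at the initial time.
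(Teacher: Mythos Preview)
Your overall strategy---freeze $\bar h$, solve the resulting linear parabolic problem in each phase, update $h$ via the jump condition, and close by contraction on $X_M^\kappa$---is exactly the one the paper uses, and your map $\Theta$ coincides with the paper's $\Phi_\kappa$.

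There is, however, one genuine technical gap in your appeal to ``standard linear parabolic theory'' for the regularity $\partial_t^l q^\pm \in L^\infty_t H^{6-2l}\cap L^2_t H^{6.5-2l}$, $l=0,\dots,3$. The mollification $\bar h^\kappa=\Lambda_\kappa\Lambda_\kappa\bar h$ smooths only in space, so the time regularity of the coefficients and forcing is inherited from $X_M^\kappa$: you have at best $\partial_t^3\bar h^\kappa\in C_tL^2$, and $\partial_t^4\bar h^\kappa$ is undefined. But the forcing term $v^\pm\cdot\bar w^\pm_\kappa$ contains $\bar w^\pm_\kappa=\partial_t\bar\psi^\kappa$, so the three-times time-differentiated equation---which you need for the $l=3$ estimate---produces a term $v^\pm\cdot\partial_t^4\bar\psi^\kappa$ that simply does not exist. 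Off-the-shelf parabolic theory does not handle this; the paper confronts it explicitly by replacing $\partial_t\bar\psi^\kappa$ with a discrete time-difference $\delta_t^s\bar\psi^\kappa$ in the weak formulation, running the Galerkin scheme at the third time-differentiated level, and then disposing of the dangerous term $\int_0^t(\bar J_\kappa v\cdot\delta_t^s\bar\psi^\kappa_{ttt},q_{ttt})$ by substituting the strong form of the \emph{twice}-differentiated equation for $q_{ttt}$ and integrating by parts in time before letting $s\to 0$. Without some maneuver of this type your claimed $L^\infty_tL^2$ control of $q_{ttt}$ (and hence of $h_{ttt}$, required for $\Theta(\bar h)\in X_M^\kappa$) is not justified.

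A smaller point: in the contraction step, the same issue resurfaces when you estimate $\delta h_{ttt}$, since this requires $\delta v_{tt}$ on $\Gamma$ and hence a parabolic estimate for $\delta q_{tt}$ whose source again contains $\delta\bar\psi^\kappa_{ttt}$; the paper handles this by trading a spatial derivative for a factor of $\kappa^{-1}$ via the mollifier, which is why the contraction constant is allowed to depend badly on $\kappa$.
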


\begin{proof}

We will separate the proof into three steps. 

\subsubsection{Step 1: The linear problem.}
Assuming that a function $\bar{h} \in X_M$ is given, consider the regularized version of $\bar{h}$:
\begin{align}
\bar{h}^\k := \Lambda_\k\Lambda_\k \bar{h}.\label{E:bar_h}
\end{align}
Again, we define the \emph{regularized coordinate transformations} $\psibark^\pm$, as the solutions to

\begin{subequations}\label{E:Psi_regularized_linear}
\begin{alignat}{2}
\Delta \psibark^{\pm} &= 0\ &&\text{ in } \Omega^{\pm},\\
\psibark^{\pm}(t,\cdot) &= x + \bar{h}^{\k}(t,\cdot) N(\cdot)\  &&\text{ on } \Gamma,\\
\psibark^+ &= e\  &&\text{ on } \g\Omega.
\end{alignat} 
\end{subequations}

We define  $\abark^\pm :=  (\nabla \psibark^\pm)^{-1}$, $\Jbark := \text{det}\nabla\psibark$, the cofactor matrix $\labark:= \Jbark\ \abark$, and ${}^\k\hspace{-0.02in}\bar{w}^\pm := \psibark_t^\pm$. Recall that equation \eqref{E:Psiboundarycontrol} holds for $\psibark^\pm$ with $\bar{h}^\k$ in the right-hand side. This gives us that the transformations $\psibark^\pm$ are in $C^\infty(\Omega^\pm)$. 
We then define the following {\it linearization} of the $\k$-problem:

\begin{subequations}\label{E:ALEregularized_linear}
\begin{align}
q_t^\pm - \Delta_{\psibark^\pm} q^\pm &= -v^\pm\cdot {}^\k\hspace{-0.02in}\bar{w}^\pm + \alpha^\pm \text{ in } \Omega^\pm,\label{E:ALEregularized_linear_heat}\\
v^\pm + \abark^\pm{}^\top \nabla q^\pm &= 0 \text{ in } \Omega^\pm,\label{E:ALEregularized_linear_v}\\
q^\pm &= \mp\k^2((v^\pm)^i\ \abark^j_iN^j) \pm \k^2\beta^\pm(t,x) \text{ on } \Gamma,\label{E:ALEregularized_linear_dirichlet}\\
v^+\cdot {\bf N}^+ &= \gamma \text{ on } \g\Omega,\label{E:ALEregularized_neumann_top}\\
q^\pm\vert_{t=0} &= \Qko^\pm \text{ on } \Omega^\pm\times\{t=0\},
\end{align}
\end{subequations}
where $\alpha^\pm,\ \beta^\pm$ and $\gamma$ are the functions of the initial data defined in \eqref{E:alpha},\eqref{E:beta} and \eqref{E:gamma} respectively.
Since $\bar{h}^\k(t)$ is prescribed, the linear system of equations \eqref{E:ALEregularized_linear} decouples into two linear heat equations on $\Omega^\pm$ respectively, with $C^\infty(\Omega^\pm)$ coefficients. The initial data for the linear smooth problem is $\Qko^\pm\in H^6(\Omega^\pm)$, which was designed in \eqref{E:kappa_initialdata_system}, along with the terms $\alpha^\pm,\ \beta^\pm,\ \gamma$, to recover the original two-phase compatibility conditions from the decoupled two phases as $\k\rightarrow 0$.

\subsubsection{Step 2: Higher regularity for the linear problem.} \label{S:higherregularitylinear}
We want to prove that there exists solutions to \eqref{E:ALEregularized_linear} such that $\g_t^l q^\pm\ \in C([0,T_\k]; H^{6-2l}(\Omega^\pm))\cap L^2([0,T_\k];H^{7-2l}(\Omega^\pm))$, for all $l=0,\dots,3$.
We proceed as in \cite{MR2208319}. Since the two phases are decoupled we have the weak formulation of the two different problems on the regions $\Omega^\pm$ separately. 
For a given function $f\in L^2\left([0,T_\kappa],\mathbb R\right)$ consider the discrete time difference $\delta_t^s f(t) = (\mathfrak Ef(t+s)-\mathfrak Ef(t))s^{-1}$, where $\mathfrak E$ is a Sobolev extension operator to the positive real line $[0,\infty)$.  We define then $q^\pm$ to be a weak solution to equation \eqref{E:ALEregularized_linear}, if for all $\phi^\pm\in H^1(\Omega^\pm)$, the following equations hold pointwise in time for all $t\in [0,T_\k]$: 
\begin{subequations}\label{E:weak_time_l}
\begin{align}
&\langle\g_t^l (\Jbark\, q_t),\phi^-\rangle +  \left(\g_t^l(\labark^i_j (\abark^k_j q,_k), \phi^-,_i\right)_{L^2(\Omega^-)} + \int_\Gamma \g_t^l(\Jbark(\k^{-2}q^- + \beta^-))\phi^- d\sigma  \nonumber\\
& = \left(\g_t^l(\labark^i_j q,_i \delta_t^s(\psibark^j)), \phi^-\right)_{L^2(\Omega^-)} + \left(\g_t^l(\Jbark\,\alpha^-),\phi^-\right)_{L^2(\Omega^-)}, \label{E:weak_time_l_minus}\\
&\langle\g_t^l (\Jbark\, q_t),\phi^+\rangle +  \left(\g_t^l(\labark^i_j (\abark^k_j q,_k), \phi^+,_i\right)_{L^2(\Omega^+)} + \int_\Gamma \g_t^l(\Jbark(\k^{-2}q^+ + \beta^+))\phi^+ d\sigma  \nonumber\\
& =  \left(\g_t^l(\labark^i_j q,_i \delta_t^s(\psibark^j)), \phi^+\right)_{L^2(\Omega^+)} + \int_{\g\Omega}(\g_t^l(\Jbark\,\gamma)) \phi^+ d\sigma\nonumber\\
&\qquad + \left(\g_t^l(\Jbark\,\alpha^+),\phi^+\right)_{L^2(\Omega^+)},\ \text{ for } l=0,\dots,3, \label{E:weak_time_l_plus}
\end{align}
\end{subequations}
with initial conditions given by $q^\pm(0) = \Qko^\pm$, and for $l=1,2,3$,
$\g_t^l q^\pm(0) := \g_t^{l-1}(\Delta_{\psik}q + \nabla_{\psik}q\cdot\psik_t + \alpha)\vert_{t=0}$. Notice that the solution $q^\pm$ depends on the parameter $s$, but we will omit its dependence for simplicity of notation, and only at the end of the proof we will take limit as $s\rightarrow 0$. The use of the difference quotient $\delta_t^s$ in equations \eqref{E:weak_time_l_minus} and \eqref{E:weak_time_l_plus}, is necessary in order to study the third time-differentiated problem; This is due to the fact $\psibark_{tttt}$ is not well defined when $\bar{h}$ is given in $X_M^\k$.\\

In what follows, we will omit the upper index $\pm$ for simplicity of notation, but we will keep in mind that we must perform the analogous techniques in the now decoupled regions $\Omega^\pm$. Existence of solutions $q\in C([0,T_\k]; L^2(\Omega))\cap L^2([0,T_\k]; H^1(\Omega))$, follow from a Galerkin approximation scheme for parabolic equations, i.e. we consider a solution of the form, 
\begin{equation}\label{E:galerkin_sol}
q^m(t,x) = \sum\limits_{n=1}^m c_n(t) \varphi_n,
\end{equation}
where $\{\varphi_n\}_{n\in\mathbb{N}}$ is a basis of $H^1(\Omega)$ that, for simplicity, we will choose so that,
\begin{equation*}
\left(\Jbark(0) \varphi_n,\varphi_n\right)_{L^2(\Omega)} = 1, \text{ and }\left(\Jbark(0) \varphi_n,\varphi_s\right)_{L^2(\Omega)} = 0,\ \forall s\neq n,
\end{equation*}
and the coefficients $c_n(t)$ satisfy the system of fourth-order differential equations: 
\begin{equation}\label{E:Galerkin_system}
\sum\limits_{n=1}^m \left\{ e^4_{sn}c_n^{(4)}(t) + e^3_{sn}c_n^{(3)}(t) + e^2_{sn}c_n^{(2)}(t) + e^1_{sn}c_n^{(1)}(t) + e^0_{sn}c_n(t)  \right\}= \mathcal{A}_{s} +\mathcal{B}_{s},\ \ \forall s=1,\dots,m, 
\end{equation}
with initial data given by,
\begin{subequations}\label{E:Galerkin_initial_data}
\begin{align}
q^m(0) &= \Qko^m = \sum\limits_{n=1}^m\left(\Qko,\varphi_n\right)_{L^2(\Omega)}\varphi_n,\\
\g_t^kq^m(0) &= {}^\k\hspace{-.02in}Q^m_k = \sum\limits_{n=1}^m {}^\k\hspace{-.02in}Q^m_k(n) \varphi_n  \text{ for } k=1,2,3,
\end{align}
where the coefficients ${}^\k\hspace{-.02in}Q^m_k(n)$ are given by	,
\begin{align*}
{}^\k\hspace{-.02in}Q^m_k(n) &:= \left[\left([\g_t^{k-1},\Jbark]q^m_t,\varphi_n\right)_{L^2} -  \left(\g_t^{k-1}(\labark^i_j (\abark^k_j q^m,_k), \varphi_n,_i\right)_{L^2(\Omega)} - \int_\Gamma \g_t^{k-1}(\Jbark(\k^{-2}q^m + \beta))\varphi_n d\sigma \right.  \nonumber\\
& \left.\qquad + \left(\g_t^{k-1}(\labark^i_j q^m,_i \delta_t^s(\psibark^j)), \varphi_n\right)_{L^2(\Omega)} + \left(\g_t^{k-1}(\Jbark\,\alpha^-),\varphi_n\right)_{L^2(\Omega)}\right]_{t=0}.
\end{align*}
\end{subequations}
The coefficients $e^i_{sn}$ for $i=1,\dots,4$ of the system \eqref{E:Galerkin_system}, are given by,
\begin{align*}
e^4_{sn} &:= \left( \Jbark \varphi_n,\varphi_s\right)_{L^2(\Omega)},\\
e^3_{sn} &:= 3\left(\g_t(\Jbark) \varphi_n,\varphi_s\right)_{L^2(\Omega)} + \left(\labark^i_j\abark^k_j \varphi_n,_k , \varphi_s,_i\right)_{L^2(\Omega)}\\
&\qquad + \int_\Gamma \Jbark\k^{-2}\varphi_n\varphi_sd\sigma - \left(\labark^i_j\varphi_n,_i\delta_t^s(\psibark^j), \varphi_l\right)_{L^2(\Omega)},\\
e^2_{sn} &:= 3\left(\g_t^2(\Jbark) \varphi_n,\varphi_s\right)_{L^2(\Omega)} + 3\left(\g_t(\labark^i_j\abark^k_j) \varphi_n,_k , \varphi_s,_i\right)_{L^2(\Omega)}\\
&\qquad + 3\int_\Gamma \g_t(\Jbark)\k^{-2}\varphi_n\varphi_sd\sigma - 3\left(\g_t(\labark^i_j\delta_t^s(\psibark^j))\varphi_n,_i, \varphi_l\right)_{L^2(\Omega)},\\
e^1_{sn} &:= 3\left(\g_t^3(\Jbark) \varphi_n,\varphi_s\right)_{L^2(\Omega)} + 3\left(\g_t^2(\labark^i_j\abark^k_j) \varphi_n,_k , \varphi_s,_i\right)_{L^2(\Omega)}\\
&\qquad + 3\int_\Gamma \g_t^2(\Jbark)\k^{-2}\varphi_n\varphi_sd\sigma - 3\left(\g_t^2(\labark^i_j\delta_t^s(\psibark^j))\varphi_n,_i, \varphi_l\right)_{L^2(\Omega)},\\
e^0_{sn} &:= \left(\g_t^3(\labark^i_j\abark^k_j) \varphi_n,_k , \varphi_s,_i\right)_{L^2(\Omega)} + \int_\Gamma \g_t^3(\Jbark)\k^{-2}\varphi_n\varphi_sd\sigma\\
&\qquad - \left(\g_t^3(\labark^i_j\delta_t^s(\psibark^j))\varphi_n,_i, \varphi_l\right)_{L^2(\Omega)},
\end{align*}
and,
\begin{equation*}
\mathcal{A}_{s} := \left(\g_t^3(\Jbark\alpha),\varphi_s\right)_{L^2(\Omega)},\ \mathcal{B}_s := - \int_\Gamma \g_t^3(\Jbark\beta)\varphi_s d\sigma.
\end{equation*}
\begin{remark}
Notice that we are considering as a generic model the weak formulation in the domain $\Omega^-$, but an analogous process works for $\Omega^+$, with the inclusion of the integral term on the boundary $\g\Omega$: $\mathcal{C}_s = \int_{\g\Omega}\g_t^3(\Jbark \gamma)\varphi_sd\sigma$. 
\end{remark}
The fundamental theorem of ODEs provides us then with a solution $q^m$ of the form \eqref{E:galerkin_sol}, that satisfies the system \eqref{E:Galerkin_system} in the time interval $[0,T_\k^m]$, which a priori depends on the parameter $m$. Moreover, from linearity, $q^m$ satisfies, 
\begin{subequations}\label{E:weak_time_3_Galerkin}
\begin{align}
&\left(\g_t^3 (\Jbark\, q^m_t),\phi^-\right)_{L^2} +  \left(\g_t^3(\labark^i_j (\abark^k_j q^m,_k), \phi^-,_i\right)_{L^2(\Omega^-)} + \int_\Gamma \g_t^3(\Jbark(\k^{-2}q^m + \beta^-))\phi^- d\sigma  \nonumber\\
& = \left(\g_t^3(\labark^i_j q^m,_i \delta_t^s(\psibark^j)), \phi^-\right)_{L^2(\Omega^-)} + \left(\g_t^3(\Jbark\,\alpha^-),\phi^-\right)_{L^2(\Omega^-)}, \label{E:weak_time_3_minus_Galerkin}\\
&\left(\g_t^3 (\Jbark\, q^m_t),\phi^+\right)_{L^2} +  \left(\g_t^3(\labark^i_j (\abark^k_j q^m,_k), \phi^+,_i\right)_{L^2(\Omega^+)} + \int_\Gamma \g_t^3(\Jbark(\k^{-2}q^m + \beta^+))\phi^+ d\sigma  \nonumber\\
& =  \left(\g_t^3(\labark^i_j q^m,_i \delta_t^s(\psibark^j)), \phi^+\right)_{L^2(\Omega^+)} + \int_{\g\Omega}(\g_t^3(\Jbark\,\gamma)) \phi^+ d\sigma\nonumber\\
&\qquad + \left(\g_t^3(\Jbark\,\alpha^+),\phi^+\right)_{L^2(\Omega^+)},\label{E:weak_time_3_plus_Galerkin}
\end{align}
\end{subequations}
for all $\phi^\pm$ in the finite dimensional space generated by $\{\varphi_n^\pm\}_{n\leq m}$. In addition, given the definition of the initial data \eqref{E:Galerkin_initial_data}, we can integrate in time equation \eqref{E:weak_time_l_Galerkin} as many as three times, to obtain that $q^m$ solves an analogous formulation as \eqref{E:weak_time_l} for 
$l=0,1,2,3$
\begin{subequations}\label{E:weak_time_l_Galerkin}
\begin{align}
&\left(\g_t^l (\Jbark\, q^m_t),\phi^-\right)_{L^2} +  \left(\g_t^l(\labark^i_j (\abark^k_j q^m,_k), \phi^-,_i\right)_{L^2(\Omega^-)} + \int_\Gamma \g_t^l(\Jbark(\k^{-2}q^m + \beta^-))\phi^- d\sigma  \nonumber\\
& = \left(\g_t^l(\labark^i_j q^m,_i \delta_t^s(\psibark^j)), \phi^-\right)_{L^2(\Omega^-)} + \left(\g_t^l(\Jbark\,\alpha^-),\phi^-\right)_{L^2(\Omega^-)}, \label{E:weak_time_l_minus_Galerkin}\\
&\left(\g_t^l (\Jbark\, q^m_t),\phi^+\right)_{L^2} +  \left(\g_t^l(\labark^i_j (\abark^k_j q^m,_k), \phi^+,_i\right)_{L^2(\Omega^+)} + \int_\Gamma \g_t^l(\Jbark(\k^{-2}q^m + \beta^+))\phi^+ d\sigma  \nonumber\\
& =  \left(\g_t^l(\labark^i_j q^m,_i \delta_t^s(\psibark^j)), \phi^+\right)_{L^2(\Omega^+)} + \int_{\g\Omega}(\g_t^l(\Jbark\,\gamma)) \phi^+ d\sigma\nonumber\\
&\qquad + \left(\g_t^l(\Jbark\,\alpha^+),\phi^+\right)_{L^2(\Omega^+)},\ \ \forall \phi^\pm \in \langle\{\varphi^\pm_n\}_{n=1}^m\rangle.\label{E:weak_time_l_plus_Galerkin}
\end{align}
\end{subequations}
The next step is to obtain estimates intependent of $m$. Standard parabolic estimates gives us that $q^m \in L^\infty([0,T^m_\k];L^2(\Omega))\cap L^2([0,T^m_\k];H^1(\Omega))$ with $m$-independent bounds, which allows us to extend $q^m(t)$ beyond $T_\k^m$, and up to an $m$-independent time $T_\k$. Indeed, substituting $\phi = q^m$ on equation \eqref{E:weak_time_l_Galerkin} for $l=0$, we have the bound,
\begin{equation}\label{E:first_parabolic_estimate}
\|q^m(t)\|^2_{L^2} + \|\nabla_{\psibark}q^m\|^2_{L^2_tL^2} + \k^{-2}|q^m|^2_{L^2_tL^2}\leq C(M,q_0),\ \forall t\in [0,T_\k],
\end{equation}
where $C(M,q_0)$ is a constant that depends only on $M$ and $q_0$ (see Lemma \ref{L:first_parabolic_estimates} in the apendix for more details). Moreover, given the regularity of $\psibark$, we can improve the bounds so that $q^m\in L^\infty([0,T_\k]; H^1(\Omega))$ and $q_t^m\in L^2([0,T_\k];L^2(\Omega))$ by using as a test function $\phi = q_t^m$ in \eqref{E:weak_time_l_Galerkin} with $l=0$, and following similar estimates as in the proof of \eqref{E:first_parabolic_estimate}.
Consequently, we found that $q^m\in C([0,T_\k],L^2(\Omega))$, and furthermore, using elliptic estimates, we obtain that $q^m\in L^2([0,T_\k];H^2(\Omega))$, and therefore $q^m\in C([0,T_\k];H^1(\Omega))\cap L^2([0,T_\k];H^2(\Omega))  $.\\

Consider now the first time differentiated problem, equation \eqref{E:weak_time_l_Galerkin} for $l=1$. Using the previously found regularity of $q^m$ and $q_t^m$, and repeating the parabolic regularity arguments for $\tilde{q}^m := q_t^m$, we have that $q_t^m\in C([0,T_\k]; H^1(\Omega))\cap L^2([0,T_\k];H^2(\Omega))$, and $q_{tt}^m\in L^2([0,T_\k]; L^2(\Omega))$. These estimates for $q^m_t$, combined again with elliptic estimates for the non-time-differentiated problem \eqref{E:weak_time_l_Galerkin} for $l=0$, gives us that $q^m\in C([0,T_\k];H^3(\Omega))\cap L^2([0,T_\k];H^4(\Omega))$. Iterating this process one more time for the twice-in-time differentiated problem, we obtain that $q^m_{tt}\in C([0,T_\k]; H^1(\Omega))\cap L^2([0,T_\k];H^2(\Omega))$, and $q^m_{ttt}\in L^2([0,T_\k];L^2(\Omega))$. Elliptic estimates on the one-time-differentiated problem \eqref{E:weak_time_l_Galerkin} for $l=1$, gives us estimates for $q^m_t \in C([0,T_\k];H^3(\Omega))\cap L^2([0,T_\k];H^4(\Omega))$, and therefore, using elliptic regularity once again on the non-time-differentiated problem, we obtain estimates for $q^m\in C([0,T_\k];H^5(\Omega))\cap L^2([0,T_\k];H^6(\Omega))$. The final step follows from the triple time-differentiated problem in the same way, but some terms must be treated carefully, and we address them below. First we will show that the triple time-differentiated approximation $q^m_{ttt}{}^\pm$ to \eqref{E:weak_time_l} satisfies the following inequality:
\begin{align}
\frac{1}{2}\|q_{ttt}^m{}^\pm\|^2_{L^\infty_t L^2} + \|q_{ttt}^m{}^\pm\|^2_{L^2_tH^1} + \k^{-2}|q_{ttt}^m{}^\pm|^2_{L^2_tL^2} \lesssim C_\k M_0 + C_\k \sqrt{t}(\|q_{ttt}^m{}^\pm\|^2_{L^\infty L^2} + \|q_{ttt}^m{}^\pm\|^2_{L^2H^1}),\label{E:q_tttm}
\end{align}
where $M_0 = M_0(q_0^\pm,h_0)$ is a function of the initial data, and $C_\k$ is a constant that depends badly on the smoothing parameter $\k$.
Indeed, the weak form of the triple time differentiated problem can be written as,
\begin{align*}
&\langle \Jbark\, q^m_{tttt},\phi\rangle + \left(\Jbark\,\abark^i_j q^m_{ttt},_i,\abark^k_j\phi,_k\right)_{L^2(\Omega^\pm)} + \int_\Gamma \Jbark\,(\k^{-2}q^m_{ttt} + \g_t^3(v^m\cdot \abark^\top N)\vert_{t=0})\phi d\sigma\\
&= -\left(\Jbark\,v^m\cdot \delta_t^s(\psibark_{ttt}),\phi\right)_{L^2} + \mathcal{Q}^m_3(\phi),
\end{align*}
where in the right-hand side we write the highest order remainder $I:= -\left(\Jbark\,v^m\cdot \delta_t^s(\psibark_{ttt}),\phi\right)_{L^2}$ by itself, and the lower order terms collected in,
\begin{align*}
\mathcal{Q}^m_3(\phi) &= \left([\g_t^3, \labark^i_j\abark^k_j]q^m,_k,\, \phi,_i\right)_{L^2} - \left([\g_t^3,\,\Jbark\, v^m{}]\cdot\psibark_t,\phi\right)_{L^2} - \int_\Gamma [\g_t^3,\, \Jbark](\k^{-2}q^m + \beta^m)\phi\\
&\qquad + \left(\g_t^3(\Jbark\,\alpha^m),\phi\right)_{L^2}, 
\end{align*} 
where $[a,b]c = a(bc) - b(ac)$ denotes the commutator.
We will prove that, choosing $\phi = q^m_{ttt}$, we have the bound
\begin{equation}\label{E:error_control}
\int_0^t (I + \mathcal{Q}_3(q^m_{ttt}))ds \lesssim_M M + C_\k\sqrt{t}(\|q^m_{ttt}\|^2_{L^2H^1} + \|q^m_{ttt}\|^2_{L^\infty L^2}), 
\end{equation}
where $C_\k$ is a constant that depends badly on $\k$. 
The integral $I$ contains as a factor the term $\delta_t^s\psibark_{ttt}$, which, if we were to take the limit as $s\rightarrow 0$ right away, it would depend on $\bar{h}^\k_{tttt}$, which has too many time derivatives on $\bar{h}^\k$ (here lies the necessity to include the discrete operator $\delta_t^s$ into the weak formulation). Nonetheless, this problem is easy to overcome. Recall that, since we know that $q^m_{ttt}$ is in $L^2_tL^2$ and $q^m_{tt}$ is in $L^2_tH^2$, we can use the strong form of the twice-in-time differentiated heat equation to obtain, 
\begin{align*}
I_2:=-\int_0^t \left(\Jbark\,v^m\cdot\delta_t^s\psibark_{ttt}, q^m_{ttt}\right)_{L^2} ds &= -\int_0^t \left(\Jbark\,v^m\cdot \delta_t^s\psibark_{ttt}, -v^m\cdot\psibark_{ttt} + \Delta_\psibark q^m_{tt}\right)_{L^2} + \mathcal{Q}_4,
\end{align*} 
where $\mathcal{Q}_4$ corresponds to the error terms that follow from the $L^2$-inner product of $v^m\cdot\delta_t^s\psibark_{ttt}$ with the remainder terms from the twice-in-time differentiated heat equation. Estimates for these terms follow from integrating by parts in time to remove a derivative from $\delta_t^s\psibark_{ttt}$, and then using standard Cauchy-Schwarz inequality. Therefore, we focus on the higher-order terms, which can be rewritten as,
\begin{align*}
I_3 &:= \int_0^t \left(\Jbark\,v^m\cdot \delta_t^s\psibark_{ttt}, -v^m\cdot\psibark_{ttt} + \Delta_\psibark q^m_{tt}\right)_{L^2}\\
& = \frac{1}{2} \int_0^t \delta_t^s \left[\|\sqrt{\scriptstyle\Jbark} (v^m\cdot\psibark_{ttt})\|^2_0\right]ds - \int_0^t \left(\delta_t^s(\Jbark\,v^m)\cdot \psibark_{ttt}, v^m\cdot\psibark_{ttt}\right)_{L^2}ds\\
&\qquad - \int_0^t \left( \Jbark\,v^m\cdot\psibark_{ttt}, \delta_t^s(\Delta_{\psibark}q^m_{tt})\right)_{L^2}ds - \int_0^t \left( \delta_t^s(\Jbark v^m)\cdot\psibark_{ttt}, \Delta_{\psibark}q^m_{tt}\right)_{L^2}ds\\
&\qquad +\int_0^t \delta_t^s \left[\left( \Jbark\, v^m\cdot\psibark_{ttt}, \Delta_{\psibark}q^m_{tt}\right)_{L^2}\right]ds.    
\end{align*}

Most of these terms are easily bounded as \eqref{E:error_control} by using Cauchy-Schwarz inequality, but the fourth and sixth terms require an intermediate step. It is necessary to first integrate by parts in space to remove a derivative from $q^m_{ttt}$ or $q^m_{tt}$ respectively.

\begin{align*}
I_4 &:= - \int_0^t \left( \Jbark\, v^m\cdot\psibark_{ttt}, \delta_t^s(\Delta_{\psibark}q^m_{tt})\right)_{L^2}ds\\
&= -\int_0^t \int_\Gamma \Jbark\,\nabla q^m\cdot\abark^\top N \bar{h}^\k_{ttt} \delta_t^s(\abark^k_j\abark^i_j q^m_{tt},_i ) N^k d\sigma ds\\
&\qquad + \int_0^t \int_{\Omega^\pm} (\Jbark\,v^m\cdot \psibark_{ttt}\abark^i_j,_i \abark^k_j + (\Jbark\,v^m\cdot\psibark_{ttt}),_i \abark^i_j\abark^k_j)\delta_t^s q^m_{tt},_k + l.o.t.\\
&\leq   \k^{-2}\int_0^t \int_\Gamma \Jbark\,(v^m\cdot N) \bar{h}^\k_{ttt} \delta_t^s q^m_{tt}d\sigma ds + C_\k\sqrt{t}|\bar{h}^\k_{ttt}|_{L^\infty_t L^2}\|\delta_t^sq^m_{tt}\|_{L^2_t H^1} + l.o.t.\\
&\leq C_M \k^{-2}\sqrt{t}|\bar{h}^\k_{ttt}|_{L^\infty_t L^2} (|q^m_{ttt}|_{L^2_tL^2(\Gamma)} + \|q^m_{ttt}\|_{L^2 H^1})\sum\limits_{l\leq 2} \|\g_t^l q^m\|_{L^2_tH^{6-2l}}\\
&\leq C_\k\sqrt{t} (\|q^m_{ttt}\|^2_{L^\infty_t L^2}+\|q^m_{ttt}\|^2_{L^2_t H^1}),
\end{align*}    
where in the last inequality we used the previously found bounds for the terms $\|\g_t^l q^m\|_{L^2_t H^{6-2l}}$. Now, for the sixth term, we integrate by parts one of the derivatives on $q^m_{tt}$,
\begin{align*}
I_5 &:=  \int_0^t \delta_t^s \left[\left( \Jbark\, v^m\cdot\psibark_{ttt}, \Delta_{\psibark}q^m_{tt}\right)_{L^2}\right]ds\\
&= \int_0^t \delta_t^s \left[-\int_\Omega (\Jbark\,v^m\cdot\psibark_{ttt}\abark^i_j),_i \abark^k_j q^m_{tt},_k + \int_\Gamma \Jbark\,v^m\cdot\psibark_{ttt}\abark^i_j N^i \abark^k_j q^m_{tt},_k\right]ds\\
&\leq C\k^{-1}\|q^m_{tt}\|_{L^\infty H^1} |\bar{h}^\k_{ttt}|_{L^\infty L^2(\Gamma)} + \int_0^t\delta_t^s\left[\int_\Gamma \Jbark v^m\cdot N \bar{h}^\k_{ttt} (v^m_{tt}{}^j A^i_jN^i) d\sigma\right] ds + \text{l.o.t.}\\
&\leq C_\k M_0 + \int_0^t\delta_t^s\left[\int_\Gamma \Jbark v^m\cdot N \bar{h}^\k_{ttt} \k^{-2}q^m_{tt} d\sigma\right] ds + \text{l.o.t.}\\
&\leq C_\k M_0 + C\k^{-2}|\bar{h}^\k_{ttt}|_{L^\infty L^2}\|q^m_{tt}\|_{L^\infty H^1} + \text{l.o.t.}\\
&\leq C_\k M_0,
\end{align*}
where in line three we pulled a coefficient $\k^{-1}$ to absorb a derivative from the norm of $\bar{h}^\k_{ttt}$, and in line four we used the strong form of the boundary condition \eqref{E:ALEregularized_dirichlet} differentiated twice in time. Combining all the estimates together, and using the bounds for $\Jbark$ analogous to the ones in Lemma \ref{gJbounds}, we obtain therefore,
\begin{align*}
\frac{1}{2}\|q_{ttt}^m\|^2_{L^\infty L^2} + \|\nabla_{\psibark}q^m_{ttt}\|^2_{L^2L^2} + \k^{-2}|q^m_{ttt}|^2_{L^2L^2} \lesssim C_\k M_0 + C_\k \sqrt{t}(\|q^m_{ttt}\|^2_{L^\infty L^2} + \|q^m_{ttt}\|^2_{L^2H^1}).
\end{align*}
With the bounds for the matrices $\abark$ being analogous as those of Lemma \ref{A_bounds}, combined with a modified Poincar\'e inequality detailed in equation (4.6) of \cite{mHsS2015}, we obtain the desired inequality.

Now, taking $T_\k$ small enough on equation \eqref{E:q_tttm} gives us $m$-independent bounds for $q_{ttt}^m\in L^\infty([0,T_\k],L^2(\Omega^\pm))\cap L^2([0,T_\k],H^1(\Omega^\pm))$, and moreover, from the weak formulation \eqref{E:weak_time_l_Galerkin}, we can obtain as well that $q^m_{tttt}\in L^2([0,T],H^1(\Omega^\pm)^\ast)$ with bounds independent of $m$. As a consequence, $q^m_{ttt}\in C([0,T_\k];L^2(\Omega^\pm))\cap L^2([0,T_\k];H^1(\Omega^\pm))$, and therefore we can use elliptic regularity in succession on the time differentiated problems to obtain the desired $m$-independent estimates for $\g_t^l q^m \in C([0,T_\k];H^{6-2l}(\Omega))\cap L^2([0,T_\k];H^{7-2l}(\Omega))$ for $l=0,1,2,3$.\\

Passing to the limit as $m\rightarrow \infty$, we obtain a weak solution $\qs$ to \eqref{E:weak_time_l} for $l=3$, that, from lower semi-continuity, satisfies that $\g_t^l\qs\in C([0,T_\k];H^{6-2l}(\Omega))\cap L^2([0,T_\k];H^{7-2l}(\Omega))$, and $\qs_{tttt}\in L^2([0,T_\k];H^1(\Omega)^*)$. 
Consider now the the time integral of equation \eqref{E:weak_time_l} for $l=3$. Given the compatibility conditions, we will recover that $\qs$ satisfy \eqref{E:weak_time_l} for $l=2$, and for this case, the term containing $\delta_t^s \psibark_{tt}$ converges strongly to $\psibark_{ttt}$ in $L^2(\Omega)$ as $s\rightarrow 0$. The estimates that we obtained were independent of the parameter $s$, therefore we can pass to the limit as $s\rightarrow 0$, to obtain a weak solution $q^\pm$, such that,
\begin{align}
&\langle\g_t^2 (\Jbark\, q_t),\phi^-\rangle +  \left(\g_t^2(\labark^i_j (\abark^k_j q,_k), \phi^-,_i\right)_{L^2(\Omega^-)} + \int_\Gamma \g_t^2(\Jbark(\k^{-2}q^- + \beta^-))\phi^- d\sigma  \nonumber\\
& = \left(\g_t^2(\labark^i_j q,_i \psibark^j_t), \phi^-\right)_{L^2(\Omega^-)} + \left(\g_t^2(\Jbark\,\alpha^-),\phi^-\right)_{L^2(\Omega^-)},\\
&\langle\g_t^2 (\Jbark\, q_t),\phi^+\rangle +  \left(\g_t^2(\labark^i_j (\abark^k_j q,_k), \phi^+,_i\right)_{L^2(\Omega^+)} + \int_\Gamma \g_t^2(\Jbark(\k^{-2}q^+ + \beta^+))\phi^+ d\sigma  \nonumber\\
& =  \left(\g_t^2(\labark^i_j q,_i \psibark^j_t), \phi^+\right)_{L^2(\Omega^+)} + \int_{\g\Omega}(\g_t^2(\Jbark\,\gamma)) \phi^+ d\sigma\nonumber\\
&\qquad + \left(\g_t^2(\Jbark\,\alpha^+),\phi^+\right)_{L^2(\Omega^+)},\nonumber
\end{align}
holds for all $\phi^\pm\in H^1(\Omega^\pm)$, and that it satisfies that $\g_t^lq^\pm\in C([0,T_\k];H^{6-2l}(\Omega^\pm))\cap L^2([0,T_\k];H^{7-2l}(\Omega^\pm))$. This finishes the proof of existence of weak solutions to the linear problem \eqref{E:ALEregularized_linear}.

\subsubsection{Step 3: Contraction mapping theorem.}
We now define an operator $\Phi_\k$ on $X_M$. Given $\bar{h}\in X_M$ we set
\begin{equation}\label{E:ALE_ht}
\Phi_\k(\bar{h}) = h := h_0 + \int_0^t [v\cdot\bar{n}^\k]^+_- ds,
\end{equation} 
where $v^\pm$ is the solution to the linearized problem \eqref{E:ALEregularized_linear}, and $\bar{n}^\k:= (N-\tau\frac{\t \bar{h}^\k}{(1+H\bar{h}^\k)})$. 
Notice that $\Phi_\k$ maps $X_M$ to itself as proven in Lemma \ref{L:xmtoxm} in the Appendix. To prove that $\Phi_\kappa$ is a contraction, we assume $\bar{h}^1,\ \bar{h}^2$ are given, and consider $\Phi_\kappa(\bar{h}^1) = h^1,\ \Phi_\kappa(\bar{h}^2) = h^2$
with the associated temperature gradients $v_1, v_2.$ The difference $\delta h_t = h_t^1-h_t^2$ is given by,
\begin{equation}
\delta h_t = [\delta v\cdot \bar{n}^\kappa_1]^+_- + [v_2\cdot\delta \bar{n}^\kappa]^+_-,
\end{equation}
where $\bar{n}_\alpha^\kappa = N - \dfrac{\t \bar{h}^\k_\alpha}{(1+\bar{H}^\k\bar{h}^\k_\alpha)}\tau$, for $\alpha=1,2,$
$\delta v = v_1-v_2,$ and $\delta\bar n^\kappa = \bar n^\kappa_1-\bar n^\kappa_2.$
Taking two time derivatives we obtain that, 
\begin{equation*}
\delta h_{ttt} = [\delta v_{tt}\cdot\bar{n}^\k_1]^+_- + [v^2\cdot\bar{n}^\k_{tt}]^+_- + \mathcal{Y},
\end{equation*}
where we gather the lower order terms in $\mathcal{Y}$,
\begin{equation*}
\mathcal{Y} := [v_2{}_{tt}\cdot\delta\bar{n}^\k + \delta v\cdot\bar{n}^\k_1{}_{tt} + 2\delta v_t\cdot\bar{n}^\k_1{}_t + 2v_2{}_t\cdot\delta\bar{n}^\k_t]^+_-. 
\end{equation*}

A straightforward bound using Sobolev and trace inequalities gives:
\begin{align}\label{E:aux_delta_httt}
|\delta h_{ttt}|_{H^k} &\leq C\left(\snorm{\delta v^+_{tt}}_{k+0.5}+\snorm{\delta v^-_{tt}}_{k+0.5}+ (\|v_2^+\|_{k+0.5}+\|v_2^-\|_{k+0.5})|\delta \bar{h}^\kappa_{tt}|_{H^{k+1}}\right), \text{ for } k=0,1,
\end{align}  
where $\delta\bar h^\kappa = \bar h^\kappa_1-\bar h^\kappa_2.$
We now obtain the necessary estimates for $\snorm{\delta v^\pm_{tt}}_{L^2 H^{1.5}}$. We will omit the superscript $\pm$ for simplicity of notation. Taking the difference of equations \eqref{E:ALEregularized_linear_v} for $v_1$ and $v_2$, and taking two time derivatives we obtain,

\begin{equation*}
\delta v_{tt} + \g_t^2\left(\abark_1{}^\top \nabla \delta q\right) = \g_t^2\left(-\delta\left(\abark{}^\top\right)  \nabla q_2\right),
\end{equation*}
with $ \delta\left(\ak{}^\top\right) = \ak_1{}^\top - \ak_2{}^\top.$
Therefore, using the bounds for $\abark$ from Lemma \ref{L:delta_A_bound} we arrive at
\begin{align}\label{E:delta_v}
\snorm{\delta v_{tt}}_{H^{1.5}}&\leq \|\abark_1\|_{1.5} \snorm{\delta q_{tt}}_{2.5} + \|\delta \abark\|_{1.5}\|q_2{}_{tt}\|_{2.5} + \|\delta \abark_{tt}\|_{1.5}\|q_2\|_{2.5}\nonumber\\
&\qquad +\|\abark_1{}_{tt}\|_{1.5}\|\delta q\|_{2.5} +l.o.t.\nonumber\\
&\leq C_M(\|\delta q_{tt}\|_{2.5} + \sqrt{t}|\delta\bar{h}_{tt}|_{L^2 H^2}) 
\end{align}

Furthermore, the difference $\delta q_{tt}$ satisfies the following parabolic problem:
\begin{subequations}
\begin{align*}
\delta q^\pm_{ttt} - \abark^i_{1j}{}^\pm(\abark^k_{1j}{}^\pm\delta q_{tt}^\pm,_k),_i &= f^\pm \text{ in } \Omega^\pm,\\
\delta q^\pm_{tt} &= 0 \text{ on } \Gamma,\\
\delta v^+_{tt}\cdot {\bf N}^+ &= 0 \text{ on }\g\Omega,\\
\delta q^\pm_{tt}(0,{\bf x}) &= 0 \text{ on } \Omega^\pm\times\{t=0\},
\end{align*} 
\end{subequations}
where 
\begin{align}\label{E:SOURCEF}
f &= \g_t^2\left(-\delta v\cdot \bar{w}_1{}_\k - v_2\cdot\delta\psibark_t + \delta\left(\abark^i_j{}\right)(\abark^k_{1j}{} q_2,_k),_i  
+ \abark^i_{2j}{}(\delta\left(\abark^k_j{}\right) q_2,_k),_i\right)\nonumber\\
& + \g_t^2 \abark_1{}^i_j(\abark_1{}^k_j\delta q,_k),_i + \abark_1{}^i_j(\g_t^2\abark_1{}^k_j\delta q,_k),_i + 2\g_t\abark_1{}^i_j(\g_t\abark_1{}^k_j\delta q,k),_i\nonumber\\
& + 2\g_t\abark_1{}^i_j(\abark_1{}^k_j\delta q_t,_k),_i + 2\abark_1{}^i_j(\g_t\abark_1{}^k_j\delta q_t,_k),_i
\end{align} 
Standard parabolic regularity provides:
\begin{equation}\label{E:parabolicestimate}
\|\delta q_{tt}^\pm\|_{L^\infty H^{2}} + \|\delta q_{tt}^\pm\|_{L^2 H^{2.5}}\leq C_\k\snorm{f^\pm}_{L^2 H^{0.5}},
\end{equation}
where estimates for the source term $f^\pm$ given in~Lemma \ref{L:sourceF} in the appendix gives us,
\begin{equation*}
\|\delta q^\pm_{tt}\|_{L^\infty H^{2}} + \|\delta q^\pm_{tt}\|_{L^2_t H^{2.5}}\leq C_M\k^{-1}\sqrt{T_\k}\ \mathcal{S}(\delta q,\delta\bar{h}^\k)^{1/2},
\end{equation*}
where $\mathcal{S}(\delta q,\delta\bar{h}^\k)$ is the high-order norm defined in \eqref{S-local} evaluated in $\delta q$ and $\delta\bar{h}^\k$.
Repeating this argument for the parabolic problems associated to $\delta q_{t}$ and $\delta q$, and adding all the inequalities together we obtain that,
\begin{equation}
\newe^+(\delta q,\delta\bar{h}^\k) + \newe^-(\delta q,\delta\bar{h}^\k) + \int_0^t (\newd^+(\delta q,\delta\bar{h}^\k) + \newd^-(\delta q,\delta\bar{h}^\k))ds \lesssim_M \k^{-1} \sqrt{T_\k} \mathcal{S}(\delta q, \delta\bar{h}^\k),\label{E:norm_balance_contraction}
\end{equation}
where we recall the definitions of $\newe^\pm$ and $\newd^\pm$ from Section \ref{S:norms} as the higher-order norms of $\g_t^l\delta q$. A small enough time $T_\k$ allows us to absorb the terms in the right side of equation \eqref{E:norm_balance_contraction} with the same norms of $\g_t^l\delta q$ as the in the left side, leaving only the boundary norms,
\begin{align*}
&\newe^+(\delta q,\delta\bar{h}^\k) + \newe^-(\delta q,\delta\bar{h}^\k) + \int_0^t (\newd^+(\delta q,\delta\bar{h}^\k) + \newd^-(\delta q,\delta\bar{h}^\k))ds\\
& \lesssim_M \k^{-1} \sqrt{T_\k}(\newe^\ppp_{\loc}(\delta\bar{h}^\k) + \int_0^t \newd^\ppp_{\loc}(\delta\bar{h}^\k) ds).
\end{align*} 
Therefore, using this together with~\eqref{E:delta_v} and \eqref{E:aux_delta_httt}, we obtain

\begin{align*}
|\delta h_{ttt}|^2_{L^\infty L^2(\Gamma)} + |\delta h_{ttt}|^2_{L^2H^1(\Gamma)} \lesssim_M \k^{-1} \left(\newe^\ppp_{\loc}(\delta\bar{h}^\k) + \int_0^t \newd^\ppp_{\loc}(\delta\bar{h}^\k) ds\right).
\end{align*}

An analogous estimate for $\g_t^l \delta h$ in $L^\infty H^{6-2l}\cap L^2 H^{6.5-2l}$ for $l=0,1,2$ respectively, allows us to conclude that

\begin{equation*}
\newe^\ppp_{\loc}(\delta h) + \int_0^t \newd^\ppp_{\loc}(\delta h)ds \lesssim_M\k^{-1}\sqrt{T_\k}(\newe^\ppp_{\loc}(\delta\bar{h}^\k) + \int_0^t\newd^\ppp_{\loc}(\delta\bar{h}^\k)ds).
\end{equation*} 
We see that $\Phi_\kappa$ is a contraction for $T_\kappa$ sufficiently small and the theorem follows from the contraction mapping theorem.

\end{proof}

\subsection{Definition of the energy functionals}\label{s:energydef}

The key ingredient to the proof of the main theorems is the introduction of the higher-order \textbf{weighted} energy $\mathscr{E}_\k(t)$, which will be shown it controls the norm $\mathcal{S}$ evaluated on the solutions of the regularized problem \eqref{E:ALEregularized_linear}, which we define as,
\begin{equation}
\mathcal{S}_\k(t) := \mathcal{S}(q^\pm,h^\k).
\end{equation} 
Note that $(q^\pm,h^\k)$ is the solution to the regularized problem \eqref{E:ALEregularized} obtained in Theorem \ref{T:smooth_solutions}, and therefore $\mathcal{S}_\k(t)$ is bounded for all $t\in[0,T_\k]$.  

\subsection*{The weight functions $W^\pm$}

To define the energy associated with the two-phase Stefan problem, we will introduce {\em weight} functions $W^\pm(t,x)$, that will allow us to successfully include the non-degeneracy condition~\eqref{E:localTaylorsign} in our theory.
Let $W^\pm:\Omega^\pm\to\R$ be a solution to the following Dirichlet problem:
\begin{subequations}\label{E:W}
\begin{align}
\Delta W^\pm & = 0,\ x\in\Omega^\pm,\label{E:W_laplacian}\\
W^\pm & = \frac{e^{(-\lambda_1 + \eta)t}}{\g_Nq^\pm},\ x\in\Gamma\label{E:W_dirichlet},\\
W^+ &\equiv \frac{e^{(-\lambda_1 + \lambda_1^+ + \eta) t}}{|c_1^+|},\ \ x\in \partial\Omega,\label{W_dirichlet_top}
\end{align}
\end{subequations}
where $c_1^+:=( q_0^+,\varphi_1^+)_{L^2}$, defined in \eqref{E:c_1}. Note that $W^\pm>0$ in $\Omega^\pm$ by the maximum principle and the Rayleigh-Taylor assumption~\eqref{E:localTaylorsign}, 
which, by continuity guarantees that $\g_Nq^\pm>0$ at least for short times. 
The long-time behavior of $W^\pm$ is very important for the proof of global stability and it depends on the difference between the first eigenvalues of the Dirichlet-Laplacian in 
the regions $\Omega^+$ and $\Omega^-.$ On the other hand, the short-time behavior of $W^\pm$ is easily characterized in the following lemma. 
\begin{lemma}[Local estimates for $W^\pm$]\label{l:wbounds_local}
Suppose the Taylor sign condition \eqref{E:localTaylorsign} holds for some $\delta >0$, and assume there exists a constant $M>0$ such that $S(t) \leq M$. 
Then there exist positive constants $c_{\delta, M} >0,\ C_{\delta, M}>0$, such that the solution $W^\pm$ to \eqref{E:W} satisfy,
$$c_{\delta,M} \leq W^\pm\leq C_{\delta, M}.$$
\end{lemma}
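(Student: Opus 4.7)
The key point is that $W^\pm$ is harmonic, so its extreme values are attained on the boundary of $\Omega^\pm$ by the maximum/minimum principle. Thus it suffices to derive two-sided bounds for the Dirichlet data \eqref{E:W_dirichlet}--\eqref{W_dirichlet_top}. The data on $\partial\Omega$ is explicit: for $t\in[0,T]$ the quantity $e^{(-\lambda_1+\lambda_1^++\eta)t}/|c_1^+|$ is trapped between two positive constants depending only on $T$ and the initial data, so nothing needs to be done there. All the work goes into controlling $e^{(-\lambda_1+\eta)t}/\g_N q^\pm$ on $\Gamma$ from above and below, which amounts to obtaining a uniform upper bound on $\g_N q^\pm$ and a uniform strictly positive lower bound on $\g_N q^\pm$ on the interval where $S(t)\le M$.

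The upper bound is immediate from the assumption $S(t)\le M$: by the trace theorem and Sobolev embedding on the one-dimensional curve $\Gamma$,
\begin{equation*}
|\g_N q^\pm(t)|_{L^\infty(\Gamma)} \;\lesssim\; |\nabla q^\pm(t)|_{H^{4.5}(\Gamma)} \;\lesssim\; \|q^\pm(t)\|_{H^6(\Omega^\pm)} \;\lesssim\; \sqrt{M},
\end{equation*}
where the control of $\|q^\pm(t)\|_{H^6}$ comes directly from $\newe^\pm(t)\le S(t)\le M$. This produces the lower bound $W^\pm\ge c_{\delta,M}e^{(-\lambda_1+\eta)t}/\sqrt{M}$ on $\Gamma$.

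For the lower bound on $\g_N q^\pm$ I would use the fundamental theorem of calculus in time together with the Taylor sign condition \eqref{E:localTaylorsign}. Writing
\begin{equation*}
\g_N q^\pm(t,x) \;=\; \g_N q_0^\pm(x) + \int_0^t \g_N q_t^\pm(s,x)\,ds,
\end{equation*}
and bounding the integrand again by trace/Sobolev using $\|q_t^\pm(s)\|_{H^4(\Omega^\pm)}^2\le \newe^\pm(s)\le M$, one obtains
\begin{equation*}
|\g_N q^\pm(t,x) - \g_N q_0^\pm(x)| \;\le\; C\sqrt{M}\,t\qquad\text{uniformly on }\Gamma.
\end{equation*}
Combining this with \eqref{E:localTaylorsign} yields $\g_N q^\pm(t,x)\ge \delta/2$ for $t\le \delta/(2C\sqrt{M})$, giving the desired uniform upper bound $e^{(-\lambda_1+\eta)t}/\g_N q^\pm \le 2/\delta$ on $\Gamma$.

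Once the two-sided bounds on the Dirichlet data are in place, the maximum principle applied to the harmonic function $W^\pm$ closes the argument, with $c_{\delta,M}$ and $C_{\delta,M}$ reading off from the bounds above (and the $\partial\Omega$-data in the case of $W^+$). The main obstacle is the propagation of strict positivity of $\g_N q^\pm$: one must absorb the short-time drift $C\sqrt{M}\,t$ into $\delta$, which is precisely what forces the constants to depend on both $\delta$ and $M$ and confines this statement to a short time-interval (as reflected in the word ``local'' in the name of the lemma).
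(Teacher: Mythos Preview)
Your proposal is correct and follows essentially the same approach as the paper: both arguments obtain two-sided bounds on $\g_N q^\pm$ on $\Gamma$ (the upper bound via Sobolev/trace from $S(t)\le M$, the lower bound via the fundamental theorem of calculus combined with the Taylor sign condition \eqref{E:localTaylorsign}), and then invoke the maximum principle for the harmonic $W^\pm$. The only cosmetic differences are that the paper uses lower-order Sobolev norms ($\|q\|_{2.25}$, $\|q_t\|_2$) and gets a $\sqrt{t}$ drift via Cauchy--Schwarz in time rather than your $t$ drift from the $L^\infty_t$ bound, and that you are slightly more explicit about the $\partial\Omega$ boundary data for $W^+$.
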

\begin{proof}
Notice that for short time, the Rayleigh-Taylor condition \eqref{E:localTaylorsign} gives us the following {\em upper bound on $\g_N q^\pm$}
\begin{align*}
 |\g_N q^\pm| \leq C\|q^\pm\|_{2.25} \leq C S(t) \leq C_M.
\end{align*}

Similarly, by the fundamental theorem of calculus
\[
 |\g_N q^\pm(t)| \ge  |\g_N q_0^\pm| - \big|\int_0^t\partial_s(\g_N q^\pm)(s)\,ds\big| \ge |\g_N q_0^\pm|  - C\sqrt t \|q_t\|_2 \ge \delta - C\sqrt t M.
\]
Therefore, for small times we have the lower bound
\[
|\g_N q^\pm| \ge c_{\delta,M}>0.
\]
By the maximum principle conclude that 
\begin{align*}
c_{\delta,M} e^{(-\lambda_1 +\eta)t} \leq \min_{x\in\Gamma}W^\pm(t,x) \le \max_{x\in\Gamma}W^\pm(t) \leq C_{\delta,M} e^{(-\lambda_1 +\eta)t}. 
\end{align*}
Taking $t$ so small that $1/2 \leq e^{(-\lambda_1 +\eta)t} \leq 1$, we obtain the result. 
\end{proof}

\subsection*{The natural energy $\mathscr{E}_\k(t)$}
The following definition of the ``natural" energy is seemingly technical, but as it will become apparent in Section~\ref{ab_energy_lemma} it is precisely the natural higher-order positive definite quantity arising from 
an integration-by-parts argument.
\begin{definition}[Higher-order weighted energy $\E_\k(t)$ and dissipation functional $\D_\k(t)$]\label{D:natural_energy}
Let $q^\pm:\Omega^\pm\to\R$, $h:\Gamma\to\R$, and recall the cut off function $\mu$ from \eqref{eq:mudefinition}. We set: 
\begin{align}\label{E:energy}
\E_\k^\pm(t) &:= \frac{1}{2}\sum\limits_{a+2b\le 5}\left(\|\mu^{1/2}\t^a\partial_t^b v^\pm\|^2_{L^{2,W^\pm}} + \k^2e^{(-\lambda_1+\eta)t} |\sqrt{r_\k^\pm} \t^a\g_t^b v^\pm\cdot n^\k|^2_{L^2(\Gamma)} \right)\nonumber\\
& + \frac{1}{2}\sum\limits_{a+2b\le 6}\left(\|\mu^{1/2}(\t^a\partial_t^b q^\pm + \t^a\partial_t^b\psik^\pm\cdot v^\pm)\|^2_{L^{2,W^\pm}} +e^{(-\lambda_1 + \eta)t}|a_\k\t^a\partial_t^b \Lambda_\k h|^2_{L^2(\Gamma)}\right)\nonumber\\
&\frac{1}{2}\sum\limits_{|a|+2b\le 5}\|(1-\mu)^{1/2}\g^a\partial_t^b v^\pm\|^2_{L^2(\Omega^\pm)} +  \frac{1}{2}\sum\limits_{|a|+2b\le 6}\|(1-\mu)^{1/2}(\g^a\partial_t^b q^\pm + \g^a\partial_t^b\psik^\pm\cdot v^\pm)\|^2_{L^2(\Omega^\pm)},
\end{align}
\begin{align}
\D_\k^\pm(t) &:= \sum\limits_{a+2b\le 6}\left(\|\mu^{1/2}\t^a\partial_t^b v^\pm\|^2_{L^{2,W^\pm}} + \k^2e^{(-\lambda_1+\eta)t}|\sqrt{r_\k^\pm}\t^a\g_t^b v^\pm\cdot n^\k|^2_{L^2(\Gamma)}\right)\nonumber\\
&+ \sum\limits_{a+2b\le 5}\left(\|\mu^{1/2}(\t^a\partial_t^{b+1} q^\pm + \t^a\partial_t^{b+1}\psik^\pm\cdot v^\pm)\|^2_{L^{2,W^\pm}} + e^{(-\lambda_1 +\eta)t}|a_\k\t^a\partial_t^{b+1} \Lambda_\k h|^2_{L^2(\Gamma)}\right)\\ 
&+ \sum\limits_{|a|+2b\le 6}\|(1-\mu)^{1/2}\g^a\partial_t^b v^\pm\|^2_{L^2(\Omega^\pm)} + \sum\limits_{|a|+2b\le 5}\|(1-\mu)^{1/2}(\g^a\partial_t^{b+1} q^\pm + \g^a\partial_t^{b+1}\psik^\pm\cdot v^\pm)\|^2_{L^2(\Omega^\pm)}.
\end{align}
where $J_\k:= \det\nabla\psik$ is the determinant of the Jacobian, $g_\k$ is defined by  $g_\k := (\t h^k)^2+(1+H(x)h^\k)^2$, and the coefficients $r_\k^\pm(t,x) := (\g_N q^\pm)^{-1}J_\k^{-2}g_\k$ and $a_\k(t,x) := J_\k^{-1}(1+Hh^\k)$.
 
\end{definition}

We remind the reader that the horizontal derivatives $\t$ are defined in Section~\ref{SS:TANGENTIAL}.
We introduce the {\em total energy}: 
\begin{equation}\label{e:mathscrE_kappa}
\mathscr{E}_\k(t) := \sup\limits_{0\leq s\leq t}\E_\k^+(s) + \sup\limits_{0\leq s\leq t}\E_\k^-(s) + \int_0^t (\D_\k^+(s) + \D_\k^-(s))ds.
\end{equation}


\begin{remark} 
For the proof of the \emph{local well-posedness theorem}, we will show that the following a-priori energy estimate holds,
\be\label{E:EnergyInequality}
\mathscr{E}_\k(t) \leq \mathscr{E}_\k(0) + C \sqrt{t} P(\mathscr{E}_\k(t)),
\ee
where 
$P(\cdot)$ is some polynomial of degree greater than or equal to one, but that it does not depend on $\k$.
A simple continuity argument then yields Theorem~\ref{T:main}. A more careful energy estimate combined with a maximum principle argument gives us the \emph{global stability result}, 
which is explained in Section \ref{C:global}.
\end{remark}

\subsection{Local-in-time energy control}\label{S:ENERGYEQUIV}
Assuming that the Rayleigh-Taylor condition~\eqref{E:localTaylorsign} holds, we shall prove in this section that the control over the derivatives of $q^\pm$ and $h^\k$ provided by the norm $\mathcal{S}_\k(t)$ is dominated by the energy $\mathscr{E}_\k(t)$ defined by~\eqref{e:mathscrE_kappa}.
\begin{prop}\label{P:local_energyequivalence}
Suppose the Taylor sign condition \eqref{E:localTaylorsign} holds for some $\delta>0$, then the norm $\mathcal{S}_\k(t)$ is equivalent to $\mathscr{E}_\k(t)$ in the sense that, 
\begin{equation}\label{EleqS}
\mathcal{S}_\k(t) \leq P(\mathscr{E}_\k(t)),
\end{equation}
for any $t$ on the interval of definition of $\mathscr{E}_\k(t)$ and $\mathcal{S}_\k(t)$ and for $P$ a universal polynomial as described in \ref{ss:Notation}.
\end{prop}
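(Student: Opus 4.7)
The plan is to invert the definition of $\mathscr{E}_\k(t)$ piece by piece, using that the Rayleigh--Taylor hypothesis~\eqref{E:localTaylorsign} and the implicit bootstrap bound $\mathcal{S}_\k(t)\le M$ render the interior weight $W^\pm$ and the boundary coefficients $a_\k,r_\k^\pm$ all harmless multiplicative factors. First, Lemma~\ref{l:wbounds_local} gives constants $c_{\delta,M},C_{\delta,M}>0$ with $W^\pm\in[c_{\delta,M},C_{\delta,M}]$ on a short time interval, so the weighted $L^{2,W^\pm}$ norms are equivalent to the usual $L^2(\Omega^\pm)$ norms; likewise the smallness $\|h^\k\|_6\lesssim\sigma$ yields $a_\k=J_\k^{-1}(1+Hh^\k)$ and $r_\k^\pm$ uniformly bounded above and below. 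It therefore suffices to dominate the pieces of $\mathcal{S}_\k(t)$ by the unweighted $L^2$ quantities supplied by $\mathscr{E}_\k$.

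Using the partition of unity $\mu+(1-\mu)=1$, the bulk terms $\|(1-\mu)^{1/2}\g^a\g_t^b(\cdot)\|_{L^2}$ already control arbitrary mixed derivatives of $v^\pm$ and of $\g^a\g_t^b q^\pm+\g^a\g_t^b\psik^\pm\cdot v^\pm$ in the region $\{\operatorname{dist}(x,\Gamma\cup\g\Omega)\ge\nu\}$. The correction $\g^a\g_t^b\psik^\pm\cdot v^\pm$ is perturbative, since $\psik^\pm-e$ is controlled in $H^{6.5}$ by~\eqref{E:Psiboundarycontrolkappa} and $v^\pm$ is already estimated at one lower order by the preceding term of the energy; expanding by the product rule and absorbing therefore gives pointwise-in-time $L^2$ control of $\g^a\g_t^b q^\pm$ on the interior region, with a constant polynomial in $\mathscr{E}_\k$.

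The central step is the tangential-to-normal upgrade on the collar of $\Gamma\cup\g\Omega$ where $\mu\equiv 1$. I would time-differentiate \eqref{E:ALEregularized_heat} a total of $b$ times and recast it as the elliptic equation $-\Delta_{\psik^\pm}\g_t^b q^\pm=-\g_t^{b+1}q^\pm-\g_t^b(v^\pm\cdot w^\pm_\k)+\g_t^b\alpha^\pm+[\Delta_{\psik^\pm},\g_t^b]q^\pm$, whose right-hand side is bounded in $H^{4-2b}(\Omega^\pm)$ by $\mathscr{E}_\k$, with the $\g_t^b\alpha^\pm$ contribution handled through~\eqref{E:alpha} and the standard parabolic bound for $\bar r^\pm$. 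Coupling this with the Dirichlet-type condition \eqref{E:ALEregularized_dirichlet} on $\Gamma$ and the Neumann-type condition \eqref{E:ALEregularized_neumann_top} on $\g\Omega$, and using that the tangential boundary derivatives are controlled through the $\k^2$-weighted boundary term in $\mathscr{E}_\k$ together with $\beta^\pm,\gamma$ (both computable from the initial data), standard elliptic regularity promotes $\g_t^b q^\pm$ to $H^{6-2b}(\Omega^\pm)$ for each $0\le b\le 3$. Differentiating \eqref{E:ALEregularized_v} tangentially then yields $\|\t^{5-2b}\g_t^b v^\pm\|_{L^2}$. Finally, the boundary term $|a_\k\t^a\g_t^b\Lambda_\k h|_{L^2(\Gamma)}$ controls $|\t^a\g_t^b\Lambda_\k(\Lambda_\k h)|_{L^2(\Gamma)}=|\t^a\g_t^b h^\k|_{L^2(\Gamma)}$ by the $L^2$-boundedness of $\Lambda_\k$, furnishing the $\newe^\ppp_{\loc}$ and $\newd^\ppp_{\loc}$ portions of $\mathcal{S}_\k$.

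The main obstacle is the inductive bookkeeping in the tangential-to-full upgrade: each commutator $[\g_t^b,\Delta_{\psik^\pm}]$, each trace of $\g_t^b\beta^\pm$, and each product-rule term coming from $\g_t^b(v^\pm\cdot w^\pm_\k)$ must be estimated at an order of regularity that fits inside $\mathscr{E}_\k$, and one must descend from $b=3$ down to $b=0$ without triggering any negative power of $\k$. The saving grace is that the mollifier $\Lambda_\k$ appears only inside $\psik^\pm$ and never in the elliptic inversion, so all the Calder\'on--Zygmund estimates used in the upgrade are $\k$-independent despite the $\k^2$-twisted Dirichlet condition on $\Gamma$.
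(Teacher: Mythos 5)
Your proposal is correct and follows essentially the same route as the paper: bounding $W^\pm$ above and below via Lemma~\ref{l:wbounds_local}, bounding $a_\k=J_\k^{-1}(1+Hh^\k)$ from below using the smallness of $h^\k$, and recovering normal derivatives from tangential ones via the (time-differentiated) equation, which is exactly the argument the paper imports from Proposition~2.4 of \cite{mHsS2011}. The only loose point is your claim that the right-hand side of the elliptic problem lies in $H^{4-2b}$ uniformly in $b$ (meaningless for $b=3$); the top time derivative must instead be read off directly from the $a=0$, $b=3$ energy term before descending in $b$, which is what your induction implicitly does anyway.
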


\begin{proof}

The proof of this Proposition follows exactly as the proof of Proposition 2.4 of \cite{mHsS2011}, but with the weights $W^\pm$. First, the contribution from the boundary terms is easy to bound, since, instead of having the weight $\g_N q$ in our energy, we have the coefficient $e^{(-\lambda_1 +\eta)t}$. Also we have the estimate, 
\begin{equation*}
J^{-1}(1+H h^\k) = 1 + O(|h^\k|) \geq \frac{1}{2},
\end{equation*}
where we used the characterization of $J$ from subsection \ref{gJbounds} of the apendix, the bound for the curvature $H$, and that for short time $h^\k$ is small. Recall that $H$ is the curvature of the smooth reference curve, thus it is \emph{$\sigma$-close} to the original initial interface.
Therefore
\begin{align*}
\newe^\ppp_{\loc}(t) &\leq C \sum\limits_{a+2b\leq 6} |J^{-1}(1+Hh^\k)\t^a\g_t^b \Lambda_\k h|^2_{L^\infty_t L^2(\Gamma)},\\
\newd^\ppp_{\loc}(t) &\leq  C \sum\limits_{a+2b\leq 5} |J^{-1}(1+Hh^\k)\t^a\g_t^{b+1} \Lambda_\k h|^2_{L^2(\Gamma)}.
\end{align*}
Second, note that both $\mathcal{S}_\k(t)$ and $\mathscr{E}_\k(t)$ have terms of the form $\|\mu^{1/2}\t^{a}\g_t^b v\|_0^2$, with the difference that in $\mathscr{E}_\k(t)$, the norm has the weights $W^\pm$. The upper and lower bounds for $W^\pm$ in Lemma \ref{l:wbounds_local}, gives us that these terms satisfy inequality \eqref{EleqS} directly.

Finally, we need to show that we can control all derivatives in the interior by controling only the tangential ones that appear on $\mathscr{E}_\k(t)$, but this process is analogous as the proof of estimate $(a)$ from Proposition 2.4 of \cite{mHsS2011}, multiplying and dividing by the coefficient $W^\pm$ on the corresponding integrals over $\Omega^\pm$. We obtain,
\begin{align*}
\sum\limits_{|a|+2b\leq 6} \|\g^a\g_t^b q^\pm\|_{L^\infty_t L^2(\Omega^\pm)} &\leq P\left(\frac{\mathscr{E}_\k(t)}{\inf W^\pm}\right)\leq P(\mathscr{E}_\k(t)),\\
\sum\limits_{|a|+2b\leq 6} \|\g^a\g_t^b q^\pm\|_{L^2_tH^{0.5}(\Omega^\pm)} &\leq P\left(\frac{\mathscr{E}_\k(t)}{\inf W^\pm}\right)\leq P(\mathscr{E}_\k(t)),
\end{align*}
where the last inequalities follow again from the local estimates \ref{l:wbounds_local} for $W^\pm$. This concludes the proof.
\end{proof}

\subsection{Derivation of the energy identities}\label{S:derivation}

For the various notations used in this section we encourage the reader to consult Subsct.~\ref{ss:Notation} -~\ref{SS:TANGENTIAL}.
\begin{lemma}\label{ab_energy_lemma}
Let $(q^\pm,h^\k)$ be a smooth solution to the two-phase Stefan problem given by Theorem \ref{T:smooth_solutions}, on the time interval $[0,T_\k]$. Then the following energy identity holds:
\begin{align}\label{E:energy_identity1}
\frac{d}{dt}\mathscr{E}_\k(t) = \frac{d}{dt}(\E_\k^+(t)+\E_\k^-(t)) + \D_\k^+(t) + \D_\k^-(t) = \mathcal{R}(t),
\end{align}
where the right-hand term $\mathcal{R}(t)$ is an error given explicitly in Lemma \ref{S:error_terms} of the Appendix.
\end{lemma}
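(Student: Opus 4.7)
My plan is to derive the identity via the standard weighted energy method applied to the time- and tangentially-differentiated Stefan problem, where the key novelty is simply keeping careful track of the weight $W^\pm$. For each multi-index $(a,b)$ appearing in the definition of $\E_\k^\pm$, I would apply $\t^a\g_t^b$ to the heat equation \eqref{E:ALEregularized_heat} and to the velocity equation \eqref{E:ALEregularized_v}, test against the appropriate weighted quantity, integrate by parts over $\Omega^\pm$, sum over $(a,b)$ and over the two phases, and then collect every non sign-definite contribution into the error $\mathcal{R}(t)$.

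Concretely, for the ``potential" piece I would multiply $\t^a\g_t^b$ of \eqref{E:ALEregularized_heat} by $\mu W^\pm\,\t^a\g_t^{b+1}q^\pm$ and integrate over $\Omega^\pm$. Integration by parts on the Laplacian term, together with \eqref{E:ALEregularized_v}, produces $\tfrac{1}{2}\tfrac{d}{dt}\|\mu^{1/2}(\t^a\g_t^b q^\pm+\t^a\g_t^b\psik^\pm\cdot v^\pm)\|^2_{L^{2,W^\pm}}$ together with the matching dissipation piece $\|\mu^{1/2}(\t^a\g_t^{b+1}q^\pm+\t^a\g_t^{b+1}\psik^\pm\cdot v^\pm)\|^2_{L^{2,W^\pm}}$ in $\D_\k^\pm$, plus a boundary integral on $\Gamma$ and a boundary integral on $\g\Omega$ (the latter is handled using \eqref{E:ALEregularized_neumann_top} and absorbs into $\mathcal{R}(t)$). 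For the ``kinetic" piece I take $\g_t^{b+1}$ of \eqref{E:ALEregularized_v} and test with $\mu W^\pm\t^a\g_t^b v^\pm$, which produces $\tfrac{1}{2}\tfrac{d}{dt}\|\mu^{1/2}\t^a\g_t^b v^\pm\|^2_{L^{2,W^\pm}}$ and the corresponding dissipation $\|\mu^{1/2}\t^a\g_t^{b+1}v^\pm\|^2_{L^{2,W^\pm}}$. The terms arising from $\g_tW^\pm$ and $\g_t\mu=0$ are placed into $\mathcal{R}(t)$; the interior $(1-\mu)$ pieces are treated by the same procedure without any boundary contribution since $(1-\mu)$ vanishes near $\Gamma\cup\g\Omega$.

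The critical step is the combination of the two $\Gamma$-boundary integrals produced by the two phases. After integration by parts the boundary contribution schematically has the form $\int_\Gamma W^\pm (\t^a\g_t^b v^\pm\cdot n)\,\t^a\g_t^{b+1}q^\pm$, plus commutators. Using the Dirichlet-type condition \eqref{E:ALEregularized_dirichlet} to rewrite $\t^a\g_t^{b+1}q^\pm$ on $\Gamma$ in terms of $v^\pm\cdot n^\k$ and the $\k^2$-regularization (this is precisely what produces the $\k^2 e^{(-\lambda_1+\eta)t}|\sqrt{r_\k^\pm}\t^a\g_t^b v^\pm\cdot n^\k|^2$ boundary term in $\E_\k^\pm$), together with the definition \eqref{E:W_dirichlet} of $W^\pm|_\Gamma=e^{(-\lambda_1+\eta)t}/\g_Nq^\pm$, the factors of $\g_Nq^\pm$ in the numerator and denominator cancel, leaving a \emph{common} weight $e^{(-\lambda_1+\eta)t}$ in front of the difference $[v\cdot\tilde n^\k]^+_-$. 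By the height evolution \eqref{E:HKAPPAEVOLUTION} this jump equals $h_t$, and after extracting $\t^a\g_t^b\Lambda_\k$ from the remaining factor one identifies $\tfrac{1}{2}\tfrac{d}{dt}\bigl(e^{(-\lambda_1+\eta)t}|a_\k\t^a\g_t^b\Lambda_\k h|^2_{L^2(\Gamma)}\bigr)$ up to the favorable exponential time-derivative \eqref{E:not_so_difficult_term}, which is moved to $\mathcal{R}(t)$.

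The main obstacle is purely bookkeeping: the commutators $[\t^a\g_t^b,\Delta_{\psik^\pm}]$, the product-rule errors from $\t^a\g_t^b$ hitting $W^\pm$, $\ak^\pm$, $\psik^\pm$, and $\mu$, the terms from $\g_tW^\pm$ (which implicitly carry $\g_N q_t^\pm$, as noted in the discussion surrounding \eqref{E:not_so_difficult_term}), and the contributions from the $\alpha^\pm,\beta^\pm,\gamma$ source terms must all be identified as strictly lower order. For each of these, one checks that at most $a+2b$ derivatives of $q^\pm$ and at most $a+2b+1$ tangential derivatives of $h$ appear, so that by Hölder, Sobolev trace, and the a priori bound $\mathcal{S}_\k(t)\le M$ guaranteed by Theorem \ref{T:smooth_solutions}, each such contribution is absorbable into the polynomial-in-$\mathscr{E}_\k(t)$ error quoted in Lemma \ref{S:error_terms}. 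Once every non sign-definite term has been so identified, summing the identities over all admissible $(a,b)$ and over the two phases yields exactly \eqref{E:energy_identity1}.
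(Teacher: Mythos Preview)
Your proposal has the correct high-level picture (weighted testing, the role of $W^\pm|_\Gamma=e^{(-\lambda_1+\eta)t}/\g_Nq^\pm$ in producing a common boundary weight, the jump $[v\cdot\tilde n^\k]^+_-=h_t$), but the concrete testing procedure you describe does not generate the energy in Definition~\ref{D:natural_energy}, and in particular it does not produce the boundary term $e^{(-\lambda_1+\eta)t}|a_\k\t^a\g_t^b\Lambda_\k h|^2$.

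The paper does \emph{not} differentiate the heat equation and test with $\t^a\g_t^{b+1}q$. It applies $\t^a\g_t^b$ to the \emph{velocity} relation \eqref{E:ALEregularized_v} and tests with $\t^a\g_t^b v\,W\mu$; see \eqref{E:innerproduct}. The point is that $\t^a\g_t^b$ acting on $A=(\nabla\psik)^{-1}$ produces, at top order, $-A(\t^a\g_t^b\nabla\psik)A$. After integrating the $\nabla\psik$ by parts, this places $\t^a\g_t^b\psik$ on $\Gamma$, and since $\psik=x+h^\k N$ there, one obtains the factor $\t^a\g_t^b h^\k$. The remaining boundary factor is $(A^l_i q,_l)(\t^a\g_t^b v\cdot A^\top N)W$; decomposing $\nabla q$ along $(\tau,N)$ yields the coefficient $\g_Nq^\pm$, which is exactly what cancels against the $\g_Nq^\pm$ in $W^\pm|_\Gamma$. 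Summing the two phases then gives $e^{(-\lambda_1+\eta)t}\int_\Gamma(\t^a\g_t^b h^\k)[\t^a\g_t^b v\cdot\tilde n^\k]^+_-$, and \eqref{E:HKAPPAEVOLUTION} converts the jump into $\t^a\g_t^b h_t$.

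By contrast, if you test the differentiated heat equation with $\mu W\,\t^a\g_t^{b+1}q$, integration by parts on $\Delta_{\psik}$ gives the boundary contribution $\int_\Gamma W(\t^a\g_t^b v\cdot A^\top N)\,\t^a\g_t^b q_t$. On $\Gamma$ the Dirichlet condition \eqref{E:ALEregularized_dirichlet} forces $\t^a\g_t^b q_t=O(\k^2)$, so this produces only the $\k^2$-regularization piece and \emph{no} $h$-energy; there is no $\g_Nq^\pm$ in the numerator to cancel with $W^\pm$, and no $\t^a\g_t^b\psik$ to turn into $\t^a\g_t^b h$. Likewise, your interior claim that this testing yields $\tfrac12\tfrac{d}{dt}\|\mu^{1/2}(\t^a\g_t^b q+\t^a\g_t^b\psik\cdot v)\|^2_{L^{2,W}}$ is not what one gets: the natural quantity from that test is $\tfrac12\tfrac{d}{dt}\|A^\top\nabla\t^a\g_t^b q\|^2_{L^{2,W}}\approx\tfrac12\tfrac{d}{dt}\|\t^a\g_t^b v\|^2_{L^{2,W}}$, which is the wrong energy and the wrong index range. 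The ``second type'' identity in the paper (apply $\t^a\g_t^{b+1}$ to \eqref{E:ALEregularized_v} and test with $\t^a\g_t^b v\,W\mu$) is what actually gives $\tfrac12\tfrac{d}{dt}\|\t^a\g_t^b v\|^2_{L^{2,W}}$ together with the dissipation $\|(\t^a\g_t^{b+1}q+\t^a\g_t^{b+1}\psik\cdot v)\|^2_{L^{2,W}}$; your ``kinetic'' step is close to this, but the dissipation you name ($\|\t^a\g_t^{b+1}v\|^2$) is not a term of $\D_\k^\pm$.

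In short: start from \eqref{E:ALEregularized_v}, not \eqref{E:ALEregularized_heat}; the $h$-boundary energy is born from $\t^a\g_t^b A$ via $\t^a\g_t^b\nabla\psik$, and the $\g_Nq$ that cancels against $W$ comes from $\nabla q\cdot N$ in that boundary term, not from any use of the Dirichlet condition.
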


\begin{proof}
Apply the operator $\t^a\partial_t^b$ to \eqref{E:ALEregularized_v}, for $0\leq a+2b\leq 6$, multiply by $\t^a\partial_t^b v W\mu$, and integrate over $\Omega^\pm$ respectively. The inclusion of the factor $W^\pm$ is very important as it allows us to recover the positive definite boundary energy by providing a common factor on both regions to form the difference $h_t = v^+\cdot\tilde{n}^\k - v^-\cdot\tilde{n}^\k$. We obtain the identity:
\begin{equation}\label{E:innerproduct}
( \t^a\partial_t^b v + \t^a\partial_t^b\ {}^\k \hspace{-.06in} A^i_j  q,_i + {}^\k \hspace{-.06in} A^i_j\t^a\g_t^b q,_i,\ \t^a\partial_t^b v W\mu)_{L^2(\Omega^\pm)} = \mathcal{R}^\pm_1
\end{equation}
where $\mathcal{R}^\pm_1$ is the error term that contains the lower order derivatives arising from the application of the product rule to ${}^\k \hspace{-.06in} A^i_j q,_i,$ integrated over the regions $\Omega^\pm$ respectively. The process then follows the same methodology as in the proof of Proposition 3.1 from \cite{mHsS2013}, or the proofs of Lemmas 2.2 and 2.3 from \cite{mHsS2011}, with the added weights $W^\pm$. 
A few new error terms appear while integrating by parts, as some derivatives fall on $W^\pm.$ Additional new terms appear to the fixed exterior boundary $\g\Omega$, but there are no new ideas in the process. 
We describe the effect of this weight on the boundary terms, but omit the rest for brevity.

Analogously to the appendix of \cite{mHsS2013}, equation (A.1), when the derivatives $\t^a\g_t^b$ hit the matrix $A$ we have the following identity, 
\begin{equation*}
\t^a\g_t^b\ {}^\k \hspace{-.06in} A^i_j = -{}^\k \hspace{-.06in} A^i_l\t^a\g_t^b \Psi^l_\k,_m {}^\k \hspace{-.06in} A^m_j - \sum\limits_{1\leq s\leq a\atop 1\leq l\leq b}c_{sl}\t^{a-s}\g_t^{b-l}\ {}^\k \hspace{-.06in} A^i_k\t^s\g_t^l \Psi^k_\k,_m {}^\k \hspace{-.06in} A^m_j.
\end{equation*}
Therefore, the second term of equation \eqref{E:innerproduct} becomes,
%
\begin{align*}
&(\t^a\partial_t^b\ {}^\k \hspace{-.06in} A^T \nabla q,\ \t^a\g_t^b v \mu W)_{L^2(\Omega^\pm)} = - \int_{\Omega^\pm} {}^\k \hspace{-.06in} A \t^a\partial_t^b  
\nabla \psik {}^\k \hspace{-.06in} A \nabla q \t^a\partial_t^b v W\mu - \mathcal{R}_2^\pm
\end{align*}
where $\mathcal{R}^\pm_2$ is the error term containing the lower order derivatives hitting $\nabla\psik$. We will specify briefly a computation over the two regions $\Omega^\pm$ as there is a small difference when integrating by parts over the region $\Omega^+$ since it has an exterior fixed boundary $\g\Omega$. We have, after integrating by parts,
\begin{align*}
- \int_{\Omega^+} {}^\k \hspace{-.06in} A \t^a\partial_t^b 
\nabla\psik {}^\k \hspace{-.06in} A \nabla q \t^a\partial_t^b v W\mu&= -\int_\Gamma {}^\k \hspace{-.06in} A^l_i \t^a\partial_t^b \Psi^i_\k {}^\k \hspace{-.06in} A^k_jq,_l\t^a\partial_t^b v^j W (-N^k) + \int_{\g\Omega}(\t^a\g_t^b\psik \cdot v) (\t^a\g_t^b v\cdot {\bf N}^+) W \\
& + \int_{\Omega^+}\t^a\partial_t^b \Psi^i_\k {}^\k \hspace{-.06in} A^k_j({}^\k \hspace{-.06in} A^l_i q,_l\t^a\partial_t^b v W\mu),_k
\end{align*}
Where the boundary condition \eqref{E:ALEregularized_neumann_top} implies that the term in $\g\Omega$ is an error term, that we call $\mathcal{R}^{a,b}_{\g\Omega_1}$, and, by the definition of $W^+$ on $\Gamma$ \eqref{E:W_dirichlet}, the integral on $\Gamma$ becomes,
\begin{align*}
&-\int_\Gamma {}^\k \hspace{-.06in} A^l_i \t^a\partial_t^b \Psi^i_\k {}^\k \hspace{-.06in} A^k_jq,_l\t^a\partial_t^b v^j W^+ (-N^k) = \int_\Gamma \t^a\partial_t^b \Psi^i_\k {}^\k \hspace{-.06in} A^l_i q,_l \t^a\partial_t^b v^j {}^\k \hspace{-.06in} A^k_j N^k \frac{e^{(-\lambda_1 +  \eta)t}}{\partial_N q^+} \\
&= \int_\Gamma \t^a\partial_t^b \Psi^i_\k {}^\k \hspace{-.06in} A^l_i(( \nabla q^+\cdot \tau) \tau^l + ( \nabla q^+\cdot N) N^l) \t^a\partial_t^b v^j {}^\k \hspace{-.06in} A^k_j N^k \frac{e^{(-\lambda_1 +  \eta)t}}{\partial_N q^+}\\
&= e^{(-\lambda_1 +  \eta)t}\int_\Gamma (\t^a\partial_t^b \psik\cdot {}^\k \hspace{-.06in} A^TN) (\t^a\partial_t^b v^+\cdot {}^\k \hspace{-.06in} A^TN) d\sigma + \mathcal{R}^{a,b}_{\Gamma_1}\\
&= e^{(-\lambda_1 +  \eta)t}\int_\Gamma J_\k^{-2}g_\k (\t^a\partial_t^b \psik\cdot n^\k) (\t^a\partial_t^b v^+\cdot n^\k) d\sigma + \mathcal{R}^{a,b}_{\Gamma_1}{}^+,
\end{align*}
where 
\begin{equation*}
\mathcal{R}^{a,b}_{\Gamma_1}{}^+:= e^{(-\lambda_1+\eta)t}\int_\Gamma \t^a\g_t^b\psik^i {}^\k \hspace{-.06in} A^l_i \tau^l \t(-\k^2 v^+\cdot \ak^\top N + \k^2\beta^+)(\t^a\g_t^b v^+\cdot n^\k)\frac{J_\k^{-1}\sqrt{g_\k}}{\g_N q^+}d\sigma,
\end{equation*}

since the term $ \nabla q^+\cdot N = \g_N q^+$, and $q^+$ along $\Gamma$ is given by \eqref{E:ALEregularized_dirichlet}. The last equalities follows from the geometric identity ${}^\k \hspace{-.06in} A^T N = J_\k^{-1}\sqrt{g_\k} n^\k$, where $n^\k$ is the normal vector to the moving domain. 

 We obtain therefore,
\begin{align*}
& \|\mu^{1/2}\t^a\partial_t^b v^+\|^2_{L^{2,W^+}} + \frac{1}{2}\frac{d}{dt}\|\mu^{1/2}(\t^a\partial_t^b q + \t^a\partial_t^b\psik\cdot v^+)\|^2_{L^{2,W^+}}\\
& + e^{(-\lambda_1 +  \eta)t}\int_\Gamma J_\k^{-2}g_\k(\t^a\partial_t^b \psik\cdot n^\k) (\t^a\partial_t^b v^+\cdot n^\k) d\sigma\\
& + \k^2 e^{(-\lambda_1+\eta)t}\int_\Gamma r^+_\k |\t^a\g_t^b v^+\cdot n^\k|^2d\sigma\\
& = \mathcal{R}^+_\aabb +\mathcal{R}^\aabb_{\Gamma_1}{}^+ + \mathcal{R}^\aabb_{\g\Omega_1}{}^+
\end{align*}
where we have gathered all the residue terms of the interior into $\mathcal{R}^+_\aabb$. An analogous process now with $W^-$ in the region $\Omega^-$ gives, 
\begin{align*}
& \|\mu^{1/2}\t^a\partial_t^b v^- \|^2_{L^{2,W^-}} + \frac{1}{2}\frac{d}{dt}\|\mu^{1/2}(\t^a\partial_t^b q^- + \t^a\partial_t^b\psik\cdot v^-)\|^2_{L^{2,W^-}}\\
& - e^{(-\lambda_1 +  \eta)t}\int_\Gamma J_\k^{-2}g_\k(\t^a\partial_t^b \psik\cdot n^\k) (\t^a\partial_t^b v^-\cdot n^\k) d\sigma\\
& +  \k^2 e^{(-\lambda_1+\eta)t}\int_\Gamma r_\k^- |\t^a\g_t^b v^-\cdot n^\k|^2d\sigma\\
&= \mathcal{R}^-_\aabb + \mathcal{R}^\aabb_{\Gamma_1}{}^-
\end{align*}
Hence, adding together the terms from both regions we obtain,
\begin{align}
& \|\mu^{1/2}\t^a\partial_t^b v^+\|^2_{L^{2,W^+}} + \frac{1}{2}\frac{d}{dt}\|\mu^{1/2}(\t^a\partial_t^b q^+ + \t^a\partial_t^b\Psi^+\cdot v^+)\|^2_{L^{2,W^+}}\nonumber\\
& +  \|\mu^{1/2}\t^a\partial_t^b v^-\|^2_{L^{2,W^-}}+\frac{1}{2}\frac{d}{dt}\|\mu^{1/2}(\t^a\partial_t^b q^- + \t^a\partial_t^b\Psi^-\cdot v^-)\|^2_{L^{2,W^-}}\nonumber\\
&+\k^2 e^{(-\lambda_1+\eta)t} \left(|\sqrt{r_\k^+} (\t^a\g_t^b v^+\cdot n^\k)|^2_{L^2(\Gamma)} + |\sqrt{r_\k^-} (\t^a\g_t^b v^-\cdot n^\k)|^2_{L^2(\Gamma)}\right)\nonumber\\
& + e^{(-\lambda_1 +  \eta)t}\int_\Gamma J_\k^{-2}g_\k\left(\t^a\partial_t^b \psik^+\cdot n^\k\ \t^a\partial_t^b v^+\cdot n^\k - \t^a\partial_t^b \psik^-\cdot n^\k\ \t^a\partial_t^b v^-\cdot n^\k\right) d\sigma\nonumber\\
&  = \mathcal{R}^+_\aabb +\mathcal{R}^-_\aabb + \mathcal{R}^\aabb_{\Gamma_1}{}^+ + \mathcal{R}^\aabb_{\Gamma_1}{}^- + \mathcal{R}_{\g\Omega_1}^\aabb{}^+.\label{E:addingtogether}
\end{align}
On the last boundary term of equation \eqref{E:addingtogether} we can factor the $(\t^a\g_t^b \psik^\pm\cdot n^\k)$-term, since on the boundary $\Gamma$, both $\psik^+$ and $\psik^-$ are the same, obtaining
\begin{align}
I_\Gamma &:= e^{(-\lambda_1 +  \eta)t}\int_\Gamma J_\k^{-2}g_\k(\t^a\partial_t^b \psik\cdot n^\k)\left( \t^a\partial_t^b v^+\cdot n^\k - \t^a\partial_t^b v^-\cdot n^\k\right)d\sigma\nonumber\\
&= e^{(-\lambda_1 +  \eta)t}\int_\Gamma J_\k^{-2}g_\k(\t^a\partial_t^b \psik\cdot n^\k) [\t^a\partial_t^b v\cdot n^\k]^+_-d\sigma\label{E:boundary_ei}
\end{align}
Recall that on the boundary $\Gamma$, $\psik(t,x) = x + h^\k(t,x) N(x)$, therefore,
\begin{equation*}
\t^a\g_t^b \psik = \t^a\g_t^b x + \t^a\g_t^bh^k N + h^\k \t^a\g_t^b N + \sum\limits_{s,l}\t^{a-l}\g_t^{b-s} h^\k \t^l\g_t^s N,
\end{equation*}
and the normal vector,
\begin{equation*}
n^\k = \frac{-\t h^\k\tau + (1+H(x)h^\k)N}{\sqrt{(\t h^k)^2+(1+H(x)h^\k)^2}} = (-\t h^\k\tau + (1+H(x)h^\k)N)\frac{1}{\sqrt{g_\k}}.
\end{equation*}

Then the boundary term \eqref{E:boundary_ei} can be rewritten as 
\begin{equation*}
I_\Gamma = e^{(-\lambda_1 +  \eta)t}\int_\Gamma J_\k^{-2} \t^a\g_t^bh^k (1+Hh^\k)^2 [\t^a\g_t^b v\cdot (N - \frac{\t h^\k}{(1+Hh^\k)}\tau)]^+_- - \mathcal{R}_{\Gamma_2}^\aabb
\end{equation*}
where 
\begin{align*} 
\mathcal{R}_{\Gamma_2}^\aabb &= -e^{(-\lambda_1 +  \eta)t}\int_\Gamma J_\k^{-2}(1+Hh^\k) ((h^\k \t^a\g_t^b N + \t^a\g_t^b x + \sum\limits_{s,l}\t^{a-l}\g_t^{b-s} h^\k \t^l\g_t^s N) \cdot (-\t h^\k\tau\\
&\qquad + (1+Hh^\k)N))[\t^a\g_t^b v\cdot (N-\frac{\t h^\k}{(1+Hh^\k)}\tau)]^+_-.
\end{align*}
Recall from equation \eqref{E:ALE_ht}, 
$$h_t = [v\cdot(N - \frac{\t h^\k}{(1+Hh^\k)}\tau) ]^+_- ,$$
then we have,
\begin{equation*}
I_\Gamma= e^{(-\lambda_1 +  \eta)t}\int_\Gamma a_\k^2 (\t^a\g_t^b h^\k) (\t^a\g_t^b h_t) -\mathcal{R}_{\Gamma_2}^\aabb - \mathcal{R}_{\Gamma_3}^\aabb
\end{equation*}
where
\begin{align*}
\mathcal{R}_{\Gamma_3}^\aabb &:= e^{(-\lambda_1 +  \eta)t}\int_\Gamma a_\k^2\t^a\g_t^{b} h^\k \left([v\cdot \t^a\g_t^b( N - \frac{\t h^\k}{(1+Hh^\k)}\tau]^+_-\right.\\
&\qquad \left. + \sum\limits_{s,l}c_{sl}[\t^{a-l}\g_t^{b-s} v\cdot \t^l\g_t^s (N - \frac{\t h^\k}{(1+Hh^\k)}\tau)]^+_-\right).
\end{align*}
First, observe that in this error there is a higher-order term hidden when the highest order derivatives hit the tangential derivative of $h^\k$, i.e. a term of the form $\t^{a+1}\g_t^b h^\k$, and must be considered carefully 
when we prove the energy estimates. Second, notice that one of the factors has the regularized $h^\k$, but the other factor is not regularized, since it comes from equation \eqref{E:ALE_ht}. Therefore, we must commute the smoothing operator $\Lambda_\k$ from $h^\k$ to $h_t$, to form the quadratic energy term. Recall that this operator commutes with the tangential derivatives and therefore
\begin{align*}
I_\Gamma &= e^{(-\lambda_1 +  \eta)t}\int_\Gamma a_\k^2 (\t^a\g_t^b \Lambda_\k\Lambda_k h) (\t^a\g_t^b h_t) - \mathcal{R}_{\Gamma_2}^\aabb - \mathcal{R}_{\Gamma_3}^\aabb\\
&= e^{(-\lambda_1 +  \eta)t}\int_\Gamma a_\k^2 (\t^a\g_t^b \Lambda_k h)(\t^a\g_t^b \Lambda_k h_t) - \mathcal{R}_{comm}^\aabb -  \mathcal{R}_{\Gamma_2}^\aabb - \mathcal{R}_{\Gamma_3}^\aabb,
\end{align*} 
where $\mathcal{R}_{comm}^\aabb$ is a commutation error given by,
\begin{align*}
\mathcal{R}_{comm}^\aabb &= -e^{(-\lambda_1 +  \eta)t}\int_\Gamma \t^a\g_t^b(\Lambda_\k h)[\Lambda_\k, a_\k^2\t^a\g_t^b]h_t.
\end{align*}

Pulling out a time derivative and grouping the error terms, we recover the positive definite energy term,
$$I_\Gamma = \frac{1}{2}\frac{d}{dt}\left(e^{(-\lambda_1 +  \eta)t}\int_\Gamma a_\k^2 (\t^a\partial_t^b \Lambda_\k h)^2\right) - \mathcal{R}_{\Gamma_4}^\aabb$$
where $\mathcal{R}_{\Gamma_4}^\aabb$ is given by,
\begin{align*}
\mathcal{R}^\aabb_{\Gamma_4} &= \frac{1}{2}\int_\Gamma \g_t\left(e^{(-\lambda_1 +  \eta)t} a_\k^2\right)(\t^a\g_t^b\ \Lambda_\k h)^2 + \mathcal{R}_{comm}^\aabb + \mathcal{R}_{\Gamma_2}^\aabb + \mathcal{R}_{\Gamma_3}^\aabb.
\end{align*}
Collecting all together we have, 
\begin{align*}
& \|\mu^{1/2}\t^a\partial_t^b v^+\|^2_{L^{2,W^+}} + \frac{1}{2}\frac{d}{dt}\|\mu^{1/2}(\t^a\partial_t^b q^+ + \t^a\partial_t^b\psik^+\cdot v^+)\|^2_{L^{2,W^+}}\\
& +\|\mu^{1/2}\t^a\partial_t^b v^-\|^2_{L^{2,W^-}} + \frac{1}{2}\frac{d}{dt}\|\mu^{1/2}(\t^a\partial_t^b q^- + \t^a\partial_t^b\psik^-\cdot v^-)\|^2_{L^{2,W^-}}\\
&+\k^2 e^{(-\lambda_1+\eta)t} \left(|r_\k^+ (\t^a\g_t^b v^+\cdot n^\k)|^2_{L^2(\Gamma)} + |r_\k^- (\t^a\g_t^b v^-\cdot n^\k)|^2_{L^2(\Gamma)}\right)\nonumber\\
& + \frac{1}{2}\frac{d}{dt}\left(e^{(-\lambda_1 + \eta)t}\int_\Gamma a_\k^2 (\t^a\partial_t^b \Lambda_\k h)^2 d\sigma\right)\\
& = \mathcal{R}^+_\aabb  + \mathcal{R}^-_\aabb + \mathcal{R}_\Gamma^\aabb + \mathcal{R}^\aabb_{\g\Omega},
\end{align*}
where $\mathcal{R}^\aabb_{\Gamma}$ contains all the error terms in the boundary $\Gamma$ and $\mathcal{R}^\aabb_{\g\Omega}$ the errors in $\g\Omega$. An analogous analysis can be done to obtain energy identities of the second type by considering the differential operator $\t^a\g_t^{b+1}$ to equation \eqref{E:ALEregularized_linear_v}, and multiplying by $\t^a\g_t^{b} v^\pm W^\pm\mu$. For the interior derivatives we consider the differential operator $\g^a\g_t^b$, where $a$ is now a multi index, and 
$\g^a = \g_{x_1}^{a_1}\g_{x_2}^{a_2}$ is a combination of derivatives in all cartesian directions. The result follows by summing over the corresponding values of $a,b$. See \cite{mHsS2013} for more details. 
\end{proof}
\begin{remark}
In contrast to~\cite{mHsS2013}, when the time derivative is applied to the weight $e^{(-\lambda_1 +  \eta)t}$, we obtain the obvious inequality:
\begin{equation*}
(-\lambda_1 + \eta)e^{(-\lambda_1 + \eta)t}\int_\Gamma a_\k^2(\t^a\g_t^b\ \Lambda_\k h)^2 <0, \ \ \text{ for }\eta<\lambda_1.
\end{equation*}
Therefore, we do not need to prove estimates for this energy-critical term as it is sign-definite with a favorable sign.
In particular, many of the technical complications from~\cite{mHsS2013} are eliminated. 
\end{remark}


\subsection{Energy estimates for the local theory}\label{S:energyestimates}

In this section we prove energy estimates for the solutions of the regularized problem \eqref{E:ALEregularized_linear}. The aim is to obtain $\kappa$-independent estimates, and therefore a uniform-in-$\k$ time of existence for our family of regularized solutions. We will accomplish this by using the energy identity \eqref{E:energy_identity1}, and bounding the error terms in $\mathcal{R}(t)$.
As a first step, we prove short time apriori bounds for $ \nabla W^\pm$ and $W_t^\pm$. 
\begin{lemma}\label{L:WSHORTTIME}
Under the assumptions of Theorem \ref{T:main}, the derivatives of the weight functions $W^\pm$ satisfy the following bounds,
\begin{align*}
\| \nabla W^\pm\|_{L^\infty(\Omega^\pm)} + \|W_t^\pm\|_{L^\infty(\Omega^\pm)}&\leq C(1+P(\E_\k(t))).
\end{align*}
\end{lemma}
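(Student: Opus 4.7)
The plan is to reduce both bounds to elliptic regularity for the harmonic extension problem~\eqref{E:W} together with the short-time Rayleigh--Taylor lower bound on $\partial_N q^\pm$. Since $\Delta W^\pm = 0$, standard elliptic estimates for the Dirichlet problem on the smooth domains $\Omega^\pm$, combined with the Sobolev embedding $H^3(\Omega^\pm) \hookrightarrow W^{1,\infty}(\Omega^\pm)$ valid in $d=2$, give
\begin{equation*}
\|\nabla W^\pm\|_{L^\infty(\Omega^\pm)} \lesssim \|W^\pm\|_{H^3(\Omega^\pm)} \lesssim |W^\pm|_{H^{5/2}(\Gamma)} + |W^+|_{H^{5/2}(\partial\Omega)},
\end{equation*}
so everything reduces to estimating the traces on the two boundary components.

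For the trace on $\Gamma$, given by $e^{(-\lambda_1+\eta)t}/\partial_N q^\pm$, I first invoke the short-time lower bound $\partial_N q^\pm \geq c_{\delta,M} > 0$ established in the proof of Lemma~\ref{l:wbounds_local}, which relies on the Rayleigh--Taylor condition~\eqref{E:localTaylorsign} and the assumption $\mathcal{S}_\k(t) \leq M$ (which, by Proposition~\ref{P:local_energyequivalence}, follows from a bound on $\E_\k(t)$). With this lower bound in hand, the map $y \mapsto 1/y$ is smooth on the relevant range, so composition preserves the $H^{5/2}(\Gamma)$ regularity polynomially. Since $\partial_N q^\pm$ is the trace of $\nabla q^\pm$ on $\Gamma$, and $\nabla q^\pm \in H^5(\Omega^\pm)$ via $\E_\k(t)$, we obtain
\begin{equation*}
\left|\frac{1}{\partial_N q^\pm}\right|_{H^{5/2}(\Gamma)} \leq P(\|q^\pm\|_6) \leq P(\E_\k(t)).
\end{equation*}
The boundary datum on $\partial\Omega$ is constant in space, so it contributes only to the additive constant $C$. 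This yields the bound on $\nabla W^\pm$.

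For $W_t^\pm$, I differentiate~\eqref{E:W} in time, obtaining a harmonic function with boundary data
\begin{equation*}
W_t^\pm \big|_{\Gamma} = \frac{(-\lambda_1 + \eta)\,e^{(-\lambda_1+\eta)t}}{\partial_N q^\pm} - \frac{e^{(-\lambda_1+\eta)t}\,\partial_N q_t^\pm}{(\partial_N q^\pm)^2},
\end{equation*}
together with a spatially constant datum on $\partial\Omega$. The first term on the right is handled exactly as above; for the second, the control $q_t^\pm \in H^4(\Omega^\pm)$ provided by $\E_\k(t)$ implies $\partial_N q_t^\pm \in H^{5/2}(\Gamma)$ by the trace theorem, and a second application of the composition/algebra argument yields $|W_t^\pm|_{H^{5/2}(\Gamma)} \leq C(1+P(\E_\k(t)))$. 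Elliptic regularity and Sobolev embedding then deliver the $L^\infty$ bound.

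The only subtle point is that the map $q \mapsto 1/\partial_N q$ is not polynomial in $q$, so a priori one could not expect an estimate of the form $P(\E_\k)$. The uniform lower bound on $\partial_N q^\pm$ from the Rayleigh--Taylor condition is precisely what is needed to render this nonlinearity Lipschitz on the set of functions under consideration, allowing the final bound to be packaged as $C(1+P(\E_\k(t)))$ with $P$ a polynomial and $C$ depending on $\delta$ and $M$ but not on $\k$.
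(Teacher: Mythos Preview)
Your proposal is correct and follows essentially the same strategy as the paper: reduce to boundary data via the harmonic extension, control $1/\partial_N q^\pm$ using the Rayleigh--Taylor lower bound, and invoke the energy to control the Sobolev norms of $\partial_N q^\pm$ and $\partial_N q_t^\pm$.

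One modest difference: for $W_t^\pm$ the paper does not go through elliptic regularity plus Sobolev embedding. Since $W_t^\pm$ is harmonic, the paper applies the maximum principle directly, so that $\|W_t^\pm\|_{L^\infty(\Omega^\pm)}$ is bounded by the $L^\infty$ norm of its boundary data; this requires only $|\partial_N q_t^\pm|_{L^\infty(\Gamma)} \lesssim \|q_t^\pm\|_{2.75}$ rather than an $H^{5/2}(\Gamma)$ bound. Your route through $H^3(\Omega^\pm)\hookrightarrow W^{1,\infty}$ and $H^{5/2}$ traces is perfectly valid given the available regularity in $\E_\k$, and has the virtue of treating $\nabla W^\pm$ and $W_t^\pm$ uniformly; the paper's maximum-principle argument for $W_t^\pm$ is simply more economical in the regularity it demands of the boundary data. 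Similarly, for $\nabla W^\pm$ the paper works at the slightly lower threshold $H^{2.25}$ (hence $H^{1.75}$ on $\Gamma$ suffices), but your use of $H^3$ is harmless here.
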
 
\begin{proof}
Without loss we will show only the estimates in $\Omega^+.$ Since $W^+$ satisfies \eqref{E:W}, standard elliptic estimates and Sobolev inequality give
\begin{align*}
\| \nabla W^+\|_{L^\infty} &\leq C\|W^+\|_{2.25} \leq Ce^{(-\lambda_1 + \eta)t} |\frac{1}{\g_N q^+}|_{2} + Ce^{(-\lambda_1 + \lambda_1^+ + \eta)t}\\
&\leq Ce^{(-\lambda_1 + \lambda_1^+ + \eta)t}(1+ |\frac{1}{\g_N q^+}|_{2}).
\end{align*}
On the other hand 
\begin{align*}
|\t^2(\frac{1}{\g_N q^+})|_{L^2} &\leq \frac{|\t^2\g_N q^+|_{L^2}}{\delta^2} + \frac{|\t\g_N q^+|^2_{L^4}}{\delta^3}\\
&\leq \frac{\|q^+\|_{3.5}}{\delta^2} + \frac{\|q^+\|^2_{3}}{\delta^3} \leq CP(\E_\k^+(t)),
\end{align*}
where we used the lower bound for $\g_N q^+$ from \eqref{E:localTaylorsign}. The other components of the $H^2(\Gamma)$ norm follow similarly, therefore, 
\begin{align*}
\| \nabla W^+\|_{L^\infty} &\leq Ce^{(-\lambda_1 + \lambda_1^+ + \eta)t}(1+P(\E_\k^+(t))).
\end{align*}
Taking time short enough so that $e^{(-\lambda_1 + \lambda_1^+ + \eta)t}\leq 2$ gives the desired bound. Next, $W_t^+$ satisfies the following problem, 
\begin{subequations}\label{E:Wt}
\begin{align}
\Delta W_t^+ &= 0 \ \text{ in } \Omega^+\\
W_t^+ &= e^{(-\lambda_1 + \eta)t}\left(\frac{(-\lambda_1 + \eta)}{\partial_N q^+} - \frac{ \partial_N q_t^+}{(\partial_N q^+)^2}\right)\ \text{ on } \Gamma\\
W_t^+ &= (-\lambda_1 + \lambda_1^+ + \eta)e^{(-\lambda_1 + \lambda_1^+ + \eta)t}\ \text{ on } \partial\Omega^+_{\text{fixed}}.
\end{align}
\end{subequations}
On the interface $\Gamma$, 
\begin{align*}
|W_t^+|_{L^\infty(\Gamma)} &\leq Ce^{(-\lambda_1 + \eta)t}\left(\frac{|-\lambda_1 + \eta|}{\delta} + \frac{|\g_N q_t^+|}{\delta^2}\right)\\
&\leq Ce^{(-\lambda_1 + \eta)t}(1+\|q_t^+\|_{2.75}) \leq Ce^{(-\lambda_1 + \eta)t}(1+P(\E_\k^+(t)))
\end{align*}
Therefore, by the maximum principle,
\begin{align*}
\|W_t^+\|_{L^\infty} &\leq Ce^{(-\lambda_1 +\lambda_1^+ +\eta)t}(1+P(\E_\k^+(t))),
\end{align*}
which again, for a sufficiently short time, gives the result.
\end{proof}

\begin{lemma}{Higher regularity for $q$.}\label{L:magical_lemma} We have the following inequality,
\begin{equation}\label{E:magic}
\int_0^t \|q^\pm(s)\|_7^2 + \|\psik^\pm(s)\|_7^2 ds \leq C\E_\k(t).
\end{equation}
\begin{proof}
The proof of this Lemma follows the same argument detailed in the proof of Lemma 2.4 of \cite{mHsS2011}, so we will omit it for economy. 
\end{proof}
\end{lemma}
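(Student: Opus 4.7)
The plan is to obtain the bound by elliptic regularity applied separately to the two unknowns, using the already-available tangential and temporal control from $\mathscr E_\k(t)$. I first treat $\psik^\pm$: since $\Delta\psik^\pm = 0$ with Dirichlet data $x+h^\k N$ on $\Gamma$ and $e$ on $\partial\Omega$, classical elliptic regularity yields $\|\psik^\pm\|_7 \lesssim |h^\k|_{6.5}(\Gamma) + 1$. Writing $h^\k(t) = h_0^\k + \int_0^t h^\k_t\,ds$ and using that the smoothing operator $\Lambda_\k$ commutes with $\bar\partial$ and satisfies $|\Lambda_\k g|_s \lesssim |g|_s$ uniformly in $\k$, the desired $L^2_t H^{6.5}(\Gamma)$ control reduces to the boundary contributions in $\D_\k^\pm$ via the evolution equation~\eqref{E:HKAPPAEVOLUTION}, together with the trace of $v^\pm$ coming from the $\|\t^{6-2b}\g_t^b v^\pm\|_{L^2(\Omega^\pm)}$ terms in the dissipation.

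For $q^\pm$, the idea is to rewrite the heat equation~\eqref{E:ALEregularized_heat} as an elliptic problem
\[
\Delta_{\psik^\pm}q^\pm = q_t^\pm + v^\pm\!\cdot w_\k^\pm - \alpha^\pm \quad\text{in }\Omega^\pm,
\]
with mixed boundary data from~\eqref{E:ALEregularized_dirichlet} and~\eqref{E:ALEregularized_neumann_top}. Since $\ak^\pm$ is a small $H^{5.5}$-perturbation of the identity (Remark after~\eqref{E:COMP_GOMEGA}), elliptic regularity for $\Delta_{\psik^\pm}$ gives
\[
\|q^\pm\|_7 \lesssim \|\Delta_{\psik^\pm} q^\pm\|_5 + |q^\pm|_{6.5}(\Gamma) + |\nabla q^+\!\cdot\!{\bf N}^+|_{5.5}(\partial\Omega) + \text{l.o.t.},
\]
where the l.o.t.\ terms, produced when derivatives fall on $\ak^\pm$ or $\psik^\pm$, are bounded by $P(\mathscr E_\k(t))$ through the pointwise-in-time bounds in $\E_\k^\pm$ (using the appendix estimates for $\psik^\pm$). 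The trace on $\Gamma$ carries a $\k^2$-factor from~\eqref{E:ALEregularized_dirichlet}, which both absorbs the highest derivative of $v^\pm$ produced when reading $q^\pm = \mp\k^2(v\cdot\ak^\top N) \pm \k^2\beta^\pm$ and leaves $\k^2|\beta^\pm|_{6.5}(\Gamma)$, a quantity built purely from the initial data. The trace on $\partial\Omega$ reduces to $|\gamma|_{5.5}$, controlled uniformly by~\eqref{E:GBOUND}.

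The remaining interior term is the main obstacle: $\|\Delta_{\psik^\pm}q^\pm\|_5 \lesssim \|q_t^\pm\|_5 + \|v^\pm\!\cdot w_\k^\pm\|_5 + \|\alpha^\pm\|_5$, and $\D_\k^\pm$ only gives $\|q_t^\pm\|_{4.5}$ in $L^2_t$. I handle this by bootstrapping: apply the same elliptic-regularity argument to the time-differentiated problem $\Delta_{\psik^\pm}q_t^\pm = q_{tt}^\pm + \cdots$, which yields $\|q_t^\pm\|_5 \lesssim \|q_{tt}^\pm\|_3 + |q_t|_{4.5}(\Gamma) + \text{l.o.t.}$, then iterate one more time to $q_{tt}^\pm$ to trade down to $\|q_{ttt}^\pm\|_1$. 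At each step the half-derivative mismatch is closed by combining the $L^2_t H^{6.5-2l}$-norms of $\g_t^l q^\pm$ from $\D_\k^\pm$ with the $L^\infty_t H^{6-2l}$-norms from $\E_\k^\pm$ and interpolating in time through $\int_0^t$; the residual terms involving $\g_t^l\alpha^\pm$, $\g_t^l\psik^\pm$ and $\g_t^l\ak^\pm$ are controlled by $P(\mathscr E_\k(t))$ using the appendix bounds and the explicit definition of $\alpha^\pm$ in~\eqref{E:alpha}. Collecting everything and taking the $L^2$-in-time norm yields the stated bound.
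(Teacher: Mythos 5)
Your treatment of $q^\pm$ --- elliptic regularity for $\Delta_{\psik^\pm}$ applied to the heat equation and its time-differentiated versions, descending from $\|q^\pm\|_7$ through $\|q_t^\pm\|_5$ and $\|q_{tt}^\pm\|_3$ down to $\|q_{ttt}^\pm\|_1$, with the tangential control supplied by the $\|\t^{6-2b}\g_t^b v^\pm\|_{L^2(\Omega^\pm)}$ terms of the dissipation --- is the right architecture and is essentially the scheme of Lemma~2.4 of \cite{mHsS2011} that the paper invokes.

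The gap is in the $\psik^\pm$ half, and it propagates back into the $q^\pm$ half. You correctly reduce $\|\psik^\pm\|_7$ to $|h^\k|_{H^{6.5}(\Gamma)}$, but the proposed control of the latter via $h^\k=h_0^\k+\int_0^t h_t^\k$ and traces of $v^\pm$ fails for three reasons. First, $h_0\in H^6(\Gamma)$ only, so $|h_0^\k|_{6.5}\lesssim \k^{-1/2}|h_0|_6$ is not $\k$-uniform, whereas the constant in \eqref{E:magic} must be $\k$-independent since the lemma feeds into the $\k$-independent estimate of Proposition~\ref{P:energyestimate}. Second, $|h_t|_{H^{6.5}(\Gamma)}=|[v\cdot\tilde{n}^\k]^+_-|_{6.5}$ would require, by the trace theorem, $v^\pm\in H^7(\Omega^\pm)$, i.e.\ $q^\pm\in H^8(\Omega^\pm)$ --- a full derivative beyond anything controlled; even granting the conclusion for $q^\pm$, one only gets $v^\pm\in L^2_tH^6(\Omega^\pm)$ and hence $|h_t|_{5.5}$. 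Third, and structurally: the claim is an $L^2$-in-time gain of half a spatial derivative over the $L^\infty_tH^6$ control of $h^\k$ already contained in $\E_\k$; obtaining it by the fundamental theorem of calculus in time would need $h_t^\k\in L^2_tH^{6.5}$, i.e.\ the time derivative with the \emph{same} spatial regularity as the target, which inverts the parabolic hierarchy. The half derivative has to be extracted from the interior dissipation $\|\mu^{1/2}\t^6 v^\pm\|^2_{L^{2,W^\pm}}$ itself: in $\t^6 v^\pm=-\t^6(\ak^{\pm}{}^{\top}\nabla q^\pm)$ the contribution where all six tangential derivatives fall on the coefficient matrix produces $\t^6\nabla\psik^\pm$ contracted against $\nabla q^\pm$, which is non-degenerate near $\Gamma$ by \eqref{E:localTaylorsign}; together with the harmonicity of $\psik^\pm$ and the combined quantities $\t^a\g_t^b q+\t^a\g_t^b\psik\cdot v$ appearing in $\E_\k$, this is what lets the cited argument control $\t^6\nabla\psik^\pm$ (hence $|h^\k|_{6.5}$ by elliptic regularity and trace) in $L^2_t$ simultaneously with $\t^6\nabla q^\pm$. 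Relatedly, the ``l.o.t.''\ you dismiss in the elliptic estimate for $q^\pm$ contain exactly such terms ($\t^6$ falling on $\ak^\pm$, of size $\|\psik^\pm\|_7\,\|\nabla q^\pm\|_{L^\infty}$); they are not controlled by the pointwise-in-time bounds of $\E_\k^\pm$, which only reach $\|\psik^\pm\|_{6.5}\lesssim|h^\k|_6$. So without the $\psik^\pm$ estimate the $q^\pm$ estimate does not close either: the two bounds must be derived together.
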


\begin{prop}\label{P:energyestimate}

For each $\k>0$, the energy function $\E_\k(t)$ is continuous in $[0,T_\k]$, and there exists a constant $C$ and a polynomial $P$, both independent of $\kappa$, such that the following bound holds:
\begin{equation}\label{E:energyestimate}
\mathscr{E}_\kappa(t) \leq C\mathscr{E}_\k(0) + C\sqrt{t}P(\E_\kappa(t)), \text{ for all }\ t\in[0,T_\k] .
\end{equation}

\end{prop}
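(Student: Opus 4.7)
The plan is to integrate the energy identity of Lemma~\ref{ab_energy_lemma} in time and estimate each of the error contributions constituting $\mathcal R(t)$ by a quantity of the form $\sqrt{t}\,P(\mathscr E_\k(t))$, where $P$ is a $\k$-independent polynomial. The continuity of $t\mapsto\mathscr E_\k(t)$ on $[0,T_\k]$ is a direct consequence of the regularity of the solution $(q^\pm,h^\k)$ provided by Theorem~\ref{T:smooth_solutions}, since each summand in $\mathcal E_\k^\pm(t)$ and $\int_0^t\mathcal D_\k^\pm(s)\,ds$ is continuous in time in the topology in which the solution lives. Integrating~\eqref{E:energy_identity1} in time gives
\begin{equation*}
\mathscr E_\k(t)\;=\;\mathscr E_\k(0)\;+\;\int_0^t\mathcal R(s)\,ds,
\end{equation*}
and the task reduces to showing $\int_0^t|\mathcal R(s)|\,ds\lesssim \sqrt{t}\,P(\mathscr E_\k(t))$ with constants independent of $\k$.

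The error $\mathcal R(s)$ decomposes into interior remainders $\mathcal R^\pm_{a,b}$ coming from commuting $\t^a\partial_t^b$ past ${}^\k\!A$ and $\nabla\psik$, boundary remainders $\mathcal R^{a,b}_{\Gamma_1},\ldots,\mathcal R^{a,b}_{\Gamma_4}$ (including the commutator $\mathcal R^{a,b}_{\mathrm{comm}}$ from moving $\Lambda_\k$), fixed-boundary remainders $\mathcal R^{a,b}_{\partial\Omega}$ from~\eqref{E:ALEregularized_neumann_top}, and the contributions generated by the forcing terms $\alpha^\pm,\beta^\pm,\gamma$ designed in~\eqref{E:alpha},~\eqref{E:beta},~\eqref{E:gamma}. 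For each such piece I would invoke the equivalence $\mathcal S_\k(t)\le P(\mathscr E_\k(t))$ of Proposition~\ref{P:local_energyequivalence} to convert the tangential/time energy back into standard Sobolev control, then apply H\"older's inequality in time, noting that every error is a product of factors of which at least one is measured in $L^2_tH^s$ (supplying the $\sqrt{t}$ factor) while the remainder is controlled in $L^\infty_tH^{s'}$. The auxiliary bound $\int_0^t(\|q^\pm\|_7^2+\|\psik^\pm\|_7^2)\,ds\le C\mathscr E_\k(t)$ of Lemma~\ref{L:magical_lemma} supplies the half-derivative surplus required for the highest-order interior error terms, and Lemma~\ref{L:WSHORTTIME} handles all occurrences of $\nabla W^\pm$ and $W^\pm_t$ (including those implicit in the term $\int_\Gamma\partial_t(e^{(-\lambda_1+\eta)t}a_\k^2)(\t^a\partial_t^b\Lambda_\k h)^2$ of $\mathcal R^{a,b}_{\Gamma_4}$; the favorable sign from $-\lambda_1+\eta<0$ identified in the Remark after Lemma~\ref{ab_energy_lemma} is not even needed at this stage, only $L^\infty$ control). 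The parabolic regularization bounds on $\alpha^\pm$ via $r^\pm$ and on $\gamma$ via~\eqref{E:GBOUND}, together with the polynomial-in-$t$ structure of $\beta^\pm$, guarantee that the forcing contributions enter as $\sqrt{t}\cdot(\text{data})$ with $\k$-independent constants.

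The genuine difficulties live in two places. First, in $\mathcal R^{a,b}_{\Gamma_3}$, the highest-order piece is the tangential derivative $\t^{a+1}\partial_t^b h^\k$ paired with $\t^a\partial_t^b h^\k$; this quantity cannot be estimated by the boundary energy alone without losing a derivative. I would handle this exactly as in \cite{mHsS2013,mHsS2011}: perform a discrete integration-by-parts in the tangential variable, exploiting the self-adjointness of $\t$ modulo curvature contributions, so that the $(a+1)$st derivative is moved onto the non-top factor (which in turn is estimated using the other half of $\mathcal D_\k^\pm$ and Lemma~\ref{L:magical_lemma}). The resulting symmetric leftover is a lower-order polynomial in $\mathscr E_\k$. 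Second, the commutator $\mathcal R^{a,b}_{\mathrm{comm}}$ involves $[\Lambda_\k,a_\k^2\t^a\partial_t^b]h_t$, where moving the convolution past $a_\k^2$ apparently produces a $\k^{-1}$ loss; the standard commutator lemma for the horizontal convolution operator $\Lambda_\k$ (see \cite{CoShfreesurfEuler}) shows that this term is actually $\k$-independent in $L^2(\Gamma)$ provided one has $|a_\k^2|_{W^{1,\infty}}$, which follows from the assumed regularity of $h^\k$ via~\eqref{E:Psiboundarycontrolkappa}.

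Finally, I would collect the boundary integrals coming from $\partial\Omega$ (where $v^+\cdot\mathbf N^+=\gamma$) and show, using $\gamma(0)=0$ and~\eqref{E:GBOUND}, that they too are bounded by $\sqrt{t}\,P(\mathscr E_\k(t))$. Summing over all indices $a+2b\le 6$ and over $\Omega^\pm$ yields
\begin{equation*}
\int_0^t|\mathcal R(s)|\,ds\;\le\;C\sqrt{t}\,P(\mathscr E_\k(t)),
\end{equation*}
with $C$ and $P$ independent of $\k$, which combined with the integrated energy identity is exactly~\eqref{E:energyestimate}. The most delicate step is the top-order boundary estimate in $\mathcal R^{a,b}_{\Gamma_3}$ described above, since it is the only place where the derivative count is tight and the weight $e^{(-\lambda_1+\eta)t}$ built into the boundary energy has to be tracked through the integration-by-parts in $\theta$.
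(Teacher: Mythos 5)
Your proposal follows essentially the same route as the paper's proof: integrate the energy identity, extract the $\sqrt{t}$ via H\"older in time, use the short-time $L^\infty$ bounds on $\nabla W^\pm$ and $W^\pm_t$ from Lemma~\ref{L:WSHORTTIME} for the new weight-dependent errors, invoke the $\kappa$-independent commutator estimate for $\Lambda_\kappa$, handle the derivative-critical boundary term in $\mathcal{R}^{a,b}_{\Gamma_3}$ by symmetrization/tangential integration by parts, and control the $\alpha$, $\beta$, $\gamma$ contributions through the parabolic regularity bounds and~\eqref{E:GBOUND}. Your remark that the favorable sign of $\mathcal{R}^{a,b}_{\Gamma_4}$ is not actually needed at the local stage is also consistent with the paper, which simply bounds that contribution by $t\,P(\mathscr{E}_\kappa(t))$.
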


\begin{proof}

First we show that the map $t \rightarrow \E_\k(t)$ is continuous on $[0,T_\k]$.
The continuity of the terms of the type $L^2([0,t]; H^s)$ follows from the fundamental theorem of calculus, and, for the norms $\|\g_t^l q^\pm(t)\|_{H^{6-2l}}$, continuity follows from the parabolic regularity estimates that we carried out in section \ref{S:higherregularitylinear} for the solution of the regularized equation \eqref{E:ALEregularized}. 

Using the definition of $h_t$ \eqref{E:HKAPPAEVOLUTION}, we can obtain the continuity of $h_t \in C([0,T_\k];H^{4}(\Gamma))$ and $h_{tt} \in C([0,T_\k];H^{2}(\Gamma))$, from the trace estimates for $v^\pm$ and $v_t^\pm$ respectively, combined with the already known continuity of $q$ and $q_t$. Moreover, the continuity of $h_{ttt}\in C([0,T_\k];L^2(\Gamma))$ can be obtained from the three-time-differentiated definition of $h_t$ in \eqref{E:HKAPPAEVOLUTION}, since, for a fixed $\k>0$, we have control of $\k|v_{ttt}\cdot n^\k|_{L^2_t L^2(\Gamma)}$, so the fundamental theorem of calculus gives us the desired continuity. 

Finally, from the higher regularity estimate \eqref{E:magic}, we have the norm $h\in L^2([0,T_\k];H^{6.5}(\Gamma))$. This estimate, combined with the control of $\k|\t^6 v\cdot n^\k|_{L^2_t L^2}$, which implies that $h_t\in L^2([0,T_\k];H^6(\Gamma))$, gives us by interpolation that $h\in C([0,T_\k];H^6(\Gamma))$.\\

Now we will prove the estimate \eqref{E:energyestimate}. The argument consists of carefully bounding the error terms from the energy identity \eqref{E:energy_identity1}. Most of these estimates are done analogously as in the proof of Proposition 2.5 from \cite{mHsS2011}, so we will address first the new error terms that appear as a consequence of having two interacting phases. These terms appear in the last line of the definitions of $\mathcal{R}^\pm_\aabb,\ \widetilde{\mathcal{R}}^\pm_\aabb$ with derivatives of the weight function $W^\pm$. Consider for example on the positive region $\Omega^+$ the errors,  
\begin{align*}
\mathcal{R}_5^\aabb{}^+ &:= \int_0^t \int_{\Omega^+} (\t^a\partial_t^b q^+ + \t^a\partial_t^b\Psi^+\cdot v^+) A^+\t^a\partial_t^b v^+  \nabla W^+ \mu\\
\mathcal{R}_6^\aabb{}^+ &:= \frac{1}{2}\int_0^t \int_{\Omega^+} (\t^a\partial_t^b q^+ + \t^a\partial_t^b\Psi^+\cdot v^+)^2 W_t^+\mu.
\end{align*}
By Lemma~\ref{L:WSHORTTIME}, $\mathcal{R}_5^\aabb{}^+$ can be easily bounded as follows:
\begin{align*}
\big|\mathcal{R}_5^\aabb{}^+\big| &\leq \int_0^t \|\mu^{1/2}(\t^a\partial_t^b q^+ + \t^a\partial_t^b\Psi^+\cdot v^+)\|_{L^{2,W^+}}\|A^+\|_{L^\infty}\|\mu^{1/2}\t^a\partial_t^b v^+\|_{L^{2,W^+}} \left\|\frac{ \nabla W^+}{W^+}\right\|_{L^\infty}\\
&\leq C\int_0^t \E_\k^+(s)^{1/2}\D_\k^+(s)^{1/2} (1+P(\E^+_\kappa(s)))ds \\
&\leq P(\sup\limits_{0\leq s\leq t} \E^+_\k(s))\int_0^t \D^+_\k(s)^{1/2}ds 
\leq \sqrt{t} P(\mathscr{E}_\k(t))
\end{align*}
where we used the bounds for $A^+$ from \ref{A_bounds}, H\"older's inequality, and the definition of the energy $\mathscr{E}_\k(t)$. In the same way we bound $\mathcal{R}_6^\aabb{}^+:$
\begin{align*}
\big|\mathcal{R}_6^\aabb{}^+\big| &\leq \frac{1}{2}\int_0^t \E^+_\kappa(s)\left\|\frac{W^+_t}{W^+}\right\|_{L^\infty}ds
\leq\ t\ P(\sup\limits_{0\leq s\leq t} \E^+_\kappa(s)).
\end{align*}
The remaining error terms are dealt with in the same way as in the proof of Proposition 2.5 of \cite{mHsS2011}, since the only difference, the weight $W^\pm$, can be bounded in $L^\infty$ using~Lemma \ref{l:wbounds_local}.


\subsection*{Boundary estimates} 

We write the boundary error $\mathcal{R}_\Gamma$ as a sum of its integral terms:
\begin{equation*}
\mathcal{R}_\Gamma = \mathcal{R}_{\Gamma_1}^\aabb + \mathcal{R}_{\Gamma_2}^\aabb + \mathcal{R}_{\Gamma_3}^\aabb + \mathcal{R}_{\Gamma_4}^\aabb + \mathcal{R}_{comm}^\aabb
\end{equation*}


\subsubsection*{Estimates for $\mathcal{R}_{\Gamma_4}^\aabb$}\label{SSS:boundary1}

The third term, as we have mentioned previously, is very distinctive, since it has the same order as the energy. We will see now that, because of our choice of $W^\pm$, this is not a problem.
\begin{align*}
\int_0^t \mathcal{R}_{\Gamma_4}^\aabb(s)ds & = \frac{1}{2}\int_0^t\int_\Gamma \g_t\left(e^{(-\lambda_1 + \eta)t} a_\k^2\right)(\t^a\g_t^b\ \Lambda_\k h)^2 \\
 & =  (-\lambda_1 +\eta)\int_0^t \int_\Gamma e^{(-\lambda_1 +\eta)s} a_\k^2(\t^a\g_t^b \Lambda_\k h)^2 \\
& \ \ \  + \int_0^t \int_\Gamma e^{(-\lambda_1 +\eta)s} \g_t(a_\k^2)(\t^a\g_t^b \Lambda_\k h)^2,
\end{align*}
where we recall the meaning of $H$ from Section~\ref{SS:FIXEDDOMAIN}, and $a_\k$ from Definition \ref{D:natural_energy}.
The first term is negative, since $-\lambda_1 +\eta <0$ for $\eta$ small enough, so we can eliminate it from the estimates, and the second can be bounded by,
\begin{align*}
& \int_0^t \int_\Gamma e^{(-\lambda_1 +\eta)s} \g_t(a_\k^2)(\t^a\g_t^b \Lambda_\k h)^2 \leq \int_0^t \E_\k(s) \left|\frac{\g_t(a_\k^2)}{a_\k^2}\right|_{L^\infty}\\
&\leq \int_0^t \E_\k(s) 2\left|\frac{\g_tJ_\k}{J_\k} + \frac{Hh_t^\k}{(1+Hh^\k)}\right|_{L^\infty} \leq t\ P(\mathscr{E}_\k(t)),
\end{align*} 
where we used the estimates for $J_\k$ from \ref{gJbounds}, Sobolev embedding, and the evolution equation~\eqref{E:HKAPPAEVOLUTION} for $h^\kappa_t$.


\subsubsection*{Estimates for $\mathcal{R}_{\Gamma_2}^\aabb$ and $\mathcal{R}_{\Gamma_3}^\aabb$}

In these errors there are \emph{problematic} terms that contain higher-order derivatives of the normal vector to the reference curve. If the reference curve were the initial domain (which is $H^6$), we would have at most 6 tangential derivatives in $L^2$, which is not enough to bound $\t^6 N$, since $N$ contains one derivative of the parametrization. 
Instead, the reference domain is a $C^\infty$ curve, which is $\sigma$-close to the initial domain, described by a height function $h_0\in H^6$, and $\sigma$ is a fixed, but small parameter. 
Our energy estimates will therefore depend on the parameter $\sigma$ as well as the time of existence, but that will not interfere with the proof. 
We write the first error terms as:
\begin{align}
\int_0^t \mathcal{R}_{\Gamma_2}^\aabb(s)ds &= \int_0^t \int_\Gamma e^{(-\lambda_1+\eta)s}J_\k^{-2}(1+Hh^\k) h^\k(\t^a\g_t^b N\cdot \tau)\t h^\k \t^a\g_t^b h_t\nonumber\\
&\quad - \int_0^t \int_\Gamma e^{(-\lambda_1+\eta)s}J_\k^{-2}(1+Hh^\k) h^\k(\t^a\g_t^b N\cdot \tau)\t h^\k [v\cdot \t^a\g_t^b(N-\frac{\t h^\k}{1+Hh^\k}\tau)]^+_-\nonumber\\
&\quad +\text{ l.o.t.} \label{E:gamma_error}
\end{align}
where ``l.o.t." stands for a collection of integral terms of the form, 
\begin{equation}\label{E:lotdefinition}
\int_0^t\int_\Gamma f(x,t)dxdt,\ \text{ or } \int_0^t\int_{\Omega^\pm} g(x,t)dxdt,
\end{equation}
that can be bounded to obtain estimate \eqref{E:energyestimate} using H\"older's inequality in a straightforward way. The first term of \eqref{E:gamma_error}, after taking one copy of  the smoothing operator $\Lambda_\kappa$ from $h^\k=\Lambda_\k\Lambda_\k h$ 
and applying it to $h_t$ gives
\begin{align}
I_{\Gamma_2}&:= \int_0^t \int_\Gamma e^{(-\lambda_1+\eta)s}J_\k^{-2} (1+Hh^\k)(\Lambda_\k h)(\t^a\g_t^b N\cdot \tau)\t h^\k (\t^a\g_t^b \Lambda_\k h_t) + I_{\text{comm}},\label{E:boundary_aux1}
\end{align}
where $I_{\text{comm}}$ is a lower-order commutator error given by,
\begin{equation*}
I_{\text{comm}} = \int_0^t \int_\Gamma \Lambda_\k h \big[\Lambda_\k,\ e^{(-\lambda_1+\eta)s}J_\k^{-2} (1+Hh^\k)(\t^a\g_t^b N\cdot \tau)\t h^\k \t^a\g_t^b\big] h_t. 
\end{equation*}
The first term in \eqref{E:boundary_aux1} has too many tangential derivatives over $\Lambda_\k h_t$ since the index $a\leq 6$, and the natural energy $\mathscr{E}_\k(t)$ can control at most 5 tangential derivatives of $\Lambda_\k h_t$. We integrate by parts to pass one derivative to the other terms which are lower order, 
\begin{align*}
I_{\Gamma_2} &=-  \int_0^t \int_\Gamma \t\left(e^{(-\lambda_1+\eta)s}J_\k^{-2}(1+Hh^\k) (\Lambda_\k h)(\t^a\g_t^b N\cdot \tau)\t h^\k\right) (\t^{a-1}\g_t^b \Lambda_\k h_t) + I_{\text{comm}}\\
&\leq C\sqrt{t}|e^{(-\lambda_1+\eta)t/2}\sqrt{a_\k} \t^{a-1}\g_t^b \Lambda_\k h_t|_{L^2_tL^2} \left|\frac{\t\left(J_\k^{-2}(1+Hh^\k) (\Lambda_\k h)(\t^a\g_t^b N\cdot \tau)\t h^\k\right)}{J_\k^{-1}(1+H h^\k)}\right|_{L^\infty_tL^2}\\
&\quad + \text{l.o.t.}\\
& \leq \sqrt{t}\ \mathscr{E}_\k(t)^{1/2}P(\E_\k(t)).
\end{align*}
For the second integral term of \eqref{E:gamma_error}, we notice that there is a higher-order term when the operator $\t^a\g_t^b$ inside the braket hits the $\t h^\k$ coefficient. Integrating by parts we get,
\begin{align*}
I_{\Gamma_3} &:= \int_0^t \int_\Gamma e^{(-\lambda_1+\eta)s}J_\k^{-2}(1+Hh^\k) h^\k(\t^a\g_t^b N\cdot \tau)\t h^\k [(v\cdot \tau) \frac{\t^{a+1}\g_t^b h^\k}{1+Hh^\k}]^+_- + \text{l.o.t.}\\
& \ = -\int_0^t \int_\Gamma e^{(-\lambda_1+\eta)s}\t\left(J_\k^{-2}(1+Hh^\k) h^\k(\t^a\g_t^b N\cdot \tau)\t h^\k\frac{[(v\cdot \tau)]^+_-}{(1+Hh^\k)}\right)  \t^{a}\g_t^b h^\k + \text{l.o.t}.\\
&\leq C t\ |e^{(-\lambda_1+\eta)t/2}J_\k^{-1}(1+Hh^\k)\t^a\g_t^b h^\k|_{L^\infty_tL^2} P(\E_\k(t))  + \text{l.o.t.}\\
& \leq C\ t\ \mathscr{E}_\k(t)^{1/2}P(\E_\k(t)).
\end{align*}
We now examine the error $\mathcal{R}_{\Gamma_3}^\aabb$. 
Notice that a similar situation occurs when the derivatives $\t^a\g_t^b$ inside the braket hits the term $\t h^\k$, but observe that now we 
cannot just integrate by parts since there is another higher-order factor $\t^{a+1}\g_t^b h^\k$ inside the integral. We extract a full derivative instead as follows: 
\begin{align*}
\int_0^t \mathcal{R}_{\Gamma_3}^\aabb(s)ds &=\int_0^t \int_\Gamma e^{(-\lambda_1+\eta)s}J_\k^{-2}(1+Hh^\k)^2 \t^a\g_t^b h^\k \frac{[v\cdot \tau]^+_-}{(1+Hh^\k)} \t^{a+1}\g_t^b h^\k\\
&=  \int_0^t \int_\Gamma e^{(-\lambda_1+\eta)s}J_\k^{-2}(1+Hh^\k)[v\cdot \tau]^+_- \frac{1}{2}\t(\t^a\g_t^b h^\k)^2\\
&= - \frac{1}{2}\int_0^t \int_\Gamma e^{(-\lambda_1+\eta)s}\t\left(J_\k^{-2}(1+Hh^\k)[v\cdot \tau]^+_-\right)(\t^a\g_t^b h^\k)^2\\
&\leq C\ t\ |e^{(-\lambda_1+\eta)t/2}J_\k^{-1}(1+Hh^\k)\t^a\g_t^b h^\k|^2_{L^\infty_tL^2} \biggr\vert \frac{\t\left(J_\k^{-2}(1+Hh^\k)[v\cdot \tau]^+_-\right)}{J_\k^{-2}(1+Hh^\k)^2}\biggr\vert_{L^\infty_tL^\infty}\\
&\leq\ t\ \mathscr{E}_\k(t)P(\E_\k(t)).
\end{align*}


\subsubsection*{Estimates for the commutation error $\mathcal{R}_{comm}^\aabb$}

\begin{align*}
 \int_0^t \mathcal{R}_{comm}^\aabb(s)ds &=-\int_0^t \int_\Gamma e^{(-\lambda_1+\eta)s} \t^a\g_t^b(\Lambda_\k h)[\Lambda_\k,  a_\k^2\t^a\g_t^b]h_t.
\end{align*}
To bound this term, we will use the commutation estimate described in Lemma 5.1 from \cite{CoSh_Kacov2010},
\begin{equation*}
|\Lambda_\k(f\t g) - f\Lambda_\k\t g|_0 \leq C|f|_{W^{1,\infty}(\Gamma)}|g|_0,
\end{equation*}
where the constant $C$ does not depend on $\k$.
Therefore, 
\begin{align*}
\int_0^t\mathcal{R}_{comm}^\aabb(s)ds &\leq \int_0^t e^{(-\lambda_1+\eta)s}|\t^a\g_t^b(\Lambda_\k h)|_0 |[\Lambda_\k,\ a_\k^2 \t^a\g_t^b] h_t|_0\\
&\leq P(\E_\k(t))\int_0^t e^{(-\lambda_1+\eta)s/2}|[\Lambda_\k,\ a_\k^2 \t^a\g_t^b] h_t|_0
\end{align*}
and 
\begin{align*}
|[\Lambda_\k,\ a_\k^2 \t^a\g_t^b] h_t|_0 &\leq | a_\k^2|_{W^{1,\infty}(\Gamma)} |\t^{a-1}\g_t^b h_t|_0
\end{align*}
Notice that, since there is not a smoothing operator $\Lambda_\k$ on $h_t$, we cannot bound it directly by the dissipation term $\D_\k(t)$. Instead, we use the definition of $h_t$ from \eqref{E:ALE_ht}. We have,
\begin{align*}
|\t^{a-1}\g_t^b h_t|_0 &\leq |[\t^{a-1}\g_t^b v \cdot (N- \frac{\t h^\k}{(1+H h^\k)}\tau)]^+_-|_0 + \left|\frac{[v\cdot \tau]^+_-}{1+Hh^\k}\right|_{L^\infty} |\t^a\g_t^b h^\k|_0 + l.o.t.
\end{align*}
We notice that the second term, multiplied by $e^{(-\lambda_1+\eta)t/2}$, is an energy term that is bounded by the $L^\infty$-norm in time. 
The first term now has the same number of derivatives on $v$ as the dissipation term $\D_\k(t)$, yet it cannot be bounded by it as it is a boundary norm. We have instead the following bounds using the control of the norm $\mathcal{S}_\k(t)$ with $\E_\k(t)$,
\begin{align*}
|[\t^{a-1}\g_t^b v \cdot (N- \frac{\t h^\k}{(1+H h^\k)}\tau)]^+_-|_0 &\leq \left(|[\t^{a-1}\g_t^b A^T  \nabla q]^+_-|_0 + |[A^T \t^{a-1}\g_t^b \nabla q]^+_-|_0\right) |(N- \frac{\t h^\k}{(1+H h^\k)}\tau)|_{L^\infty}\\
& \quad + l.o.t. \\
&\leq P(\E_\k(t))\left(|\t^a\g_t^b h^\k|_0 + \|\g_t^b q\|_{H^{6.5-2b}}\right).
\end{align*}
Therefore, 
\begin{align*}
\int_0^t \mathcal{R}_{comm}^\aabb(s)ds  &\leq P(\E_\k(t)) \int_0^t e^{(-\lambda_1 +\eta)s/2}\left(|\t^a\g_t^b h^\k|_0 + \|\g_t^b q\|_{H^{6.5-2b}}\right) ds + \text{l.o.t.}\\
& \leq P(\E_\k(t))(t\E_\k(t)^{1/2}+\sqrt{t}P(\E_\k(t)))
 \leq \sqrt{t}\ P(\E_\k(t)).
\end{align*}
Estimates for $\mathcal{R}^\aabb_{\Gamma_1}$ are straightforward, since it has a factor $\k^2$, and therefore we can bound the higher-order factors inside of the integral in $L^2(\Gamma)$ and use the extra terms in the energy with the $\k$-coefficients to obtain the desired estimate.   
The error $\widetilde{\mathcal{R}}_{\Gamma}(t)$ can be studied similarly as $\mathcal{R}_\Gamma$, but it is actually simpler, since $\widetilde{\mathcal{R}}_\Gamma$ does not contain the conflicting term $\mathcal{R}_{\Gamma_4}^\aabb$. Thus, simple integration by parts and Sobolev estimates suffice for obtaining the desired bounds. For the outside boundary error terms $\mathcal{R}^+_{\g\Omega}$ and $\widetilde{\mathcal{R}}^+_{\g\Omega}$, we capitalize on the boundary condition \eqref{E:ALEregularized_neumann_top}, and the smoothness of $\g\Omega$, which allows us to remove derivatives from the critical terms, and put them in the smooth normal vector. The estimates for the interior errors $\mathring{\mathcal{R}}^\pm$, follow the same methodology as the terms in $\mathcal{R}^\pm$. We will omit these estimates as there is no new ideas involved in their bounds.

\subsubsection{Estimates for the errors with $\alpha$ and $\gamma$.}\label{SS:alpha_gamma_errors}
\begin{align*}
\mathcal{R}^\aabb_{\alpha} &:= \int_{\Omega^\pm}(\t^a\g_t^b q + \t^a\g_t^b\psik\cdot v)(\t^a\g_t^b\alpha) W\mu, \text{ for } a+2b\leq 6,\\
\widetilde{\mathcal{R}}^\aabb_{\alpha} &:= \int_{\Omega^\pm}(\t^a\g_t^{b+1}q + \t^a\g_t^{b+1}\psik\cdot v)(\t^a\g_t^b\alpha) W\mu, \text{ for } a+2b\leq 5.
\end{align*}
Let us look at the case when $a=6,\ b=0$, 
\begin{align*}
\mathcal{R}^{6,0}_{\alpha} &\leq \int_{\Omega^\pm}(\t^7q + \t^7\psik\cdot v)\t^5\alpha W\mu + \text{l.o.t.}\leq C(\|q\|_7 + \|\psik\|_7)\|\alpha\|_5 + \text{l.o.t.}\\
& \leq \E_\k(t)^{1/2}\|\alpha\|_5 + \text{l.o.t.}
\end{align*}
Now, recall that from the definition of $\alpha$ in \eqref{E:alpha} we get, 
\begin{align*}
\|\alpha\|_5 &\leq \|\alpha_0\|_5 + \int_0^t \|r(s)\|_5 ds \lesssim \|q_0\|_6 + \sqrt{T_\k} \|r\|_{L^2_tH^5} \lesssim \|q_0\|_6 + C\sqrt{T_\k}. 
\end{align*} 
Therefore the bound follows directly. Similarly for the other terms $a,b$, and for the error terms involving $\gamma$, 
\begin{align*}
\mathcal{R}^\aabb_{\g\Omega} &= -\int_{\g\Omega}(\t^a\g_t^b q^+ + \t^a\g_t^b\psik\cdot v)\t^a\g_t^b \gamma\ W + \text{ l.o.t.},\\
\widetilde{\mathcal{R}}_{\g\Omega}^\aabb &= \int_{\g\Omega}( \t^a\g_t^{b+1} q^+ + \t^a\g_t^{b+1}\psik\cdot v)\t^a\g_t^b \gamma\ W + \text{l.o.t.},
\end{align*}
since, after integrating by parts a tangential derivative, control for $|\t^{a-1}\g_t^b \gamma|_{L^2(\Gamma)}$ follows from the definition of $\gamma$ in \eqref{E:gamma}. Consider for example $a=6,\ b=0$, then,
\begin{align*}
|\t^5 \gamma|_{L^\infty_tL^2} &\lesssim \|\tilde{\mathcal{G}}^0\|_{L^\infty_t H^{5.5}} + \sqrt t \|\tilde{\mathcal{G}}^1\|_{L^2_t H^{5.5}}ds + t^{3/2} \|\tilde{\mathcal{G}}^2\|_{L^2_t H^{5.5}} \\
& \leq C(1+\sqrt{T_\k}),
\end{align*} 
where we have used the bound~\eqref{E:GBOUND}.
Therefore, for $T_\k$ small enough,
\begin{align*}
\int_0^t \mathcal{R}^{6,0}_{\g\Omega}ds &\lesssim\ t\ \E_\k(t)^{1/2} (1+T_\k) \lesssim\ t\ \E_\k(t)^{1/2}.
\end{align*}
The other terms of $\mathcal{R}^\aabb_{\g\Omega}$ and $\widetilde{\mathcal{R}}^\aabb_{\g\Omega}$ follow directly using the same ideas, so we will omit them for brevity.  

\end{proof}

\subsection{Proof of  Theorem \ref{T:main}} \label{S:PROOF1}

\subsubsection*{Existence of solutions to the two-phase Stefan problem}

The energy estimates of Proposition \ref{P:energyestimate}, and a continuation argument like the one described in Section 9 of \cite{MR2208319}, gives us that there exists a $\kappa$-independent time $T>0$ such that the following bound holds,
\begin{align}\label{E:energy_estimate_final}
\mathscr{E}_\k(t) \leq C\mathscr{E}_\k(0) \ \text{ for all } t\in [0,T],
\end{align}
where $C$ is independent of $\k$. The energy control of Lemma \ref{P:local_energyequivalence}, and the definition of the initial data $\Qko$, gives us the bound,
\begin{equation*}
\mathcal{S}_\kappa(t)\leq P(\mathscr{E}_\k(t)) \leq P(\E_\k(0)) \leq C\mathcal{S}(0) \ \text{ for all } t\in [0,T],
\end{equation*}
where we note that the polynomial $P$ is also independent of $\kappa$. 
We conclude then that for all $\k>0$, the solutions to the $\k$-problem exist at least until a $\k$-independent time $T>0$, and they are bounded by,
\begin{equation}\label{E:uniform_bound}
\mathcal{S}_\k(t) \leq C\mathcal{S}(0)\ \ \forall\ t\in[0,T].
\end{equation}   

Consider now $\k=\frac{1}{n}$ and let $n\rightarrow \infty$. Let us define the reflexive Hilbert space, 
\begin{align*}
X(T) &:= \left\{(\tilde{q}^\pm,\tilde{h})|\ \g_t^l \tilde{q}^\pm \in L^2([0,T];H^{6.5-2l}(\Omega^\pm)) \text{ for } l=0,1,2,3,\right.\\
&\qquad \qquad\left. \g_t^s \tilde{h}_t \in L^2([0,T];H^{5-2l}(\Gamma)) \text{ for } s=0,1,2\right\},
\end{align*}
then the uniform bound \eqref{E:uniform_bound} implies that there exists a subsequence that converges weakly in $X(T)$ to a limit which we call $(q^\pm, h)$, and that $\mathcal{S}(t)=\mathcal{S}(q^\pm,h)$ also satisfies the estimate,
\begin{equation}\label{E:energy_estimate_final}
\mathcal{S}(t) \leq C\mathcal{S}(0)\ \ \forall\ t\in[0,T].
\end{equation}
Since the space of functions such that $\mathcal{S}(T)$ is bounded imbeds compactly into 
\begin{align*}
C^{1,2}&:= \left\{(q^\pm, h):\ q\in C^1([0,T];C^0(\Omega^\pm))\cap C([0,T];C^2(\Omega^\pm)),\right.\\
&\qquad \left. h\in C^1([0,T];C^0(\Gamma))\cap C([0,T];C^2(\Gamma))\right\},
\end{align*}
we have that the subsequence in fact converges strongly to a solution $(q^\pm, h)\in C^{1,2}$ of \eqref{E:ALE}.
This finishes the existence part of Theorem \ref{T:main}. 

\subsubsection*{Continuity of $\mathcal{S}(t)$} An application of the fundamental theorem of calculus and the bound \eqref{E:energy_estimate_final} gives us continuity of the lower-order norms. Namely,
\begin{equation}\label{E:lowerordercontinuity}
\g_t^l q^\pm \in C([0,T]; H^{5-2l}(\Omega^\pm)),\ \g_t^l h\in C([0,T]; H^{5-2l}(\Gamma)), \text{ for } l=0,..,2. 
\end{equation}
Moreover, since we have the higher regularity estimate \eqref{E:magic}, then $q^\pm\in L^2([0,T];H^{7}(\Omega^\pm))$ and $\g_t^l q_t^\pm\in L^2([0,T];H^{5-2l}(\Omega^\pm))$, so we can use interpolation estimates to obtain that in fact $q^\pm\in C([0,T];H^{6}(\Omega^\pm))$, $q_t^\pm\in C([0,T];H^{4}(\Omega^\pm))$, and $q_{tt}^\pm\in C([0,T];H^2(\Omega^\pm))$. 
The same estimate \eqref{E:magic}, gives us that $h\in L^2([0,T];H^{6.5}(\Gamma))$, and from the definition of $h_t$ in \eqref{E:ALE_ht}, we can actually recover a higher regularity than $L^2([0,T],H^{5}(\Gamma))$. Indeed, notice that $$|h_t|^2_{L^2_tH^{5.5}} \leq \int_0^t|[v\cdot\tilde{n}]^+_-|^2_{5.5}ds,$$ where the right-hand side is easily bounded using the known higher order estimates \eqref{E:magic}. Therefore we have instead that $h_t\in L^2([0,T];H^{5.5}(\Gamma))$, and consequently, via interpolation, we have that $h\in C([0,T];H^6(\Gamma))$, $h_t\in C([0,T],H^4(\Gamma))$, and $h_{tt}\in C([0,T],H^2(\Gamma))$.
It is left to prove then that the norms with the higher-order time derivatives are continuous functions of time, i.e. $q_{ttt}^\pm\in C([0,T]; L^2(\Omega^\pm))$, and $h_{ttt} \in C([0,T];L^2(\Gamma))$.\\

The continuity of $h_{ttt}(t)$ in $L^2(\Gamma)$ follows from the definition of $h_t$ in equation \eqref{E:ALEneumann} time differentiated two times, 
\begin{align*}
h_{ttt} &= [v_{tt}\cdot\tilde{n} + 2v_{t}\cdot\tilde{n}_t +v\cdot\tilde{n}_{tt}]^+_-.
\end{align*}
The continuity of $q_{tt}^\pm\in C([0,T],H^2(\Omega^\pm))$, and the continuity of $\g_t^l h$ in the lower-order norms, gives us then that $v_{tt}^\pm\in C([0,T],H^1(\Omega^\pm))$ and thus the continuity of $h_{ttt}$ follows. Finally, from the triple time differentiated problem \eqref{E:ALEheat}, we have that $q_{ttt}^\pm$ satisfies,
\begin{equation*}
q_{ttt}^\pm = \g_t^2(\Delta_{\Psi^\pm} q^\pm - \Psi_{t}^\pm\cdot v^\pm),
\end{equation*}
where we see that all the terms of the right-hand side are continuous in $L^2(\Omega)$, including the higher-order term $\Psi_{ttt}\cdot v$, since $h_{ttt}\in C([0,T],L^2(\Gamma))$. This concludes the proof of the continuity in time of $\mathcal{S}(t)$. 

\subsubsection*{Uniqueness}
The uniqueness can be derived from the energy estimates in a straightforward way. A brief sketch of the argument is presented in \cite{mHsS2011}, so we omit the analogous proof in this article.

\section{Global well-posedness}\label{C:global}

In Sections~\ref{S:bootstrap} -~\ref{S:improvedenergy}, we shall collect all the necessary ingredients for the proof of Theorem~\ref{T:main_global}, which is presented in Section~\ref{S:finalproof}.
We shall consider small initial data satisfying the hypothesis~\eqref{initialdatawithF}, which implies the bound
\begin{equation}\label{E:small_data}
\E^\pm(q_0^\pm, h_0) \leq \frac{C}{|c_1^\pm|}\frac{\epsilon_0^2}{F(K)},
\end{equation}
where the denominator $|c_1^\pm|$ comes from~\eqref{E:W_dirichlet} and~\eqref{E:globalTaylorsign}.

\subsection{Bootstrap assumptions}\label{S:bootstrap}

As guaranteed by the local well-posedness Theorem \ref{T:main}, 
we assume that the solution $(q^\pm, h)$ to the two-phase Stefan problem \eqref{E:ALE} exists on a time-interval $[0,T]$, for some $T>0.$
For $\epsilon_0<\epsilon \ll 1$ to be specified later, we make the following bootstrap assumptions:

\begin{subequations}\label{E:bootstrap}
\begin{align}
\frac{\newe^\pm(t)}{|c_1^\pm|} + \newe^\ppp(t) + \int_0^t (\frac{\newd^\pm(s)}{|c_1^\pm|} + \newd^\ppp(s))ds &\leq \frac{\epsilon^2}{|c_1^\pm|},\label{E:energy_bootstrap}\\
\sup\limits_{0\leq s\leq T}E^\pm_\beta(s) &\leq \tilde{C}E^\pm_\beta(0),\label{E:lowernorm_bootstrap}\\
\mathcal{X^\pm}(t):= \inf\limits_{x\in\Gamma}\partial_N q^\pm &\geq C|c^\pm_1| e^{-(\lambda^\pm_1+\eta/2)t},\label{E:lower_bound}
\end{align}
\end{subequations}
where we recall the definitions from Section~\ref{S:norms}.

\begin{remark} 
Note that the bounds for the boundary norms $\newe^\ppp,\ \newd^\ppp$ satisfy~\eqref{E:energy_bootstrap} in both the $+$ and the $-$ case, and therefore with
$c_1 = \max\{ |c_1^+|,|c_1^-|\}$ we have the bound:
\begin{equation*}
\newe^\ppp(t) + \int_0^t \newd^\ppp(s)ds \leq \frac{\epsilon^2}{c_1}.
\end{equation*} 
\end{remark}

As in \cite{mHsS2013}, we will prove first that if $\mathcal{T}$ is the maximal time for which the solution exists and satisfies the bootstrap assumptions, we actually improve upon
the smallness assumptions~\eqref{E:energy_bootstrap},~\eqref{E:lowernorm_bootstrap} and the lower bounds~\eqref{E:lower_bound}. 
A standard continuity argument then leads to the proof of global existence.

The following technical lemma will be fundamental for our analysis and it is a direct consequence of the bootstrap assumptions.  Intuitively, the lower order norms of the temperature have strong enough decay to counter the decay of the weight $e^{(-\lambda_1+\eta)t}$ of the boundary norms. This will be used when proving the energy estimates of Lemma \ref{L:higher_energy}, as we will be able to bound products of boundary and interior terms by multiplying and dividing by the weight $e^{(-\lambda_1+\eta)t}$, while still maintaining control of the decay.

\begin{lemma}\label{L:exponential_balance}
If the bootstrap assumptions \eqref{E:bootstrap} hold, we have that
\begin{equation}\label{E:exponential_balance}
\frac{E_\beta(t)^\pm e^{-\beta^\pm t}}{e^{(-\lambda_1 +\eta)t}} \leq C\frac{\epsilon^2_0}{F(K)} e^{-\gamma^\pm t},
\end{equation}
with 
$$
\gamma^\pm = 2\lambda_1^\pm -\lambda_1 >0.
$$ 
\end{lemma}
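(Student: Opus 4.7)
The statement is essentially an exercise in matching exponents and invoking the bootstrap assumption \eqref{E:lowernorm_bootstrap}, so the plan is short.

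First, I would simplify the exponent on the left-hand side. Recalling $\beta^\pm = 2\lambda_1^\pm - \eta$ from \eqref{E:BETAPMDEFINITION}, one directly computes
\begin{equation*}
\frac{e^{-\beta^\pm t}}{e^{(-\lambda_1+\eta)t}} = e^{(-2\lambda_1^\pm + \eta + \lambda_1 - \eta)t} = e^{(\lambda_1 - 2\lambda_1^\pm)t} = e^{-\gamma^\pm t},
\end{equation*}
with $\gamma^\pm := 2\lambda_1^\pm - \lambda_1$. Positivity of $\gamma^\pm$ is immediate from $\lambda_1 = \min\{\lambda_1^+,\lambda_1^-\} \le \lambda_1^\pm < 2\lambda_1^\pm$. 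Thus the desired estimate reduces to the time-uniform bound
\begin{equation*}
E_\beta^\pm(t) \le C\,\frac{\epsilon_0^2}{F(K)}.
\end{equation*}

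Second, I would invoke the bootstrap assumption~\eqref{E:lowernorm_bootstrap}, which gives
$\sup_{0\le s\le T} E_\beta^\pm(s) \le \tilde C\, E_\beta^\pm(0)$.
Unpacking the definition of $E_\beta^\pm$, one has $E_\beta^\pm(0) = \sum_{l=0}^{2}\|\partial_t^l q^\pm(0)\|_{H^{4-2l}(\Omega^\pm)}^2$, which is manifestly majorized by $\newe^\pm(0)$, hence by $S(0)$. The smallness assumption \eqref{initialdatawithF} then yields $E_\beta^\pm(0) \le \epsilon_0^2/F(K)$, and combining with the bootstrap completes the argument.

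I do not foresee any real obstacle: the content of the lemma is precisely that the exponential decay rate $\beta^\pm$ built into $E_\beta^\pm$ is strong enough to absorb the growth weight $e^{-(-\lambda_1+\eta)t}$ with the strict surplus $\gamma^\pm>0$. The only mild bookkeeping point is the identification of $E_\beta^\pm(0)$ with a piece of $\newe^\pm(0)$; the time derivatives $\partial_t^l q^\pm(0)$ are defined as in \eqref{E:qt_atzero} purely in terms of $(q_0^\pm,h_0)$, so this quantity is unambiguous and controlled by $S(0)$.
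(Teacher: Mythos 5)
Your proposal is correct and follows essentially the same route as the paper: apply the bootstrap bound $E_\beta^\pm(t)\le \tilde C E_\beta^\pm(0)$, absorb the weight via $e^{-\beta^\pm t}e^{(\lambda_1-\eta)t}=e^{-\gamma^\pm t}$ with $\gamma^\pm=2\lambda_1^\pm-\lambda_1>0$, and control $E_\beta^\pm(0)$ by the smallness hypothesis \eqref{initialdatawithF}. The only cosmetic difference is that the paper passes through $E_\beta^\pm(0)\le C\|q_0^\pm\|_4^2$ (citing the compatibility conditions) before invoking \eqref{initialdatawithF}, whereas you majorize $E_\beta^\pm(0)$ directly by $\newe^\pm(0)\le S(0)$; both are valid and amount to the same bookkeeping.
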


\begin{proof}
Using the bootstrap assumptions~\eqref{E:lowernorm_bootstrap} and \eqref{E:lower_bound} we arrive at  
\begin{align*}
\frac{E_\beta^\pm(t)e^{-\beta^\pm t}}{e^{(-\lambda_1 +\eta)t}} &\leq \frac{\tilde{C}E_\beta^\pm(0)e^{-\beta^\pm t}}{e^{(-\lambda_1 +\eta)t}} \leq CE_\beta^\pm(0)e^{-\gamma^\pm t}
\end{align*}
Since by definition $E_\beta^\pm(0) = \sum\limits_{b=0}^2 \|\partial_t^b q^\pm(0)\|_{H^{4-2b}}^2$, it follows from the compatibility assumptions (\ref{compatibility1}, \ref{compatibility2})
that
$
E_\beta^\pm(0) \leq C\|q_0^\pm\|_4^2.
$
Therefore
\begin{align*}
\frac{E_\beta^+(t)e^{-\beta^+ t}}{e^{(-\lambda_1 +\eta)t}} &\leq C\|q_0^\pm\|^2_4 e^{-\gamma^\pm t} \leq C \frac{\epsilon_0^2 }{F(K)}e^{-\gamma^\pm t}.
\end{align*}
where the last inequality follows from the smallness of the initial data \eqref{initialdatawithF}.
\end{proof}

\begin{remark}\label{R:otherboundforEbeta}
Using a higher order Hardy inequality as in Lemma 2.1 of \cite{mHsS2013}, we obtain as well the following bound for the initial lower order energy: 
\begin{equation}\label{E:otherboundforEbeta}
E_\beta^\pm(0) \leq C(K^\pm)^4|c_1^\pm|^2,
\end{equation}
which, by inspecting the proof of the Lemma leads to the following alternative bound :
\be\label{E:ALTERNATIVE}
\frac{E_\beta^\pm(t) e^{-\beta^\pm t}}{e^{(-\lambda_1 +\eta)t}} \leq C(K^\pm)^4|c_1^\pm|^2 e^{-\gamma^\pm t}.
\ee
\end{remark}

\subsection{Global estimates for $W^\pm$ and energy equivalence}\label{S:WGLOBAL}
\begin{lemma}[Global estimates for $W^\pm$]\label{l:wbounds}
Let the bootstrap assumptions \eqref{E:bootstrap} hold. Then $W^\pm$ satisfy the following bounds,
\begin{equation}\label{E:w+}
\frac{Ce^{\sigma^\pm t}}{(K^\pm)^2|c_1^\pm|}\leq \min_{x\in\Omega^\pm}W^\pm(t,x) \le \max_{x\in\Omega^\pm}W^\pm(t,x)   \leq \frac{Ce^{(\sigma^\pm+\eta)t}}{|c_1^\pm|},
\end{equation}
where
\be\label{E:SIGMADEFINITION}
\sigma^\pm : = \lambda_1^\pm-\lambda_1+\frac\eta 2>0.
\ee
\end{lemma}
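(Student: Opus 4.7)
The plan is to exploit the fact that $W^\pm$ is harmonic in $\Omega^\pm$, so that by the maximum/minimum principle it suffices to bound the boundary data on $\Gamma$ and (for $W^+$) on $\partial\Omega$. The bootstrap assumption~\eqref{E:lower_bound} gives the lower bound on $\g_Nq^\pm$ needed for the upper bound on $W^\pm$, while the bootstrap assumption~\eqref{E:lowernorm_bootstrap} combined with trace and Sobolev inequalities produces the upper bound on $\g_Nq^\pm$ needed for the lower bound on $W^\pm$.

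For the upper bound, I would start from the Dirichlet data~\eqref{E:W_dirichlet} on $\Gamma$ and use~\eqref{E:lower_bound} to obtain
\[
W^\pm\big|_\Gamma \;=\; \frac{e^{(-\l_1+\eta)t}}{\g_Nq^\pm} \;\le\; \frac{e^{(-\l_1+\eta)t}}{C|c_1^\pm|e^{-(\l_1^\pm+\eta/2)t}}
\;=\; \frac{Ce^{(\l_1^\pm-\l_1+3\eta/2)t}}{|c_1^\pm|}\;=\;\frac{Ce^{(\sigma^\pm+\eta)t}}{|c_1^\pm|}.
\]
For $W^+$, the outer Dirichlet datum~\eqref{W_dirichlet_top} is exactly $e^{(\l_1^+-\l_1+\eta)t}/|c_1^+|$, which is of the same order, and therefore the harmonic maximum principle yields the claimed upper bound on $\max_{\Omega^\pm}W^\pm$.

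For the lower bound, the key step is to control $\g_Nq^\pm$ from above on $\Gamma$ using the decay assumption~\eqref{E:lowernorm_bootstrap}. By the trace theorem and Sobolev embedding,
\[
\|\g_Nq^\pm(t)\|_{L^\infty(\Gamma)}\;\lesssim\;\|q^\pm(t)\|_{H^{2.5}(\Omega^\pm)}\;\lesssim\;\|q^\pm(t)\|_{H^4(\Omega^\pm)}\;\le\;\sqrt{\tilde CE_\beta^\pm(0)}\,e^{-\beta^\pm t/2}.
\]
To get the $K^\pm$ dependence sharp, I would use the alternative bound~\eqref{E:otherboundforEbeta}, namely $E_\beta^\pm(0)\le C(K^\pm)^4|c_1^\pm|^2$, which gives
\[
\g_Nq^\pm\;\le\;C(K^\pm)^2|c_1^\pm|\,e^{-(\l_1^\pm-\eta/2)t}\qquad\text{on }\Gamma.
\]
Inserting this into~\eqref{E:W_dirichlet} leads to
\[
W^\pm\big|_\Gamma \;\ge\; \frac{e^{(-\l_1+\eta)t}}{C(K^\pm)^2|c_1^\pm|e^{-(\l_1^\pm-\eta/2)t}}\;=\;\frac{Ce^{(\l_1^\pm-\l_1+\eta/2)t}}{(K^\pm)^2|c_1^\pm|}\;=\;\frac{Ce^{\sigma^\pm t}}{(K^\pm)^2|c_1^\pm|}.
\]
For $W^+$, one then checks that the outer boundary value $e^{(\l_1^+-\l_1+\eta)t}/|c_1^+|$ dominates $Ce^{\sigma^+t}/((K^+)^2|c_1^+|)$ (since $K^+\ge1$), so the minimum over $\Gamma\cup\partial\Omega$ is still of the desired order. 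Applying the minimum principle concludes the proof.

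The only delicate point is obtaining the correct $(K^\pm)^2$ factor in the lower bound; the naive estimate via Lemma~\ref{L:exponential_balance} would only yield a bound of the form $\epsilon_0/\sqrt{F(K)}$, which is not sharp in $K$. This is why one needs the sharper initial-data estimate~\eqref{E:otherboundforEbeta} coming from the higher-order Hardy inequality rather than the smallness condition~\eqref{initialdatawithF}. Everything else is a direct application of the harmonic maximum principle together with the two bootstrap assumptions.
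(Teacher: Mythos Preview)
Your proposal is correct and follows essentially the same approach as the paper: bound $\g_Nq^\pm$ from below via the bootstrap assumption~\eqref{E:lower_bound} and from above via Sobolev embedding combined with~\eqref{E:lowernorm_bootstrap} and~\eqref{E:otherboundforEbeta}, then apply the maximum/minimum principle for the harmonic function $W^\pm$. The paper's proof is slightly terser (it uses the embedding $H^{1.25}(\Omega^\pm)\hookrightarrow L^\infty$ to bound $|\g_Nq^\pm|$ by $\|q^\pm\|_{2.25}$ rather than going through the trace theorem), but the logic is identical.
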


\begin{proof}
We use the bootstrap assumptions \eqref{E:bootstrap} to obtain the following bounds for $\g_N q^\pm$:
\begin{equation*}
C|c_1^\pm|e^{(-\lambda_1^\pm -\eta/2)t} \leq |\g_N q^\pm| \leq \|q^\pm\|_{2.25} \leq C E_\beta^\pm(t)^{1/2} e^{-\beta^\pm t/2} \leq C (K^\pm)^2|c_1^\pm| e^{(-\lambda_1^\pm + \eta/2)t},
\end{equation*}
 where we used the Sobolev embedding $H^{1.25}(\Omega^\pm)\hookrightarrow L^\infty(\Omega^\pm)$ in the second inequality. Now
the proof follows analogously to the proof of Lemma \ref{l:wbounds_local} but using the above bounds instead. 
\end{proof}

With these estimates, we have a new equivalence relation between the natural energy $\mathscr{E}(t)$ and the norm $S(t)$. 

\begin{prop} \label{P:global_energyequiv}
With the bootstrap assumptions \eqref{E:bootstrap} holding, and with $\epsilon >0$ sufficiently small, there exists a constant $C$, and a $\gamma >0$ such that

\begin{align}\label{E:energyequivsummary}
&\sup\limits_{0\leq s\leq t}\frac{e^{\sigma^\pm s}\newe^\pm(s)}{(K^\pm)^2|c_1^\pm|} + \sup\limits_{0\leq s\leq t}\newe^\ppp(s) + \int_0^t \left(\frac{e^{\sigma^\pm s}\newd^\pm(s)}{(K^\pm)^2|c_1^\pm|} + \newd^\ppp(s)\right)ds\nonumber\\
& \leq C\sup\limits_{0\leq s\leq t} \E^\pm(s) + C\int_0^t  \epsilon e^{-\gamma^\pm s}\E^\pm(s)ds + C\int_0^t \D^\pm(s) ds\nonumber\\
&\leq C\mathscr{E}(t).
\end{align}

We will use this relationship in Section \ref{S:finalproof} to prove the final Theorem.

\end{prop}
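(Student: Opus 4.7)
The plan is to extend the local energy equivalence of Proposition~\ref{P:local_energyequivalence} to the global-in-time regime, with the global $W^\pm$ bounds of Lemma~\ref{l:wbounds} playing the role that Lemma~\ref{l:wbounds_local} played locally. The lower bound $W^\pm(t,x) \geq Ce^{\sigma^\pm t}/((K^\pm)^2 |c_1^\pm|)$ converts each weighted norm $\|\mu^{1/2}(\cdot)\|_{L^{2,W^\pm}}^2$ appearing in $\E^\pm$ and $\D^\pm$ into a standard $L^2$-norm multiplied by the exponential prefactor $e^{\sigma^\pm t}/((K^\pm)^2|c_1^\pm|)$, which is exactly the prefactor on the left-hand side of~\eqref{E:energyequivsummary}. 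For the boundary pieces, the coefficient $a_\k = J_\k^{-1}(1 + H h^\k) = 1 + \mathcal O(|h^\k|_{L^\infty})$ is bounded above and below by positive constants under the bootstrap assumption~\eqref{E:energy_bootstrap}, so the boundary contributions of $\newe^\ppp$ and $\newd^\ppp$ are controlled directly by the corresponding terms of $\E^\pm$ and $\D^\pm$ (passing $\Lambda_\k\to\operatorname{Id}$ in the limit $\k\to 0$).

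Next, I would recover control of full spatial derivatives from the purely tangential ones present in $\E^\pm$ and $\D^\pm$ by imitating the argument of Proposition~\ref{P:local_energyequivalence}: use the heat equation~\eqref{E:ALEheat} to trade normal derivatives for tangential plus time derivatives, and use elliptic regularity for~\eqref{E:definition_Psi} to pass from $\psik^\pm$ to $h$. The triangle-inequality splitting of $(\t^a\g_t^b q^\pm + \t^a\g_t^b \psik^\pm \cdot v^\pm)$ then produces a nonlinear cross term $\|\mu^{1/2}\t^a\g_t^b \psik^\pm \cdot v^\pm\|_{L^{2,W^\pm}}^2$, which is the technical heart of the argument.

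The main obstacle is controlling this cross term uniformly in $t$: indeed, $\t^a\g_t^b \psik^\pm$ is controlled via elliptic regularity only by $\t^a\g_t^b h$, which under the bootstrap~\eqref{E:energy_bootstrap} may grow like $e^{(\lambda_1+\eta)t/2}$. The resolution is precisely Lemma~\ref{L:exponential_balance}: the companion factor $v^\pm$ is dominated by $E_\beta^\pm(t)^{1/2}$ which decays like $e^{-\beta^\pm t/2}$, and the ratio $e^{-\beta^\pm t}/e^{-(\lambda_1+\eta)t}$ equals $e^{-\gamma^\pm t}$ with $\gamma^\pm > 0$. Combining with the smallness factor $\epsilon$ inherited from~\eqref{E:energy_bootstrap} produces exactly an error of the form $\epsilon\, e^{-\gamma^\pm s}\E^\pm(s)$ after $s$-integration, matching the middle expression in~\eqref{E:energyequivsummary}. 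The final inequality $\leq C\mathscr{E}(t)$ is then immediate from the definition of the total energy: since $\gamma^\pm > 0$, we have
\[
\int_0^t \epsilon\, e^{-\gamma^\pm s}\E^\pm(s)\,ds \;\leq\; \frac{C\epsilon}{\gamma^\pm}\sup_{0\leq s\leq t}\E^\pm(s) \;\leq\; C\mathscr{E}(t),
\]
so the error integral is absorbed, and the supremum and dissipation pieces are dominated by $\mathscr{E}(t)$ by definition.
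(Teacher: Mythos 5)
Your proposal is correct and follows essentially the same route as the paper: convert the weighted interior norms using the lower bound on $W^\pm$ from Lemma~\ref{l:wbounds}, recover normal derivatives from tangential and time derivatives via the heat equation and elliptic regularity (inducting down from the most time-differentiated problem), and control the cross terms $\t^a\g_t^b\psik^\pm\cdot v^\pm$ by pairing the exponential growth of the boundary norms against the decay of $E_\beta^\pm$, exactly the balance quantified in Lemma~\ref{L:exponential_balance}. The only nitpick is a sign typo in your ratio ($e^{(-\lambda_1+\eta)t}$, not $e^{-(\lambda_1+\eta)t}$), which does not affect the conclusion since $\eta$ is small.
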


\begin{proof} The proof of this Proposition follows the same steps as the proofs of Lemma 2.9, Lemma 2.10 and Corollary 2.11 of \cite{mHsS2013}, with the caveat of adding the weights $W^\pm$. We will show the highest order estimate for ilustration purposes, assuming that we have shown suitable estimates up to $\|q_t\|_{4.5}$, using the time differentiated problems. We will omit the upper indices $\pm$ since both regions follow the same argument. Consider equation \eqref{E:ALEheat}, 
\begin{equation*}
\Delta q = q_t + \Psi_t\cdot v + ((\delta^k_l - A^k_j A^l_j) q,_l),_k,
\end{equation*}
therefore, 
\begin{align*}
\|\t^6 \nabla q\|_0 &\leq \|q_t\|_5 + (\|\t^6 \nabla\Psi\|_0 + \|\t^5\Psi_t\|_0 + \|\t^5 v\|_0)\epsilon^{1/2}e^{-\beta t/2} + C\epsilon^{1/2}\E(t)^{1/2}e^{-\beta t/2}\\
\|\t^5 \nabla q\|_0 &\leq \|q_t\|_4 + (\|\t^5 \nabla\Psi\|_0 + \|\t^4\Psi_t\|_0 + \|\t^4 v\|_0)\epsilon^{1/2} e^{-\beta t/2} + C\epsilon^{1/2}\E(t)^{1/2}e^{-\beta t/2}.
\end{align*}
Thus, using the estimates for $\Psi$ and $\Psi_t$ in terms of their boundary value, interpolating, and using that control of the $\psi$-\emph{divergence} of $ \nabla q$ gives us control of the normal derivatives. We obtain
the bound:  
\begin{equation*}
\|q\|_{6.5} \leq \|q_t\|_{4.5} + (|h|_{6} + |h_t|_4 + \|\t^5 v\|_0)\epsilon^{1/2}e^{-\beta t/2} + C\epsilon^{1/2}\E(t)^{1/2}e^{-\beta t/2}.
\end{equation*}
Recall that the boundary norms in the natural energy have the weight $e^{(-\lambda_1 +\eta)t}$, and the interior norm of $\t^5 v$ has the weights $W.$ Therefore, 
\begin{align*}
\|q\|_{6.5} &\leq \|q_t\|_{4.5} + C\left(\E(t)^{1/2}e^{(\lambda_1 - \eta)t/2} + \frac{\E(t)^{1/2}}{(\inf W)^{1/2}}\right)\epsilon^{1/2}e^{-\beta t/2} + C\epsilon^{1/2}\E(t)^{1/2}e^{-\beta t/2}\\
&\leq \|q_t\|_{4.5} + C\epsilon^{1/2}\E(t)^{1/2}e^{-\gamma^\pm t} + \frac{\E(t)^{1/2}}{(\inf W)^{1/2}}\epsilon^{1/2}e^{-\beta t/2} 
\end{align*} 
with $\gamma^\pm = \lambda_1^\pm - \lambda_1/2.$
The result then follows using the estimates for $\|q_t\|_{4.5}$. 
\end{proof}

\begin{remark}
Notice that, just as in the one-phase problem, the exponential growth introduced by bounding the norms of $h$ with the natural energy is counterbalanced by the decay of the lower order norms. 
\end{remark}

\subsection{A priori bounds on $h$}\label{S:APRIORIH}

\begin{lemma}[Suboptimal decay bound for $h_t$]\label{L:suboptimal_decay_ht}
Under the bootstrap assumptions \eqref{E:bootstrap}, the following decay bound holds:
\begin{equation}\label{E:decay_h_t}
|h_t|_{2.5} \leq C\epsilon (e^{-\gamma^+ t/2} + e^{-\gamma^- t/2}) \leq C\epsilon e^{-\lambda_1 t/2}.
\end{equation}
\end{lemma}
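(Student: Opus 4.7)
\medskip
\noindent\textbf{Proof plan.} The starting point is the kinematic boundary condition~\eqref{E:ALEneumann}, which reads
\[
h_t=[v\cdot\tilde n]^+_- = v^+\cdot\tilde n - v^-\cdot\tilde n\quad\text{on }\Gamma,
\]
with $\tilde n = N - \tau\,\bar\partial h/(1+Hh)$. The plan is to reduce the bound on $|h_t|_{2.5}$ to a bound on $\|q^\pm\|_4$ and then harvest the exponential decay already encoded in the bootstrap assumption~\eqref{E:lowernorm_bootstrap}. First I would split $\tilde n=N+\tilde n_1$, where $\tilde n_1=-\tau\,\bar\partial h/(1+Hh)$, so that
\[
|h_t|_{2.5}\ \lesssim\ |v^+\cdot N|_{2.5}+|v^-\cdot N|_{2.5} + |v^+\cdot\tilde n_1|_{2.5}+|v^-\cdot\tilde n_1|_{2.5}.
\]

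For the first two terms, the trace inequality together with $v^\pm=-(A^\pm)^\top\nabla q^\pm$ and the product bounds for $A^\pm$ (controlled under~\eqref{E:bootstrap}; cf.\ the remark following~\eqref{E:compatibility}) yields $|v^\pm\cdot N|_{2.5}\lesssim\|v^\pm\|_3\lesssim\|q^\pm\|_4$. The bootstrap assumption~\eqref{E:lowernorm_bootstrap} gives $E_\beta^\pm(t)\le\tilde C E_\beta^\pm(0)$, and together with the smallness~\eqref{E:small_data} and the compatibility conditions this produces
\[
\|q^\pm(t)\|_4\ \le\ C\sqrt{E_\beta^\pm(t)}\,e^{-\beta^\pm t/2}\ \le\ C\epsilon\,e^{-\beta^\pm t/2}.
\]
Since $\beta^\pm=2\lambda_1^\pm-\eta$ and $\gamma^\pm=2\lambda_1^\pm-\lambda_1$, the hypothesis $\eta<\lambda_1$ gives $\beta^\pm\ge\gamma^\pm$ and hence $e^{-\beta^\pm t/2}\le e^{-\gamma^\pm t/2}$, so these two contributions are bounded by the right-hand side of~\eqref{E:decay_h_t}.

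For the remaining product terms, I would use the standard $H^{2.5}(\Gamma)$ multiplier estimate
\[
|v^\pm\cdot\tilde n_1|_{2.5}\ \lesssim\ |v^\pm|_{2.5}\,|\tilde n_1|_{L^\infty}+|v^\pm|_{L^\infty}\,|\tilde n_1|_{2.5}.
\]
The factor $|\tilde n_1|_{L^\infty}\lesssim |\bar\partial h|_{L^\infty}$ is $O(\epsilon)$ by Sobolev embedding and the bootstrap, so the first summand is absorbed into $\epsilon\,e^{-\gamma^\pm t/2}$. For the second summand, the trace and the energy bound~\eqref{E:energy_bootstrap} give $|\tilde n_1|_{2.5}\lesssim |h|_{3.5}\lesssim |h|_6\lesssim\epsilon\,e^{(\lambda_1-\eta)t/2}$, while $|v^\pm|_{L^\infty}\lesssim\|q^\pm\|_3\lesssim\epsilon\,e^{-\beta^\pm t/2}$. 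The product of these two factors has exponent
\[
\tfrac12(\lambda_1-\eta)-\tfrac12\beta^\pm\ =\ \tfrac12(\lambda_1-\eta-2\lambda_1^\pm+\eta)\ =\ -\tfrac12\gamma^\pm,
\]
giving a bound of order $\epsilon^2 e^{-\gamma^\pm t/2}\le\epsilon\,e^{-\gamma^\pm t/2}$. Summing both phases yields the first inequality in~\eqref{E:decay_h_t}, and the second follows at once from $\gamma^\pm=2\lambda_1^\pm-\lambda_1\ge\lambda_1$.

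The delicate step is the second summand above, where the growing bound on $|h|_{3.5}$ from $\newe^\ppp$ threatens to destroy the decay; the identity $(\lambda_1-\eta)-\beta^\pm=-\gamma^\pm$ is precisely what makes the exponents cancel, and this is the algebraic reason the suboptimal rate $e^{-\lambda_1 t/2}$ appears rather than the faster $e^{-\beta^\pm t/2}$ of the interior temperature.
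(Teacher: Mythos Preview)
Your proof is correct and follows essentially the same approach as the paper's: start from the kinematic condition $h_t=[v\cdot\tilde n]^+_-$, control $v^\pm$ via the trace inequality and the lower-order decay norm $E_\beta^\pm$, and neutralize the $h$-dependent factors by the multiply--divide trick with $e^{(-\lambda_1+\eta)t}$, which is exactly the algebraic identity $(\lambda_1-\eta)-\beta^\pm=-\gamma^\pm$ you highlight. The paper's proof is stated tersely by reference to \cite{mHsS2013}, but your write-up makes the same mechanism explicit.
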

\begin{proof}
The proof of this Lemma follows exactly as in Lemma 2.4 of \cite{mHsS2013}, but instead of multiplying and dividing by $\sqrt{\mathcal{X}}$ to obtain a boundary energy term, we multiply by $e^{(-\lambda_1 + \eta)t}$. Since the boundary condition for $h_t$ depends on the jump of the temperature gradients from both regions, we obtain the sum of the two exponential decays. The last inequality follows since we defined $\lambda_1 = \min\{\lambda_1^+,\lambda_1^-\}$. Notice that the weights $W^\pm$ do not show up in this proof, as this estimate only uses the lower order energy $E_\beta(t)$, which has no weights.
\end{proof}

\begin{remark}
A more precise statement can be achieved by following the bootstrap regularity arguments of Lemma 2.6 of~\cite{mHsS2013}, wherein we keep 
track of constant $c_1$ in our estimates. In particular we have
\begin{equation*}
|h_t|_{2.5} \le Cc_1e^{-\lambda_1 t},
\end{equation*}
which will be useful in Lemma~\ref{L:smallness_h}.
\end{remark}

\begin{lemma}[Smallness of the height function]\label{L:smallness_h}
Suppose that the bootstrap assumptions \eqref{E:bootstrap} hold. Then, for $\epsilon>0$ taken sufficiently small,

\begin{equation}
\sup\limits_{0\leq s\leq t} |h(s)|_{4.5} \leq C\sqrt{\epsilon},
\end{equation} 
while for the lower order norms, 
\begin{equation}
\sup\limits_{0\leq s\leq t} |h(s)|_{2.5} \leq Cc_1\ \text{ and }\ \sup\limits_{0\leq s\leq t} |h(s)|_4 \leq C\epsilon^{1/2}c_1^{1/4}.
\end{equation}
\end{lemma}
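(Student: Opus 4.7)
The basic approach is the fundamental theorem of calculus in time applied to the evolution equation
\[
h(t)=h_0+\int_0^t h_t(s)\,ds,\qquad h_t=[v\cdot\tilde n]^+_-,
\]
so that each Sobolev bound on $h$ reduces to a time-integrated bound on $h_t$, which by the trace inequality is controlled by $\|q^+\|_{r+3/2}+\|q^-\|_{r+3/2}$. The plan is to exploit the contrast between the sharp low-norm decay from Lemma~\ref{L:suboptimal_decay_ht} (and its refined form in the Remark) on the one hand, and the bootstrap bounds~\eqref{E:energy_bootstrap} in the higher-order weighted norms on the other, through carefully tuned Sobolev interpolation combined with an exponentially-weighted H\"older inequality.

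For the $H^{2.5}$ estimate, I would use the refined decay $|h_t(s)|_{2.5}\le Cc_1 e^{-\lambda_1 s}$ from the Remark following Lemma~\ref{L:suboptimal_decay_ht}, which is time-integrable. Integration yields
\[
|h(t)-h_0|_{2.5}\le \int_0^t Cc_1 e^{-\lambda_1 s}\,ds\le Cc_1/\lambda_1.
\]
The initial data assumption~\eqref{initialdatawithF} combined with the definition of the weights $W^\pm$ forces $|h_0|_{6}\le C\epsilon_0/\sqrt{F(K)}$, which for $F(K)$ chosen sufficiently large (depending on $K$ and $c_1$) gives $|h_0|_{2.5}\lesssim |h_0|_6\le Cc_1$; adding these two contributions establishes $\sup_{s\le t}|h(s)|_{2.5}\le Cc_1$.

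For the $H^4$ and $H^{4.5}$ bounds I would interpolate $|h_t(s)|_{r}\le C|h_t(s)|_{2.5}^{\theta}|h_t(s)|_5^{1-\theta}$ with $r=2.5\theta+5(1-\theta)$, using the $L^\infty_t$ decay of $|h_t|_{2.5}$ from Lemma~\ref{L:suboptimal_decay_ht} and the weighted $L^2_t$ control $\int_0^t e^{(-\lambda_1+\eta)s}|h_t|_5^2\,ds\le \epsilon^2/c_1$ coming from $\newd^\Gamma$. Integrating the interpolation with H\"older's inequality, weighted by a suitable exponential factor $e^{-\alpha s}\cdot e^{\alpha s}$, one splits the time integral into $\bigl(\int |h_t|_{2.5}^p e^{-p\alpha s/(1-1/q)}\bigr)^{1/p}\bigl(\int|h_t|_5^2 e^{(-\lambda_1+\eta)s}\bigr)^{1/q}$ with the remaining purely exponential integrand made integrable by choosing $\alpha$ in the admissible range $(-2\lambda_1/3,-(\lambda_1-\eta)/2)$, which is nonempty. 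For $r=4.5$ (respectively $r=4$) the optimal choice of $\theta$ and $\alpha$ yields, after combining with the initial contribution $|h_0|_{4.5}\lesssim |h_0|_6\le\epsilon_0/\sqrt{F(K)}$, a uniform-in-time bound of order $C\sqrt{\epsilon}$ (respectively $C\epsilon^{1/2}c_1^{1/4}$).

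The main technical obstacle is precisely this bookkeeping of competing exponentials: the bootstrap only gives $|h(s)|_6\le (\epsilon/\sqrt{c_1})e^{(\lambda_1-\eta)s/2}$, which is growing, so a straightforward interpolation between $|h|_{2.5}$ and $|h|_6$ on $h$ itself fails to produce a uniform bound. Shifting all the work to $h_t$ and integrating is essential, since the dissipation norm $|h_t|_5$ is in $L^2_t$ only after the exponential weight is accounted for; the successful interpolation relies on the rate $\lambda_1$ of the refined decay of $|h_t|_{2.5}$ being strictly larger than the growth rate $(\lambda_1-\eta)/2$ of $|h_t|_5$, leaving just enough room for the H\"older exponents to close.
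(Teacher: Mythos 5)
Your strategy works for the $H^{2.5}$ bound and, after checking exponents, also for the $H^{4}$ bound, but it breaks down exactly at the top order $H^{4.5}$, which is the main assertion of the lemma. The obstruction is quantitative. Writing $|h_t|_r\le |h_t|_{2.5}^{\theta}|h_t|_5^{1-\theta}$ with $r=5-2.5\theta$, and splitting $\int_0^t|h_t|_{2.5}^{\theta}|h_t|_5^{1-\theta}\,ds$ by H\"older so that one factor becomes $\bigl(\int_0^te^{(-\lambda_1+\eta)s}|h_t|_5^2\,ds\bigr)^{(1-\theta)/2}\le(\epsilon^2/c_1)^{(1-\theta)/2}$, the other factor necessarily carries the compensating weight $e^{(\lambda_1-\eta)(1-\theta)s/(1+\theta)}$ against the decay $e^{-2\theta\lambda_1 s/(1+\theta)}$ supplied by $|h_t|_{2.5}\le Cc_1e^{-\lambda_1 s}$. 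Uniform-in-time convergence of that factor requires $2\theta\lambda_1>(1-\theta)(\lambda_1-\eta)$, i.e.\ essentially $\theta>1/3$, i.e.\ $r<25/6\approx 4.17$. At $r=4.5$ one has $\theta=1/5$, and the first H\"older factor grows like $e^{(\lambda_1-2\eta)t/3}$ no matter how the auxiliary exponent $\alpha$ is tuned: there is no admissible choice, and the ``just enough room'' you invoke at the end is precisely what is missing at this order. (At $r=4$ one has $\theta=2/5>1/3$, so that case does close, though the resulting power of $c_1$ differs slightly from the one stated.)

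The paper closes the gap by not integrating $|h_t|_{4.5}$ at all. It differentiates the square of the norm and rebalances derivatives between the two factors: $\frac{d}{dt}|h|_{4.5}^2\le C|h|_6|h_t|_3$. This puts six derivatives on $h$, which the bootstrap controls with the growing weight via $e^{(-\lambda_1+\eta)s/2}|h|_6\le(\newe^\ppp)^{1/2}\le\epsilon/\sqrt{c_1}$, and only three derivatives on $h_t$, for which the interpolation exponent is $\theta=4/5>1/3$: one has $|h_t|_3\le|h_t|_{2.5}^{4/5}|h_t|_5^{1/5}$, so the integrand decays like $e^{(-4\lambda_1/5+(\lambda_1-\eta)/2)s}=e^{(-3\lambda_1/10-\eta/2)s}$ up to the mild factor $|h_t|_5^{1/5}$, which a final H\"older step with exponents $10/9$ and $10$ absorbs into $\bigl(\int_0^t\newd^\ppp\bigr)^{1/10}$. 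You should adopt this bilinear rebalancing for the $H^{4.5}$ estimate; your argument as written is fine for the two lower-order bounds.
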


\begin{proof}
The proof follows the same argument as in the proof of Lemma 2.6 of \cite{mHsS2013}, but with again the same modification as before, when instead of multiplying by $\sqrt{\mathcal{X}}$ we use $e^{(-\lambda_1 +\eta)t}$. We will show only the higher order estimate for brevity. Recall the interpolation estimate,
$$
|h|^2_{4.5} \leq C\int_0^t |h|_6|h_t|_3ds \leq C\int_0^t |h|_6|h_t|^{1/5}_5|h_t|^{4/5}_{2.5}
$$
Then, using an improved bound for $|h_t|_{2.5}$, we have,
\begin{align*}
|h|^2_{4.5} &\leq C\int_0^t |h|_6|h_t|^{4/5}_{2.5}|h_t|^{1/5}_5 \leq Cc_1^{4/5}\int_0^t |h|_6 e^{-4\lambda_1 t/5}|h_t|^{1/5}_5\,ds\\
&\leq Cc_1^{4/5}\int_0^t e^{(-\lambda_1+\eta)s/2}|h|_6 |h_t|_5^{1/5} e^{(-\frac{3\lambda_1}{10} - \frac{1}{2}\eta)s} \,ds
\end{align*}
Let $\bar{\gamma} := \frac{3\lambda_1}{10} + \frac{1}{2}\eta >0.$ Then,
\begin{align*}
|h|^2_{4.5} &\leq C\epsilon c_1^{3/10}\int_0^t e^{-\bar{\gamma} s}|h_t|^{1/5}_5 ds\\
&\leq C\epsilon c_1^{3/10}\left(\int_0^t (e^{-\bar{\gamma}s/2})^{10/9}ds\right)^{9/10}\left(\int_0^t (e^{-\bar{\gamma}s/2}|h_t|^{1/5}_5)^{10}\right)^{1/10}\\
&\leq C\epsilon c_1^{3/10}\left(\int_0^t e^{(-\frac{\lambda_1}{2} + \frac{3}{2}\eta)s}e^{(-\lambda_1+\eta)s}|h_t|^2_5\right)^{1/10}\\
&\leq C\epsilon c_1^{3/10} \left(\int_0^t \newd^\ppp(s)\right)^{1/10}\\
&\leq C\epsilon^{6/5}c_1^{1/5},
\end{align*}
and the claim easily follows. Note that we have used H\"older's inequality, the definition of $\newd^\ppp$ from Section \ref{S:norms} and the bootstrap assumption \eqref{E:energy_bootstrap}.
\end{proof}

\subsection{Lower bounds on $\partial_N q^\pm$}\label{S:lowerbound}
The main objective of this section is to prove a lower bound which improves the bootstrap assumption \eqref{E:lower_bound}. The detailed study of the decay rates of $\partial_N q^\pm$ is of fundamental importance as it also determines the growth and decay-in-time properties of the weights $W^\pm$. We present the following,

\begin{lemma}[Lower bound for $\mathcal{X^\pm}(t)$]\label{lowerboundforX}
Assuming the bootstrap assumptions \eqref{E:bootstrap} with $\epsilon$ small enough, there exists a universal constant $C>0$ such that
\begin{equation}
\mathcal{X^\pm}(t) \geq C|c_1^\pm|e^{-(\lambda_1^\pm + \tilde{\lambda}^\pm(t))t}.\label{E:lowerboundforX}
\end{equation}
Moreover, $\tilde{\lambda}^\pm \geq 0$ satisfies, $\tilde{\lambda}^\pm\leq C\sqrt{\epsilon}$ for some positive constant $C$. In particular, with $\epsilon>0$ sufficiently small so that $C\sqrt{\epsilon} < \eta/4$, we obtain the improvement of the bootstrap bound \eqref{E:lower_bound} given by $$\mathcal{X^\pm}(t) \geq C|c_1^\pm|e^{-(\lambda_1^\pm + \eta/4)t}.$$
\end{lemma}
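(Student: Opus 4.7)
The plan is to construct an explicit subsolution of the heat equation for $q^\pm$ involving the first Dirichlet eigenfunction $\varphi_1^\pm$ of $\Omega^\pm$, compare it with $q^\pm$ via a Pucci-type maximum principle, and extract the lower bound on $\partial_N q^\pm$ from the Hopf lemma applied to $\varphi_1^\pm$. This follows the strategy of Section~4 of~\cite{mHsS2015}, modulo the adaptation to the two-phase setting where both phases must be treated separately.

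First, I would transfer the analysis to the moving domain $\Omega^\pm(t)$, where $p^\pm = q^\pm\circ(\Psi^\pm)^{-1}$ satisfies the classical heat equation $p_t^\pm - \Delta p^\pm = 0$ in $\Omega^\pm(t)$ with $p^\pm = 0$ on $\Gamma(t)$. By Lemma~\ref{L:smallness_h}, $\sup_{0\le s\le t}|h(s)|_{4.5} \le C\sqrt\epsilon$, so the domain $\Omega^\pm(t)$ is a $C\sqrt\epsilon$-perturbation of $\Omega^\pm$. Using a near-identity diffeomorphism between $\Omega^\pm$ and $\Omega^\pm(t)$, the eigenfunction $\varphi_1^\pm$ of the Dirichlet-Laplacian on $\Omega^\pm$ pulls forward to a function $\tilde\varphi_1^\pm(t,\cdot)$ on $\Omega^\pm(t)$ which is a near-eigenfunction: it solves $(\partial_t - \Delta)\tilde\varphi_1^\pm = -\lambda_1^\pm\tilde\varphi_1^\pm + R^\pm$, where the remainder $R^\pm$ is controlled by the perturbation size and the decay bound~\eqref{E:decay_h_t} for $h_t$. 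Alternatively, one may bound $\Delta$ on $\Omega^\pm(t)$ from below and above by Pucci's extremal operators $\mathcal{M}_\lambda^\pm$ with ellipticity constants of the form $1 \pm C\sqrt\epsilon$, so that $p^\pm$ is a supersolution of a Pucci operator that admits the scaled eigenfunction as a subsolution.

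Second, I would build the comparison function. Define
\[
\phi^\pm(t,x) := \alpha^\pm \, c_1^\pm\, \tilde\varphi_1^\pm(t,x)\, \exp\!\big(-(\lambda_1^\pm + \tilde\lambda^\pm(t))\, t\big),
\]
with $\alpha^\pm>0$ a universal constant chosen using the initial non-degeneracy condition~\eqref{E:globalTaylorsign} (so that $\phi^\pm(0,\cdot) \le p^\pm_0$ pointwise near $\Gamma$) and $\tilde\lambda^\pm(t)\ge 0$ a correction to be determined. The point is to choose $\tilde\lambda^\pm$ minimally so that $\phi^\pm$ is a genuine subsolution, i.e.\ $\phi^\pm_t - \mathcal{M}_\lambda^+\phi^\pm \le 0$. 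Differentiating yields a pointwise balance
\[
\phi^\pm_t - \Delta\phi^\pm \;\lesssim\; \big(-\tilde\lambda^\pm - t\,(\tilde\lambda^\pm)' + C\sqrt{\epsilon}\big)\phi^\pm,
\]
where the $C\sqrt\epsilon$ collects the eigenfunction remainder $R^\pm$ and the Pucci ellipticity deficit. Taking $\tilde\lambda^\pm$ of order $C\sqrt\epsilon$ (uniformly in $t$) kills the right-hand side. Comparison between $p^\pm$ and $\phi^\pm$ on the parabolic boundary of $\cup_{0\le s\le t}\{s\}\times\Omega^\pm(s)$ (using Dirichlet data on $\Gamma(t)$ and compatibility with the fixed boundary for the $+$ phase) then gives $p^\pm \ge \phi^\pm$ throughout the moving domain.

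Finally, because $\phi^\pm$ vanishes on $\Gamma(t)$ and $p^\pm - \phi^\pm \ge 0$, the Hopf lemma on $\Gamma(t)$ produces
\[
\partial_n p^\pm(t,\cdot) \;\ge\; \partial_n \phi^\pm(t,\cdot) \;=\; \alpha^\pm c_1^\pm\, e^{-(\lambda_1^\pm + \tilde\lambda^\pm(t))t}\, \partial_n\tilde\varphi_1^\pm \;\ge\; C\,|c_1^\pm|\, e^{-(\lambda_1^\pm + \tilde\lambda^\pm(t))t},
\]
after translating back to $\partial_N q^\pm$ on $\Gamma$ via $\nabla p^\pm\circ\Psi^\pm = v^\pm$ and observing that the unit normals $n$ and $N$ differ by an $O(|h|_{2.5})$ correction absorbed into the constants. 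Infimizing over $\Gamma$ and taking $C\sqrt\epsilon<\eta/4$ yields the improved bootstrap bound. The main obstacle in this scheme is the correct bookkeeping of the perturbative remainder terms so that $\tilde\lambda^\pm$ is genuinely $O(\sqrt\epsilon)$ rather than accumulating a growth-in-$t$ factor; this requires the decay estimate~\eqref{E:decay_h_t} on $h_t$ and the smallness bounds for $h$ to be used in an integrated sense (not merely pointwise), which is exactly where the bootstrap structure of Section~\ref{S:bootstrap} earns its keep.
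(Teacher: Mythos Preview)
Your proposal is correct in its overall architecture (comparison argument via Pucci-type maximum principle, then Hopf lemma), and this is indeed the strategy the paper invokes by deferring to Section~2.6 of~\cite{mHsS2015}. The difference lies in the choice of comparison function. The paper's route builds the subsolution directly from the \emph{demi-eigenfunction} of the maximal Pucci operator (in the sense of Lions~\cite{Lions}): since the ellipticity constants are $1\pm C\sqrt\epsilon$, the associated demi-eigenvalue is automatically within $O(\sqrt\epsilon)$ of $\lambda_1^\pm$ by eigenvalue stability, and the demi-eigenfunction satisfies the Pucci equation \emph{exactly}, so there is no remainder term to track. Your route instead uses the Dirichlet eigenfunction $\varphi_1^\pm$ of the Laplacian, pulled forward to the moving domain, and absorbs both the ellipticity deficit and the near-eigenfunction remainder $R^\pm$ into the correction $\tilde\lambda^\pm$. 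This is more elementary (no need for the Lions demi-eigenvalue machinery) but requires the careful ``integrated'' bookkeeping of $h_t$-decay that you flag at the end; the demi-eigenfunction approach bypasses that bookkeeping entirely. A secondary difference: the paper works on the fixed reference domain (where $q^\pm$ solves the ALE heat equation with near-identity coefficients) rather than transferring to the moving domain, which avoids the pull-forward step altogether.
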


We will omit the proof as it is detailed in Section 2.6 of \cite{mHsS2015}, where a comparison function is constructed
using the so-called {\em demi-eigenvalues and demi-eigenfunctions} of the maximal Pucci operators detailed in~\cite{Lions}.

\subsection{Improved bounds for the lower order terms of the energy $S(t)$}\label{S:lower_order}

In this Section we prove the improvement of the bootstrap bounds for the terms $E_\beta^\pm,$ responsible
for the decay of the below-top-order energy terms.
\begin{lemma}\label{L:EBETAIMPROVED}
There exists a constant $\tilde{C}$ and $\epsilon >0$ sufficiently small, such that if the bootstrap assumptions \eqref{E:bootstrap} hold with such $\epsilon$ and $\tilde{C}$, then the following improved bound holds:
\begin{equation*}
E_\beta^\pm(t) \leq \frac{\tilde{C}}{2}E_\beta^\pm(0).
\end{equation*}
\end{lemma}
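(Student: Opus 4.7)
The decay rate $\beta^{\pm}=2\lambda_1^{\pm}-\eta$ is deliberately slightly slower than the sharp rate $2\lambda_1^{\pm}$ that the first Dirichlet eigenvalue on $\Omega^{\pm}$ would predict for a linear heat equation with homogeneous Dirichlet boundary data. The whole point of the improvement from $\tilde C$ to $\tilde C/2$ is to exploit the width $\eta$ of that window, which accommodates the $O(\sqrt{\epsilon})$ perturbations generated by the moving interface, the ALE nonlinearity, and the lower-order transport term in \eqref{E:ALEheat}. Throughout, I would use the a priori control provided by the bootstrap assumptions \eqref{E:bootstrap}, Lemma~\ref{L:smallness_h} (giving $|h|_{4.5}\le C\sqrt{\epsilon}$), Lemma~\ref{L:suboptimal_decay_ht} (giving exponential decay of $|h_t|_{2.5}$), Lemma~\ref{lowerboundforX} (the improved lower bound on $\partial_N q^{\pm}$), and Lemma~\ref{l:wbounds}.

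\emph{Step 1 (sharp $L^2$-decay).} On the moving domain $\Omega^{\pm}(t)$, the Eulerian temperatures $p^{\pm}$ satisfy the heat equation with Dirichlet data $p^{\pm}=0$ on $\Gamma(t)$. The Reynolds transport theorem kills the boundary contribution because $p^{\pm}|_{\Gamma(t)}=0$, so
\[
\frac{d}{dt}\int_{\Omega^{\pm}(t)}|p^{\pm}|^{2}\,dx
= -2\int_{\Omega^{\pm}(t)}|\nabla p^{\pm}|^{2}\,dx
\le -2\lambda_1^{\pm}(t)\int_{\Omega^{\pm}(t)}|p^{\pm}|^{2}\,dx,
\]
where $\lambda_1^{\pm}(t)$ is the first Dirichlet eigenvalue on $\Omega^{\pm}(t)$. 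Perturbation of eigenvalues under a normal-graph deformation of size $|h|_{4.5}$ gives $\lambda_1^{\pm}(t)\ge \lambda_1^{\pm}(1-C\sqrt{\epsilon})$, so after pulling back (the Jacobian is $1+O(\sqrt{\epsilon})$ by Lemma~\ref{L:smallness_h}) one obtains
\[
\|q^{\pm}(t)\|_{0}^{2}\le C\,\|q_0^{\pm}\|_{0}^{2}\,e^{-2\lambda_1^{\pm}(1-C\sqrt{\epsilon})t}.
\]

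\emph{Step 2 (decay of time derivatives).} Because $q^{\pm}$ vanishes identically on the \emph{fixed} boundary $\Gamma$ in ALE coordinates, each time derivative $\partial_t^{l}q^{\pm}$ also vanishes on $\Gamma$. Time-differentiating \eqref{E:ALEheat}, $\partial_t^{l}q^{\pm}$ satisfies a perturbed heat equation whose forcing terms contain commutators of $\partial_t$ with $\Delta_{\Psi^{\pm}}$ and with the transport term $v^{\pm}\!\cdot\! w^{\pm}$; every such term carries at least one factor of $\partial_t\Psi^{\pm}$, $A^{\pm}-\mathrm{Id}$ or $J^{\pm}-1$, hence an $O(\sqrt{\epsilon})$ smallness by the bootstrap assumptions. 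Running the same Reynolds-Poincaré argument as in Step~1 on the pulled-back formulation and absorbing these perturbations yields
\[
\|\partial_t^{l}q^{\pm}(t)\|_{0}^{2}\le C\bigl(\|\partial_t^{l}q^{\pm}(0)\|_{0}^{2}+\epsilon^{3/2}\bigr)e^{-2\lambda_1^{\pm}(1-C\sqrt{\epsilon})t},\qquad l=0,1,2,
\]
where the $\epsilon^{3/2}$ term collects the integrated forcing coming from $h_t$ via Lemma~\ref{L:suboptimal_decay_ht}.

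\emph{Step 3 (upgrading to $H^{4-2l}$).} Treat the ALE heat equation, differentiated $l$ times, as an elliptic equation for $\partial_t^{l}q^{\pm}$ with right-hand side $\partial_t^{l+1}q^{\pm}+\text{l.o.t.}$ and homogeneous boundary data. Standard elliptic regularity on $\Omega^{\pm}$, together with the perturbation bound $\|A^{\pm}-\mathrm{Id}\|_{\text{Lip}}\lesssim \sqrt{\epsilon}$, gives
\[
\|\partial_t^{l}q^{\pm}\|_{H^{4-2l}}^{2}
\lesssim \|\partial_t^{l+1}q^{\pm}\|_{H^{2-2l}}^{2}+\epsilon\, \|\partial_t^{l}q^{\pm}\|_{H^{4-2l}}^{2}+\text{l.o.t.},
\]
and iterating from $l=2$ down to $l=0$ (absorbing the $\epsilon$-factor on the left) we get $\sum_{l=0}^{2}\|\partial_t^{l}q^{\pm}\|_{H^{4-2l}}^{2}\le C\sum_{l=0}^{2}\|\partial_t^{l}q^{\pm}\|_{0}^{2}+C\epsilon^{3/2}e^{-\gamma^{\pm}t}$, where the remainder uses the exponential decay provided by Lemma~\ref{L:exponential_balance}. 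Combining with Step~2 and multiplying by $e^{\beta^{\pm}t}$,
\[
E_{\beta}^{\pm}(t)\le C\,E_{\beta}^{\pm}(0)\,e^{(-\eta+2C\sqrt{\epsilon}\lambda_1^{\pm})t}+C\epsilon^{3/2}.
\]
Choose $\epsilon_0$ small enough that $2C\sqrt{\epsilon}\lambda_1^{\pm}<\eta/2$ and that the additive $\epsilon^{3/2}$ term is dominated by $E_{\beta}^{\pm}(0)$ via \eqref{E:small_data} and \eqref{E:otherboundforEbeta}. Fixing $\tilde C:=4C$ then yields $E_{\beta}^{\pm}(t)\le (\tilde C/2)\,E_{\beta}^{\pm}(0)$, which is the required improvement.

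\emph{Main obstacle.} The delicate point is keeping the sharp rate $2\lambda_1^{\pm}$ intact in Step~2 once time derivatives are applied: the commutator with $\Delta_{\Psi^{\pm}}$ produces terms supported in the bulk that, a priori, decay only as $E_{\beta}^{\pm}(t)^{1/2}$ itself, so one must carefully separate pieces that can be absorbed in the dissipation $2\lambda_1^{\pm}\,\|\partial_t^{l}q^{\pm}\|_{0}^{2}$ from pieces controlled by the exponentially decaying $h_t$-factor, mirroring the balancing used in Lemma~\ref{L:exponential_balance}. Once this bookkeeping is done the argument closes cleanly.
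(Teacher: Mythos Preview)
Your overall strategy---Poincar\'e inequality with constant $\lambda_1^\pm$ to obtain the sharp $L^2$ decay, and then elliptic regularity from the time-differentiated ALE heat equation to upgrade to $H^{4-2l}$---is precisely the mechanism the paper invokes (it defers to Lemma~4.1 of~\cite{mHsS2013}, which is exactly this combination). Your Eulerian detour in Step~1 is a harmless variation; the paper would run the same estimate directly in ALE variables and perturb the Poincar\'e constant by $\|A^\pm-\mathrm{Id}\|_{L^\infty}\lesssim\sqrt\epsilon$ instead of perturbing the domain eigenvalue, with the same outcome.

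There is, however, a genuine gap in your bookkeeping in Steps~2--3. You produce an \emph{additive} remainder $C\epsilon^{3/2}$ and then claim it can be dominated by $E_\beta^\pm(0)$ via~\eqref{E:small_data} and~\eqref{E:otherboundforEbeta}. Both of those are \emph{upper} bounds on $E_\beta^\pm(0)$, not lower bounds; in fact nothing in the assumptions prevents $E_\beta^\pm(0)\sim|c_1^\pm|^2$ from being arbitrarily small compared with $\epsilon^{3/2}$, so the absorption fails as written. The fix is that no additive term should appear in the first place: every commutator $[\partial_t^l,\Delta_{\Psi^\pm}]q^\pm$ and every piece of $\partial_t^l(v^\pm\!\cdot\! w^\pm)$ carries at least one factor from the family $\{\partial_t^m q^\pm\}_{m\le l}$, so the forcing in the $\|\partial_t^l q^\pm\|_0^2$ estimate is bounded by $C\sqrt\epsilon$ times a product of two such norms. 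Running the energy estimate for the full sum $\sum_{l=0}^2\|\partial_t^l q^\pm\|_{H^{4-2l}}^2$ simultaneously (rather than level by level), and using the elliptic gain $\|\partial_t^l q^\pm\|_{H^{4-2l}}\lesssim \|\partial_t^{l+1}q^\pm\|_{H^{2-2l}}+\text{l.o.t.}$ to close the loop, the $O(\sqrt\epsilon)$ forcing is absorbed \emph{multiplicatively} into the decay rate, yielding
\[
E_\beta^\pm(t)\le C\,E_\beta^\pm(0)\,e^{(-\eta+C\sqrt\epsilon)t}
\]
with no additive error. Then choosing $\tilde C$ and $\epsilon$ appropriately gives the improvement directly. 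Your ``main obstacle'' paragraph correctly anticipates this issue; you just need to carry it through the estimate rather than summarizing it as an additive $\epsilon^{3/2}$.
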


\begin{proof} 
 Notice that the lower order norms $E_\beta^\pm$, do not contain weights $W^\pm$ in their definitions (see Section \ref{S:norms}).
Therefore the proof of the Lemma is analogous to the proof of Lemma 4.1 of \cite{mHsS2013} and we omit the details of the proof for brevity purposes. In~\cite{mHsS2013} the authors used elliptic estimates and a Poincar\'e inequality. 
Note that the Poincar\'e constant is given precisely by the first eigenvalue of the Dirichlet-Laplacian in each region $\Omega^\pm$, which gives us the different decay rates $\lambda_1^\pm$ in each respective region.
The basic mechanism for the decay of the $H^4$-norm of the temperature $q$ is exactly the same as in the standard heat equation. We can not however propagate this nearly-optimal decay rate
to the top-order norms of $q$ as that would require a stronger control on the top-order derivatives of $h,$ than the one dictated by the natural energy $\mathscr E.$
\end{proof}

\subsection{Improved bounds for the energy $\mathscr{E}(t)$}\label{S:improvedenergy}

In this Section we prove the higher-order energy estimates to be used in the proof of Theorem~\ref{T:main_global} in Section \ref{S:finalproof}. 
Most of the energy terms are dealt with using the same techniques as in \cite{mHsS2013}, but all the new terms that arise from the interaction of the two phases via the weights $W^\pm$ are presented in detail. 
This is the price we must pay for eliminating the \emph{critical term} of \cite{mHsS2013}, but we will see it is a low price to pay.
As a starting point we first prove sharp upper bounds on the spacetime derivatives of the weights $W^\pm$ as they will be used crucially in dealing with the above mentioned ``critical" terms.

\begin{lemma}[Global estimates for $ \nabla W^\pm$ and $W_t^\pm$] \label{L:WEIGHTDERIVATIVES}
Under the bootstrap assumptions \eqref{E:bootstrap}, the derivatives of the weight functions $W^\pm$ satisfy the following bounds:
\begin{align*}
\|W_t^\pm\|_{L^\infty(\Omega^\pm)} \leq P(K) \frac{e^{\sigma^\pm t}}{|c_1^\pm|}e^{2\eta t}\ \text{ and }\ \ \ \| \nabla W^\pm\|_{L^\infty(\Omega^\pm)} \leq P(K)\frac{e^{\sigma^\pm t}}{|c_1^\pm|} e^{3\eta t}
\end{align*}
where we recall from~\eqref{E:SIGMADEFINITION}
$
\sigma^\pm = \l_1^\pm-\l_1 + \frac\eta 2.
$
\end{lemma}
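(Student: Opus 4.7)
The plan is to exploit the fact that both $W^\pm$ and $W_t^\pm$ are harmonic, so that $L^\infty$ control of $W_t^\pm$ and $W^{1,\infty}$ control of $W^\pm$ reduces, via the maximum principle and standard elliptic regularity plus the embedding $H^{2.25}(\Omega^\pm)\hookrightarrow W^{1,\infty}(\Omega^\pm)$, to Sobolev control of the boundary traces. The whole game is then to convert pointwise/Sobolev bounds on $\partial_Nq^\pm$ and $\partial_Nq_t^\pm$ on $\Gamma$ into the claimed growth rates, feeding in the sharp lower bound
$\partial_N q^\pm \geq C|c_1^\pm|e^{-(\lambda_1^\pm+\eta/4)t}$ from Lemma~\ref{lowerboundforX} and the decay of the lower-order norms from Lemma~\ref{L:EBETAIMPROVED} (combined with the initial estimate $E_\beta^\pm(0)\le C(K^\pm)^4|c_1^\pm|^2$ of Remark~\ref{R:otherboundforEbeta}).

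First I would estimate $\nabla W^\pm$. By elliptic regularity for \eqref{E:W} and Sobolev embedding,
$\|\nabla W^\pm\|_{L^\infty}\lesssim \|W^\pm\|_{2.25}\lesssim |W^\pm|_{H^2(\Gamma)}+|W^\pm|_{H^2(\partial\Omega)}$. The fixed-boundary trace contributes only $Ce^{(\sigma^++\eta/2)t}/|c_1^+|$, so the dominant contribution comes from $\Gamma$. I would use the algebra identity
\[
\Big|\frac{1}{\partial_Nq^\pm}\Big|_{H^2(\Gamma)}\lesssim \frac{|\partial_Nq^\pm|_{H^2(\Gamma)}}{(\inf_\Gamma\partial_Nq^\pm)^2}+\frac{|\partial_Nq^\pm|_{W^{1,\infty}(\Gamma)}^2}{(\inf_\Gamma\partial_Nq^\pm)^3},
\]
bound the numerators by $\|q^\pm\|_{3.5}\le P(K)|c_1^\pm|e^{-\beta^\pm t/2}$ (using $E_\beta^\pm$ and trace), and plug in the lower bound. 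Since $\beta^\pm=2\lambda_1^\pm-\eta$, the exponents combine to
\[
(-\lambda_1+\eta)+(2\lambda_1^\pm+\eta/2)-\beta^\pm/2 \;=\; \lambda_1^\pm-\lambda_1+2\eta \;=\; \sigma^\pm+\tfrac{3}{2}\eta,
\]
which is dominated by $\sigma^\pm+3\eta$ as claimed.

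For $W_t^\pm$, I would exploit that $W_t^\pm$ is harmonic and apply the maximum principle directly to \eqref{E:Wt}. On the fixed boundary the contribution is just $Ce^{(\sigma^++\eta/2)t}/|c_1^+|$. On $\Gamma$, Sobolev embedding gives
\[
|W_t^\pm|_{L^\infty(\Gamma)}\le e^{(-\lambda_1+\eta)t}\Big(\frac{|\lambda_1-\eta|}{\inf_\Gamma\partial_Nq^\pm}+\frac{\|q_t^\pm\|_{2.25}}{(\inf_\Gamma\partial_Nq^\pm)^2}\Big),
\]
and the bootstrap decay $\|q_t^\pm\|_{2.25}\le Ce^{-\beta^\pm t/2}E_\beta^\pm(0)^{1/2}\le P(K)|c_1^\pm|e^{-\beta^\pm t/2}$ combined with Lemma~\ref{lowerboundforX} yields again the exponent $\sigma^\pm+\frac32\eta$, dominated by $\sigma^\pm+2\eta$.

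The only subtle step is the first one: one must choose the lower bound on $\partial_Nq^\pm$ carefully---if one used only the bootstrap \eqref{E:lower_bound} with $\eta/2$ rather than the improved $\eta/4$ from Lemma~\ref{lowerboundforX}, the $\eta$-budget in the exponent would be tight and one might barely miss the $e^{3\eta t}$ slack. Using the improved lower bound provides exactly the buffer needed to absorb the $K$-dependent polynomial factors into $P(K)$, and avoid closing the bootstrap with a deficit.
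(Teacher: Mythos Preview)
Your proposal is correct and follows essentially the same route as the paper: maximum principle for the harmonic function $W_t^\pm$, and Sobolev embedding $H^{2.25}\hookrightarrow W^{1,\infty}$ combined with elliptic regularity for $\nabla W^\pm$, in both cases reducing to Sobolev estimates on $1/\partial_Nq^\pm$ on $\Gamma$ and then feeding in the lower bound on $\partial_Nq^\pm$ together with the decay $\|q^\pm\|_{3.5}\le P(K)|c_1^\pm|e^{-\beta^\pm t/2}$ from~\eqref{E:ALTERNATIVE}.

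One correction to your closing remark, however: the paper does \emph{not} use the improved lower bound from Lemma~\ref{lowerboundforX} with $\eta/4$; it uses only the bootstrap assumption~\eqref{E:lower_bound} with $\eta/2$, and this suffices. With $\mathcal X^\pm\ge C|c_1^\pm|e^{-(\lambda_1^\pm+\eta/2)t}$, the cubic-denominator term in your estimate of $|1/\partial_Nq^\pm|_{H^2(\Gamma)}$ produces the exponent
\[
(-\lambda_1+\eta)+3(\lambda_1^\pm+\tfrac{\eta}{2})-\beta^\pm \;=\; \lambda_1^\pm-\lambda_1+\tfrac{7\eta}{2}\;=\;\sigma^\pm+3\eta,
\]
which is exactly the claimed rate---tight but not missed. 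So the improved $\eta/4$ bound buys a little extra room but is not needed for this lemma; its role in the paper is solely to improve the bootstrap assumption~\eqref{E:lower_bound} itself.
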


\begin{proof}

Recall that $W_t^\pm$ solves the elliptic boundary value problem \eqref{E:Wt}. Therefore 
\begin{align*}
|W_t^\pm|_{L^\infty(\Gamma)} &\leq Ce^{(-\lambda_1 +\eta)t}\left(\frac{1}{\mathcal{X}^\pm(t)} + \frac{|\g_Nq_t^\pm|_{L^\infty}}{\mathcal{X}^\pm(t)^2}\right)\\
&\leq Ce^{(-\lambda_1 +\eta)t}\left(\frac{e^{(\lambda_1^\pm + \eta/2)t}}{|c_1^\pm|} + \frac{(K^\pm)^2 |c_1^\pm|e^{-\beta^\pm t/2}}{|c_1^\pm|^2}\right)\\
&\leq \frac{e^{(-\lambda_1 + \lambda_1^\pm +3\eta/2)t}}{|c_1^\pm|}\left(1 + (K^\pm)^2e^{\eta t}\right)\\
&\leq C(K^\pm)^2\frac{e^{(-\lambda_1 + \lambda_1^\pm +5\eta/2)t}}{|c_1^\pm|}\\
&\leq P(K) \frac{e^{\sigma^\pm t}}{|c_1^\pm|}e^{2\eta t}.
\end{align*} 
And so the result follows from the maximum principle applied to $W_t^\pm$. To obtain the bound on $\| \nabla W^\pm\|_{L^\infty}$, we first use the Sobolev inequality and then the elliptic 
theory to infer that 
\begin{align*}
\| \nabla W^-\|_{L^\infty} &\leq C\| \nabla W^-\|_{1.25} \leq C e^{(-\lambda_1 + \eta)t}\left|\frac{1}{\partial_N q^-}\right|_{1.75},\\
\| \nabla W^+\|_{L^\infty} &\leq C\| \nabla W^+\|_{1.25} \leq C e^{(-\lambda_1 + \eta)t}\left|\frac{1}{\partial_N q^+}\right|_{H^{1.75}(\Gamma)} + C\frac{e^{(-\lambda_1 +\lambda_1^\pm + \eta)t}}{|c_1^+|}.
\end{align*}

To estimate the right-hand sides, let us  first estimate the $L^2(\Gamma)$-norm of two tangential derivatives applied to $\frac{1}{\partial_N q^\pm}:$
\begin{align*} 
&\left|\t^2(\frac{1}{\partial_N q^\pm})\right|_0 \leq C\frac{|\t \g_N q^\pm|^2_{L^4}}{\mathcal{X}^\pm(t)^3} + C\frac{|\t^2 \g_N q^\pm|_0}{\mathcal{X}^\pm(t)^2}\\
&\leq \frac{\|q^\pm\|_3^2}{|c_1^\pm|^3}e^{3(\lambda_1^\pm +\eta/2)t} + \frac{\|q^\pm\|_{3.5}}{|c_1^\pm|^2}e^{(2\lambda_1^\pm +\eta)t}\\
&\leq \frac{(K^\pm)^4}{|c_1^\pm|} e^{(-\beta^\pm + 3\lambda_1^\pm + 3\eta/2)t} + \frac{(K^\pm)^2}{|c_1^\pm|} e^{(-\beta^\pm/2 + 2\lambda_1^\pm + \eta)t}\\
&\leq \frac{e^{(\lambda_1^\pm + 5\eta/2)t}}{|c_1^\pm|}\left((K^\pm)^4 + (K^\pm)^2e^{-\eta t}\right)\\
&\leq P(K)\frac{e^{(\lambda_1^\pm + 5\eta/2)t}}{|c_1^\pm|},
\end{align*}
where we used the continuous embedding $H^{1/2}\hookrightarrow L^4,$ the trace estimates, and the bound~\eqref{E:ALTERNATIVE}. Recall as well the definition~\eqref{E:BETAPMDEFINITION}
of $\beta^\pm = 2\l_1^\pm-\eta.$
The same bound for $|\t\left(\frac 1{\g_Nq^\pm}\right)|_{0}$ follows analogously. As a result, we obtain that 
\begin{equation*}
\| \nabla W^\pm\|_{L^\infty} \leq P(K)\frac{e^{(-\lambda_1 + \lambda_1^\pm + \eta/2)t}}{|c_1^\pm|} e^{3\eta t} = P(K)\frac{e^{\sigma^\pm t}}{|c_1^\pm|}e^{3\eta t},
\end{equation*}
for some universal polynomial $P$. This concludes the proof of the Lemma.
\end{proof}

\begin{remark}
The proof of the lemma depends on the boundary estimates for $W_t^\pm$ and $ \nabla W^\pm$ on $\Gamma$. 
This inevitably leads us to study sharp upper bounds for the reciprocals $\frac 1{\g_Nq^\pm},$ which in turn demands
a very good understanding of the lower bounds on the decay rate of the Neumann derivative $\g_Nq^\pm.$ These
ingredients are provided by Lemma~\ref{lowerboundforX}.
\end{remark}

We can now prove the improvement on the higher-order bootstrap assumption \eqref{E:energy_bootstrap}. Notice that, because of the way in which the norm and the natural energy are related in equation \eqref{E:energyequivsummary}, we need to build the estimates with the appropriate coefficients $|c_1^\pm|,\ K^\pm$, and exponential growth $e^{\sigma^\pm t}$.

\begin{lemma}[Higher-order energy estimates]\label{L:higher_energy}
Suppose that the bootstrap assumptions \eqref{E:bootstrap}  hold with $\epsilon >0$ and $\eta >0$ sufficiently small. Then the following bound holds:
\begin{align}\label{E:energy_estimate_1}
\mathscr{E}(t) &\leq\ C\mathcal{E}(0) + \frac{P(K)\epsilon_0^2}{|c_1^\pm|F(K)}
+ O(\epsilon+\delta)\left[\sup\limits_{0\leq s\leq t}\frac{\newe^+(s)e^{\sigma^+ s}}{(K^+)^2|c_1^+|} + \sup\limits_{0\leq s\leq t}\frac{\newe^-(s)e^{\sigma^- s}}{(K^-)^2|c_1^-|} + \sup\limits_{0\leq s\leq t} \newe^\ppp(s)\right.\nonumber\\
&\left.   +\int_0^t \left(\frac{\newd^+(s)e^{\sigma^+ s}}{(K^+)^2|c_1^+|} + \frac{\newd^-(s)e^{\sigma^-s}}{(K^-)^2|c_1^-|} + \newd^\ppp(s)\right)ds\right], \ \text{ for all } t\in [0,\mathcal{T}]
\end{align}
where $P(K)$ is some universal polynomial, $0\le\delta\ll1$ is sufficiently small, and 
$\sigma^\pm = \lambda_1^\pm - \lambda_1 + \eta/2 >0$ has already been introduced in~\eqref{E:SIGMADEFINITION}.
\end{lemma}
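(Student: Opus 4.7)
The plan is to integrate the energy identity of Lemma~\ref{ab_energy_lemma} in time (applied now to the un-regularized problem, obtained by passing $\k\to 0$ in the identity), and to bound the resulting error $\int_0^t \mathcal{R}(s)\,ds$ by the right-hand side of~\eqref{E:energy_estimate_1}. The errors split into four families: (i) the standard interior errors $\mathcal{R}^\pm_{a,b}$ arising from commutators $[\t^a\g_t^b, A^\pm]$ and product-rule terms, (ii) the boundary errors $\mathcal{R}_\Gamma$, including the commutation errors from the fact that $h^\k$ is smoothed but $h_t$ is not (which in the limit $\k=0$ simplify), (iii) the new errors $\mathcal{R}_5,\mathcal{R}_6$ produced by derivatives falling on the weights $W^\pm$, and (iv) the sign-definite boundary term coming from $\partial_t(e^{(-\l_1+\eta)t} a_\k^2)$ which we already know to be favorable. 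Throughout, whenever a factor depends on $h$, I will use Lemma~\ref{L:smallness_h} to extract smallness; whenever a boundary norm of $h$ or its time derivatives is needed, I will pay an $e^{(\l_1-\eta)t/2}$ factor and compensate it using either $E_\beta^\pm$ (via Lemma~\ref{L:exponential_balance}) or the weight bounds from Lemma~\ref{l:wbounds} combined with~\eqref{E:energyequivsummary}.

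The heart of the argument is the treatment of the family (iii), which is the novelty forced by the two-phase weighting. A representative term is
\[
\mathcal{R}_6^{a,b\,\pm} \;=\; \tfrac{1}{2}\int_0^t\!\!\int_{\Omega^\pm} (\t^a\g_t^b q^\pm + \t^a\g_t^b\Psi^\pm\cdot v^\pm)^2\, W_t^\pm\,\mu\,dx\,ds.
\]
Using Lemma~\ref{L:WEIGHTDERIVATIVES}, $\|W_t^\pm/W^\pm\|_{L^\infty}\lesssim P(K)e^{2\eta s}$, so this is bounded by $\int_0^t P(K)e^{2\eta s}\mathcal{E}^\pm(s)\,ds$. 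To trade this potentially growing factor for the decay that appears in~\eqref{E:energyequivsummary}, I multiply and divide by $e^{\sigma^\pm s}(K^\pm)^2|c_1^\pm|$ and observe that $e^{2\eta s}\cdot e^{-\sigma^\pm s}$ is acceptable once $\eta$ is chosen small compared to $\sigma^\pm$. For $\mathcal{R}_5^{a,b\,\pm}$, which contains $\nabla W^\pm$, the same idea applies with the bound $\|\nabla W^\pm/W^\pm\|_{L^\infty}\lesssim P(K)e^{3\eta s}$; a Cauchy–Schwarz step splits it into $\|\mu^{1/2}\t^a\g_t^b v^\pm\|_{L^{2,W^\pm}}$ (which is bounded by $\mathcal{D}^\pm$) and $\|\mu^{1/2}(\t^a\g_t^b q^\pm+\t^a\g_t^b\Psi^\pm\cdot v^\pm)\|_{L^{2,W^\pm}}$ (bounded by $\mathcal{E}^\pm$), producing an integrand $\mathcal{D}^{\pm\,1/2}\mathcal{E}^{\pm\,1/2}P(K)e^{3\eta s}$ which, after inserting the weights $e^{\sigma^\pm s}(K^\pm)^2|c_1^\pm|$ and using Young's inequality, is absorbed into $\int_0^t \mathcal{D}^\pm e^{\sigma^\pm s}/((K^\pm)^2|c_1^\pm|)\,ds$ up to a small prefactor. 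This is exactly the mechanism that replaces the hard pointwise argument with the decay term~\eqref{E:difficult_term} from~\cite{mHsS2013}.

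For the boundary errors, the analysis proceeds exactly as in the local theory of Proposition~\ref{P:energyestimate}, but now $\sqrt t$ factors are no longer available. Instead, every time an integration by parts or H\"older step produces a top-order boundary norm such as $|\t^{a+1}\g_t^b h|_0$ or a full-derivative-on-$h$ term, I split off $e^{(-\l_1+\eta)s/2}$ from the energy weight and pair the remaining factor with a lower-order norm that decays via $E_\beta^\pm$; by~\eqref{E:exponential_balance}, the product decays at rate $e^{-\gamma^\pm s/2}$ and is therefore time-integrable. The sign-definite term $(-\l_1+\eta)\int_\Gamma e^{(-\l_1+\eta)s}a^2(\t^a\g_t^b h)^2$ is simply discarded. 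The commutator errors involving $\t(a^2)$, $\t(J^{-1})$, etc., produce $h_t$ to some degree, and Lemma~\ref{L:suboptimal_decay_ht} gives them $e^{-\l_1 s/2}$ decay at no cost. The errors $\mathcal{R}_{\alpha},\mathcal{R}_{\g\Omega}$ from the $\k$-problem disappear in the limit.

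The main obstacle is the bookkeeping of exponential weights: after each application of Cauchy–Schwarz one must verify that the cumulative exponent of the integrand is strictly negative (so the time integral is bounded uniformly in $t$), while also carrying the correct powers of $K^\pm$ and $|c_1^\pm|$ so that the resulting estimate fits into the form~\eqref{E:energy_estimate_1} where $\newe^\pm$ appears weighted by $e^{\sigma^\pm s}/((K^\pm)^2|c_1^\pm|)$. A clean way to organize this is to assign to each factor in an error integrand a ``budget'' $(K\text{-power},\, c_1\text{-power},\, e^{\cdot s}\text{-exponent})$ drawn from either the natural energy (which contributes $W^\pm$, hence roughly $e^{(\sigma^\pm+\eta)s}/|c_1^\pm|$), from $E_\beta^\pm$ (which contributes $(K^\pm)^4|c_1^\pm|^2 e^{-\beta^\pm s}$), or from Lemma~\ref{L:smallness_h} for the height function. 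Verifying that for every error the total exponential exponent is $\le -\gamma s$ for some $\gamma>0$, with prefactor at most $P(K)\epsilon_0^2/(|c_1^\pm|F(K))$, is the main computational obstacle, but it is a finite check over the finite list of error terms from Lemma~\ref{ab_energy_lemma}.
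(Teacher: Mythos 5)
Your overall architecture matches the paper's: integrate the energy identity of Lemma \ref{ab_energy_lemma}, discard the favorable sign-definite boundary term, treat the boundary errors by splitting off $e^{(-\lambda_1+\eta)s/2}$ from the boundary energy and compensating the resulting growth with the decay of $E_\beta^\pm$ via Lemma \ref{L:exponential_balance}, and isolate $\mathcal{R}^{a,b}_5{}^\pm$, $\mathcal{R}^{a,b}_6{}^\pm$ (the terms carrying $\nabla W^\pm$ and $W_t^\pm$) as the genuinely new critical terms. That identification is correct, and it is exactly where the paper concentrates its effort.

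However, the mechanism you propose for these two terms does not close. First, the exponent comparison fails: you conclude from $\|W_t^\pm/W^\pm\|_{L^\infty}\lesssim P(K)e^{2\eta s}$ that $e^{2\eta s}e^{-\sigma^\pm s}$ is harmless ``once $\eta$ is chosen small compared to $\sigma^\pm$'', but $\sigma^\pm=\lambda_1^\pm-\lambda_1+\eta/2$ equals $\eta/2$ for whichever phase attains $\lambda_1=\min\{\lambda_1^+,\lambda_1^-\}$, so the residual exponent is $+3\eta/2$ and the rebalanced integrand still grows; one cannot choose $\eta$ small relative to $\eta/2$. Second, and more fundamentally, even with a strictly negative residual exponent you would be left with $P(K)\int_0^t e^{-\gamma s}\,\mathcal{E}^\pm(s)\,ds$ (or, after Young's inequality in the $\mathcal{R}_5$ case, $C_\delta P(K)^2\int_0^t e^{6\eta s}\mathcal{E}^\pm(s)\,ds$): a Gronwall-type term whose coefficient is $P(K)$ rather than $O(\epsilon+\delta)$, so it cannot be absorbed into the bracket of \eqref{E:energy_estimate_1}, and applying Gronwall instead would destroy the uniform-in-time bound. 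The paper's resolution is norm interpolation: write $\|q\|_6\le\|q\|_4^{1/5}\|q\|_{6.5}^{4/5}$ for $\mathcal{R}_5$ and $\|q\|_6^2\le\|q\|_4^{2/5}\|q\|_{6.5}^{8/5}$ for $\mathcal{R}_6$, so that the factor $\|q\|_4\lesssim E_\beta^{\pm\,1/2}e^{-\beta^\pm s/2}$ injects both strong exponential decay (at rate proportional to $\beta^\pm=2\lambda_1^\pm-\eta$, which dominates the $O(\eta)$ growth coming from $W_t^\pm,\nabla W^\pm$ and the $e^{\sigma^\pm s}$ rebalancing) and the smallness $E_\beta^\pm(0)\lesssim\epsilon_0^2/F(K)$; Young's inequality then splits the product into the constant $C_\delta P(K)\epsilon_0^2/(|c_1^\pm|F(K))$ plus $\delta$ times the weighted dissipation integral. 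Your closing ``budget'' paragraph names $E_\beta^\pm$ as a resource, but the worked mechanism for the representative terms never invokes it, and without it the estimate does not reach the form \eqref{E:energy_estimate_1}. (The second piece of $\mathcal{R}_5$, containing $\|\t^6\Psi\|_0\|v\|_{L^\infty}$, likewise requires pairing $|h|_{5.5}$ with the boundary energy and $\|v\|_{L^\infty}$ with $E_\beta^\pm$ decay, which your general boundary strategy does cover.)
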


\begin{proof}
Recall the energy identity~\eqref{E:energy_identity1} from Lemma \ref{ab_energy_lemma}, 
and notice that we need only to prove bounds for the error terms $\mathcal{R}$. We will exemplify this by showing the estimates for $\mathcal{R}^\pm_\aabb,$ defined in the statement of Lemma~\ref{ab_energy_lemma}.

\subsection*{Bounds for $\mathcal{R}^\pm_\aabb$}
Let us re write $\mathcal{R}^\pm_\aabb$ as a sum: 
$$\int_0^t \mathcal{R}^\pm_\aabb =\sum\limits_{a,b} \mathcal{R}^\aabb_1{}^\pm + \mathcal{R}^\aabb_2{}^\pm + \mathcal{R}^\aabb_3{}^\pm + \mathcal{R}^\aabb_4{}^\pm + \mathcal{R}^\aabb_5{}^\pm + \mathcal{R}^\aabb_6{}^\pm,$$
where $\mathcal{R}^\aabb_i{}^\pm$ are the integrated terms written in the definition of $\mathcal{R}^\pm_\aabb,$  see Lemma~\ref{ab_energy_lemma}.
The estimates for $\mathcal{R}^\aabb_1{}^\pm, \mathcal{R}^\aabb_2{}^\pm, \mathcal{R}^\aabb_3{}^\pm, \mathcal{R}^\aabb_4{}^\pm$ follow the same strategy as in Section 3.2 of \cite{mHsS2013}. The only addition is the presence of the weights $W^\pm,$
which we  bound in $L^\infty$ yielding an additional exponentially growing term with a rate $\sigma^\pm +\eta$. 
It is therefore left to show that in every such error term, there exists a below-top-order energy term which decays sufficiently fast to counteract the potential exponential growth stemming from $W^\pm$. 
We illustrate this by estimating the term $\mathcal{R}^\aabb_1{}^\pm$:
\begin{align*}
 \Big|\int_{\Omega^\pm} \t^5 A \t \nabla q\t^6 v W \Big| & \leq \|\t^5 A\|_{L^4}\|\t \nabla q\|_{L^4} \|\t^6 v\|_0 \|W\|_{L^\infty}  \\
& \leq C|h|_6\|q\|_{2.5}\|\t^6 v\|_0 \frac{C}{|c_1^\pm|}e^{(\sigma^\pm +\eta)t}\\
&\lesssim \frac{1}{|c_1^\pm|} e^{(-\lambda_1 + \eta)t/2}|h|_6 \frac{E_\beta(t)^{1/2} e^{-\beta t/2}}{e^{(-\lambda_1 +\eta)t/2}} \newd^{1/2}(t) e^{(\sigma^\pm + \eta)t} \\
& \lesssim   K^\pm |c_1^\pm|^{-1/2} 
\frac{\sqrt{\epsilon_0}}{\sqrt{F(K)}}\newe^\ppp(t)^{1/2}\newd(t)^{1/2} e^{(\sigma^\pm + \eta -\gamma^\pm/2)t} \\
&\leq \delta \newe^\ppp(t)  + C_\delta(K^\pm)^2|c_1^\pm|^{-1}\frac{\epsilon_0}{F(K)}\newd^\pm(t)e^{\sigma^\pm t} e^{(\sigma^\pm-\gamma^\pm+2\eta) t} \\
&\leq \delta \newe^\ppp(t) + C_\delta(K^\pm)^4\frac{\epsilon_0}{F(K^\pm)^{1/2}} \frac{ \newd^\pm(t)e^{\sigma^\pm t}}{(K^\pm)^2|c_1^\pm|},
\end{align*}  
where $\sigma^\pm$ is given by~\eqref{E:SIGMADEFINITION}, $\gamma^\pm = 2\l_1^\pm - \l_1,$ and therefore
$\sigma^\pm-\gamma^\pm+\eta = -\frac 12 \l_1 + \frac32 \eta<0$ for $\eta$ sufficiently small. 
Note that we used~\eqref{E:w+} in the second line, the definition of $E_\beta$ in the third line, and the estimate~\eqref{E:ALTERNATIVE}  in the fourth line to bound the ratio 
$\frac{E_\beta(t)^{1/2} e^{-\beta t/2}}{e^{(-\lambda_1 +\eta)t/2}}$ by $C K^\pm |c_1^\pm|^{1/2} \frac{\sqrt{\epsilon_0}}{\sqrt{F(K)}}e^{-\gamma^\pm t/2}.$
In the fifth line we used Young's inequality and in the last line the negativity of $\sigma^\pm-\gamma^\pm+\eta$. Considering $\epsilon_0$ small enough so that $C_\delta(K^\pm)^4\frac{\epsilon_0}{F(K^\pm)^{1/2}}<\epsilon$, gives us the desired inequality.

We can apply an entirely analogous reasoning to bound the terms $\mathcal{R}^\aabb_j{}^\pm,$ $j=2,3,4.$
Integrating-in-time we therefore obtain that  
\begin{align*}
\mathcal{R}^\aabb_1{}^\pm + \mathcal{R}^\aabb_2{}^\pm + \mathcal{R}^\aabb_3{}^\pm + \mathcal{R}^\aabb_4{}^\pm &\leq O(\epsilon+\delta) \left[\sup\limits_{0\leq s\leq t}\left(\frac{\newe^+(s)e^{\sigma^+s}}{(K^+)^2|c_1^+|} + \frac{\newe^-(s)e^{\sigma^-s}}{(K^-)^2|c_1^-|} + \newe^\ppp(s)\right)\right.\\
& + \left.\int_0^t \left(\frac{\newd^+(s)e^{\sigma^+ s}}{(K^+)^2|c_1^+|}+ \frac{\newd^-(s)e^{\sigma^-s}}{(K^-)^2|c_1^-|} + \newd^\ppp(s)\right)ds\right].
\end{align*} 
The estimates for the terms  $\tilde{\mathcal{R}}^\pm,\ \mathcal{R}_\Gamma,\ \mathcal{R}^+_{\ppp}$, and $\mathring{\mathcal{R}}^\pm$, follow the same methodology and we omit the details.  

The remaining integrals $\mathcal{R}^\aabb_5{}^\pm$ and $\mathcal{R}^\aabb_6{}^\pm$ in the definition of $\mathcal{R}^\pm_\aabb$ (see Lemma~\ref{ab_energy_lemma}) 
are {\em new} error terms with respect to~\cite{mHsS2013,mHsS2015} and they involve derivatives of the weights $W^\pm:$ 
\begin{align*}
\mathcal{R}^\aabb_5{}^\pm &:= \int_0^t \int_{\Omega^\pm} (\t^a\partial_t^b q^\pm + \t^a\partial_t^b\Psi^\pm\cdot v^\pm) A\t^a\partial_t^b v^\pm  \nabla W^\pm\\
\mathcal{R}^\aabb_6{}^\pm &:= \frac{1}{2}\int_0^t \int_{\Omega^\pm} (\t^a\partial_t^b q^\pm + \t^a\partial_t^b\Psi^\pm\cdot v^\pm)^2 W_t^\pm.
\end{align*}
To bound $\mathcal{R}^\aabb_5{}^\pm$ and $\mathcal{R}^\aabb_6{}^\pm$ we need upper bounds for $ \nabla W^\pm$ and $W_t^\pm$ in $L^\infty$ provided by Lemma~\ref{L:WEIGHTDERIVATIVES}. We will show that even for these terms, the additional exponential growth is also counterbalanced by the decay of the lower order norms.
In the following we omit the upper index $\pm$ for simplicity, and consider only the hardest case $a=6,\ b=0,$ as the argument remains the same for the other cases.
\begin{align}
\mathcal{R}^{a=6,b=0, \, \pm}_5&\leq \int_0^t \|\t^6 q + \t^6\Psi\cdot v\|_0 \|\t^6 v\|_0 \| \nabla W^\pm\|_{L^\infty}\nonumber\\
&\leq \frac{P(K)}{|c_1^\pm|}\int_0^t (\|q\|_6 + \|\t^6\Psi\|_0\|v\|_{L^\infty} )\|\t^6 v\|_0 e^{\sigma^\pm s}e^{3\eta s}ds\nonumber\\
&\leq \frac{P(K)}{|c_1^\pm|}\int_0^t \|q\|_6\|\t^6 v\|_0 e^{\sigma^\pm s}e^{3\eta s}ds + \frac{P(K)}{|c_1^\pm|}\int_0^t\|\t^6\Psi\|_0\|v\|_{L^\infty}\|\t^6 v\|_0 e^{\sigma^\pm s}e^{3\eta s}ds, \label{E:AUX}
\end{align}
where we used Lemma~\ref{L:WEIGHTDERIVATIVES} in the second line.
The first term of the right-most side is estimated as follows:
\begin{align*}
&\frac{P(K)}{|c_1^\pm|}\int_0^t \|q\|_6\|\t^6 v\|_0 e^{\sigma^\pm s}e^{3\eta s}ds \leq \frac{P(K)}{|c_1^\pm|} \int_0^t\|q\|_{4}^{1/5}\|q\|_{6.5}^{4/5} \newd^\pm(s)^{1/2} e^{\sigma^\pm s}e^{3\eta s}ds\\
&\leq \frac{P(K)}{|c_1^\pm|}\int_0^t E_\beta^\pm(s)^{1/10} e^{-\beta^\pm s/10} \newd^\pm(s)^{9/10}e^{\sigma^\pm s}e^{3\eta s}ds\\
&\leq \frac{C}{|c_1^\pm|}\int_0^t \left(C_\delta P(K) E_\beta^\pm(s)e^{(-\beta^\pm + 30\eta) s} + \frac{\delta}{(K^\pm)^2}\newd^\pm(s)\right)e^{\sigma^\pm s} ds\\
&\leq \frac{C_\delta P(K) E_\beta^\pm(0)}{|c_1^\pm|}\int_0^t e^{-\bar{\gamma}^\pm s} ds + \delta\int_0^t\frac{\newd^\pm(s)e^{\sigma^\pm s}}{(K^\pm)^2|c_1^\pm|} ds\\
&\leq \frac{P(K)\epsilon_0^2}{|c_1^\pm|F(K)} + \delta\int_0^t\frac{\newd^\pm(s)e^{\sigma^\pm s}}{(K^\pm)^2|c_1^\pm|} ds,
\end{align*}
where we note that 
$\bar{\gamma}^\pm = \lambda_1^\pm+\lambda_1 - 63\eta/2 >0$ for $\eta$ small enough. We used an interpolation estimate and the definition of the norm $\newd^\pm$  in the first line, and Young's inequality in the third.
Similarly, the second integral on the right-most side of~\eqref{E:AUX} satisfies:
\begin{align*}
&\frac{P(K)}{|c_1^\pm|}\int_0^t\|\t^6\Psi\|_0\|v\|_{L^\infty}\|\t^6 v\|_0 e^{\sigma^\pm s}e^{3\eta s}ds \leq \frac{P(K)}{|c_1^\pm|}\int_0^t |h|_{5.5}E_\beta^\pm(s)^{1/2}e^{(-\lambda_1^\pm +\frac{\eta}{2})s}\newd(s)^{1/2} e^{\sigma^\pm s}e^{3\eta s}ds\\
&\leq P(K)\int_0^t e^{(-\lambda_1 + \eta)s/2}|h|_{5.5}\newd(s)^{1/2} e^{\sigma^\pm s}e^{(-\lambda_1^\pm + \lambda_1/2 + 3\eta)s}ds\\
&\leq P(K)\int_0^t \newe^\ppp(s)^{1/2}\newd(s)^{1/2} e^{(-\lambda_1/2 + 3\eta)s}ds\\
&\leq \delta \sup\limits_{0\leq s\leq t} \newe^\ppp(s) + C_\delta\frac{P(K)\epsilon_0}{F(K)^{1/2}}\int_0^t \frac{\newd^\pm(s)}{(K^\pm)^2|c_1^\pm|} e^{(-\lambda_1/2 + 3\eta)s}ds.
\end{align*}
Taking $\epsilon_0$ so small that $\frac{P(K)\epsilon_0}{F(K)^{1/2}}\leq \epsilon$, we obtain the desired inequality.
For the error term $\mathcal{R}^\aabb_6{}^\pm$ in the case $a=6,b=0$ we follow the same idea, but instead use the bound on $W_t^\pm$ from Lemma~\ref{L:WEIGHTDERIVATIVES}:
\begin{align}
\mathcal{R}^{a=6,b=0,\, \pm}_6&\leq \frac{1}{2}\int_0^t  \|\t^6 q + \t^6\Psi\cdot v\|_0^2  \|W_t\|_{L^\infty}ds\nonumber\\
&\leq \frac{P(K)}{|c_1^\pm|}\int_0^t (\|q\|_6^2 + \|\t^6\Psi\|_0^2\|v\|_{L^\infty}^2)e^{\sigma^\pm t}e^{2\eta t} ds\nonumber\\
&\leq \frac{P(K)}{|c_1^\pm|}\int_0^t \|q\|_{4}^{2/5}\|q\|_{6.5}^{8/5}e^{\sigma^\pm t}e^{2\eta t}ds + \frac{P(K)}{|c_1^\pm|}\int_0^t |h|^2_{5.5}\|q^+\|_{2.75}^2 e^{\sigma^\pm t}e^{2\eta t}ds\nonumber\\
&\leq  \frac{P(K)}{|c_1^\pm|}\int_0^t E_\beta^\pm(0)^{1/5}e^{(\frac{\sigma^\pm}{5} - \beta^\pm/5 + 2\eta)s}\newd(s)^{4/5} e^{4\sigma^\pm s/5}ds + 
\frac{P(K)\epsilon_0}{F(K)^{1/2}}\int_0^t \newe^\ppp(s)e^{(-\lambda_1^\pm +5\eta/2) s}ds\nonumber\\
&\leq C_\delta\frac{P(K)}{|c_1^\pm|}\int_0^t E_\beta(0) e^{-\tilde{\gamma} s} ds + \delta \int_0^t \frac{\newd^\pm(s)e^{\sigma^\pm s}}{(K^\pm)^2|c_1^\pm|} ds  + \epsilon\sup\limits_{0\leq s\leq t} \newe^\ppp(s)\nonumber\\
&\leq C_\delta\frac{P(K)\epsilon_0^2}{|c_1^\pm|F(K)} + \delta \int_0^t \frac{\newd^\pm(s)}{(K^\pm)^2|c_1^\pm|}e^{\sigma^\pm s}ds + \epsilon\sup\limits_{0\leq s\leq t} \newe^\ppp(s), \label{R2_final},
\end{align}
where $\tilde{\gamma}^\pm = \lambda_1^\pm + \lambda_1 - 19\eta/2>0$  for a sufficiently small $\eta>0$. Note that we used norm-interpolation in the third line and the exponential decay in the fifth line
to infer that $\frac{P(K)\epsilon_0}{F(K)^{1/2}}\int_0^t \newe^\ppp(s)e^{(-\lambda_1^\pm +5\eta/2) s}ds \le \epsilon\sup\limits_{0\leq s\leq t} \newe^\ppp(s).$ 
This completes the proof of the higher-order energy estimate. 
\end{proof}

\subsection{Proof of Theorem~\ref{T:main_global}}\label{S:finalproof}
 
To finish the proof we consider $\mathcal{T}>0$, to be the maximal time at which the solution exists and satisfies the bootstrap assumptions \eqref{E:bootstrap}. 
We will assume by means of a contradiction argument that $\mathcal T$ is finite, and will obtain an improved estimate for the higher energy bootstrap up to $\mathcal{T}$, which, via the local well-posedness Theorem, will gives us a contradiction to the maximality of $\mathcal{T}$.

From the global energy equivalence relation \eqref{E:energyequivsummary}, we have that, 
\begin{align*}
&\sup\limits_{0\leq s\leq t}\frac{\newe^\pm(s)}{(K^\pm)^2|c_1^\pm|}e^{\sigma^\pm s} + \sup\limits_{0\leq s\leq t}\newe^\ppp(s) 
+ \int_0^t \left(\frac{\newd^\pm(s)}{(K^\pm)^2|c_1^\pm|} e^{\sigma^\pm s} + \newd^\ppp(s)\right)ds \leq C\mathscr{E}(t).
\end{align*}
Therefore, using Lemma~\ref{L:higher_energy}, we have that 
\begin{align*}
&\sup\limits_{0\leq s\leq t}\frac{\newe^\pm(s)}{(K^\pm)^2|c_1^\pm|}e^{\sigma^\pm s} + \sup\limits_{0\leq s\leq t}\newe^\ppp(s) + \int_0^t \left(\frac{\newd^\pm(s)}{(K^\pm)^2|c_1^\pm|} e^{\sigma^\pm s} + \newd^\ppp(s)\right)ds \leq C\mathcal{E}(0) + \frac{P(K)\epsilon_0^2}{F(K)|c_1^\pm|}\\ 
& + O(\epsilon+\delta)\left[\sup\limits_{0\leq s\leq t}\frac{\newe^\pm(s)e^{\sigma^\pm s}}{(K^\pm)^2|c_1^\pm|} + \sup\limits_{0\leq s\leq t} \newe^\ppp(s) +\int_0^t (\frac{\newd^\pm(s)e^{\sigma^\pm s}}{(K^\pm)^2|c_1^\pm|} + \newd^\ppp(s))ds\right],
\end{align*} 
Taking $\delta$ and $\epsilon$ small enough, we can absorb the term in the rectangular brackets to obtain
\begin{align*}
&\sup\limits_{0\leq s\leq t}\frac{\newe^\pm(s)}{(K^\pm)^2|c_1^\pm|}e^{\sigma^\pm s} + \sup\limits_{0\leq s\leq t}\newe^\ppp(s) + \int_0^t \left(\frac{\newd^\pm(s)}{(K^\pm)^2|c_1^\pm|} e^{\sigma^\pm s} + \newd^\ppp(s)\right)ds \leq C\mathcal{E}(0) +\frac{P(K)\epsilon_0^2}{F(K)|c_1^\pm|}.
\end{align*}

Using the smallness condition on the initial data \eqref{E:small_data},
\begin{align*}
&\sup\limits_{0\leq s\leq t}\frac{\newe^\pm(s)}{(K^\pm)^2|c_1^\pm|}e^{\sigma^\pm s} + \sup\limits_{0\leq s\leq t}\newe^\ppp(s) + \int_0^t \left(\frac{\newd^\pm(s)}{(K^\pm)^2|c_1^\pm|} e^{\sigma^\pm s} + \newd^\ppp(s)\right)ds \leq \frac{\epsilon_0^2}{|c_1^\pm|F(K)}(1 +P(K)).
\end{align*}
Taking $\epsilon_0$ small enough so that 
\begin{equation*}
\frac{\epsilon_0^2(K^\pm)^2(1+P(K))}{F(K)} \leq \frac{\epsilon^2}{2},
\end{equation*} 
we obtain an improvement on the bootstrap assumption \eqref{E:energy_bootstrap},
\begin{equation}
\sup\limits_{0\leq s\leq t}\frac{\newe^\pm(s)}{|c_1^\pm|}e^{\sigma^\pm s} + \sup\limits_{0\leq s\leq t}\newe^\ppp(s) + \int_0^t \left(\frac{\newd^\pm(s)}{|c_1^\pm|} e^{\sigma^\pm s} + \newd^\ppp(s)\right)ds \leq \frac{\epsilon^2}{2|c_1^\pm|},  \label{E:higherdecay}
\end{equation}
for all $t\in [0,\mathcal{T}]$. Therefore, by the continuity of the energy, we can extend the solution by the local well-posedness Theorem to $[0,\mathcal{T}+T_0)$, for some small $T_0 >0$, 
so that the bootstrap assumptions hold up until $\mathcal{T}+T_0$. This contradicts the maximality of $\mathcal{T}$, and therefore $\mathcal{T} = +\infty$, and the proof is complete.  

\begin{remark}
Notice that not only we obtained an improvement on the energy bootstrap, but from \eqref{E:higherdecay} we also obtained that the higher order norms $\newe^\pm$ and $\newd^\pm$ 
decay at a rate $e^{-\sigma^\pm t}$, with $\sigma^\pm = \lambda_1^\pm - \lambda_1 + \eta/2$. 
Recall that $\lambda_1$ is the minimum of the two eigenvalues, which means intuitively that the temperature in the region with smaller surface area decays faster to an equilibrium.   
Note however that the norm $\newe^\ppp$ measuring the size of the free boundary deviation from the reference domain {\em does not} exhibit any decay, which is
consistent with the idea that the asymptotic equilibrium shape is selected from a continuum of possible nearby steady states.
\end{remark}

\renewcommand{\thetheorem}{A.\arabic{theorem}}
\renewcommand{\theequation}{A.\arabic{equation}}
\appendix
\section{Basic inequalities}
\label{S:apendix_local}

\begin{lemma}[A priori bounds for $A$ in terms of $h$]\label{A_bounds} 
Let $h\in H^3(\Gamma)$ such that $|h|_3$ is small, and consider $\Psi^\pm$ the solution to the elliptic problem \eqref{E:definition_Psi}, and $A^\pm := ( \nabla\Psi^\pm)^{-1}$, then: 
\begin{equation}
\|A^\pm\|_{L^\infty} \leq 1 + C|h|_{2.25} + O(|h|_{2.25}^2).
\end{equation}
\end{lemma}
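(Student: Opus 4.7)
The plan is to write $\Psi^\pm = e + (\Psi^\pm - e)$ and observe that the displacement $\Psi^\pm - e$ solves a standard Dirichlet problem for the Laplacian with boundary data $hN$ on $\Gamma$ (and zero data on $\partial\Omega$ for the ``$+$'' phase). Then $\nabla\Psi^\pm$ is a small perturbation of the identity matrix, and $A^\pm = (\nabla\Psi^\pm)^{-1}$ is controlled by a Neumann series.

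More precisely, first I would apply standard elliptic regularity to \eqref{E:definition_Psi}, which gives
\[
\|\Psi^\pm - e\|_{H^{s+1/2}(\Omega^\pm)} \le C\, |h|_{H^s(\Gamma)}
\]
for any $s\ge 1/2$ (the reference curve $\Gamma$ is $C^\infty$, so no regularity is lost from the geometry). Specializing to $s=1.75$ produces $\|\Psi^\pm - e\|_{2.25}\le C|h|_{1.75}\le C|h|_{2.25}$, hence $\|\nabla\Psi^\pm - \mathrm{Id}\|_{1.25}\le C|h|_{2.25}$. Since we are in two dimensions, the Sobolev embedding $H^{1.25}(\Omega^\pm)\hookrightarrow L^\infty(\Omega^\pm)$ then yields the pointwise bound
\[
\|\nabla\Psi^\pm - \mathrm{Id}\|_{L^\infty(\Omega^\pm)} \le C|h|_{2.25}.
\]

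Second, using that $|h|_{2.25}$ is small, the matrix $\nabla\Psi^\pm = \mathrm{Id} + (\nabla\Psi^\pm - \mathrm{Id})$ is invertible by the Neumann series
\[
A^\pm = (\nabla\Psi^\pm)^{-1} = \sum_{k=0}^\infty (-1)^k (\nabla\Psi^\pm - \mathrm{Id})^k,
\]
which converges in $L^\infty$ as soon as $\|\nabla\Psi^\pm - \mathrm{Id}\|_{L^\infty}<1$. Summing the geometric series gives
\[
\|A^\pm - \mathrm{Id}\|_{L^\infty(\Omega^\pm)} \le \frac{\|\nabla\Psi^\pm - \mathrm{Id}\|_{L^\infty}}{1 - \|\nabla\Psi^\pm - \mathrm{Id}\|_{L^\infty}} \le C|h|_{2.25} + O(|h|_{2.25}^2),
\]
from which the triangle inequality $\|A^\pm\|_{L^\infty}\le 1 + \|A^\pm - \mathrm{Id}\|_{L^\infty}$ delivers the claim.

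There is no real obstacle here; the argument is entirely routine once the two ingredients (elliptic regularity for the harmonic extension and the Sobolev embedding) are in place. The only modest care-point is choosing the correct Sobolev index: one needs $\nabla\Psi^\pm - \mathrm{Id}$ in a space that embeds into $L^\infty(\Omega^\pm)$, which forces $s>1+1/2$ on the boundary data in 2D, comfortably covered by $|h|_{2.25}$. Analogous higher-order Sobolev bounds on $A^\pm$ (used elsewhere in the paper) would be obtained by differentiating the Neumann series and keeping track of multilinear product estimates.
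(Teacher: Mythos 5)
Your proposal is correct and follows essentially the same route as the paper: elliptic regularity for the harmonic extension gives $\|\nabla\Psi^\pm-\operatorname{Id}\|_{L^\infty}\le C|h|_{2.25}$ via a Sobolev embedding, and then one inverts the small perturbation of the identity. The only cosmetic difference is that you sum the Neumann series explicitly, whereas the paper writes $A-\operatorname{Id}=A(\operatorname{Id}-\nabla\Psi)$ and absorbs $\|A\|_{L^\infty}$, both yielding the same bound $\|A\|_{L^\infty}\le (1-C|h|_{2.25})^{-1}$.
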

\begin{proof}
We will omit the super-index $\pm$ throughout the proof for clarity.
First, we bound the difference $ \nabla\Psi - I$, using Sobolev embedding and elliptic estimates: 
\begin{equation*}
\| \nabla\Psi - \text{Id}\|_{L^\infty} \leq \| \nabla\Psi - I\|_{1.75} \leq C|h|_{2.25}
\end{equation*}
Moreover, for $0\leq s\leq 3$, $$\| \nabla^2\Psi\|_s \leq C|h|_{s+1.5}$$
Using this we obtain,
\begin{equation*}
\|A-\text{Id}\|_{L^\infty} \leq \|A(\text{Id}- \nabla\Psi)\|_{L^\infty} \leq C\|A\|_{L^\infty}|h|_{2.25}.
\end{equation*} 
Therefore, 
\begin{align*}
\|A\|_{L^\infty} &\leq \|\text{Id}\|_{L^\infty} + \|A-\text{Id}\|_{L^\infty} \leq 1 +C\|A\|_{L^\infty}|h|_{2.25} \\
\|A\|_{L^\infty} &\leq \frac{1}{1-C|h|_{2.25}} \leq 1 + C|h|_{2.25} + O(|h|_{2.25}^2).
\end{align*}
Moreover we have that for $0\leq s\leq 3$, 
$$\| \nabla A\|_s \leq C|h|_{s+1.5}.$$

\end{proof}

\begin{lemma}\label{L:delta_A_bound}
Considering $\bar{h}_\alpha$ such that $S(\bar{q}_\alpha,\bar{h}_\alpha)\leq M$, $\alpha = 1,2$, we have that there exists a time $T_\k$ small enough so that the following bound for $\delta A = A_1 - A_2$ holds, for $s>1.5$:

\begin{equation}
\snorm{\delta A}_{L^\infty H^s}\leq \epsilon |\delta \bar{h}_t|_{L^2 H^{s+0.5}}
\end{equation}
for any $\epsilon > 0$.

\begin{proof}
\begin{align}
& A_1-A_2 = \int_0^t A_{1t} - A_{2t} = -\int_0^t A_1A_1 \nabla\delta \Psi^1_t + (A_1(A_1-A_2) + (A_1-A_2)A_2) \nabla\Psi^2_t,\nonumber\\
&\snorm{\delta A}_s \leq \int_0^t \left(\snorm{A_1}^2_s\snorm{\delta\Psi_t}_{s+1} + (\snorm{A_1}_s + \snorm{A_2}_s)\snorm{\delta A}_s\snorm{\Psi^2_t}_{s+1}\right),\nonumber\\
&\snorm{\delta A}_{L^\infty H^s} \leq \frac{\sqrt{T_\kappa}\snorm{A_1}^2_{L^\infty H^s}}{(1-\sqrt{T_\kappa}C(M))}|\delta \bar{h}_t|_{L^2 H^{s+0.5}}.
\end{align} 
Taking $T_\kappa$ small enough yields the result. 
\end{proof}

\end{lemma}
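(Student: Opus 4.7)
My plan is the following. First I would use the fact that, in the contraction mapping context of Theorem~\ref{T:smooth_solutions}, both $\bar h_1$ and $\bar h_2$ lie in $X_M^\kappa$ and therefore share the same initial data $h_0^\kappa$. This forces $\delta\bar h(0) = 0$, hence $\delta\Psi(0) = 0$ and $\delta A(0) = 0$, so by the fundamental theorem of calculus
\[
\delta A(t) = \int_0^t (\delta A)_t(s)\,ds,
\]
and it suffices to obtain a pointwise-in-time bound on $\|(\delta A)_t(s)\|_{s}$ that is integrable in time.

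Next I would differentiate the identity $A_\alpha \nabla \Psi_\alpha = \mathrm{Id}$ in time to obtain the standard formula $(A_\alpha)_t = -A_\alpha (\nabla \Psi_{\alpha,t}) A_\alpha$, and then subtract the two expressions to get
\[
(\delta A)_t = -A_1 (\nabla \delta\Psi_t)\, A_1 \;-\; A_1 (\nabla \Psi_{2,t})\,\delta A \;-\; (\delta A)(\nabla \Psi_{2,t})\, A_2,
\]
by adding and subtracting $A_1 (\nabla \Psi_{2,t}) A_1$ and $A_1(\nabla \Psi_{2,t}) A_2$. This isolates a term containing $\nabla\delta\Psi_t$ (the genuine ``driving'' term) from terms linear in $\delta A$ itself (which we will absorb).

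For $s>1.5$, $H^s(\Omega^\pm)$ is a Banach algebra, so taking the $H^s$ norm gives
\[
\|(\delta A)_t\|_s \lesssim \|A_1\|_s^2 \,\|\nabla \delta \Psi_t\|_s + \bigl(\|A_1\|_s + \|A_2\|_s\bigr)\|\nabla \Psi_{2,t}\|_s\,\|\delta A\|_s.
\]
Elliptic estimates applied to the harmonic extension problem~\eqref{E:definition_Psi} (with $\bar h^\kappa$ on the boundary, which has the smoothing operator $\Lambda_\kappa$ applied twice and so is as regular as we like in space) yield $\|\delta\Psi_t\|_{s+1}\lesssim |\delta \bar h_t^\kappa|_{s+0.5} \lesssim |\delta\bar h_t|_{s+0.5}$, and the bounds of Lemma~\ref{A_bounds} together with the $X_M^\kappa$ bound give $\|A_\alpha\|_s, \|\nabla\Psi_{2,t}\|_s \le C(M)$.

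Integrating in time, applying Cauchy--Schwarz to the driving term, and taking sup over $t\in[0,T_\kappa]$, I would arrive at
\[
\|\delta A\|_{L^\infty_t H^s} \;\le\; \sqrt{T_\kappa}\, C(M)\,|\delta\bar h_t|_{L^2_t H^{s+0.5}} \;+\; \sqrt{T_\kappa}\,C(M)\,\|\delta A\|_{L^\infty_t H^s}.
\]
Choosing $T_\kappa$ so that $\sqrt{T_\kappa}\,C(M)<\tfrac12$ lets me absorb the second term on the left, yielding $\|\delta A\|_{L^\infty_t H^s} \le \sqrt{T_\kappa}\, \tilde C(M)\, |\delta\bar h_t|_{L^2_t H^{s+0.5}}$. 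Finally, given any prescribed $\epsilon>0$, I further shrink $T_\kappa$ so that $\sqrt{T_\kappa}\tilde C(M)\le \epsilon$, which delivers the stated estimate. The argument is essentially routine; the only point requiring mild care is the initial condition $\delta A(0)=0$, which is what lets us trade a time integral for the factor $\sqrt{T_\kappa}$ used to absorb $\|\delta A\|$ on the right and to match any desired $\epsilon$.
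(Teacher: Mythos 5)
Your proposal is correct and follows essentially the same route as the paper: the same fundamental-theorem-of-calculus setup exploiting $\delta A(0)=0$, the same add-and-subtract decomposition of $(\delta A)_t$ into a driving term in $\nabla\delta\Psi_t$ plus terms linear in $\delta A$, the same algebra-property and elliptic estimates, and the same absorption of $\|\delta A\|_{L^\infty_tH^s}$ for $T_\kappa$ small. Your write-up is if anything slightly more careful than the paper's (explicit justification of $\delta A(0)=0$ and of the matrix ordering in the product rule), so no changes are needed.
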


\begin{lemma}[Bounds for $J$] \label{gJbounds}
Under the hypothesis of Proposition \ref{P:local_energyequivalence}, there exist a $\delta>0$ such that if $t<\delta$, then the determinants $J^\pm=det( \nabla\Psi^\pm)$ satisfy the bound,
\begin{equation}
\frac{1}{2}\leq J \leq \frac{3}{2}\ \text{ on } \Gamma.
\end{equation}

\end{lemma}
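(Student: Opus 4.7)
The plan is to argue by perturbation from the identity: at $t=0$ the Jacobian $J^\pm(0)$ is close to $1$ because the initial height function $h_0$ is small of order $\sigma$, and a short-time continuity argument using the energy control of $h_t$ keeps $J^\pm(t)$ close to $J^\pm(0)$.

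First I would treat the initial time. By the construction in Section~\ref{SS:smoothreference} the reference interface $\Gamma$ is a smoothing of $\Gamma_0$, so the height function $h_0$ defining $\Gamma_0$ over $\Gamma$ has amplitude $\mathcal{O}(\sigma)$. Elliptic estimates for the harmonic extension~\eqref{E:definition_Psi}, together with the Sobolev embedding $H^{1.75}(\Omega^\pm)\hookrightarrow L^\infty(\Omega^\pm)$ in two dimensions, give
\[
\|\nabla\Psi^\pm(0)-\operatorname{Id}\|_{L^\infty(\Omega^\pm)} \leq C\|\Psi^\pm(0)-e\|_{2.75}\leq C|h_0|_{2.25}\leq C\sigma,
\]
from which $|J^\pm(0)-1|\leq C\sigma$ at every point of $\overline{\Omega^\pm}$ (and in particular on $\Gamma$). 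Choosing $\sigma$ sufficiently small makes this at most $1/4$.

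Next I would propagate the bound in time via the fundamental theorem of calculus. Using the formula $J^\pm_t = J^\pm A^\pm{}^j_i\,\partial_j\Psi^\pm_t{}^i$, Lemma~\ref{A_bounds} for $A^\pm$, and the elliptic bound $\|\Psi^\pm_t\|_{2.75}\leq C|h_t|_{2.25}$ derived from~\eqref{E:definition_Psi} differentiated in time, I obtain
\[
\|J^\pm_t(s)\|_{L^\infty(\Omega^\pm)}\leq C\|J^\pm(s)\|_{L^\infty}\bigl(1+|h(s)|_{2.25}\bigr)|h_t(s)|_{2.25}.
\]
By Proposition~\ref{P:local_energyequivalence} the norm $\mathcal{S}_\k(t)$ is bounded by $P(\mathscr{E}_\k(t))$, which in turn controls $|h_t|_{L^\infty_t H^4}$ and hence $|h_t(s)|_{2.25}$ uniformly on the local existence interval by a constant $M$ depending only on the initial data. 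A standard bootstrap shows that as long as $\|J^\pm(s)\|_{L^\infty}\le 2$ on $[0,t]$, one has $\|J^\pm_t(s)\|_{L^\infty}\leq C_M$.

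Combining the two steps gives
\[
|J^\pm(t,x)-1|\leq |J^\pm(0,x)-1| + \int_0^t \|J^\pm_t(s)\|_{L^\infty}\,ds \leq C\sigma + C_M\,t,
\]
so choosing $t<\delta$ with $\delta$ so small that $C\sigma+C_M\delta\leq 1/2$ yields $\tfrac12\leq J^\pm(t,x)\leq \tfrac32$, which is the claim. The only subtlety is keeping the bootstrap closed, but since the energy is assumed finite and the drift per unit time in the $L^\infty$ norm of $J^\pm$ is controlled by a fixed constant $C_M$, the bootstrap trivially closes for $t$ sufficiently small.
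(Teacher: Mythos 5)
Your argument is correct and follows essentially the same route as the paper: both reduce the claim to the smallness of $|h(t)|$ for short time, obtained from $|h_0|=\mathcal{O}(\sigma)$ together with the energy control of $h_t$, via elliptic estimates for $\Psi^\pm$ and Sobolev embedding. The only cosmetic difference is that the paper bounds $|J(t)-1|\le C|h(t)|_{1.75}+C|h(t)|_{1.75}^2$ directly from the explicit $2\times2$ determinant expansion and then uses $|h(t)|_s\le|h_0|_s+\sqrt{t}\,|h_t|_{L^2H^s}$, whereas you apply the fundamental theorem of calculus to $J$ itself via Jacobi's formula and a (trivially closing) bootstrap; both yield $|J-1|\le C\sigma + C_M\,t^{\gamma}$ and hence the same conclusion.
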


\begin{proof}
For any of the regions $\Omega^\pm$, 
\begin{align*}
J &= det( \nabla\Psi) = \Psi^1,_1\Psi^2,_2 - \Psi^1,_2\Psi^2,_1= 1 + [(\Psi^1,_1 -1)\Psi^2,_2 + (\Psi^2,_2 - 1) - \Psi^1,_2\Psi^2,_1].
\end{align*}
Now, 
\begin{align*}
|(\Psi^1,_1 -1)\Psi^2,_2 + (\Psi^2,_2 - 1) - \Psi^1,_2\Psi^2,_1|_\infty &\leq |\Psi^1,_1 -1|_\infty |\Psi^2,_2|_\infty + |\Psi^2,_2 - 1|_\infty + |\Psi^1,_2|_\infty |\Psi^2,_1|_\infty\\
&\leq \| \nabla\Psi -I\|_{1.25}(\| \nabla\Psi\|_{1.25} +1) + \| \nabla\Psi -I\|^2_{1.25}\\
&\leq C|h|_{1.75} + C|h|^2_{1.75}
\end{align*}

But recall that, $h = h_0 + \int_0^t h_t(s)ds$, therefore, $|h|_s \leq |h_0|_s + \int_0^t |h_t(s)|_s ds\leq |h_0|_s + \sqrt{t}|h_t|_{L^2H^s}$, with $|h_0|_s=O(\sigma)$ which we can make as small as we want. Therefore, taking $t$ small enough, and since we are considering $|h_t|_{L^2H^s} \leq M$, we obtain that $|h|_s$ can be made small for short time, and so,  

$$1-C|h|_{1.75}-C|h|^2_{1.75} \leq J \leq 1 + C|h|_{1.75} + C|h|^2_{1.75}$$
$$\frac{1}{2}\leq J \leq \frac{3}{2}\ \text{ on } \Gamma.$$
\end{proof}

\begin{lemma}{Parabolic estimates for $q^m$.}\label{L:first_parabolic_estimates} Given $q^m$ as in \eqref{E:galerkin_sol}, there exists a small enough time $T_\k$ independent of $m$, such that we have the following estimates,
\begin{equation*}
\|q^m(t)\|^2_{L^2(\Omega)} + \|q^m\|^2_{L^2_tH^1} + \k^{-2}|q^m|^2_{L^2_tL^2} \leq C(M,q_0), \text{ for all } t\in [0,T_\k],
\end{equation*}
where $C(M,q_0)$ is a constant independent of $m$. 
\end{lemma}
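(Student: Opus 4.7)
The plan is to test the Galerkin equation \eqref{E:weak_time_l_Galerkin} for $l=0$ against $\phi=q^m(t)$ itself, which lies in the Galerkin subspace by construction, and then extract the three pieces on the left-hand side of the claimed inequality as a consequence of the coercivity of the bilinear form associated with $-\Delta_{\psibark}$ combined with the penalization coming from the boundary term $\Jbark\kappa^{-2}q^m$.

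First, I would rewrite the time-derivative term as
\begin{equation*}
(\Jbark q^m_t, q^m)_{L^2(\Omega)} = \tfrac{1}{2}\tfrac{d}{dt}\|\sqrt{\Jbark}\,q^m\|_{L^2(\Omega)}^2 - \tfrac{1}{2}\int_\Omega (\partial_t\Jbark)|q^m|^2,
\end{equation*}
using the $L^\infty$-bound on $\partial_t\Jbark$ (Lemma~\ref{gJbounds} together with the regularity of $\bar h\in X_M^\k$) to treat the second piece by Gronwall. The elliptic term $(\labark^i_j\abark^k_j q^m,_k,\,q^m,_i)_{L^2(\Omega)}$ is coercive for short time: by Lemma~\ref{A_bounds}, $\|A-\mathrm{Id}\|_{L^\infty}=O(\sigma+\sqrt{t}M)$, so that for $T_\k$ small enough (independent of $m$), $\labark^i_j\abark^k_j\xi^k\xi^i\ge \tfrac12|\xi|^2$ pointwise, giving
\begin{equation*}
(\labark^i_j\abark^k_j q^m,_k,\,q^m,_i)_{L^2} \;\ge\; \tfrac12 \|\nabla q^m\|_{L^2(\Omega)}^2.
\end{equation*}
Together with a Poincar\'e-type bound using the boundary penalty, this yields control of $\|q^m\|_{H^1}$.

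Next, the boundary integral splits as
\begin{equation*}
\int_\Gamma \Jbark(\kappa^{-2}q^m + \beta)q^m\,d\sigma = \kappa^{-2}\int_\Gamma \Jbark(q^m)^2\,d\sigma + \int_\Gamma \Jbark\,\beta\,q^m\,d\sigma,
\end{equation*}
where the first piece is nonnegative (in fact $\ge \tfrac12\kappa^{-2}|q^m|_{L^2(\Gamma)}^2$ by Lemma~\ref{gJbounds}) and produces the third term on the left-hand side of the claim; the second is handled by Young's inequality: $\int_\Gamma\Jbark\beta q^m \le \tfrac14\kappa^{-2}|q^m|_{L^2(\Gamma)}^2 + C\kappa^2|\beta|_{L^2(\Gamma)}^2$, where $|\beta|_{L^2(\Gamma)}$ is uniformly bounded in $t\in[0,T_\k]$ by definition~\eqref{E:beta} in terms of the initial data. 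The first piece is absorbed into the coercive boundary term on the left-hand side, while the $\kappa^2|\beta|_{L^2(\Gamma)}^2$ contribution is just a constant depending on $q_0$ and $\k$.

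For the right-hand side, the term $(\labark^i_j q^m,_i\,\delta_t^s(\psibark^j),\,q^m)_{L^2(\Omega)}$ is bounded by
\begin{equation*}
\|\labark\|_{L^\infty}\|\delta_t^s\psibark\|_{L^\infty(\Omega)}\|\nabla q^m\|_{L^2(\Omega)}\|q^m\|_{L^2(\Omega)} \le C_M\bigl(\|\nabla q^m\|_{L^2}^2 + \|q^m\|_{L^2}^2\bigr),
\end{equation*}
using that $\delta_t^s\psibark$ is bounded in $L^\infty_{t,x}$ uniformly in $s$ by elliptic regularity and $\bar h\in X_M^\k$ (in particular $|\bar h_t|_{L^\infty_t H^{4}}\le C_M$). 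The $\alpha$-contribution $(\Jbark\alpha,q^m)_{L^2}\le C(\|\alpha\|_{L^2(\Omega)}^2+\|q^m\|_{L^2(\Omega)}^2)$ is controlled by the definition \eqref{E:alpha} and the $L^2_tH^5$-bound on $r$. Choosing the small free parameters to absorb the $\|\nabla q^m\|_{L^2}^2$ piece into the elliptic term (possible once $T_\k$ is taken small enough that $C_M\cdot T_\k$ is under the coercivity threshold), and then applying Gronwall's lemma to the resulting inequality
\begin{equation*}
\tfrac{d}{dt}\|\sqrt{\Jbark}q^m\|_{L^2}^2 + c_0\|\nabla q^m\|_{L^2}^2 + c_0\kappa^{-2}|q^m|_{L^2(\Gamma)}^2 \;\le\; C(M,q_0)\bigl(1+\|q^m\|_{L^2}^2\bigr),
\end{equation*}
yields the claimed bound. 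The main (mild) obstacle is verifying that the short-time coercivity constant and all Gronwall constants can be chosen independently of $m$; this follows because every constant depends only on $\|\bar h\|_{X_M^\k}$, $\|\alpha\|_{L^\infty_t H^{s}}$, $|\beta|_{L^\infty_tH^s(\Gamma)}$, and the initial data $\Qko$, none of which involves the Galerkin truncation index $m$, while the initial value $\|q^m(0)\|_{L^2}\le\|\Qko\|_{L^2}$ is uniform in $m$ by the choice of basis.
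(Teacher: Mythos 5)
Your proposal is correct and follows essentially the same route as the paper: test the $l=0$ Galerkin weak formulation with $\phi=q^m$, use short-time coercivity of $\labark\abark$ via Lemma \ref{A_bounds} and the positivity of the $\kappa^{-2}$ boundary penalty, and control the $\partial_t\Jbark$, $\beta$, transport, and $\alpha$ remainders by Cauchy--Schwarz before absorbing. The only cosmetic difference is that you close the estimate with Gronwall in differential form, whereas the paper integrates in time first and absorbs the error via a $\sqrt{T_\kappa}$ prefactor; both are equivalent and all constants are manifestly $m$-independent.
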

\begin{proof}
Indeed, considering $\phi = q^m$ on the weak formulation \eqref{E:weak_time_l} for $l=0$, and integrating in time, we obtain, 
\begin{align}\label{E:parabolic_bounds}
\|{\scriptstyle \Jbark^{1/2}}q^m(t)\|^2_0 + \int_0^t \|{\scriptstyle \Jbark^{1/2}}\nabla_{\psibark}q^m(s)\|^2_0 + \k^{-2}|{\scriptstyle \Jbark^{1/2}} q^m|^2_0 ds \leq \|\Qko^m\|^2_0 + \mathcal{R},
\end{align}
where the error $\mathcal{R}$ is given by, 
\begin{align*}
\mathcal{R} := \int_0^t\left[\int_\Omega \g_t(\Jbark)(q^m)^2 - \int_\Gamma \Jbark \beta^m q^m d\sigma + \int_\Omega\labark^i_j q^m,_i\psibark^j_t q^m + \int_\Omega \Jbark\alpha^m q^m\right]ds.
\end{align*}
Using Sobolev and Cauchy-Schwarz inequality for the terms of $\mathcal{R}$, we have that, 
\begin{align}
\mathcal{R} &\leq \int_0^t \left[\|\Jbark{}_t\|_{L^\infty}\|q^m\|^2_0 + |\beta^m|_0|q^m|_{L^2(\Gamma)}\right.\nonumber\\
&\left.\qquad + \|\Jbark\|_{L^\infty}\|\psibark_t\|_{L^\infty}\|\nabla_{\psibark}q^m\|_0\|q^m\|_0 + \|\Jbark\|_{L^\infty}\|\alpha^m\|_0\|q^m\|_0\right]ds\nonumber\\
&\leq C(M,q_0) + C\sqrt{T^m_\k}(\|q^m\|^2_{L^2_tH^1} + \|q^m\|^2_{L^\infty_tL^2}),\label{E:R_bounds_parabolic}
\end{align} 
where in the last inequality we used Young's inequality, trace estimates, and that by definition, $\alpha^m$ and $\beta^m$ are bounded by a function of the initial data $C(M,q_0)$. Finally, estimates for $\abark$ from Lemma \ref{A_bounds}, gives us that for small enough time (that does not depend on $m$),
\begin{align*}
\|q^m\|_1^2 &\leq \|q^m\|^2_0 + \|\nabla_{\psibark} q^m\|^2_0 + \|A-I\|^2_{L^\infty}\|q^m\|^2_1\\
& \leq  \|q^m\|^2_0 + \|\nabla_{\psibark} q^m\|^2_0 + \epsilon\|q^m\|^2_1,
\end{align*}
therefore $\|q^m\|^2_1 \lesssim \|q^m\|^2_0 + \|\nabla_{\psibark} q^m\|^2_0$, and combining \eqref{E:R_bounds_parabolic} with \eqref{E:parabolic_bounds} along with the estimates for $\Jbark$ from \ref{gJbounds}, we obtain that for a time $T^m_\k\leq 1/4C^2$, 
\begin{equation}\label{E:aux_parabolic_estimate}
\|q^m\|^2_{L^\infty_tL^2} + \|q^m\|^2_{L^2_tH^1} + \k^{-2}|q^m|^2_{L^2_t L^2}\leq C(M,q_0).
\end{equation}
Note that since the previous estimate holds up to any $T_\k\leq 1/4C^2$, if $T_k^m < T_k$, we can extend the solution $q^m(t)$ past $T_\k^m$ all the way to $T_\k$, while still satisfying the bound \eqref{E:aux_parabolic_estimate}.  
\end{proof}

\begin{lemma}\label{L:xmtoxm}
For a small enough time $T_\k>0$, the operator $\Phi_\k$ defined in \eqref{E:ALE_ht} is a well defined function from $X_M^\k$ to itself. 
\end{lemma}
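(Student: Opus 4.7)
The plan is to verify the three defining properties of the space $X_M^\k$ from \eqref{E:contraction_space} for the candidate $h = \Phi_\k(\bar h)$: the prescribed initial data, the time-regularity of $\g_t^l h$, and the smallness of the $X_M^\k$-norm. The starting point is that for any $\bar h \in X_M^\k$, the transformations $\psibark^\pm$ are $C^\infty(\Omega^\pm)$ diffeomorphisms with bounds on $\|\abark^\pm\|$ controlled via Lemma \ref{A_bounds}, and the linear parabolic analysis carried out in Section \ref{S:higherregularitylinear} produces a unique solution $q^\pm$ of \eqref{E:ALEregularized_linear} with
\[
\g_t^l q^\pm \in C([0,T_\k]; H^{6-2l}(\Omega^\pm)) \cap L^2([0,T_\k]; H^{7-2l}(\Omega^\pm)), \qquad l=0,1,2,3,
\]
with constants that depend on $M$, $\k$, and the initial data. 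From $v^\pm = -(\abark^\pm)^\top\nabla q^\pm$ one gets analogous Sobolev control on $\g_t^l v^\pm$, and the standard trace inequality then yields bounds for $\g_t^l v^\pm|_\Gamma$ in the natural boundary spaces of order one-half less.

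First I would check the initial data matching. By construction $h(0)=h_0=h_0^\k$. Differentiating $h_t=[v\cdot\bar n^\k]^+_-$ once, twice, and three times in time and evaluating at $t=0$ yields expressions involving $v^\pm(0), v^\pm_t(0), v^\pm_{tt}(0)$ and the time derivatives of $\bar n^\k$ at $t=0$. The crucial point is that the functions $\alpha^\pm$, $\beta^\pm$, $\gamma$ and the tailored initial data $\Qko^\pm$ of Section \ref{qzerokappa} were designed precisely so that the $\k$-regularized compatibility conditions agree, at $t=0$, with the original compatibility conditions \eqref{E:compatibility} for $(q_0,h_0)$. Consequently, the time derivatives of $h_t(0)$ reproduce exactly $g_1^\k,g_2^\k,g_3^\k$, i.e.\ the smoothed versions of \eqref{E:h_and_derivativesatzero}.

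Second I would bound the $X_M^\k$-norm of $h$. From the integral formula $h(t)=h_0+\int_0^t[v\cdot\bar n^\k]^+_-\,ds$ and its time-differentiated versions, together with the product rule applied to factors involving $\bar n^\k$ (whose smoothness is guaranteed by the double mollification $\bar h^\k=\Lambda_\k\Lambda_\k\bar h$), the fundamental theorem of calculus gives schematically
\[
|\g_t^l h(t)|_{H^{6-2l}(\Gamma)}^2 \ \lesssim \ |g_l^\k|_{H^{6-2l}}^2 + T_\k\, P(|\bar h^\k|_{W^{k,\infty}})\, \|\g_t^l v\|_{L^2_tH^{6-2l-1/2}(\Omega^\pm)}^2,
\]
and a similar estimate in $L^2_tH^{5-2l}(\Gamma)$ for $\g_t^{l+1}h$. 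Combining these with the parabolic bounds on $\g_t^l q^\pm$, which depend on $M$ and the initial data, one obtains
\[
\sum_{l=0}^3 |\g_t^l h|^2_{L^\infty_t H^{6-2l}(\Gamma)} + \sum_{l=0}^2 |\g_t^{l+1}h|^2_{L^2_t H^{5-2l}(\Gamma)} \ \le \ C(M_0) + \sqrt{T_\k}\,P(M,\k^{-1}),
\]
where $M_0$ depends only on $(q_0,h_0)$. Choosing $M$ sufficiently large relative to $M_0$ and then $T_\k$ sufficiently small (depending on $M$ and $\k$, which is permitted at this stage since $\k$-independence is only needed later at the energy-estimate level), the right-hand side is bounded by $M$, yielding $\Phi_\k(\bar h)\in X_M^\k$.

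The main obstacle is the control of $h_{ttt}$ in $L^2_tH^1(\Gamma)$, as this requires $v_{tt}^\pm|_\Gamma$ in $L^2_tH^{3/2}(\Gamma)$, which in turn demands $q_{tt}^\pm \in L^2_tH^3(\Omega^\pm)$. This regularity is precisely at the edge of what the iterative elliptic-plus-parabolic bootstrap of Section \ref{S:higherregularitylinear} produces (via $q_{ttt}^\pm\in L^2_tH^1$ combined with elliptic regularity applied to the once-time-differentiated heat equation). The remaining norms are then obtained by routine trace inequalities and interpolation, so the whole argument reduces to making $T_\k$ small enough to absorb the $P(M,\k^{-1})$ factor.
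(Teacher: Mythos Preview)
Your approach is essentially the same as the paper's: parabolic regularity for $q^\pm$ from Section~\ref{S:higherregularitylinear}, trace estimates for $v^\pm|_\Gamma$, the formula $\g_t^l h=\g_t^{l-1}[v\cdot\bar n^\k]^+_-$, and the fundamental theorem of calculus, followed by the choice ``$M$ large relative to $M_0$, then $T_\k$ small.'' Your identification of $h_{ttt}\in L^2_tH^1(\Gamma)$ as the delicate case and your initial-data verification both match the paper.

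There is, however, a derivative-counting slip in your schematic estimate. The trace map \emph{loses} half a derivative, so $|\g_t^l v|_{H^{6-2l}(\Gamma)}\lesssim \|\g_t^l v\|_{H^{6-2l+1/2}(\Omega^\pm)}$, not $H^{6-2l-1/2}$. With the correct index, the FTC route to $|\g_t^l h|_{L^\infty_tH^{6-2l}}$ requires $\g_t^l q\in L^2_tH^{7.5-2l}$, whereas the parabolic iteration only delivers $L^2_tH^{7-2l}$; you are half a derivative short. The paper avoids this for $l\ge 1$ by \emph{not} using FTC: one writes $\g_t^l h=\g_t^{l-1}h_t$ directly and uses $\g_t^{l-1}q\in C_tH^{8-2l}$, which gives $\|\g_t^{l-1}v\|_{L^\infty_tH^{6.5-2l}}$ and hence $|\g_t^l h|_{L^\infty_tH^{6-2l}}$. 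For $l=0$ (the norm $|h|_{L^\infty_tH^6}$) FTC is unavoidable; the missing half-derivative is supplied by the $\k$-regularized Robin condition \eqref{E:ALEregularized_dirichlet}, which on $\Gamma$ expresses $v^\pm\cdot\bar n^\k$ as a smooth multiple of $\mp\k^{-2}q^\pm\pm\beta^\pm$, so that $|h_t|_{H^6(\Gamma)}\lesssim C_\k\|q^\pm\|_{H^{6.5}}$ and $q\in L^2_tH^7$ suffices. The paper's ``the other norms follow similarly'' sweeps this under the rug, but that is the mechanism.

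A second point worth making explicit is how your leading constant becomes $C(M_0)$ rather than $C(M)$. The parabolic constants for $q$ depend on the coefficients $\abark^\pm$, which in turn depend on $\bar h^\k$ and hence on $M$. The paper's device (visible in its treatment of $|h_t|_{L^2H^5}$) is to write $|\bar h^\k|_6\le |h_0^\k|_6+\k^{-1}\sqrt{T_\k}\,|\bar h_t|_{L^2H^5}$ via FTC and the smoothing gain, so that for $T_\k$ small the coefficients are $O(M_0)$-close to their initial values and the $M$-dependence lands only in the $\sqrt{T_\k}$ term. Your final inequality has the right structure, but this step is what actually produces it.
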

\begin{proof}
From the definition of $\Phi_\k$ \eqref{E:ALE_ht}, the definition of $v^\pm$ \eqref{E:ALEregularized_linear_v}, and the regularization of $\bar{h}^\k$ \eqref{E:bar_h}, we have that $h$ satisfy the initial data of $X_M^\k$.
The parabolic estimates obtained for the solution $q^\pm$ of the linear problem \eqref{E:ALEregularized_linear} in Section \ref{S:higherregularitylinear} are the key ingredients to show that $\Phi_\k(\bar{h})=h\in X_M^\k$. We have for example, by the estimates for $\ak^\pm$ from Lemma \ref{A_bounds},
\begin{align*}
|h_{ttt}|_1 &= |\g_t^2[v\cdot \tilde{n}]^+_-|_1 \leq |\nabla_{\psibark^+}q^+_{tt}\cdot\tilde{n}|_1 + |\ak_{tt}^\top\nabla q^+\cdot \tilde{n}|_1 + |\nabla_{\psibark^+}q^+\cdot \tau \frac{\t \bar{h}^\k_{tt}}{(1+H\bar{h}^\k)}|_1\\
&\qquad + |\nabla_{\psibark^-}q^-_{tt}\cdot\tilde{n}|_1 + |\ak_{tt}^\top\nabla q^-\cdot \tilde{n}|_1 + |\nabla_{\psibark^-}q^-\cdot \tau \frac{\t \bar{h}^\k_{tt}}{(1+H\bar{h}^\k)}|_1 + \text{l.o.t.}\\
&\lesssim \|q^+_{tt}\|_{2.5} +\|q^-_{tt}\|_{2.5} + |\bar{h}^\k_{tt}|_2(\|q^+\|_3 + \|q^-\|_3 )+ \text{l.o.t.} 
\end{align*}
Therefore, interpolating: $\|q_{tt}^\pm\|_{2.5} \leq \|q_{tt}^\pm\|_{2}^{1/2}\|q_{tt}^\pm\|_3^{1/2}$, and using the parabolic estimates for $q^\pm$, we obtain,
\begin{align*}
|h_{ttt}|_{L^2_tH^1(\Gamma)} &\lesssim \sqrt{t}(\|q^+_{tt}\|_{L^\infty H^2}^{1/2}\|q^+_{tt}\|_{L^2H^3}^{1/2} + \|q^-_{tt}\|_{L^\infty H^2}^{1/2}\|q^-_{tt}\|_{L^2H^3}^{1/2})\\ &\qquad + t |\bar{h}_{tt}^\k|_{L^\infty H^2}(\|q^+\|_{L^\infty H^3}+\|q^-\|_{L^\infty H^3}) + \text{l.o.t.}\\
&\lesssim \sqrt{t}C(M_0) + t M_0 M\\
&\leq \sqrt{t}M,  
\end{align*}
where $C(M_0)$ is a constant function of the initial data, and the last inequality follows by choosing $M$ so that $M\geq M_0$ and $T_\k$ small enough so that $\sqrt{t}M_0\leq 1,\ \forall t\leq T_\k$.  
A similar estimate can be done for $h_t\in H^5(\Gamma)$,
\begin{align*}
|h_t|_5 &\leq C(\|v^+\|_{5.5}+\|v^-\|_{5.5})(1+|\bar{h}^\k|_6)\\
&\leq C(\|q^+\|_{6.5} + \|q^-\|_{6.5})(1+|\bar{h}^\k|_6)^2\\
&\leq C(\|q^+\|_{6.5} + \|q^-\|_{6.5})(1+|h^\k_0|_6 + \sqrt{T_\k}|\bar{h}^\k_t|_{L^2H^6})^2\\
&\leq C(\|q^+\|_{6.5} + \|q^-\|_{6.5})(1+|h^\k_0|_6 + \k^{-1}\sqrt{T_\k}|\bar{h}^\k_t|_{L^2H^5})^2,
\end{align*}
where on the third line we used the definition of $\bar{h}^\k$ and Cauchy-Schwarz inequality to get the term $\sqrt{T_\k}|\bar{h}^\k_t|_{L^2H^6}$. The next line follows from absorving one of the derivatives of the $H^6$ norm in exchange for the $\k^{-1}$ coefficient due to the tangential convolution structure of $\bar{h}_t^\k$. Taking the $L^2_t$ norm in time we obtain,
 
\begin{align*}
|h_t|_{L^2H^5} &\leq C(\|q^+\|_{L^2H^{6.5}} + \|q^-\|_{L^2 H^{6.5}})(1+|h^\k_0|_6 + \k^{-1}\sqrt{T_\k}|\bar{h}^\k_t|_{L^2H^5})^2\\
&\leq C(M_0)(1+|h^\k_0|_6 + \k^{-1}\sqrt{T_\k}|\bar{h}^\k_t|_{L^2H^5})^2
\end{align*}
Choosing $M\geq C(M_0)(2+|h_0|_6)^2$ and $T_\k$ small enough so that $\k^{-1}\sqrt{T_\k}M\leq 1$ we obtain the desired inequality. The bounds for the other norms of $h$ in the definition of $X_M^\k$ follow in similar fashion, so we will omit their proof for brevity.

\end{proof}

\begin{remark}
Notice that the time of existence $T_\k$ depends on $M$, which is a function of the initial data. 
\end{remark}

\begin{lemma}\label{L:sourceF} The source function $f$ defined by~\eqref{E:SOURCEF} satisfies the bound:
\begin{equation}\label{E:f_bound}
\snorm{f}_{L^2 H^{0.5}}\leq C_M\k^{-1}\sqrt{T_\k}\ \mathcal{S}(\delta q,\delta\bar{h}^\k)^{1/2},
\end{equation}
where $\mathcal{S}(\delta q, \delta\bar{h}^\k)$ is the norm defined in \eqref{S-local} evaluated in the differences $\delta q$ and $\delta\bar{h}^\k$.
\end{lemma}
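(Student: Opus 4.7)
The plan is to decompose $f$ as the explicit sum written in \eqref{E:SOURCEF} and estimate each summand separately in $L^2_tH^{0.5}(\Omega^\pm)$, then add. The three main tools will be: (i) the Sobolev product estimate $\snorm{\varphi\psi}_{0.5}\lesssim \snorm{\varphi}_{1.5}\snorm{\psi}_{0.5}$ (so that the factor in lower regularity carries the differentiated variable), (ii) the pointwise bounds on $\abark_1,\abark_2$ coming from Lemma~\ref{A_bounds} together with the bound $\snorm{\delta\abark}_{L^\infty H^s}\lesssim |\delta\bar h^\k_t|_{L^2 H^{s+0.5}}$ from Lemma~\ref{L:delta_A_bound}, and (iii) the smoothing inequality $|\Lambda_\k\varphi|_{s}\lesssim \k^{-(s-s')}|\varphi|_{s'}$ applied to $\delta\bar h^\k = \Lambda_\k\Lambda_\k\delta\bar h$, which converts an excess spatial derivative into a power of $\k^{-1}$.

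First I would dispatch the two lines of \eqref{E:SOURCEF} whose coefficients involve only $\abark_1$ (not differences). Expanding with Leibniz, each term becomes a product with one factor bounded in $L^\infty_t H^s$ by Lemma~\ref{A_bounds} and by the a priori assumption $S(\bar h_1)\le M$, and the other factor is $\delta q$, $\delta q_t$, or $\delta q_{tt}$ with at most two spatial derivatives, all controlled in $L^2_tH^{j}$ for a $j$ large enough to close in $H^{0.5}$ using $\mathcal S(\delta q,\delta \bar h^\k)$. A Cauchy–Schwarz in time produces the $\sqrt{T_\k}$; at this stage no $\k^{-1}$ loss appears.

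For the first-line terms $\g_t^2\!\left(-\delta v\cdot\bar w_{1,\k} -v_2\cdot\delta\psibark_t+\delta\abark\,(\abark_1\nabla q_2)+\abark_2\,(\delta\abark\,\nabla q_2)\right)$, after substituting $\delta v = -\delta\abark^\top\nabla q_2 - \abark_1^\top\nabla\delta q$ and applying Leibniz to $\g_t^2$, the critical factors become $\g_t^j(\delta\abark)$ for $j=0,1,2$ and $\delta\psibark_t, \delta\psibark_{tt}, \delta\psibark_{ttt}$. Each time derivative applied via the identity $\abark_t=-\abark\,\nabla\psibark_t\,\abark$ pulls in one derivative of $\bar h^\k_t$; this means that to estimate $\g_t^2(\delta\abark)$ in $L^\infty_t H^s$ (paired with a smoother factor in $L^2_tH^{1.5}$) we need $\delta\bar h^\k_{ttt}$ at a spatial regularity one level beyond what $X_M^\k$ provides directly. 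Writing $\delta\bar h^\k_{ttt}=\Lambda_\k\Lambda_\k\delta\bar h_{ttt}$ and using the smoothing bound to absorb one derivative at the cost of $\k^{-1}$ closes the estimate and is precisely the origin of the $\k^{-1}$ in the statement.

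The main obstacle will be the careful derivative bookkeeping inside these first-line terms: we must choose, for each distribution of the two time derivatives, which factor carries the extra spatial derivative (so that the Sobolev product estimate is admissible) and which factor eats the single $\k^{-1}$ loss. Once this accounting is fixed and each resulting product is bounded by $C_M\k^{-1}\mathcal S(\delta q,\delta\bar h^\k)^{1/2}\cdot\|(\text{time-integrable factor})\|_{L^2_t}$, an application of Cauchy–Schwarz in time on $[0,T_\k]$ yields the uniform extra $\sqrt{T_\k}$, and summation over all terms delivers the claimed inequality.
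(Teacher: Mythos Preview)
Your plan is essentially the paper's own proof: expand $f$ via Leibniz, estimate each summand with Sobolev product inequalities together with Lemmas~\ref{A_bounds} and~\ref{L:delta_A_bound}, trade one spatial derivative on the triple time-differentiated height for a factor of $\k^{-1}$ using the convolution structure of $\bar h^\k=\Lambda_\k\Lambda_\k\bar h$, and extract $\sqrt{T_\k}$ by Cauchy--Schwarz in time.

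One small correction in your bookkeeping: the $\k^{-1}$ loss is \emph{not} forced by $\g_t^2(\delta\abark)$. Since $\abark_t=-\abark\,\nabla\psibark_t\,\abark$, two time derivatives of $\delta\abark$ involve at worst $\nabla\delta\psibark_{tt}$, hence $\delta\bar h^\k_{tt}$, which is controlled in $L^\infty_tH^2$ by $X_M^\k$ with no loss. The $\k^{-1}$ actually arises from the factors $\delta\psibark_{ttt}$ and $\psibark^1_{ttt}$ that appear when $\g_t^2$ hits $\delta\psibark_t$ in $v_2\cdot\delta\psibark_t$ and hits $\bar w_{1,\k}=\psibark^1_t$ in $\delta v\cdot\bar w_{1,\k}$; these terms require $|\bar h^\k_{ttt}|_{H^{0.5}}$ (or $H^1$) on $\Gamma$, while $X_M^\k$ only supplies $L^2$, and the smoothing inequality then closes exactly as you describe.
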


\begin{proof}

First, let us identify the higher order terms of $f$,
\begin{align}
f &= \abark_1^i{}_j{}_{tt}(\abark_1{}^k_j\delta q,_k),_i - \delta v_{tt}\cdot \bar{w}_{1\k}- \delta v\cdot\bar{w}_{1\k}{}_{tt}\nonumber\\
&\quad - v_2{}_{tt}\cdot\delta\psibark_t - v_2\cdot \delta\psibark_{ttt} + \delta\ak_{tt}{}^i_j(\abark_1{}^k_j q_2,_k),_i\nonumber\\
&\quad + \delta \abark^i_j(\abark_1{}^k_j q_2{}_{tt},_k),_i + \text{l.o.t.},
\end{align} 
where we group together in ``l.o.t.'' all the lower order terms that can be bounded as in equation \eqref{E:f_bound} via a simple application of Cauchy-Schwarz inequality and where the norms are directly bouded by the norm $\mathcal{S}(\delta q, \delta h)$.
The first term in the definition~\eqref{E:SOURCEF} of $f$ satisfies the bound, 

\begin{align*}
\|\ak_1^i{}_j{}_{tt}(\ak_1{}^k_j\delta q,_k),_i\|_{L^2_tH^{1}} &\leq C_M \|\nabla\psibark^1_{tt} \nabla^2\delta q\|_{L^2_tH^{1}} + \text{l.o.t.}\\
& \leq C_M |\bar{h}^\k_{tt}{}^1|_{L^2_tH^{2}}\|\delta q\|_{L^\infty_t H^{3.5}}\\
&\leq C_M \sqrt{T_\k} |\bar{h}^\k_{tt}{}^1|_{L^\infty_tH^{2}}\|\delta q\|_{L^\infty_t H^{3.5}}\\
&\leq C_M \sqrt{T_\k}\|\delta q\|_{L^\infty H^{3.5}}, 
\end{align*}
where we used Cauchy-Schwarz inequality in the time integration to obtain the coefficient $\sqrt{T_\k}$. The second term can be bounded by,

\begin{align*}
\|\delta v_{tt}\cdot \bar{w}_{1\k}\|_{1} &\leq \|\delta v_{tt}\|_1 \|\psibark^1_t\|_{L^\infty} + \|\delta v_{tt}\|_{L^4}\|\nabla\psibark^1_t\|_{L^4}\\
&\leq C_M(\|\nabla\psibark^1_{tt}\|_1\|\delta q\|_{2.5} + \|\delta q_{tt}\|_2)|\bar{h}^\k_t{}^1|_1 + \text{l.o.t.}\\
&\leq C_M(|\bar{h}^\k_{tt}{}^1|_{1.5}\|\delta q\|_{2.5} + \|\delta q_{tt}\|_2)|\bar{h}^\k_t{}^1|_1 + \text{l.o.t.},
\end{align*}
therefore, 
\begin{equation*}
\|\delta v_{tt}\cdot \bar{w}_{1\k}\|_{L^2_tH^1} \leq C_M\sqrt{T_\k}(\|\delta q\|_{L^\infty_t H^{2.5}} + \|\delta q_{tt}\|_{L^\infty_t H^2}).
\end{equation*}
The third term can be bounded by, 
\begin{align*}
\|\delta v\cdot \psibark^1_{ttt}\|_{L^2_tH^1} &\leq C\|\delta v\|_{L^\infty_tH^{1.5}}|\bar{h}^\k_{ttt}{}^1|_{L^2H^1}\\
&\leq C\k^{-1}\|\delta v\|_{L^\infty_tH^{1.5}}|\bar{h}^\k_{ttt}{}^1|_{L^2_tL^2}\\
&\leq C\k^{-1}\sqrt{T_\k}\|\delta v\|_{L^\infty_tH^{1.5}}|\bar{h}^\k_{ttt}{}^1|_{L^\infty_tL^2}\\
&\leq C_M \k^{-1}\sqrt{T_\k}\|\delta v\|_{L^\infty_tH^{1.5}},
\end{align*}
where we used the smoothing of $\bar{h}^\k_{ttt}{}^1$ in line 2, to lower the Sobolev norm in exchange for the coefficient $\k^{-1}$.
The fourth term can be bounded by, 
\begin{align*}
\|v_2{}_{tt}\cdot \delta\psibark_t\|_{L^2_tH^1} &\leq C\|v_2{}_{tt}\|_{L^2H^1}\|\delta\psibark_t\|_{1.5}\\
&\leq C_M(\|\psibark^2_{tt}\|_{L^2_tH^2}\|q_2\|_{L^\infty_tH^{2.5}}+\|q_2{}_{tt}\|_{L^2_tH^2})|\delta \bar{h}^\k_t|_{L^\infty_tH^1} + \text{l.o.t.}\\
&\leq C_M\sqrt{T_\k}(|\bar{h}^\k_{tt}{}^2|_{L^\infty_tH^{1.5}}\|q_2\|_{L^\infty_tH^{2.5}} + \|q_2{}_{tt}\|_{L^\infty_tH^2})|\delta\bar{h}^\k_t|_{L^\infty_tH^1} + \text{l.o.t.}\\
&\leq C_M\sqrt{T_\k}|\delta\bar{h}^\k_t|_{L^\infty_tH^1}.
\end{align*}

The fifth term,
\begin{align*}
\|v_2\cdot \delta\psibark_{ttt}\|_{L^2_tH^1} &\leq C \|v_2\|_{L^\infty_tH^{1.5}}\|\delta\psibark_{ttt}\|_{L^2_tH^1}\\
&\leq C\|v_2\|_{L^\infty_tH^{1.5}}|\delta \bar{h}^\k_{ttt}|_{L^2_tH^1}\\
&\leq C\k^{-1}\|v_2\|_{L^\infty_tH^{1.5}}|\delta \bar{h}^\k_{ttt}|_{L^2_tL^2}\\
&\leq C_M \k^{-1}\sqrt{T_\k}|\delta\bar{h}^\k_{ttt}|_{L^\infty_tL^2}.
\end{align*}

The sixth term, 
\begin{align*}
\|\delta \ak_{tt}{}^i_j(\ak_1{}^k_j q_2,_k),_i\|_{L^2_tH^1} &\leq \|\delta \ak_{tt} \ak_1 \nabla^2 q_2\|_{L^2_tH^1} + \text{l.o.t.}\\
&\leq C_M\|\delta\ak_{tt}\|_{L^2_tH^1}\|q_2\|_{L^\infty_tH^{3.5}} + \text{l.o.t.}\\
&\leq C_M \sqrt{T_\k}|\delta\bar{h}^\k_{tt}|_{L^\infty_tH^{1.5}}.
\end{align*}

The last and most critical term need to be analyzed in the actual $H^{0.5}$ Sobolev norm, 
\begin{align*}
\|\delta\ak^i_j(\ak_1{}^k_j q_2{}_{tt},_k),_i\|_{L^2_tH^{0.5}} &\leq C_M\|\delta \ak\|_{L^\infty_t H^{1.5}}\|q_2{}_{tt}\|_{L^2_tH^{2.5}} + \text{l.o.t.}\\
&\leq C_M\sqrt{T_\k}|\delta \bar{h}^\k_t|_{L^2_tH^2}\|q_2{}_{tt}\|_{L^2_tH^{2.5}}\\
&\leq C_M\sqrt{T_\k}|\delta \bar{h}^\k_t|_{L^2_tH^2},
\end{align*}
where we used the bounds for $\delta\ak$ from lemma \ref{L:delta_A_bound}, and that $\|q_2{}_{tt}\|_{L^2_tH^{2.5}}\leq \mathcal{S}(q_2)\leq C_M$. The proof then follows from collecting all the terms together along with the straightforward estimates of the lower-order terms.

\end{proof}

\begin{lemma}[Error terms]\label{S:error_terms}
The error terms from Lemma \ref{ab_energy_lemma} are given by,

\begin{equation*}
\mathcal{R}(t) = \mathcal{R}^+(t) + \mathcal{R}^-(t) + \mathcal{R}_\Gamma(t) + \mathcal{R}^+_{\g\Omega}(t) + \mathring{\mathcal{R}}^+(t) + \mathring{\mathcal{R}}^-(t), 
\end{equation*}
where,
\begin{align*}
\mathcal{R}^\pm &= \sum\limits_{a+2b\leq 6} \mathcal{R}^\pm_\aabb + \sum\limits_{a+2b\leq 5}\widetilde{\mathcal{R}}^\pm_\aabb,\\
\mathcal{R}_\Gamma  &= \sum\limits_{a+2b\leq 6} \mathcal{R}_\Gamma^\aabb + \sum\limits_{a+2b\leq 5}\widetilde{\mathcal{R}}_\Gamma^\aabb,\\
\mathcal{R}^+_{\g\Omega} &= \sum\limits_{a+2b\leq 6}\mathcal{R}^\aabb_{\g\Omega} + \sum\limits_{a+2b\leq 5} \widetilde{\mathcal{R}}_{\g\Omega}^\aabb,\\
\mathring{\mathcal{R}}^\pm &= \sum\limits_{a+2b\leq 6} \mathring{\mathcal{R}}^\pm_\aabb + \sum\limits_{a+2b\leq 5}\widetilde{\mathcal{R}}^{\circ \pm}_\aabb
\end{align*}
with,
\begin{align*}
\mathcal{R}^\pm_\aabb &= \sum_{1\le s\le a-1\atop 1\le l\le b-1} \int_{\Omega^\pm}c_{sl}\t^{a-s}\partial_t^{b-l}A^T\left(-\t^s\partial_t^l \nabla q+ \t^s\partial_t^l \nabla\Psi A \nabla q\right) \t^a\partial_t^b v W\mu\\
& + \int_{\Omega^\pm}\t^a\partial_t^b\Psi A \nabla v\t q\partial_t^b v W\mu \\
& - \int_{\Omega^\pm}(\t^a\partial_t^b q + \t^a\partial_t^b\Psi\cdot v )\left( \t^a\partial_t^b A  \nabla v + \sum_{1\le s\le a-1\atop 1\le l\le b-1} c_{sl}\t^{a-s}\partial_t^{b-l}A\t^s\partial_t^l \nabla v\right)W\mu\\
& - \int_{\Omega^\pm}(\t^a\partial_t^b q + \t^a\partial_t^b\Psi\cdot v )\left( -\t^a\partial_t^b\Psi\cdot v_t + \Psi_t\cdot\t^a\partial_t^b v + \sum_{1\le s\le a-1\atop 1\le l\le b-1} c_{sl}\t^{a-s}\partial_t^{b-l}\Psi_t\cdot \t^s\partial_t^l v\right)W\mu\\
&\quad + \int_{\Omega^\pm}(\t^a\g_t^b q + \t^a\g_t^b\psik\cdot v)(\t^a\g_t^b\alpha) W\mu\\
&+ \frac{1}{2} \int_{\Omega^\pm}(\t^a\partial_t^b q + \t^a\partial_t^b\Psi\cdot v)^2 W_t + \int_{\Omega^\pm}(\t^a\partial_t^b q + \t^a\partial_t^b\Psi\cdot v)A\t^a\partial_t^b v  \nabla (W\mu),
\end{align*}
\begin{align*}
\widetilde{\mathcal{R}}^\pm_\aabb &=  - \sum_{1\le s\le a-1\atop 1\le l\le b-1} c_{sl} \int_{\Omega^\pm}\t^{a-s}\g_t^{b+1-l}A^i_j \t^s\g_t^l q,_i\t^a\g_t^b v^j W\mu\\
&+ \int_{\Omega^\pm}(\Psi_t^k,_l \t^a\g_t^b (A^i_kA^l_j) + \sum_{1\le s\le a-1\atop 1\le l\le b-1} c_{sl} \t^{a-s}\g_t^{b-l}\Psi_t^k,_l\t^s\g_t^l(A^i_kA^l_j))q,_i\t^a\g_t^b v^j W\mu\\
&+ \int_{\Omega^\pm}\t^a\g_t^{b+1}\Psi^k v^k,_l A^l_j\t^a\g_t^bv^j W\mu\\
&- \int_{\Omega^\pm}(\t^a\g_t^{b+1} q + \t^a\g_t^{b+1}\Psi\cdot v)\left( \Psi_t\cdot \t^a\g_t^{b}v + \t^a\g_t^{b}A^i_j v^j,_i + \sum_{1\le s\le a-1\atop 1\le l\le b-1} c_{sl} \left( \t^{a-s}\g_t^{b-l}\Psi_t\cdot \t^s\g_t^l v \right.\right.\\
&\left.\left.  +\t^{a-s}\g_t^{b-l} A^i_j \t^s\g_t^l v^j,_i\right)\right)W\mu\\
& + \int_{\Omega^\pm}(\t^a\g_t^{b+1}q + \t^a\g_t^{b+1}\psik\cdot v)(\t^a\g_t^b \alpha)W\mu\\
&+\frac{1}{2}\int_{\Omega^\pm}(\t^a\g_t^b v)^2 W_t\mu + \int_{\Omega^\pm}(\t^a\g_t^{b+1} q + \t^a\g_t^{b+1}\Psi\cdot v)(A^i_j \t^a\g_t^{b} v^j)(W\mu),_i,
\end{align*}
\begin{align*}
\mathcal{R}_\Gamma^\aabb &= \frac{1}{2}\int_\Gamma \g_t\left(e^{(-\lambda_1 +\eta)t} a_\k^2\right)(\t^a\g_t^b\ \Lambda_\k h)^2\\
& -e^{(-\lambda_1 +\eta)t}\int_\Gamma J_\k^{-2} (1+Hh^\k)((h^\k \t^a\g_t^b N + \t^a\g_t^b x + \sum_{1\le s\le a-1\atop 1\le l\le b-1}\t^{a-s}\g_t^{b-l} h^\k \t^s\g_t^l N) \cdot (-\t h^\k\tau\\
& + (1+Hh^\k)N))[\t^a\g_t^b v\cdot \tilde{n}^\k]^+_-\\
& + e^{(-\lambda_1 +\eta)t}\int_\Gamma a_\k^2\t^a\g_t^{b} h^\k \left([v\cdot \t^a\g_t^b\tilde{n}^\k]^+_- + \sum_{1\le s\le a-1\atop 1\le l\le b-1}c_{sl}[\t^{a-s}\g_t^{b-l} v\cdot \t^s\g_t^l \tilde{n}^\k]^+_-\right)\\
& -e^{(-\lambda_1 +\eta)t}\int_\Gamma \t^a\g_t^b(\Lambda_\k h)[\Lambda_\k, a_\k^2\t^a\g_t^b]h_t\\
& - \k^2 e^{(-\lambda_1 +\eta)t} \int_\Gamma \left(r^+_\k \t^a\g_t^b \beta^+ (\t^a\g_t^b v^+\cdot n^\k) + r^-_\k \t^a\g_t^b \beta^- (\t^a\g_t^b v^-\cdot n^\k)\right)\\
& - \k^2 e^{(-\lambda_1+\eta)t}\sum\limits_{1\leq s\leq a\atop 1\leq l\leq b} c_{sl} \int_\Gamma \left((\g_N q^+)^{-1}\t^{a-s}\g_t^{b-l} v^+ \cdot \t^s\g_t^l (A^\top N)\t^a\g_t^b v^+\cdot A^\top N \right.\\
&\qquad \left. + (\g_N q^-)^{-1}\t^{a-s}\g_t^{b-l} v^- \cdot \t^s\g_t^l (A^\top N)\t^a\g_t^b v^-\cdot A^\top N \right)\\
&  -\k^2e^{(-\lambda_1+\eta)t}\int_\Gamma \left(\t^a\g_t^b\psik^i {}^\k \hspace{-.06in} A^l_i \tau^l \t( v^+\cdot {}^\k \hspace{-.06in} A^\top N)(\t^a\g_t^b v^+\cdot n^\k)J_\k^{-1}\sqrt{g_\k}(\g_N q^+)^{-1} \right.\\
&\qquad \qquad \left.+ \t^a\g_t^b\psik^i {}^\k \hspace{-.06in} A^l_i \tau^l \t( v^-\cdot {}^\k \hspace{-.06in} A^\top N)(\t^a\g_t^b v^-\cdot n^\k)J_\k^{-1}\sqrt{g_\k}(\g_N q^-)^{-1}\right),
\end{align*}
\begin{align*}
\widetilde{\mathcal{R}}_\Gamma^\aabb &= -e^{(-\lambda_1 + \eta)t}\int_\Gamma J_\k^{-2}(1+Hh^\k)(h_t^\k\t^a\g_t^b N + \sum_{1\le s\le a-1\atop 1\le l\le b-1} c_{sl}\t^{a-s}\g_t^{b-l}h_t^\k\t^s\g_t^l N)\cdot(-\t h^\k\tau\\
& + (1+H h^\k)N)[\t^a\g_t^{b+1}v\cdot n]^+_-\\
& + e^{(-\lambda_1 + \eta)t}\int_\Gamma a_\k^2\t^a\g_t^{b+1} h^\k\left([v\cdot\t^a\g_t^b\tilde{n}^\k]^+_- + \sum_{1\le s\le a-1\atop 1\le l\le b-1} c_{sl}[\t^{a-s}\g_t^{b-s}v\cdot \t^l\g_t^s\tilde{n}^\k]^+_-\right)\\
& + e^{(-\lambda_1 + \eta)t}\int_\Gamma\t^a\g_t^{b+1}\Lambda_\k h [a_\k^2\t^a, \Lambda_\k]\g_t^{b+1}h\\
& + \k^2e^{(-\lambda_1 + \eta)t}\int_\Gamma \left(\t^a\g_t^{b+1} \psik {}^\k \hspace{-.06in} A^i_k \tau^i \t(v^+\cdot {}^\k \hspace{-.06in} A^\top N)\t^a\g_t^{b} v^+\cdot {}^\k \hspace{-.06in} A^\top N (\g_Nq^+)^{-1} \right.\\
&\qquad\qquad  \left.+ \t^a\g_t^{b+1} \psik {}^\k \hspace{-.06in} A^i_k \tau^i \t(v^-\cdot {}^\k \hspace{-.06in} A^\top N)\t^a\g_t^{b} v^-\cdot {}^\k \hspace{-.06in} A^\top N (\g_Nq^-)^{-1} \right)\\
& - \k^2 e^{(-\lambda_1+\eta)t} \sum\limits_{s,l} c_{sl} \int_\Gamma \left((\g_t(\t^{a-l}\g_t^{b-s} v^+\cdot \t^l\g_t^s (A^\top N)) - \t^a\g_t^{b+1}\beta^+)(\t^a\g_t^b v^+\cdot A^\top N) (\g_N q^+)^{-1} \right.\\
&\qquad\qquad  - \left. (\g_t(\t^{a-l}\g_t^{b-s} v^-\cdot \t^l\g_t^s (A^\top N)) - \t^a\g_t^{b+1}\beta^-)(\t^a\g_t^b v^-\cdot A^\top N) (\g_N q^-)^{-1} \right)\\
& + \frac{\k^2}{2}\int_\Gamma \left((\t^a\g_t^b v^+\cdot A^\top N)^2\g_t(e^{(-\lambda_1 +\eta)t}(\g_N q^+)^{-1}) + (\t^a\g_t^b v^-\cdot A^\top N)^2\g_t(e^{(-\lambda_1 +\eta)t}(\g_N q^-)^{-1})\right)
\end{align*}
\begin{align*}
\mathcal{R}_{\g\Omega}^\aabb &= -\int_{\g\Omega}  (\t^a\partial_t^b q^+ + \t^a\g_t^b\psik\cdot v) (\t^a\g_t^b \gamma - \sum\limits_{s,l}c_{sl}\t^{a-l}\g_t^{b-s} v\cdot \t^l\g_t^s {\bf N}^+) W,\\
\widetilde{\mathcal{R}}_{\g\Omega}^\aabb &= \int_{\g\Omega}(\t^a\g_t^{b+1} q^+ + \t^a\g_t^{b+1}\Psi^+\cdot v^+)(\t^a\g_t^b \gamma - \sum\limits_{s,l}c_{sl}\t^{a-l}\g_t^{b-s} v\cdot \t^l\g_t^s {\bf N}^+)W,\\
\mathring{\mathcal{R}}^\pm_\aabb &= \sum_{0\leq s^i\leq a^i\atop{1\le |s|\le |a|-1\atop 1\le l\le b-1}}c_{sl}\int_{\Omega^\pm}(1-\mu)\left(-\g^{a-s}\g_t^{b-l}A^j_i\g^s\g_t^l q,_j  + \g^{a-s}\g_t^{b-l}A^j_k \g^s\g_t^l \Psi^k,_l A^l_i q,_j \right)\g^a\g_t^b v W\\
&-\int_{\Omega^\pm} (1-\mu)\g^a\g_t^b\Psi^k v^k,_l A^l_i\g^a\g_t^b v^i W\\
&+ \int_{\Omega^\pm} (\g^a\g_t^b q + \g^a\g_t^b\Psi\cdot v)\left[ ((1-\mu)W),_j A^j_i\g^a\g_t^b v^i + (1-\mu)W\left( (\g^a\g_t^b A^j_i v^i),_j\right.\right.\\
&\left.\left. + \sum_{0\leq s^i\leq a^i\atop{1\le |s|\le |a|-1\atop 1\le l\le b-1}} c_{sl} (\g^{a-s}\g_t^{b-l}A^j_i \g^s\g_t^l v^i),_j\right)\right]\\
&+ \int_{\Omega^\pm}(1-\mu)(\g^a\g_t^b q + \g^a\g_t^b\Psi\cdot v)(\g^a\g_t^b\Psi\cdot v_t) W\\
& + \int_{\Omega^\pm} (\t^a\g_t^b q + \t^a\g_t^b\psik\cdot v)(\t^a\g_t^b \alpha) (1-\mu)W
\end{align*}
\begin{align*}
\widetilde{\mathcal{R}}^{\circ\pm}_\aabb &= -\sum_{0\leq s^i\leq a^i\atop{1\le |s|\le |a|-1\atop 1\le l\le b-1}} c_{sl}\int_{\Omega^\pm}(1-\mu)\g^{a-s}\g_t^{b-l}A^i_j\g^s\g_t^l q,_i\g^a\g_t^b v^j W\\
&+ \int_{\Omega^\pm}\left(\Psi_t^k,_l \g^a\g_t^b (A^i_kA^l_j) + \sum_{0\leq s^i\leq a^i\atop{1\le |s|\le |a|-1\atop 1\le l\le b-1}} c_{sl} \g^{a-l}\g_t^{b-s}\Psi_t^k,_l\g^l\g_t^s(A^i_kA^l_j)\right)q,_i\g^a\g_t^b v^j (1-\mu)W\\
&+ \int_{\Omega^\pm}\g^a\g_t^{b+1}\Psi^k v^k,_l A^l_j\g^a\g_t^bv^j (1-\mu)\\
&+ \int_{\Omega^\pm}(\g^a\g_t^{b+1} q + \g^a\g_t^{b+1}\Psi\cdot v)(A^i_j \g^a\g_t^{b} v^j)((1-\mu)W),_i\\
&- \int_{\Omega^\pm}(\g^a\g_t^{b+1} q + \g^a\g_t^{b+1}\Psi\cdot v)\left( \Psi_t\cdot \g^a\g_t^{b}v + \g^a\g_t^{b}A^i_j v^j,_i + \sum_{0\leq s^i\leq a^i\atop{1\le |s|\le |a|-1\atop 1\le l\le b-1}} c_{sl} \left( \g^{a-s}\g_t^{b-l}\Psi_t\cdot \g^s\g_t^l v \right.\right.\\
&\left.\left.  +\g^{a-s}\g_t^{b-l} A^i_j \g^s\g_t^l v^j,_i\right)\right)(1-\mu)W\\
& +\int_{\Omega^\pm}(\g^a\g_t^{b+1} q + \g^a\g_t^{b+1}\Psi\cdot v) (\t^a\g_t^b\alpha)(1-\mu)W,
\end{align*}
where we have omitted the upper indices ${}^\pm$ inside the interior integrals for simplicity of notation, but it is assumed that the functions $q^\pm,\ \Psi^\pm,\ A^\pm,\ v^\pm$, $W^\pm$, 
are being integrated over the corresponding region $\Omega^\pm$. 
\end{lemma}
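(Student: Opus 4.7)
The statement is essentially a bookkeeping lemma: it records, in explicit form, every residual integral that appears in the energy identity of Lemma~\ref{ab_energy_lemma}. So my ``proof'' would consist of retracing the derivation of that identity and carefully naming each term that is not part of the positive-definite energy/dissipation. I would organize the calculation into four blocks, corresponding to the four families $\mathcal{R}^\pm,\;\mathcal{R}_\Gamma,\;\mathcal{R}^+_{\g\Omega},\;\mathring{\mathcal{R}}^\pm$, and within each block separate the terms generated by the differential operator $\t^a\g_t^b$ (giving $\mathcal{R}^\aabb$, $\mathcal{R}^\aabb_\Gamma$, etc.) from those generated by $\t^a\g_t^{b+1}$ tested against $\t^a\g_t^b v^\pm\, W^\pm\mu$ (giving the ``tilde'' terms $\widetilde{\mathcal{R}}^\aabb$, $\widetilde{\mathcal{R}}_\Gamma^\aabb$, etc.).

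First, I would apply $\t^a\g_t^b$ to \eqref{E:ALEregularized_v}, take the weighted $L^2$-inner product with $\t^a\g_t^b v^\pm W^\pm\mu$ in each phase, and systematically expand using Leibniz: the top-order commutator pieces (one derivative sitting on $\ak$ or $\psik$, the remaining ones on $q$ or $v$) are precisely the interior sums appearing in $\mathcal{R}^\pm_\aabb$ and $\mathring{\mathcal{R}}^\pm_\aabb$. Splitting by the cutoff $\mu$ separates the localized tangential part from the full-gradient part away from $\Gamma\cup\g\Omega$. All terms involving $ \nabla W^\pm$ and $W_t^\pm$ are generated when one performs the integration by parts in space (moving a derivative off $v^\pm$ onto the weighted measure $W^\pm\mu$) and when one pulls the time derivative out of the $L^2$-energy; these produce the lines with $ \nabla(W\mu)$ and $W_t$ in the display for $\mathcal{R}^\pm_\aabb$. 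The analogous calculation with the operator $\t^a\g_t^{b+1}$ yields $\widetilde{\mathcal{R}}^\pm_\aabb$, with the additional commutator terms coming from $\g_t$ falling on $A^i_kA^l_j$ factors.

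Second, I would track the boundary contributions on $\Gamma$. Integration by parts produces an integral of $\t^a\g_t^b\psik\cdot\ak^\top N\;\t^a\g_t^b v^\pm\cdot\ak^\top N\,W^\pm$; plugging in the Dirichlet-type boundary condition \eqref{E:ALEregularized_dirichlet} and the definition $W^\pm = e^{(-\l_1+\eta)t}/\g_Nq^\pm$ on $\Gamma$ produces both the positive-definite piece (whose time derivative generates the $e^{(-\l_1+\eta)t}a_\k^2|\t^a\g_t^b\Lambda_\k h|^2$ energy) and the residues labelled $\mathcal{R}^\aabb_{\Gamma_1}$ through $\mathcal{R}^\aabb_{\Gamma_4}$, plus the commutator $\mathcal{R}^\aabb_{\mathrm{comm}}$ arising when $\Lambda_\k$ is commuted through $a_\k^2\t^a\g_t^b$ so as to form a perfect square in $\Lambda_\k h_t$. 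The $\k^2$-weighted residues in the statement come from expanding $\t^a\g_t^b(\k^2 v^\pm\cdot\ak^\top N - \k^2\beta^\pm)$ on the boundary; the terms with $(\g_Nq^\pm)^{-1}$ track where the weight $W^\pm$ is evaluated. For $\g\Omega$ one uses \eqref{E:ALEregularized_neumann_top} to rewrite $\t^a\g_t^b v^+\cdot{\bf N}^+$ as $\t^a\g_t^b\gamma$ plus commutators of $\t^a\g_t^b$ with the (smooth) normal ${\bf N}^+$; this yields the two displays for $\mathcal{R}^\aabb_{\g\Omega}$ and $\widetilde{\mathcal{R}}^\aabb_{\g\Omega}$.

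There is no deep obstruction in this lemma — the only real challenge is careful bookkeeping: making sure that every commutator produced by Leibniz is listed exactly once with the correct binomial-type coefficient $c_{sl}$, that the ranges $1\le s\le a{-}1$, $1\le l\le b{-}1$ (or the multi-index analogue $0\le s^i\le a^i$ for the $\g^a$-derivatives in $\mathring{\mathcal{R}}$) are correctly indexed so as not to double-count the ``fully differentiated'' term, and that the signs produced by integration by parts and by the boundary orientation (outward $n$ for $\Omega^-$ is $-N$, etc.) are consistent with the jump condition $h_t = [v\cdot\tilde n]^+_-$ that allowed the common factor $(\t^a\g_t^b\psik\cdot n^\k)$ to be pulled out in \eqref{E:boundary_ei}. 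Once those indices and signs are set, the listed decomposition is precisely what the derivation in Lemma~\ref{ab_energy_lemma} produces, and the claim follows.
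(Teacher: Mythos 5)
Your proposal is correct and follows exactly the route the paper takes: Lemma~\ref{S:error_terms} has no independent argument in the paper, being simply the explicit record of the residual integrals produced by the computation in the proof of Lemma~\ref{ab_energy_lemma} (applying $\t^a\g_t^b$ and $\t^a\g_t^{b+1}$ to \eqref{E:ALEregularized_v}, testing against $\t^a\g_t^b v^\pm W^\pm\mu$, integrating by parts, and using the boundary conditions \eqref{E:ALEregularized_dirichlet} and \eqref{E:ALEregularized_neumann_top} together with $W^\pm\vert_\Gamma=e^{(-\lambda_1+\eta)t}/\g_Nq^\pm$). Your organization into the four families, the separation of the untilded and tilded terms, and the identification of where the $\nabla W^\pm$, $W_t^\pm$, $\k^2$, and commutator contributions arise all match the paper's derivation.
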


\section*{Acknowledgements} 
S.S. was supported by the National Science Foundation under grant no. DMS-1301380 and by a Royal
Society Wolfson Merit Award.

\bibliographystyle{plain}

\begin{thebibliography}{10}

\bibitem{iAlCsS1996.0}
I.~Athanasopoulos, L.~Caffarelli, and S.~Salsa.
\newblock Caloric functions in {L}ipschitz domains and the regularity of
  solutions to phase transition problems.
\newblock {\em Ann. of Math. (2)}, 143(3):413--434, 1996.

\bibitem{iAlCsS1996}
I.~Athanasopoulos, L.~Caffarelli, and S.~Salsa.
\newblock Regularity of the free boundary in parabolic phase-transition
  problems.
\newblock {\em Acta Math.}, 176(2):245--282, 1996.

\bibitem{iAlCsS1998}
I.~Athanasopoulos, L.~A. Caffarelli, and S.~Salsa.
\newblock Phase transition problems of parabolic type: flat free boundaries are
  smooth.
\newblock {\em Comm. Pure Appl. Math.}, 51(1):77--112, 1998.

\bibitem{iAlCsS2003}
I.~Athanasopoulos, L.~A. Caffarelli, and S.~Salsa.
\newblock Stefan-like problems with curvature.
\newblock {\em J. Geom. Anal.}, 13(1):21--27, 2003.

\bibitem{lCsS2005}
L.~Caffarelli and S.~Salsa.
\newblock {\em A geometric approach to free boundary problems}, volume~68 of
  {\em Graduate Studies in Mathematics}.
\newblock American Mathematical Society, Providence, RI, 2005.

\bibitem{lC1977}
L.~A. Caffarelli.
\newblock The regularity of free boundaries in higher dimensions.
\newblock {\em Acta Math.}, 139(3-4):155--184, 1977.

\bibitem{lC1978}
L.~A. Caffarelli.
\newblock Some aspects of the one-phase {S}tefan problem.
\newblock {\em Indiana Univ. Math. J.}, 27(1):73--77, 1978.

\bibitem{lCcE1983}
L.~A. Caffarelli and L.~C. Evans.
\newblock Continuity of the temperature in the two-phase {S}tefan problem.
\newblock {\em Arch. Rational Mech. Anal.}, 81(3):199--220, 1983.

\bibitem{lCaF1979}
L.~A. Caffarelli and A.~Friedman.
\newblock Continuity of the temperature in the {S}tefan problem.
\newblock {\em Indiana Univ. Math. J.}, 28(1):53--70, 1979.

\bibitem{ChCoSh2012a}
C.~H.~A. Cheng, D.~Coutand, and S.~Shkoller.
\newblock Global existence and decay for solutions of the {H}ele-{S}haw flow
  with injection.
\newblock {\em Interfaces Free Bound.}, 16(3):297--338, 2014.

\bibitem{ChGrSh2015}
C.~H.~A. {Cheng}, R.~{Granero-Belinch{\'o}n}, and S.~{Shkoller}.
\newblock {Well-posedness of the Muskat problem with \$H\^{}2\$ initial data}.
\newblock {\em Adv. Math.}, 286:32--104, 2016.

\bibitem{sCiK2010}
S.~Choi and I.~C. Kim.
\newblock Regularity of one-phase {S}tefan problem near {L}ipschitz initial
  data.
\newblock {\em Amer. J. Math.}, 132(6):1693--1727, 2010.

\bibitem{sCiK2012}
S.~Choi and I.~C. Kim.
\newblock The two-phase {S}tefan problem: regularization near {L}ipschitz
  initial data by phase dynamics.
\newblock {\em Anal. PDE}, 5(5):1063--1103, 2012.

\bibitem{aCdCfG2011}
A.~C{\'o}rdoba, D.~C{\'o}rdoba, and F.~Gancedo.
\newblock Interface evolution: the {H}ele-{S}haw and {M}uskat problems.
\newblock {\em Ann. of Math. (2)}, 173(1):477--542, 2011.

\bibitem{MR2208319}
D.~Coutand and S.~Shkoller.
\newblock The interaction between quasilinear elastodynamics and the
  {N}avier-{S}tokes equations.
\newblock {\em Arch. Ration. Mech. Anal.}, 179(3):303--352, 2006.

\bibitem{CoShfreesurfEuler}
D.~Coutand and S.~Shkoller.
\newblock Well-posedness of the free-surface incompressible {E}uler equations
  with or without surface tension.
\newblock {\em J. Amer. Math. Soc.}, 20(3):829--930, 2007.

\bibitem{CoSh_Kacov2010}
D.~Coutand and S.~Shkoller.
\newblock A simple proof of well-posedness for the free-surface incompressible
  {E}uler equations.
\newblock {\em Discrete Contin. Dyn. Syst. Ser. S}, 3(3):429--449, 2010.

\bibitem{CoSh2011}
D.~Coutand and S.~Shkoller.
\newblock Well-posedness in smooth function spaces for moving-boundary 1-{D}
  compressible {E}uler equations in physical vacuum.
\newblock {\em Comm. Pure Appl. Math.}, 64(3):328--366, 2011.

\bibitem{CoSh2012a}
D.~Coutand and S.~Shkoller.
\newblock Well-posedness in smooth function spaces for moving-boundary 3-{D}
  compressible {E}uler equations in physical vacuum.
\newblock {\em Arch. Rational Mech. Anal}, 2012.

\bibitem{CoSh2014}
D.~Coutand and S.~Shkoller.
\newblock On the finite-time splash and splat singularities for the 3-{D}
  free-surface {E}uler equations.
\newblock {\em Comm. Math. Phys.}, 325(1):143--183, 2014.

\bibitem{CoSh2016}
D.~Coutand and S.~Shkoller.
\newblock On the {I}mpossibility of {F}inite-{T}ime {S}plash {S}ingularities
  for {V}ortex {S}heets.
\newblock {\em Arch. Ration. Mech. Anal.}, 221(2):987--1033, 2016.

\bibitem{pDkL2004}
P.~Daskalopoulos and K.~Lee.
\newblock All time smooth solutions of the one-phase {S}tefan problem and the
  {H}ele-{S}haw flow.
\newblock {\em Comm. Partial Differential Equations}, 29(1-2):71--89, 2004.

\bibitem{Eb1987}
D.~G. Ebin.
\newblock The equations of motion of a perfect fluid with free boundary are not
  well posed.
\newblock {\em Comm. Partial Differential Equations}, 12(10):1175--1201, 1987.

\bibitem{aF1968}
A.~Friedman.
\newblock The {S}tefan problem in several space variables.
\newblock {\em Trans. Amer. Math. Soc.}, 133:51--87, 1968.

\bibitem{aFfR1988}
A.~Friedman.
\newblock {\em Variational principles and free-boundary problems}.
\newblock Robert E. Krieger Publishing Co., Inc., Malabar, FL, second edition,
  1988.

\bibitem{aFdK1975}
A.~Friedman and D.~Kinderlehrer.
\newblock A one phase {S}tefan problem.
\newblock {\em Indiana Univ. Math. J.}, 24(11):1005--1035, 1974/75.

\bibitem{FrSo}
E.~V. {Frolova} and V.~A. {Solonnikov}.
\newblock {$L_p$-theory for the Stefan problem.}
\newblock {\em J. Math. Sci.}, 373(1):989--1006, 2000.

\bibitem{mH2012}
M.~Had{\v{z}}i{\'c}.
\newblock Orthogonality conditions and asymptotic stability in the {S}tefan
  problem with surface tension.
\newblock {\em Arch. Ration. Mech. Anal.}, 203(3):719--745, 2012.

\bibitem{mHyG2010}
M.~Had{\v{z}}i{\'c} and Y.~Guo.
\newblock Stability in the {S}tefan problem with surface tension ({I}).
\newblock {\em Comm. Partial Differential Equations}, 35(2):201--244, 2010.

\bibitem{mHsS2011}
M.~{Had\v zi\'c} and S.~{Shkoller}.
\newblock {Well-posedness for the classical Stefan problem and the zero surface
  tension limit}.
\newblock {\em ArXiv e-prints}, December 2011.

\bibitem{mHsS2013}
M.~Had{\v{z}}i{\'c} and S.~Shkoller.
\newblock Global stability and decay for the classical {S}tefan problem.
\newblock {\em Comm. Pure Appl. Math.}, 68(5):689--757, 2015.

\bibitem{mHsS2015}
M.~Had{\v{z}}i{\'c} and S.~Shkoller.
\newblock Global stability of steady states in the classical stefan problem for
  general boundary shapes.
\newblock {\em Phil. Trans. R. Soc. A}, 373(2050):20140284, 2015.

\bibitem{eiH1981}
E.~Hanzawa.
\newblock Classical solutions of the {S}tefan problem.
\newblock {\em T\^ohoku Math. J. (2)}, 33(3):297--335, 1981.

\bibitem{slK1961}
S.~L. Kamenomostskaja.
\newblock On {S}tefan's problem.
\newblock {\em Mat. Sb. (N.S.)}, 53 (95):489--514, 1961.

\bibitem{iK2003}
I.~C. Kim.
\newblock Uniqueness and existence results on the {H}ele-{S}haw and the
  {S}tefan problems.
\newblock {\em Arch. Ration. Mech. Anal.}, 168(4):299--328, 2003.

\bibitem{iKnP2011}
I.~C. Kim and N.~Po{\v{z}}{\'a}r.
\newblock Viscosity solutions for the two-phase {S}tefan problem.
\newblock {\em Comm. Partial Differential Equations}, 36(1):42--66, 2011.

\bibitem{dKlN1977}
D.~Kinderlehrer and L.~Nirenberg.
\newblock Regularity in free boundary problems.
\newblock {\em Ann. Scuola Norm. Sup. Pisa Cl. Sci. (4)}, 4(2):373--391, 1977.

\bibitem{dKlN1978}
D.~Kinderlehrer and L.~Nirenberg.
\newblock The smoothness of the free boundary in the one phase {S}tefan
  problem.
\newblock {\em Comm. Pure Appl. Math.}, 31(3):257--282, 1978.

\bibitem{dKgS1980}
D.~Kinderlehrer and G.~Stampacchia.
\newblock {\em An introduction to variational inequalities and their
  applications}, volume~88 of {\em Pure and Applied Mathematics}.
\newblock Academic Press, Inc. [Harcourt Brace Jovanovich, Publishers], New
  York-London, 1980.

\bibitem{hK1998}
H.~Koch.
\newblock Classical solutions to phase transition problems are smooth.
\newblock {\em Comm. Partial Differential Equations}, 23(3-4):389--437, 1998.

\bibitem{qaLvaSnnU1968}
O.~A. Lady{\v{z}}enskaja, V.~A. Solonnikov, and N.~N. Ural{\cprime}ceva.
\newblock {\em Linear and quasilinear equations of parabolic type}.
\newblock Translated from the Russian by S. Smith. Translations of Mathematical
  Monographs, Vol. 23. American Mathematical Society, Providence, R.I., 1968.

\bibitem{Lions}
P.-L. Lions.
\newblock Bifurcation and optimal stochastic control.
\newblock {\em Nonlinear Anal.}, 7(2):177--207, 1983.

\bibitem{amM1992extra}
A.~M. Meirmanov.
\newblock {\em The {S}tefan problem}, volume~3 of {\em de Gruyter Expositions
  in Mathematics}.
\newblock Walter de Gruyter \& Co., Berlin, 1992.
\newblock Translated from the Russian by Marek Niezg{\'o}dka and Anna Crowley,
  With an appendix by the author and I. G. G{\"o}tz.

\bibitem{jPjSgS2007}
J.~Pr{\"u}ss, J.~Saal, and G.~Simonett.
\newblock Existence of analytic solutions for the classical {S}tefan problem.
\newblock {\em Math. Ann.}, 338(3):703--755, 2007.

\bibitem{jPgSrZ2013}
J.~Pr{\"u}ss, G.~Simonett, and R.~Zacher.
\newblock Qualitative behavior of solutions for thermodynamically consistent
  {S}tefan problems with surface tension.
\newblock {\em Arch. Ration. Mech. Anal.}, 207(2):611--667, 2013.

\bibitem{fQjlV2000}
F.~Quir{\'o}s and J.~L. V{\'a}zquez.
\newblock Asymptotic convergence of the {S}tefan problem to {H}ele-{S}haw.
\newblock {\em Trans. Amer. Math. Soc.}, 353(2):609--634 (electronic), 2001.

\bibitem{liR1971}
L.~I. Rubenstein.
\newblock {\em The {S}tefan problem}.
\newblock American Mathematical Society, Providence, R.I., 1971.
\newblock Translated from the Russian by A. D. Solomon, Translations of
  Mathematical Monographs, Vol. 27.

\bibitem{jS1889c}
J.~Stefan.
\newblock {\" Uber} die theorie der eisbildung, insbesondere {\" uber} die
  eisbildung im polarmeere.
\newblock {\em Sitzungs-berichteder Osterreichischen Akademie der
  Wissenschaften Mathematisch-Naturwissen-schaftliche Klasse, Abteilung 2,
  Mathematik, Astronomie, Physik, Meteorologie und Technik}, 98:965--983, 1889.

\bibitem{jS1891}
J.~Stefan.
\newblock {\" Uber} die theorie der eisbildung, insbesondere {\" uber} die
  eisbildung im polarmeere.
\newblock {\em Annalen der Physik und Chemie}, 42:269--286, 1891.

\bibitem{aV1996}
A.~Visintin.
\newblock {\em Models of phase transitions}.
\newblock Progress in Nonlinear Differential Equations and their Applications,
  28. Birkh\"auser Boston, Inc., Boston, MA, 1996.

\bibitem{Wu1999}
S.~Wu.
\newblock Well-posedness in sobolev spaces of the full water wave problem in
  3-d.
\newblock {\em J. Amer. Math. Soc.}, 12:445--495, 1990.

\end{thebibliography}
\def\cprime{$'$}

\end{document}